\newcommand{\R}{\mathbb R}
\newcommand{\Z}{\mathbb Z}
\newcommand{\N}{\mathbb N}
\newcommand{\diam}{\mathrm{diam}}
\DeclareMathOperator{\Lip}{Lip}
\DeclareMathOperator*{\argmax}{arg\,max}
\DeclareMathOperator{\Int}{Int}
\DeclareMathOperator{\conv}{conv}
\DeclareMathOperator{\codim}{codim}
\DeclareMathOperator{\ad}{ad}
\newcommand{\eqdef}{\overset{\mathrm{def}}{=\joinrel=}}
\newcommand{\ddt}{\frac{\displaystyle d}{\displaystyle dt}}
\newcommand{\dds}{\frac{\displaystyle d}{\displaystyle ds}}
\newtheorem{thm}{Theorem}
\newtheorem{lemma}{Lemma}
\newtheorem{prop}{Proposition}
\newtheorem{corollary}{Corollary}
\theoremstyle{definition}
\newtheorem{defn}{Definition}
\theoremstyle{remark}
\newtheorem{remark}{Remark}
\def\cal{\mathcal}
\newcommand{\texorpdfstring}[2]{#1}
\DeclareMathOperator{\bellman}{\cal B}
\DeclareMathOperator{\blowing}{\mathbf{B}}
\DeclareMathOperator{\blowingBig}{\mathbf{B}}
\begin{document}

\begin{centering}
	\LARGE Typicalness of chaotic fractal behaviour of integral vortexes in
	Hamiltonian systems with discontinuous right hand side\footnote{The work 
	was supported by the RFBR (grant N 14-01-00784)}.
\end{centering}

\bigskip

\begin{flushright}
	{\bf M.I. Zelikin}
	
	Lomonosov Moscow State University, Department of Mechanics and Mathematics
	
	\textit{mzelikin@mtu-net.ru}\\[0.5cm]
\end{flushright}

\begin{flushright}
	{\bf L.V. Lokutsievskiy}

	Lomonosov Moscow State University, Department of Mechanics and Mathematics

	\textit{lion.lokut@gmail.com}\\[0.5cm]
\end{flushright}

\begin{flushright}
	{\bf R. Hildebrand}

	Weierstrass Institute for Applied Analysis and Stochastics

	\textit{hildebra@wias-berlin.de}\\[0.5cm]
\end{flushright}

We consider a linear-quadratic deterministic optimal control problem where the control takes values in a two-dimensional simplex. The phase portrait of the optimal synthesis contains second-order singular extremals and exhibits modes of infinite accumulations of switchings in finite time, so-called chattering. We prove the presence of an entirely new phenomenon, namely the chaotic behaviour of bounded pieces of optimal trajectories. We find the hyperbolic domains in the neighbourhood of a homoclinic point and estimate the corresponding contraction-extension coefficients. This gives us the possibility to calculate the entropy and the Hausdorff dimension of the non-wandering set which appears to have a Cantor-like structure as in Smale's Horseshoe. The dynamics of the system is described by a topological Markov chain. In the second part it is shown that this behaviour is generic for piece-wise smooth Hamiltonian systems in the vicinity of a junction of three discontinuity hyper-surface strata. 

\tableofcontents

\section{Introduction}

The main tool for solving deterministic optimal control problems is the Pontryagin Maximum Principle (PMP) \cite{PBGM62}. It allows one to reduce the control problem to a two-point boundary value problem of Hamiltonian dynamics. Suppose the control variable $u$ takes values in some set $\Omega$. Then the Hamiltonian function $H$ defining the dynamics is given as the maximum $H(t,x,p) = \max_{u \in \Omega}{\cal H}(t,x,p,u)$
over the control $u$ of the {\it Pontryagin function} ${\cal H}$, and the optimal control $\hat u(t,x,p)$, if it exists, is found among the maximizers.

In general, the maximizer is unique in an open dense subset of the space of variables $t,x,p$ and depends smoothly on these variables. On this set, the Hamiltonian $H$ will be smooth, whereas on its boundary the derivatives of $H$ will experience discontinuities. Usually the Hamiltonian system as a whole will be piece-wise smooth on the extended phase space of the variables $t,x,p$. The extended phase space will be divided into disjoint domains $A_1,\dots,A_k$ on which the Hamiltonian is given by smooth functions $H_1,\dots,H_k$, respectively. The dynamics is described by an ordinary differential equation with discontinuous right-hand side. We consider situations when the set $\Omega$ is a convex polyhedron, and the domains $A_i$ are those regions where the optimal control resides in a particular vertex $v_i$ of the polyhedron. The set of points where the derivatives of $H$ are discontinuous is a stratified manifold, and on each stratum the optimal control is confined to a particular face of the polyhedron $\Omega$.

A trajectory of the Hamiltonian system evolving inside a smoothness domain is called {\it regular}. If a trajectory passes from one smoothness domain $A_i$ into another one $A_j$, then the corresponding optimal control will experience a jump from the vertex $v_i$ of the polyhedron $\Omega$ to the vertex $v_j$. This process is called {\it switching}, and the discontinuity hyper-surface is called {\it switching surface}. Typically optimal trajectories intersect the switching surface transversally, in which case they are called {\it bang-bang trajectories}. It may happen, however, that a trajectory  moves along the switching surface, in which case one speaks of a {\it singular trajectory}. Typically uniqueness of the solution does not hold in the vicinity of a singular trajectory, and many regular trajectories can join or leave from the same singular point. This is possible because the right-hand side of the underlying ordinary differential equation  experiences a discontinuity.

For a singular trajectory lying on a switching hyper-surface\footnote{Equivalently one can consider a problem with a one-dimensional control variable in the neighbourhood of the singular trajectory \cite{KelleyKoppMoyer}.} one can define an {\it order}, in dependence on up to which order the Poisson brackets of the adjoining smooth pieces $H_i$ of the Hamiltonian vanish. The order may be {\it local} or {\it intrinsic}, in dependence on whether the brackets vanish only on the trajectory itself or in a neighbourhood of it \cite{Lewis}. A good generalization of these notions (the so-called {\it natural order}) is given in \cite{Lokut_HSF}. If the natural order of the singular trajectory is even, then a regular trajectory cannot join it in a piece-wise smooth manner. In this case regular trajectories spiral around the singular trajectory and intersect the switching surface in an infinite number of points in finite time, such that the joining point is an accumulation point of switchings. This phenomenon is called {\it chattering}, and is well-studied for the situation where exactly two smoothness domains meet at the singular trajectory in question \cite{Marchal73}.

In this contribution we consider the situation where three
smoothness domains $A_1,A_2,A_3$ meet at a manifold ${\cal S}_{123}$ of codimension 2. This situation is equivalent to an optimal control problem with 2-dimensional control. The general case of singular extremals for $n$-dimensional control was explored in \cite{HLZOrderTorus}. The role of the order is played by a flag of orders. In the present contribution we consider a singular trajectory of intrinsic second order. This work can be seen as a continuation of the paper \cite{ZMHBasic}, where this case was first considered and the presence of the chattering phenomenon was proven. Here we observe an additional phenomenon appearing in this concrete problem, namely, the chaotic behaviour of bounded parts of optimal trajectories. This phenomenon was not yet seen in optimal control problems and is hence entirely new. The key to the proof is a new mathematical object: the system of ordinary differential equations for all Poisson brackets up to the fourth order between the restrictions $H_i$ of the Hamiltonian to the domains of continuity $A_i$ neighbouring the singular trajectory. We call this system the {\it descending system} of Poisson brackets.  

Our findings are not limited to optimal control problems, but rather hold for a whole class of piece-wise smooth Hamiltonian systems with three smoothness domains joining at surface ${\cal S}_{123}$ containing singular trajectories of second order. We will consider a particular {\it model} linear-quadratic optimal control problem in detail whose solution is prototypical for the general case. We call this problem the model problem for general piece-wise continuous Hamiltonian systems because the principal part of the descending system of ODEs for general Hamiltonian systems coincides with the descending system of ODEs for the model problem.

Finally, we state some conjectures and possible directions for further work.

The work consists of two parts. The first part is devoted to the complete design of the optimal synthesis for the model problem. The model problem is affine in the planar control, the latter takes values in an equilateral triangle centered at the origin. It is proved that the optimal synthesis contains an entirely new singularity. All optimal trajectories end at the fixed singular point in finite time, but nevertheless the control on some extremals has a chaotic nature: there exists a topological Markov chain
$\Sigma_\Gamma$ such that the sequence of control switchings on an optimal trajectory corresponds one to one to a trajectory through a point of $\Sigma_\Gamma$ under the right Bernoulli shift. The set  of non-wandering points in the model problem has a fractal structure (as in Smale's horseshoe) and non-integer Hausdorff and box dimensions.

In the second part a generalization of this phenomenon to Hamiltonian systems with discontinuous right-hand side is proved. The main role in demonstrating the generalization is played by a resolution of singularities for the Poincare mapping at the singular point, the so-called {\it strange point}. For this purpose we introduce an auxiliary descending system of ordinary differential equations on Poisson brackets of restrictions of the Hamiltonian to the domains of smoothness neighbouring the strange point. It turns out that the principal part of the descending system  coincides with the equations of the model problem that was investigated in the first part.

The discovered phenomenon appears in the neighbourhood of a generic singularity. Namely, in the second part we prove a theorem on the structural stability of the synthesis and determine the codimension of the manifold of singularities in question.

\section{Simplest properties of the model control problem}

In this section the model optimal control problem will be formulated and its basic properties described. The behaviour of the optimal trajectories of the problem is of chaotic nature. It is worth to note that the model problem possesses a rich group of symmetries, and hence its optimal synthesis can be studied to a sufficiently full extent. But at the same time it models the optimal synthesis for a broad class of Hamiltonian systems with discontinuous right-hand side. In the subsequent sections we show that the behaviour of optimal trajectories for systems in this class is similar to the optimal synthesis of the model problem.

\subsection{Formulation of the model problem}

Consider the optimal control problem

\begin{equation}
\label{problem:model}
	\begin{array}{l}
		J(x) = \frac{1}{2}\int\limits_0^{+\infty} \bigl<x(t),x(t)\bigr>\,dt\to \inf\\
		\ddot x = u;\qquad u\in\Omega\subset U;\\
	\end{array}
\end{equation}

\noindent with initial data

\begin{equation}
	x(0) = x_0,\ \ \dot x(0) = y_0.
\end{equation}

\noindent Here $x$ and $u$ belong to two-dimensional Euclidean space $U\simeq \R^2$ equipped with the scalar product  $\bigl<\cdot,\cdot\bigr>$, and $\Omega$ is a closed triangle with $0\in \Int \Omega$ (such triangles will be called admissible).

\begin{remark}
The deepest investigation of problem (\ref{problem:model}) will be conducted for the case when
$\Omega$ is an equilateral triangle centered at the origin. Some results related to the chaotic behaviour of the optimal trajectories will be obtained also for the more general case when $\Omega$ is not necessarily equilateral.
\end{remark}

Denote $y=\dot x$. Let $\phi$, $\psi$ be the adjoint variables to $x$ and $y$ as prescribed by the PMP. For brevity we shall sometimes write
$q=(x,y)$ and $p=(\phi,\psi)$. We employ the notation
 $M=U\oplus U=\{(x,y)\}$ for phase space, and the notation ${\cal M} = T^*M = \{(p,q)\}$ for extended phase space. The Pontryagin function has the form

\[{\mathcal H}= - \frac{\lambda_0}{2} \bigl<x,x\bigr> + \bigl<\phi,y\bigr> + \bigl<\psi,u\bigr>,\]

\noindent i.e., $\dot \phi = \lambda_0 x$ and
$\dot \psi=-\phi$. If $\lambda_0=0$, then the function $\psi$ is linear, and we immediately obtain a contradiction with the finiteness of integral (\ref{problem:model}). Setting $\lambda_0=1$
we obtain the following Hamiltonian system with  discontinuous right-hand side:

\begin{equation}
\label{eq:model_pmp_system}
	\left\{\begin{array}{l}
		\dot \psi = -\phi;\ \ \dot \phi = x; \ \dot x = y;\ \ \dot y = u.\\
		\bigl<\psi,u\bigr> \to \max\limits_{u\in\Omega}
	\end{array}\right.
\end{equation}

In problem (\ref{eq:model_pmp_system}) there exists a unique trajectory which is singular with respect to $\Omega$, namely  $x\equiv y\equiv \phi\equiv\psi\equiv u\equiv 0$. This trajectory is of second order (\cite{HLZOrderTorus}). In the second part of the paper we show that the behaviour of the optimal trajectories of problem (\ref{problem:model}) in the vicinity of this singular trajectory (i.e., of the origin) is typical for Hamiltonian systems with discontinuous right-hand side. This motivates us to investigate the optimal synthesis of problem (\ref{problem:model}) to a maximal extent.

\subsection{Symmetries of the problem}
\label{sec:symmetries}

The Hamiltonian system (\ref{eq:model_pmp_system}) has two important groups of symmetries. The first one is the scale group $\R\setminus 0$. Being one-dimensional, it allows to reduce the dimension of the phase space by one. The second group is the discrete permutation group $S_3$. It appears only in the case when $\Omega$ is an equilateral triangle and allows to prove the chaotic behaviour of the optimal trajectories.

\begin{enumerate}
	\item Let $\lambda\in\R\setminus 0$. Consider the following transformation $g(\lambda)\in GL(8,\R)$ of the extended phase space $\cal M$:
	
	\[
		x\mapsto \lambda^2 x,\ \ y\mapsto \lambda y,\ \ \phi\mapsto \lambda^3\phi,\ \ \psi\mapsto\lambda^4\psi.
	\]
	
	\noindent The transformation $g(\lambda)$ changes the Hamiltonian $\cal H$ by a factor of $\lambda^4$ and the symplectic form $\omega=dp\wedge dq=d\phi\wedge dx + d\psi\wedge dy$ by a factor of $\lambda^5$. Hence $g(\lambda)$
takes the set of trajectories of system (\ref{eq:model_pmp_system}) onto itself, but with a change in velocities by a factor of $\lambda$. If
$\lambda>0$, then optimal trajectories are mapped to optimal ones, because the functional $J$ changes by a factor of $\lambda^5$.

	\item If the triangle $\Omega$ is equilateral, then there exists a discrete subgroup $S_3\subset O(2,\R)$ of the orthogonal group of transformations of the plane $U$ which preserves the triangle $\Omega$. The simultaneous action of $S_3$ on the vectors $x$, $y$, $\phi$ and $\psi$ defines a transformation of the extended phase space that preserves the set of optimal trajectories, the Hamiltonian $H$ and the symplectic structure $\omega$.
	
\end{enumerate}


\subsection{A theorem on the passage to the adjoint variables}

One of the main tools for designing the optimal synthesis of problem (\ref{problem:model}) is the following theorem \ref{thm:model_problem_bellman} on the passage to the adjoint variables. This theorem allows to describe the structure of the manifold $M_+\subset{\cal M}=T^*M$ comprising those trajectories of system (\ref{eq:model_pmp_system}) which are optimal for problem (\ref{problem:model}).

Denote $q=(x,y)\in M=\R^4=U\oplus U$ and $p=(\phi,\psi)\in T^*_0M=\R^4$.

\begin{thm}
\label{thm:model_problem_bellman}

Let $\Omega$ be a convex compact set (not necessarily a triangle), and suppose $0\in\Int\Omega$. Then for any initial point $q_0=(x_0,y_0)$ the following holds:

 \begin{enumerate}
 \item There exists a unique optimal trajectory $\widehat q(t,q_0)$ of problem (\ref{problem:model}) with initial point $q_0$. The adjoint function $\widehat p(t,q_0)$ is also unique.

 \item There exists a time instant $T(q_0)\ge 0$ such that $\widehat q(t,q_0)=0$ and $\widehat p(t,q_0)=0$ for $t\ge T(q_0)$, and $q(t,q_0)\ne0$, $p(t,q_0)\ne0$ for $t<T(q_0)$. The function $T(\cdot)$ is continuous and
		
		\[
			C_1\max\{\sqrt{|x_0|},|y_0|\}\le T(q_0)\le C_2\max\{\sqrt{|x_0|},|y_0|\}
		\]
		
\noindent for some constants $C_1>0$ and $C_2>0$.
		
		\item The mapping $E:q_0 \mapsto \widehat p(0,q_0)=(\phi_0,\psi_0)$ is locally Lipschitz and bijective. Moreover,
		
		\[
			C_3\max\{\sqrt{|x_0|},|y_0|\}\le \max\{\sqrt[3]{|\phi_0|},\sqrt[4]{|\psi_0|}\}\le C_4\max\{\sqrt{|x_0|},|y_0|\}
		\]

		\noindent for some constants $C_3>0$ and $C_4>0$.
	\end{enumerate}
\end{thm}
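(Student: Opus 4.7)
The plan combines three tools: the convex structure of the LQ problem (linear dynamics, convex compact $\Omega$, convex quadratic cost), the scaling group $g(\lambda)$ from Section~\ref{sec:symmetries}, and Pontryagin's Maximum Principle together with a Bellman function analysis. The strategy is to express all three parts of the theorem in terms of objects that are homogeneous under $g(\lambda)$ and then reduce the quantitative estimates to qualitative statements on the compact anisotropic unit sphere $\{N(q_0) = 1\}$, where $N(q_0) := \max\{\sqrt{|x_0|},\,|y_0|\}$.

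For Part 1, I would prove existence by the direct method. Any minimizing sequence of controls in $L^\infty([0,\infty);\Omega)$ is norm-bounded, hence admits a weak-$*$ convergent subsequence whose limit is again admissible; the trajectory map $u \mapsto x$ is continuous on compact intervals, and $J$ is convex and weak-$*$ lower semicontinuous, so the limit is optimal. Finiteness of the infimum follows from the fact that $0 \in \Int \Omega$, which permits an explicit feasible control steering $q_0$ to $0$ in finite time and then resting there with $u \equiv 0$. Uniqueness of the optimal trajectory comes from strict convexity of $J$ as a functional of $x$: two distinct optimal controls would produce distinct trajectories, and the midpoint control would strictly improve $J$. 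Uniqueness of the adjoint follows from normality (the argument $\lambda_0 = 0 \Rightarrow \psi$ linear $\Rightarrow J = +\infty$ already given in the excerpt) together with the adjoint ODE, which determines $\widehat p$ uniquely from its boundary values at the endpoint.

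For Part 2, the scaling $g(\lambda)$ immediately gives $V(g(\lambda) q_0) = \lambda^5 V(q_0)$ where $V(q_0) = \inf J$, and a parallel computation on the time variable gives $T(g(\lambda) q_0) = \lambda T(q_0)$; both $V$ and $T$ are therefore positively homogeneous of anisotropic degrees $5$ and $1$ respectively, and all estimates reduce by scaling to statements on the compact sphere $\{N = 1\}$. The upper bound $T \le C_2 N$ follows from the explicit suboptimal strategy above together with homogeneity. The lower bound $T \ge C_1 N$ is elementary: since $u$ stays in the bounded set $\Omega$, reducing $|y|$ from $|y_0|$ to $0$ needs time at least $|y_0|/\diam(\Omega)$, and reducing $|x|$ needs time of order $\sqrt{|x_0|}$. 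Continuity of $T(\cdot)$ then comes from continuity of $V$ (locally Lipschitz by dynamic programming) together with uniqueness.

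For Part 3, single-valuedness of $E$ is the uniqueness asserted in Part 1, and surjectivity is automatic. Injectivity is equivalent to one-to-oneness of the canonical projection $M_+ \to M$: two distinct lifts above the same $q_0$ would integrate under the Hamiltonian flow (\ref{eq:model_pmp_system}) to two distinct optimal responses to $q_0$. The local Lipschitz property comes from continuous dependence of the flow on initial data together with the arrival-time control of Part 2. The two-sided bound on $\max\{|\phi_0|^{1/3},|\psi_0|^{1/4}\}$ is then a direct corollary of scaling, since that quantity and $N(q_0)$ are both positively homogeneous of degree one under $g(\lambda)$, and continuity plus compactness of the sphere give the inequality. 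I expect the main obstacle to be the finite-time arrival in Part 2, i.e.\ showing $T(q_0) < \infty$ rather than merely $V(q_0) < \infty$: the natural Lyapunov inequality $\dot V \le -c V^{4/5}$ cannot hold pointwise because $|\widehat x|^2$ vanishes on the slice $\{x = 0\}$, so one must either pass to an integrated version controlling the time the trajectory can spend near $\{x=0\}$, or extract finite arrival directly from compactness of the unit sphere together with the homogeneity of $T$.
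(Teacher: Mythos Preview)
Your overall architecture---convexity for existence/uniqueness, the scaling group $g(\lambda)$ to reduce to a compact anisotropic sphere, and PMP/Bellman theory for the adjoint map---matches the paper's. But two places in your sketch are genuine gaps rather than details to be filled in.

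\textbf{Finite-time arrival.} You correctly flag this as the crux, but neither of your proposed routes works as stated. The second route (``extract finite arrival from compactness of $\{N=1\}$ together with homogeneity of $T$'') is circular: you cannot invoke continuity or boundedness of $T$ on the sphere before you know $T<\infty$ there. The first route (an integrated Lyapunov inequality) is the right instinct, but you need a concrete mechanism. The paper's device is to bound, on any interval of length $\delta=\rho(q)$ where $\rho(x,y)=\max\{\sqrt{|x|/a},|y|/b\}$, the integral $\int|\widehat x|^2\,dt$ from below by $c_1\delta^5$; summing such intervals shows that the time $t_{1/2}$ for $\rho$ to halve satisfies $t_{1/2}\le c\,\bellman(q_0)/\rho(q_0)^4$. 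Scaling then gives $t_{1/2}(\lambda\Delta)=\lambda t_{1/2}(\Delta)$, and a geometric series finishes. Without something of this kind you do not have Part~2, and without Part~2 you cannot even define the adjoint uniquely (the paper's uniqueness argument for $\widehat p$ uses $\widehat q(T(q_0))=0$ to force $\widehat p(T(q_0))=0$ and then integrates backward).

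\textbf{Part 3 is muddled.} You write that ``surjectivity is automatic'' and that ``injectivity is equivalent to one-to-oneness of the projection $M_+\to M$''. Neither is right. The projection $M_+\to M$ being one-to-one is single-valuedness of $E$, which you already attributed to Part~1; injectivity of $E$ is one-to-oneness of the \emph{other} projection $M_+\to N$, and the paper gets it from strict convexity of the Bellman function via $E=-\bellman'$. Surjectivity is not automatic: the paper argues geometrically that for any covector direction $v$ there is a tangent hyperplane to $\{\bellman\le 1\}$ with normal $v$, and then scales. Finally, ``local Lipschitz from continuous dependence of the flow'' is not an argument---the flow of \eqref{eq:model_pmp_system} has discontinuous right-hand side. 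The paper instead proves a two-sided second-order bound $0\le \bellman(q_0+\Delta q)-\bellman(q_0)-\bellman'(q_0)\Delta q\le C|\Delta q|^2$ by comparing with a specific non-optimal trajectory, and then shows via a mollification lemma that such a bound forces $\bellman'$ to be Lipschitz.
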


The proof of the theorem consists of several lemmas.
\begin{lemma}
For any $q_0$ there exists a unique optimal solution
$\widehat q(t,q_0)$ of problem (\ref{problem:model}) with initial condition $q_0$.
\end{lemma}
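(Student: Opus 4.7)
The plan is to combine a standard direct-method existence argument with a strict-convexity uniqueness argument, both of which go through easily once one checks that the problem is nontrivial.

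\textbf{Finiteness of the infimum.} First I would verify that there exists at least one admissible control with finite cost. Since $0\in\Int\Omega$, the set $\Omega$ contains a ball $\bar B_\varepsilon(0)$, so one can use a small bang-bang or linear-feedback strategy on $\ddot x=u$ to drive $(x_0,y_0)$ to $(0,0)$ in finite time $\tau\le C\max\{\sqrt{|x_0|},|y_0|\}$, and then set $u\equiv 0$ for $t\ge\tau$. The resulting trajectory has $x(t)=0$ for $t\ge\tau$, hence $J<\infty$. This also gives the upper bound $T(q_0)\le C_2\max\{\sqrt{|x_0|},|y_0|\}$ needed later.

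\textbf{Existence.} Take a minimizing sequence $u_n$ of admissible controls. Since $\Omega$ is compact, $\|u_n\|_{L^\infty}\le\diam\Omega$, so by Banach--Alaoglu there is a subsequence (not relabelled) with $u_n\to u^\ast$ weakly-$\ast$ in $L^\infty([0,\infty);U)$. The corresponding trajectories satisfy
\[
	x_n(t)=x_0+y_0 t+\int_0^t (t-s)u_n(s)\,ds,
\]
so $x_n(t)\to x^\ast(t)$ pointwise for every $t\ge 0$ (the kernel $(t-s)\mathbf 1_{[0,t]}$ is in $L^1$). Since $\Omega$ is convex and closed, Mazur's lemma (or Filippov's closure theorem) gives $u^\ast(t)\in\Omega$ a.e., so $u^\ast$ is admissible with trajectory $x^\ast$. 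Fatou's lemma yields
\[
	J(x^\ast)=\tfrac12\int_0^{+\infty}\langle x^\ast,x^\ast\rangle\,dt\le\liminf_{n\to\infty} J(x_n)=\inf J,
\]
so $u^\ast$ is optimal.

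\textbf{Uniqueness.} The integrand $\tfrac12\langle x,x\rangle$ is strictly convex in $x$, and the map $u\mapsto x$ is affine. If $u_1,u_2$ were two distinct optimal controls with trajectories $x_1\neq x_2$, then for $\lambda\in(0,1)$ the control $u_\lambda=\lambda u_1+(1-\lambda)u_2$ is admissible (convexity of $\Omega$), produces trajectory $x_\lambda=\lambda x_1+(1-\lambda)x_2$, and
\[
	J(x_\lambda)<\lambda J(x_1)+(1-\lambda)J(x_2)=\inf J,
\]
a contradiction. Hence $x_1=x_2$ on $[0,\infty)$, and differentiating twice gives $u_1=u_2$ a.e.

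\textbf{Main obstacle.} The only subtlety is the infinite horizon: one must ensure the weak-$\ast$ limit $u^\ast$ actually produces a trajectory in $L^2$ and that lower semicontinuity survives passage to the limit. This is handled cleanly by combining pointwise convergence of $x_n$ with Fatou, rather than by trying to establish weak convergence of $x_n$ in $L^2([0,\infty))$. Uniqueness of the adjoint $\widehat p$ (the second half of part 1 of the theorem) will then follow from the PMP once the singular set is analysed, but this lemma only asserts uniqueness of the primal optimum.
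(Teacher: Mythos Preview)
Your proof is correct and follows essentially the same route as the paper: weak-$\ast$ compactness of the admissible control set via Banach--Alaoglu, pointwise convergence of the trajectories via the Duhamel formula $x(t)=x_0+y_0t+\int_0^t(t-s)u(s)\,ds$, lower semicontinuity of $J$ via Fatou, and uniqueness from strict convexity of $J$. The paper handles admissibility of the limit by noting that $\mathbb U$ is weak-$\ast$ closed (convexity and compactness of $\Omega$), where you invoke Mazur/Filippov; and the paper does not include your preliminary ``finiteness of the infimum'' paragraph, which is unnecessary for the direct method and whose time-estimate content belongs to a later lemma rather than this one.
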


\begin{proof}

Since for any measurable control $u(t)\in\Omega$ there exists a unique trajectory $q(t)$ with initial condition $q(0) = q_0$, we can consider $J$ as a functional on the space of controls $L_\infty(0,+\infty)$. The set of all admissible controls $\mathbb U$ is bounded, closed, and convex in
$L_\infty(0,+\infty)$ due to boundedness, closedness, and convexity of $\Omega$. Hence, due to the Banach-Alaoglu   theorem the set $\mathbb U$ is weakly $^*$ precompact
($L_\infty(0,+\infty) = L_1^*(0,+\infty)$). The weak $^*$ closedness of $\mathbb U$ follows trivially from the convexity and from the compactness of $\Omega$. Let us show that the functional $J$ is lower semi-continuous relative to the weak $^*$
topology.

Consider the operator $K:L_\infty(0,+\infty)\to AC[0,+\infty)$ given by
	
	\begin{equation}
	\label{eq:xt_by_x0_y0_u}
		x(t) = \bigl(Ku\bigr)(t) = x_0 + y_0t + \int_0^t (t-\tau) u(\tau)\,d\tau.
	\end{equation}
	
The operator $K$ takes the control $u(t)$ to the corresponding solution $x(t)$. Hence we have to prove that for any sequence of controls $u_n\xrightarrow{w^*} \widehat u$ 	
one has

	$$
		\liminf\limits_{n\to+\infty}\int_0^{+\infty} \bigl<\bigl(Ku_n\bigr)(t),\bigl(Ku_n\bigr)(t)\bigr>\,dt \ge
		\int_0^{+\infty} \bigl<\bigl(K\widehat u\bigr)(t),\bigl(K\widehat u\bigr)(t)\bigr>\,dt.
	$$

It is possible to rewrite (\ref{eq:xt_by_x0_y0_u}) in the form	 $\bigl(Ku\bigr)(t) = x_0 + y_0t + \int_0^{+\infty} (t-\tau)u(\tau)\theta(t-\tau)\,d\tau$,
	
where $\theta(\cdot)$ is the Heavyside function. Since
$\theta(t-\tau)\in L_1(0,+\infty)$, we have that $(Ku_n\bigr)(t)$ converges point-wise to $(K\widehat u\bigr)(t)$. The desired inequality follows from the Fatou theorem. The uniqueness of the optimal solution follows  immediately from the strict convexity of the functional $J$.
\end{proof}

Let us denote the Bellman function by $\bellman(q_0)$,

\begin{eqnarray*}
	\bellman(q_0) = \bellman(x_0,y_0)  = \inf \Bigl\{&&
	\frac{1}{2}\int_0^{+\infty} \bigl<x(t),x(t)\bigr>\,dt,\\
	&&\mbox{where } \ddot x \in\Omega\mbox{ and } x(0)=x_0, \dot x(0)=y_0 \Bigr\}.\\
\end{eqnarray*}

Obviously $0\le \bellman(q_0)<+\infty$ for any $q_0$.

\begin{lemma} \label{Bellman_convex}
	The Bellman function $\bellman$ is strictly convex.
\end{lemma}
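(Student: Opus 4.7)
The plan is to use the standard argument for strict convexity of value functions in linear-convex problems, exploiting the linearity of the dynamics $\ddot x = u$ and the strict convexity of the running cost $\langle x,x\rangle$.

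Given two distinct initial points $q_0^1=(x_0^1,y_0^1)$ and $q_0^2=(x_0^2,y_0^2)$ and $\lambda\in(0,1)$, I would first exhibit an admissible competitor for the intermediate datum $\lambda q_0^1+(1-\lambda)q_0^2$. Take the optimal controls $\widehat u_1,\widehat u_2$ supplied by the previous lemma and set $u_\lambda=\lambda\widehat u_1+(1-\lambda)\widehat u_2$. Since $\Omega$ is convex, $u_\lambda$ is admissible, and because both the dynamics and the initial condition are affine in $(x,y,u,x_0,y_0)$, the trajectory starting at $\lambda q_0^1+(1-\lambda)q_0^2$ driven by $u_\lambda$ is exactly $x_\lambda=\lambda\widehat x_1+(1-\lambda)\widehat x_2$.

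Next, pointwise strict convexity of $z\mapsto\langle z,z\rangle$ on $U$ gives
\[
\langle x_\lambda(t),x_\lambda(t)\rangle\le \lambda\langle\widehat x_1(t),\widehat x_1(t)\rangle+(1-\lambda)\langle\widehat x_2(t),\widehat x_2(t)\rangle,
\]
with strict inequality whenever $\widehat x_1(t)\ne \widehat x_2(t)$. Integrating over $[0,+\infty)$ and using that the cost of $u_\lambda$ upper-bounds $\bellman(\lambda q_0^1+(1-\lambda)q_0^2)$ yields
\[
\bellman(\lambda q_0^1+(1-\lambda)q_0^2)\le \lambda\bellman(q_0^1)+(1-\lambda)\bellman(q_0^2).
\]

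To upgrade this to a strict inequality, I would argue that $q_0^1\ne q_0^2$ forces $\widehat x_1\not\equiv \widehat x_2$ on a set of positive measure near $t=0$. If $x_0^1\ne x_0^2$, then by continuity of the trajectories $\widehat x_1(t)\ne \widehat x_2(t)$ on a whole interval $[0,\varepsilon)$. If instead $x_0^1=x_0^2$ but $y_0^1\ne y_0^2$, then the Taylor expansion
\[
\widehat x_1(t)-\widehat x_2(t)=(y_0^1-y_0^2)\,t+O(t^2)
\]
(valid because $u$ is bounded, so $\widehat x\in C^{1,\mathrm{Lip}}$) shows $\widehat x_1(t)\ne \widehat x_2(t)$ on some $(0,\varepsilon)$. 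Either way, the pointwise inequality above is strict on a set of positive measure, and the integral inequality is strict.

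This argument is essentially routine; the only obstacle requiring any care is the case $x_0^1=x_0^2$, $y_0^1\ne y_0^2$, where the strict pointwise inequality fails at $t=0$ and one must invoke the Taylor expansion to obtain disagreement on a nontrivial interval.
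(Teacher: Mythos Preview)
Your proof is correct and follows the same core idea as the paper: take the convex combination of the two optimal controls, use convexity of $\Omega$ and linearity of the dynamics to get an admissible trajectory for the intermediate datum, then exploit convexity of the quadratic cost.

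The only difference is in how strictness is extracted. You argue by a case split and a Taylor expansion near $t=0$. The paper instead uses the exact quadratic identity
\[
J(\lambda x^1 + (1-\lambda)x^2) = \lambda J(x^1) + (1-\lambda) J(x^2) - \lambda(1-\lambda) J(x^2-x^1),
\]
so the convexity gap is precisely $\lambda(1-\lambda)J(x^2-x^1)$, and equality forces $x^1\equiv x^2$ (hence $q_0^1=q_0^2$) in one stroke. This is slightly cleaner, and more importantly the paper reuses this identity later (in the proof that the map $E$ is bi-Lipschitz) to get a quantitative lower bound on $\bellman(q_1)-\bellman(q_0)-\bellman'(q_0)\Delta q$. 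Your argument gives the qualitative result just as well but does not yield that remainder estimate for free.
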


\begin{proof}
	Let $x^1(t)$ and $x^2(t)$ be two admissible trajectories. For any $\lambda\in\R$ we have after some calculus
	
	$$
		J\bigl(\lambda x^1 + (1-\lambda)x^2\bigr) =
		\lambda J(x^1) + (1-\lambda) J(x^2) - \lambda(1-\lambda) J(x^2-x^1).
	$$

If $x^1(t)$ and $x^2(t)$ are optimal trajectories with initial conditions $q^1_0=(x^1_0,y^1_0)$ and $q^2_0=(x^2_0,y^2_0)$ respectively, then in view of the nonnegativity of $J$ we obtain for $0<\lambda<1$ that
	
	$$
		\lambda\bellman(q^1_0) + (1-\lambda)\bellman(q^2_0) \ge
		J\bigl(\lambda x^1 + (1-\lambda)x^2\bigr) \ge
		\bellman\bigl(\lambda q^1_0 + (1-\lambda)q^2_0\bigr).
	$$
Equality can be attained only if $J(x^2-x^1)=0$,
i.e., if $x^1\equiv x^2$.
	
\end{proof}

\begin{corollary}
The Bellman function $\bellman$ is continuous.
\end{corollary}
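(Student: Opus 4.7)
The plan is to deduce continuity from the classical fact that a convex function which is finite-valued on an open convex subset of $\R^n$ is automatically continuous on that set. By Lemma~\ref{Bellman_convex} the function $\bellman$ is (strictly) convex on $\R^4$, and the remark preceding the statement records that $0\le \bellman(q_0)<+\infty$ for every $q_0$. So the only point left to check is local boundedness, from which continuity of a convex function follows by standard reasoning.

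First I would fix an arbitrary point $q^*\in\R^4$ and exhibit a neighbourhood on which $\bellman$ is bounded above. To this end, pick an $\varepsilon>0$ and a set of $5$ points $q_1,\dots,q_5\in\R^4$ whose convex hull $\Delta$ is a $4$-simplex containing a ball $B_\varepsilon(q^*)$ around $q^*$ in its interior. Each value $\bellman(q_i)$ is finite, so $K\eqdef\max_i \bellman(q_i)<+\infty$. Any $q\in\Delta$ can be written as a convex combination $q=\sum\alpha_i q_i$, and Jensen's inequality (which is just the definition of convexity iterated) gives $\bellman(q)\le \sum\alpha_i\bellman(q_i)\le K$. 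Hence $\bellman$ is bounded above by $K$ on the neighbourhood $B_\varepsilon(q^*)$ of $q^*$.

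Next I would convert the upper bound into continuity by the usual reflection argument. For any $q\in B_\varepsilon(q^*)$, the point $q'=2q^*-q$ also lies in $B_\varepsilon(q^*)$, and $q^*=\tfrac12 q+\tfrac12 q'$, so convexity yields $\bellman(q)\ge 2\bellman(q^*)-\bellman(q')\ge 2\bellman(q^*)-K$. Thus $\bellman$ is bounded both above and below on $B_\varepsilon(q^*)$. A standard lemma on convex functions then shows that on any smaller concentric ball $B_{\varepsilon/2}(q^*)$ the function $\bellman$ is Lipschitz (one slides along secants between a point in $B_{\varepsilon/2}$ and the boundary of $B_\varepsilon$ and uses the two-sided bound to estimate the slope). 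In particular $\bellman$ is continuous at $q^*$, and since $q^*$ was arbitrary we conclude continuity on all of $\R^4$.

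There is no real obstacle here: the only point demanding attention is to make sure we establish local boundedness from above before invoking the ``convex $\Rightarrow$ continuous'' machinery, and this is done by the simplex construction above.
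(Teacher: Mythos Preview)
Your argument is correct and is exactly what the paper has in mind: the corollary is stated with no proof, relying on the standard fact that a finite-valued convex function on $\R^n$ is automatically continuous, which is precisely what you have spelled out in detail.
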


\begin{corollary}
	The mapping $q_0\mapsto \widehat u(\cdot,q_0)\in L_\infty(0,+\infty)$ taking the point $q_0$ to the optimal control $\widehat u(\cdot,q_0)$ on the trajectory through this point is  continuous in the weak $^*$ topology.
\end{corollary}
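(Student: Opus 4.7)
The plan is a standard subsequence/uniqueness argument that combines the three ingredients already at hand: weak $^*$ precompactness of $\mathbb U$ (Banach--Alaoglu), continuity of the Bellman function (just established), and uniqueness of the optimal control (previous lemma).

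First I would take a sequence $q_0^n \to q_0$ and let $u_n = \widehat u(\cdot,q_0^n)\in\mathbb U$. Since $\mathbb U$ is weak $^*$ compact and bounded subsets of $L_\infty(0,+\infty)=L_1^*(0,+\infty)$ are metrizable in the weak $^*$ topology (because $L_1(0,+\infty)$ is separable), it suffices to show that every weak $^*$ convergent subsequence of $\{u_n\}$ has limit $\widehat u(\cdot,q_0)$; then the whole sequence converges to that limit. So I extract a subsequence, still denoted $u_n$, with $u_n \xrightarrow{w^*}\bar u\in\mathbb U$.

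Next I would show that $\bar u$ is optimal for $q_0$. Let $x^n(t) = (Ku_n)(t)$ with initial data $(x_0^n,y_0^n)$ and $\bar x(t) = (K\bar u)(t)$ with initial data $(x_0,y_0)$; as in the proof of the first lemma, the kernel $(t-\tau)\theta(t-\tau)$ lies in $L_1(0,+\infty)$ for each fixed $t$, and combined with $q_0^n\to q_0$ this gives pointwise convergence $x^n(t)\to \bar x(t)$. Applying Fatou and using continuity of $\bellman$,
\[
J(\bar u,q_0) = \tfrac{1}{2}\int_0^{+\infty}\bigl<\bar x,\bar x\bigr>\,dt
\le \liminf_{n\to\infty}\tfrac{1}{2}\int_0^{+\infty}\bigl<x^n,x^n\bigr>\,dt
= \liminf_{n\to\infty}\bellman(q_0^n) = \bellman(q_0).
\]
Hence $\bar u$ attains the infimum at $q_0$, and uniqueness of the optimal control forces $\bar u = \widehat u(\cdot,q_0)$. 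Since this limit does not depend on the subsequence, the whole sequence $\widehat u(\cdot,q_0^n)$ converges weak $^*$ to $\widehat u(\cdot,q_0)$.

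I do not anticipate a serious obstacle: every step is either already proved in the preceding lemmas/corollaries or is a direct application of Banach--Alaoglu and Fatou. The only mildly delicate point is justifying that the subsequence trick actually yields convergence of the full sequence; this is handled cleanly by the weak $^*$ metrizability of bounded sets in $L_\infty$ arising from the separability of $L_1(0,+\infty)$, and otherwise by the general topological fact that in a compact Hausdorff space a sequence whose cluster set is a single point converges to that point.
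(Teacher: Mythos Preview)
Your proof is correct and follows essentially the same route as the paper: extract a weak $^*$ convergent subsequence by compactness of $\mathbb U$, use Fatou together with continuity of $\bellman$ to show the limit is optimal for $q_0$, invoke uniqueness, and conclude convergence of the full sequence. You are slightly more careful than the paper in justifying the last step via weak $^*$ metrizability of bounded sets (from separability of $L_1$), which the paper leaves implicit.
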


\begin{proof}
Consider a sequence $q_k$ which converges to the point $q_0$. Denote by $u_k(\cdot)=\widehat u(\cdot,q_k)$ the optimal control on the trajectory through the point $q_k$. Let us show that the sequence
$u_k(\cdot)$ converges in the weak $^*$ topology to the optimal control out of the point $q_0$. We may select from the sequence $u_k$ a weakly$^*$ converging subsequence $u_{k_m}$, because  the set $\mathbb U$ is weakly $^*$ compact. We then have $u_{k_m}\xrightarrow{w^*}u_0$. Let us prove that the trajectory
 $x_0(t) = {\cal K}(u_0(\cdot),q_0)(t)$ with control $u_0(\cdot)$ is optimal for the initial point $q_0$. Indeed, denote by $x_k(t)={\cal K}(u_k(\cdot),q_k)(t)$
the optimal trajectory for the point $q_k$. Then from the continuity of the function $\bellman$ and from the Fatou theorem we obtain

	\[
		\bellman(q_0) = \lim_{m\to\infty} \bellman(q_{k_m})
		=\lim_{m\to\infty} J(x_{k_m}) \ge J(x_0).
	\]

	\noindent Hence the trajectory $x_0(\cdot)$ is optimal. Therefore there exists a unique limit of the sequence $u_k$, namely the optimal control $u_0$ for the initial point $q_0$, i.e.,
the sequence $u_k$ converges in the weak $^*$ topology to $u_0$. 	
	
\end{proof}

\begin{corollary}
\label{corollary:map_q0_to_optimal_q_is_continuous}
	The mapping $q_0\mapsto \widehat q(\cdot,q_0)\in C[0,\tau]$ is continuous for any $\tau\ge 0$.
\end{corollary}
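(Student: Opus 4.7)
The plan is to combine the weak$^*$ continuity of the optimal control map established in the previous corollary with the smoothing action of the integral operator $K$ from (\ref{eq:xt_by_x0_y0_u}). First I would take a convergent sequence $q_k\to q_0$ in the phase space $M$ and set $u_k:=\widehat u(\cdot,q_k)$; by the previous corollary $u_k\xrightarrow{w^*} u_0:=\widehat u(\cdot,q_0)$ in $L_\infty(0,+\infty)$.

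Second, I would exploit the explicit representations of the two components of $\widehat q(t,q_k)=(\widehat x(t,q_k),\widehat y(t,q_k))$. Writing $q_k=(x_k,y_k)$, the trajectory satisfies
\[
\widehat x(t,q_k)=x_k+y_k t+\int_0^t (t-s)\,u_k(s)\,ds,\qquad
\widehat y(t,q_k)=y_k+\int_0^t u_k(s)\,ds.
\]
For each fixed $t\in[0,\tau]$, the kernels $s\mapsto (t-s)\chi_{[0,t]}(s)$ and $s\mapsto\chi_{[0,t]}(s)$ belong to $L_1(0,+\infty)$; pairing the weak$^*$ convergence $u_k\xrightarrow{w^*} u_0$ against them, and using the convergence $q_k\to q_0$ of the initial data, yields pointwise convergence $\widehat q(t,q_k)\to\widehat q(t,q_0)$ for every $t\in[0,\tau]$.

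Third, I would upgrade pointwise to uniform convergence on $[0,\tau]$. Because $\|u_k\|_\infty\le \max_{v\in\Omega}|v|$ is uniformly bounded and the initial data $q_k$ stay bounded, the functions $\widehat q(\cdot,q_k)$ are uniformly Lipschitz on $[0,\tau]$ with a common constant depending only on $\tau$ and $\Omega$, so the family is equicontinuous. Pointwise convergence of an equicontinuous family on the compact interval $[0,\tau]$ is automatically uniform (via Arzel\`a--Ascoli), which is exactly the claimed continuity of $q_0\mapsto\widehat q(\cdot,q_0)$ into $C[0,\tau]$.

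I do not anticipate a substantive obstacle here. The hard analytic content lives in the preceding corollary; what remains is the standard observation that the kernel $(t-s)$ turns weak$^*$ convergence of controls into pointwise convergence of trajectories, and that uniform Lipschitz bounds promote this to uniform convergence on any finite horizon.
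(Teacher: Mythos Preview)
Your proposal is correct and follows essentially the same route as the paper: take a sequence $q_k\to q_0$, invoke the weak$^*$ continuity of the optimal-control map, and read off convergence of $\widehat x$ and $\widehat y$ from the explicit integral formula~(\ref{eq:xt_by_x0_y0_u}). The paper's proof is terser and simply asserts that the convergence is uniform on $[0,\tau]$, whereas you supply the missing justification via the uniform Lipschitz bound and equicontinuity, which is a welcome clarification.
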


\begin{proof}

Let us show that the mapping  $q_0\mapsto \widehat x(\cdot,q_0)\in C[0,\tau]$ is continuous. Consider a converging sequence $q_k\to q_0$. Then we have $\widehat u(\cdot,q_k)\xrightarrow{w^*} \widehat u(\cdot,q_0)$. From formula (\ref{eq:xt_by_x0_y0_u}) it  follows immediately that
$\widehat x(t,q_k)\to \widehat x(t,q_0)$ for all $t$ and the convergence is uniform on $t\in[0,\tau]$. Similarly one obtains the proof of the convergence $\widehat y(\cdot,q_k)\to \widehat y(\cdot,q_0)$.
\end{proof}

\begin{lemma}
For any initial data $q_0$ there exists a time $T(q_0)$ such that
	 $\widehat q(t,q_0) = 0$ for all $t\ge T(q_0)$ and $T(q_0)\le C_2\max\{\sqrt{|x_0|},|y_0|\}$, where $C_2>0$ is some constant.
\end{lemma}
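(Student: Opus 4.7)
The plan is to first produce, for every initial point $q_0=(x_0,y_0)$, an \emph{explicit} admissible trajectory that reaches the origin in time at most $C_2 R$, where $R:=\max\{\sqrt{|x_0|},|y_0|\}$, and then to use the Bellman principle together with the scaling group $g(\lambda)$ of Section~\ref{sec:symmetries} to propagate this reach-time bound to the unique optimal trajectory $\widehat q(\cdot,q_0)$.

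\textbf{Step 1: an explicit two-phase steering control.} Choose $\varepsilon>0$ small enough that the closed disk $\{u\in U:|u|\le\varepsilon\}$ is contained in $\Omega$; this is possible because $0\in\Int\Omega$. On the interval $[0,\tau_1]$ with $\tau_1=|y_0|/\varepsilon$ apply the constant control $u\equiv -\varepsilon\, y_0/|y_0|$ (set $u\equiv 0$ if $y_0=0$); then $y(\tau_1)=0$ and the displacement of $x$ is at most $|y_0|\tau_1=|y_0|^2/\varepsilon$, so the state becomes $(x_1,0)$ with $|x_1|\le |x_0|+|y_0|^2/\varepsilon\le (1+1/\varepsilon)R^2$. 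On the next interval steer $(x_1,0)$ to $(0,0)$ by the classical scalar three-piece bang-bang control in the direction $-x_1/|x_1|$ with magnitude $\varepsilon$; this takes time bounded by $C\sqrt{|x_1|/\varepsilon}\le C' R$. Extending by $u\equiv 0$ beyond the total steering time $T_0\le C_2 R$ gives an admissible $\widetilde u$ whose trajectory $\widetilde q$ satisfies $\widetilde q(t)\equiv 0$ for $t\ge T_0$ and $|\widetilde x(t)|\le C R^2$ throughout, so
\[
J(\widetilde q)=\tfrac12\int_0^{T_0}|\widetilde x|^2\,dt \le \tfrac12 (CR^2)^2 T_0 \le C''R^5.
\]

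\textbf{Step 2: propagation to the optimal trajectory.} Step~1 already gives $\bellman(q_0)\le C''R^5<\infty$, so the unique optimal $\widehat q$ has finite cost. To conclude that $\widehat q$ reaches $0$ within time $C_2 R$ I plan to combine two ingredients. First, because $g(\lambda)$ sends optimal trajectories to optimal ones and multiplies $\bellman$ by $\lambda^5$, continuity and strict positivity of $\bellman$ on the compact slice $\{q:R(q)=1\}$ yield two-sided bounds $c_* R(q)^5\le \bellman(q)\le C_* R(q)^5$. Second, the Bellman principle in differential form
\[
\frac{d}{dt}\bellman(\widehat q(t))=-\tfrac12|\widehat x(t)|^2,
\]
combined with a dynamical estimate asserting that on any time-window $[t,t+c_0 R(\widehat q(t))]$ one has $\int|\widehat x|^2\,ds\ge c\,R(\widehat q(t))^5$ (unless $\widehat q$ has already reached the origin on that window), yields a contraction $R(\widehat q(t+c_0 R(\widehat q(t))))\le \kappa\, R(\widehat q(t))$ with $\kappa<1$. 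A geometric series then shows that $\widehat q$ reaches $0$ in total time at most $C_2 R$.

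\textbf{Main obstacle.} The explicit construction in Step~1 and the resulting upper bound on $\bellman$ are routine. The difficulty is the dynamical lower bound $\int_t^{t+c_0 R}|\widehat x|^2\,ds\ge c\,R^5$: pointwise $|\widehat x|$ can vanish, and one must exploit that $\ddot{\widehat x}=\widehat u$ is bounded while $(\widehat x,\widehat y)$ has size of order $R^2$ in the scale-weighted sense, so $\widehat x$ cannot stay too close to zero throughout a window of length comparable to $R$. Without such a bound one can only conclude $\bellman(\widehat q(t))\to 0$ as $t\to\infty$, i.e., asymptotic — not finite-time — arrival at the origin.
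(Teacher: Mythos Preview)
Your outline is on the right track and matches the paper's architecture, but it leaves exactly the hard part undone. Two substantive points.

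\medskip
\textbf{1. The dynamical lower bound is the whole lemma.} You correctly flag $\int_t^{t+c_0R}|\widehat x|^2\,ds\ge cR^5$ as the obstacle, but give no argument. The paper proves it by brute force. Set $c=\tfrac12\max_{u\in\Omega}|u|$, $a=5c$, $b=12c$, $\rho(x,y)=\max\{\sqrt{|x|/a},\,|y|/b\}$, and suppose $\rho(x_0,y_0)=\delta$. From $\widehat x(t)=x_0+y_0t+\int_0^t(t-\tau)\widehat u(\tau)\,d\tau$ one has, for $t=\tau\delta\in[0,\delta]$,
\[
|\widehat x(t)|\ \ge\ \begin{cases} c\delta^2\max\{0,\,4-12\tau\} & \text{if }|x_0|=a\delta^2,\\[1mm] c\delta^2\max\{0,\,12\tau-6\} & \text{if }|y_0|=b\delta.\end{cases}
\]
Either way there is a sub-interval of $[0,\delta]$ on which $|\widehat x|\gtrsim c\delta^2$, so $\int_0^\delta|\widehat x|^2\,dt\ge c_1\delta^5$. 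The specific constants $a,b$ are chosen to make the case split work against the worst admissible control.

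\medskip
\textbf{2. Your contraction step has a gap.} One application of the dynamical bound gives only that $\bellman$ drops by $cR^5$ over a window of length $c_0R$; combined with $c_*R^5\le\bellman\le C_*R^5$ this does \emph{not} yield $R\mapsto\kappa R$ unless $c>C_*-c_*$, which you have no control over. The paper avoids this by bounding the halving time directly: while $\rho\ge\delta:=\rho(q_0)/2$, tile the time axis by consecutive windows of length $\tau_n=\rho(t_{n-1})\ge\delta$; each contributes $\ge c_1\tau_n^5\ge c_1\delta^4\tau_n$ to the cost, so $\bellman(q_0)\ge c_1\delta^4\,t_{1/2}(q_0)$. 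Now the scaling $\bellman(g(\lambda)q)=\lambda^5\bellman(q)$ gives $t_{1/2}(\Delta)\le c_2\Delta$, and $T(q_0)\le\sum_{k\ge0}t_{1/2}(2^{-k}\rho(q_0))\le c_3\rho(q_0)$.

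\medskip
\textbf{Minor remarks.} Your Step~1 (explicit steering to the origin) is unnecessary: the paper uses only that $\bellman<\infty$ and its scaling, both already established. Also, the differential identity $\frac{d}{dt}\bellman(\widehat q(t))=-\tfrac12|\widehat x|^2$ presumes $\bellman\in C^1$, which is only proved later; use the integral form instead.
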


\begin{proof}

At first we shall prove that the optimal solutions
$\widehat q(t,q_0)$ tend to $0$ as $t\to+\infty$ due to the decrease of the function $\bellman$. Indeed, suppose the opposite: let $|\widehat q(t_k,q_0)|>\varepsilon$ for some
$\varepsilon>0$ and $t_k\to\infty$ as $k\to\infty$. Since the function $\bellman$ is continuous, we have $M=\min_{|q|=\varepsilon}\bellman(q)>0$. Therefore $\bellman(q)\ge M$ for all $|q|\ge\varepsilon$ due to convexity of the function
$\bellman$ (since $0$ is its absolute minimum). So,
$\bellman(\widehat q(t_k,q_0))\ge M>0$ for all $k$. This yields the claimed contradiction.

Now let us prove the assertion of the lemma itself. Denote

$$\rho(x,y) = \max\{\sqrt{|x|/a},|y|/b\},$$ 

\noindent where the constants
$a>0$ and $b>0$ will be chosen later. Let $\rho(x_0,y_0)=\delta$. Since the optimal trajectory tends to the origin, it
intersects the boundary of the ball $\rho(x,y)=\frac{1}{2}\delta$ in finite time. The remaining part of the proof will be devoted to the estimation of this time. Namely, our aim is to correlate the time of halving
$\rho(x,y)$ on the optimal trajectory and the value of the Bellman function.

We begin with the estimation from below of the integral of	
$\bigl<\widehat x(t),\widehat x(t)\bigr>$ along an optimal trajectory $\widehat x(t)$ on an interval $[0,\delta]$. We use the formula (\ref{eq:xt_by_x0_y0_u}): if $|x_0| =
a\delta^2$, then we have as $t\in[0,\delta]$	
	
	\[
		|\widehat x(t)| \ge \max\{0,a\delta^2 - b\delta t - \frac{1}{2} \delta^2\max\limits_{u\in\Omega}|u|\},
	\]

	\noindent and if $|y_0|=b\delta$, then
	
	\[
		|\widehat x(t)| \ge \max\{0,b\delta t - a\delta^2 - \frac{1}{2} \delta^2\max\limits_{u\in\Omega}|u|\}.
	\]

	\noindent Let us denote $c=\frac{1}{2}\max\limits_{u\in\Omega}|u|$, and put $a=5c$, $b=12c$, and $\tau=t/\delta\in[0,1]$. Then one of the two inequalities
	
	\[
		|\widehat x(t)| = |\widehat x(\tau\delta)| \ge\left[\begin{array}{ll}
			c\delta^2\max\{0,4 - 12\tau\},&\mbox{if}\ |x_0| = a\delta^2,\\
			c\delta^2\max\{0,12\tau - 6\},&\mbox{if}\ |y_0|=b\delta\\
		\end{array}\right.
	\]

\noindent is fulfilled. In any case either for $\tau\in[0,\frac{1}{3}]$, or for $\tau\in[\frac{1}{2},1]$ we obtain a non-degenerated lower bound on $|\widehat x(t)|$. Therefore for some constant $c_1>0$ we have

	\begin{equation}
	\label{eq:int_0_delta_ge_delta_5}
		\int_0^\delta \bigl<\widehat x(t),\widehat x(t)\bigr>dt =
		\delta\int_0^1 \bigl<\widehat x(\tau\delta),\widehat x(\tau\delta)\bigr>d\tau \ge
		c_1\delta^5.
	\end{equation}

Now let $\rho(q_0)=\rho(x_0,y_0)=\Delta$. Denote for brevity
$\rho(t) = \rho(\widehat x(t),\widehat y(t))$. Let
$t_{\frac{1}{2}}(q_0)>0$ be the minimal time instant such that
$\rho(t_{\frac{1}{2}}(q_0)) = \frac{1}{2}\Delta=\delta$. Calculate successively

	$$
		\begin{array}{llllll}
			t_0=0;             &t_1 = t_0+\tau_1;    & t_2 = t_1+\tau_2; & \ldots; & t_{n+1} = t_n+\tau_{n+1};& \ldots \\
			\tau_1 = \rho(t_0);& \tau_2 = \rho(t_1);& \tau_3 = \rho(t_2);& \ldots; & \tau_{n+1} = \rho(t_n);  & \ldots \\
		\end{array}
	$$
	
\noindent Let $N$ be a number such that $t_{N-1}\le t_\frac{1}{2}(q_0)$ and $t_N>t_\frac{1}{2}(q_0)$. The number $N$ is finite because $t_{n+1}-t_n\ge\delta$ for $t_n\le t_\frac{1}{2}(q_0)$. Since $\tau_n\ge\delta$ for $n\le N$, we obtain by virtue of
 inequality (\ref{eq:int_0_delta_ge_delta_5})
	
	$$
		\bellman(q_0) \ge \int_0^{t_N} \bigl<\widehat x(t,q_0), \widehat x(t,q_0)\bigr>\,dt \ge
		c_1\sum_{k=1}^N\tau_k^5 =
	$$
	$$
		=c_1\delta^5 \sum_{k=1}^N\left(\frac{\tau_k}{\delta}\right)^5\ge
		c_1\delta^5 \sum_{k=1}^N\frac{\tau_k}{\delta}\ge
		c_1\delta^4t_\frac{1}{2}(q_0).
	$$
	
	\noindent Therefore
	
	\[
		t_{\frac{1}{2}}(\Delta) \le c_2\, \frac{\beta(\Delta)}{\Delta^4} ,
	\]

	\noindent where $t_{\frac{1}{2}}(\Delta) = \sup_{\rho(q_0)=\Delta}t_{\frac{1}{2}}(q_0)$ and $\beta(\Delta)=\max_{\rho(q_0)=\Delta}\bellman(q_0)$.

	Now note that $t_{\frac{1}{2}}(\lambda\Delta) = \lambda t_{\frac{1}{2}}(\Delta)$ and $\beta(\lambda\Delta)=\lambda^5\beta(\Delta)$ for all $\lambda>0$ by virtue of the
symmetry subgroup $g$ of the group $\R_+$. Hence	
		
	\[
		T(q_0)\le (1+\frac{1}{2}+\frac{1}{4}+\ldots)t_{\frac{1}{2}}(\rho(q_0)) = c_3\rho(q_0),
	\]

	\noindent which completes the proof.
	
\end{proof}

In what follows let us denote by $T(q_0)$ the minimal time instant such that $\widehat q(t,q_0)=0$ for $t\ge T(q_0)$.

\begin{remark}
A lower bound on the time of hitting the origin of the form $T(q_0)\ge C_1\max\{\sqrt{|x_0|},|y_0|\}$ follows immediately from the consideration of the time optimal problem

	\[
		T\to\inf\ \ \mbox{as}\ \ \dot x=y,\ \dot y=u,\ u\in\widetilde\Omega,\quad x(0)=x_0,\,y(0)=y_0,\quad x(T)=y(T)=0,
	\]

	\noindent where $\Omega\subset\widetilde\Omega$, and
	$\widetilde\Omega$ is a square centered at the origin and with side length $2\diam\Omega$.
\end{remark}

\begin{corollary}
	The function $T(q_0)$ is continuous.
\end{corollary}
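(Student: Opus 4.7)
The strategy is to prove upper and lower semicontinuity of $T$ at each $q_0\in M$ separately, leveraging the locally Lipschitz bound $T(q_0)\le C_2\max\{\sqrt{|x_0|},|y_0|\}$ just established and the continuity of $q_0\mapsto \widehat q(\cdot,q_0)$ in $C[0,\tau]$ from Corollary~\ref{corollary:map_q0_to_optimal_q_is_continuous}. The upper bound immediately yields continuity at $q_0=0$, so one may assume $q_0\ne 0$, fix a sequence $q_k\to q_0$, and work with limit points of $\{T(q_k)\}$, which form a bounded set by the same estimate.

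For upper semicontinuity I would invoke the principle of optimality: the tail of an optimal trajectory, restarted at its value at any intermediate time, is itself optimal, by uniqueness of the optimal solution proved in the first lemma together with time-invariance of the dynamics. Denoting $q_k^{\star}=\widehat q(T(q_0),q_k)$, this gives
\[
T(q_k)\le T(q_0)+T(q_k^{\star}),
\]
where the right-hand side reduces to $T(q_0)$ in the case $T(q_k)\le T(q_0)$ (since then $q_k^{\star}=0$) and is an equality otherwise. Corollary~\ref{corollary:map_q0_to_optimal_q_is_continuous} applied on $[0,T(q_0)]$ yields $q_k^{\star}\to \widehat q(T(q_0),q_0)=0$, and applying the upper bound once more to $q_k^{\star}$ forces $T(q_k^{\star})\to 0$, so $\limsup_{q_k\to q_0}T(q_k)\le T(q_0)$.

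For lower semicontinuity, suppose that along some subsequence $T(q_{k_j})\to T_{\star}<T(q_0)$, and pick $t\in(T_{\star},T(q_0))$. For $j$ large, $t>T(q_{k_j})$, so $\widehat q(t,q_{k_j})=0$; passing to the limit via Corollary~\ref{corollary:map_q0_to_optimal_q_is_continuous} gives $\widehat q(t,q_0)=0$. But the trivial trajectory is the unique optimal solution starting at the origin (since $\bellman(0)=0$ and $0\in\Int\Omega$ makes $u\equiv 0$ admissible), so once $\widehat q(\cdot,q_0)$ reaches $0$ it must remain there by uniqueness; this contradicts the minimality of $T(q_0)$ as the smallest time after which $\widehat q(\cdot,q_0)$ vanishes identically.

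The only delicate step is the systematic use of the principle of optimality, namely that the restriction of $\widehat q(\cdot,q_0)$ to $[s,\infty)$ coincides with the optimal trajectory issued from $\widehat q(s,q_0)$. This is not a separate axiom but a direct consequence of the already proved uniqueness of optimal trajectories combined with the autonomy of the dynamical system; I would state it explicitly once and then use it freely in both directions of the semicontinuity argument.
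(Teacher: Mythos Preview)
Your proposal is correct and follows essentially the same approach as the paper: continuity at the origin from the bound $T(q_0)\le C_2\max\{\sqrt{|x_0|},|y_0|\}$, lower semicontinuity by passing to the limit in $\widehat q(t,q_k)=0$ via Corollary~\ref{corollary:map_q0_to_optimal_q_is_continuous}, and upper semicontinuity via the dichotomy $T(q_k)\le T(q_0)$ or $T(q_k)=T(q_0)+T(\widehat q(T(q_0),q_k))$ combined with continuity of $T$ at the origin. Your write-up is in fact slightly more careful than the paper's in making explicit why the trajectory, once at $0$, must remain there (the paper leaves this implicit).
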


\begin{proof}
		The continuity of the function $T(q_0)$ at $q_0=0$ follows from the Squeeze theorem. Now consider the case $q_0\ne 0$. Consider a converging sequence $q_k\to q_0$. Denote
	
	\[
		T^- = \liminf_{k\to\infty} T(q_k),
		\qquad
		T^+ = \limsup_{k\to\infty} T(q_k).
	\]

\noindent Note that $T(q_0)\le T^-$. Indeed, if $t>T^-$, then due to Corollary \ref{corollary:map_q0_to_optimal_q_is_continuous} we have         $\widehat q(t,q_0)=0$.

Let us show that $T(q_0)\ge T^+$. According to Corollary
\ref{corollary:map_q0_to_optimal_q_is_continuous} we have

$$\widehat q(T(q_0),q_k) \to \widehat q(T(q_0),q_0)=0.$$ 

\noindent This implies that the
optimal trajectories with initial points $q_k$ appear at an arbitrarily small neighbourhood of the origin after the time $T(q_0)$. But at the origin the function $T(\cdot)$ is continuous. Hence $T\bigl(\widehat q(T(q_0),q_k)\bigr)\to 0$.
It remains to note that for any $k$ either $T(q_k)\le T(q_0)$, or $T(q_k) = T(q_0) + T\bigl(\widehat q(T(q_0),q_k)\bigr)$,
which completes the proof.

\end{proof}

\begin{corollary}
\label{conj:p_uniqueness}
For every $q_0$ the adjoint function $\widehat p(t,q_0)$ is unique.
\end{corollary}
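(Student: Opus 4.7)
The plan is to show that the adjoint necessarily vanishes on the interval $[T(q_0),+\infty)$ where the optimal trajectory rests at the origin, and then to reconstruct $\widehat p$ uniquely on $[0,T(q_0)]$ by backward integration of the linear adjoint ODE.

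The central observation is the following. By the preceding lemma, on $[T(q_0),+\infty)$ one has $\widehat q\equiv 0$, and hence $\widehat u=0$ almost everywhere. The PMP maximum condition then demands that $u=0$ realize $\max_{u\in\Omega}\bigl<\widehat\psi(t),u\bigr>$ for a.e.\ such $t$. Since $0\in\Int\Omega$ by assumption, a linear functional on $U$ attains its maximum on $\Omega$ at the interior point $0$ only when it is itself the zero functional, so $\widehat\psi(t)=0$ a.e.\ on $[T(q_0),+\infty)$, and thus identically there by the $C^1$-regularity of $\widehat\psi$ (its derivative $-\widehat\phi$ is continuous). Differentiating then gives $\widehat\phi\equiv 0$ on the same interval. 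Along the way I would also discard the abnormal case $\lambda_0=0$ by the same argument: in that case $\widehat\phi$ would be constant and $\widehat\psi$ affine, and their vanishing on $[T(q_0),+\infty)$ would force $\widehat p\equiv 0$ globally, contradicting the nontriviality clause of PMP; so the normalization $\lambda_0=1$ used in (\ref{eq:model_pmp_system}) is legitimate.

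It then remains to note that on $[0,T(q_0)]$ the adjoint equations
\[
\dot{\widehat\phi}=\widehat x(t),\qquad \dot{\widehat\psi}=-\widehat\phi
\]
form a linear inhomogeneous system whose forcing $\widehat x(t)$ is continuous (by Corollary \ref{corollary:map_q0_to_optimal_q_is_continuous}). Combined with the terminal data $\widehat p(T(q_0),q_0)=0$ obtained above, this Cauchy problem is uniquely solvable on $[0,T(q_0)]$, yielding a unique $\widehat p(\cdot,q_0)$ on the whole half-line. The only conceptually nontrivial point is the interior-position argument that forces $\widehat\psi$ to vanish on the rest interval; no infinite-horizon transversality condition is needed, precisely because the optimal trajectory actually reaches the origin in finite time.
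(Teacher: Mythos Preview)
Your proof is correct and follows essentially the same route as the paper: use $0\in\Int\Omega$ together with the maximum condition to force $\widehat\psi\equiv 0$ (hence $\widehat\phi\equiv 0$) on $[T(q_0),+\infty)$, then recover $\widehat p$ uniquely on $[0,T(q_0)]$ by backward integration of the linear adjoint equations. The paper simply writes out the resulting integral formulas explicitly, and your aside on the abnormal case $\lambda_0=0$ is redundant here since that was already excluded prior to setting up system~(\ref{eq:model_pmp_system}).
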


\begin{proof}
Since $\widehat u(t,q_0) = 0$ for $t\ge T(q_0)$, we have that
$\bigl<\psi,u\bigr>$ attains its maximum at an interior point of
$\Omega$ (e.g., $0\in\Int\Omega$). Hence $\widehat \psi(t,q_0) = 0$ for $t\ge T(q_0)$, and consequently, $\widehat\phi(t,q_0)=0$ for $t\ge T(q_0)$. For $t<T(q_0)$ the function $\widehat p(t,q_0)$ is uniquely determined by

	\begin{equation}
	\label{eq:phi_psi_int_formula}
		\begin{array}{l}
			\widehat \phi(t,q_0) = - \int_t^{T(q_0)} \widehat x(\tau,q_0)\,d\tau,\\
			\widehat \psi(t,q_0) = \int_t^{T(q_0)} \widehat \phi(\tau,q_0)\,d\tau.\\
		\end{array}
	\end{equation}
\end{proof}

\begin{lemma}
The Bellman function is of class $C^1$ and
$\bellman'(q_0) = -\widehat p(0,q_0)$.
\end{lemma}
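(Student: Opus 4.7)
The plan is to establish the one-sided expansion
\[
\bellman(q_0+\delta q) \le \bellman(q_0) - \bigl<\widehat p(0,q_0),\delta q\bigr> + o(|\delta q|) \qquad \text{as } \delta q \to 0,
\]
then combine it with convexity of $\bellman$ (Lemma \ref{Bellman_convex}) to force $-\widehat p(0,q_0)$ to be the unique subgradient at $q_0$, and finally upgrade pointwise differentiability to $C^1$ smoothness using continuity of $q_0 \mapsto \widehat p(0,q_0)$.

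To obtain the upper bound I would construct an admissible comparison trajectory starting from $q_0+\delta q$ as follows. On $[0,T(q_0)]$ apply the optimal control $\widehat u(\cdot,q_0)$; by (\ref{eq:xt_by_x0_y0_u}) the state is then $\widehat x(t,q_0) + \delta x_0 + t\,\delta y_0$, so the residual state at time $T(q_0)$ has size $O(|\delta q|)$. On a short interval $[T(q_0),T(q_0)+\tau]$ with $\tau = O(\sqrt{|\delta q|})$ apply a time-optimal control from the remark following the time-estimate lemma (with $\widetilde\Omega$ in place of $\Omega$) to steer this residual to the origin, then set $u\equiv 0$. Expanding $\tfrac12|\widehat x + \delta x_0 + t\,\delta y_0|^2$, the quadratic term in $\widehat x$ integrates to $\bellman(q_0)$; the cross term, after integration by parts using $\dot{\widehat\phi}=\widehat x$, $\dot{\widehat\psi}=-\widehat\phi$ and $\widehat\phi(T(q_0))=\widehat\psi(T(q_0))=0$ (Corollary \ref{conj:p_uniqueness}), equals precisely $-\bigl<\widehat\phi(0,q_0),\delta x_0\bigr>-\bigl<\widehat\psi(0,q_0),\delta y_0\bigr> = -\bigl<\widehat p(0,q_0),\delta q\bigr>$; and the purely perturbative term contributes $O(|\delta q|^2)$. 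On the corrective segment the state stays of size $O(|\delta q|)$, yielding an extra $O(\tau|\delta q|^2) = O(|\delta q|^{5/2})$ in the cost. The degenerate case $q_0=0$ (where $T(q_0)=0$ and the construction collapses to the second segment alone) is handled directly by the scaling symmetry $g(\lambda)$ of Section \ref{sec:symmetries}, which gives $\bellman(\delta q)=O(|\delta q|^{5/2}) = o(|\delta q|)$, matching $\widehat p(0,0)=0$.

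Since $\bellman$ is convex, $\partial\bellman(q_0)$ is nonempty; any $s \in \partial\bellman(q_0)$ satisfies $\bellman(q_0+\delta q) \ge \bellman(q_0) + \bigl<s,\delta q\bigr>$, and combining this with the upper bound along $\delta q = \pm\varepsilon v$ as $\varepsilon \to 0^+$ forces $s = -\widehat p(0,q_0)$. The subdifferential is therefore a singleton, so $\bellman$ is Fr\'echet differentiable at $q_0$ with $\bellman'(q_0) = -\widehat p(0,q_0)$. Continuity of this gradient is immediate from (\ref{eq:phi_psi_int_formula}) combined with continuity of $T(\cdot)$ proved above and continuity of $q_0\mapsto \widehat x(\cdot,q_0)$ in $C[0,\tau]$ (Corollary \ref{corollary:map_q0_to_optimal_q_is_continuous}), which delivers $\bellman \in C^1$. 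The one nontrivial step is the bookkeeping on the corrective segment: one has to pair its duration $\tau\sim\sqrt{|\delta q|}$ with the uniform $O(|\delta q|)$ control on the state there in order to genuinely push the remainder into $o(|\delta q|)$; everything afterwards is mechanical.
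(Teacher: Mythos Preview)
Your argument is correct and complete in its essentials, but it follows a genuinely different route from the paper's.

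The paper does not construct a comparison trajectory. Instead, for $q_0\ne0$ it replaces the initial condition $q(0)=q_0$ by $q(0)\in l$, where $l$ is an arbitrary supporting hyperplane to $\{\bellman\le\bellman(q_0)\}$ at $q_0$; strict convexity of $\bellman$ makes $\widehat q(\cdot,q_0)$ optimal for this relaxed problem too, so PMP gives an adjoint with transversality $p^l(0)\perp l$, and uniqueness of the adjoint (Corollary~\ref{conj:p_uniqueness}) forces $p^l=\widehat p$. Thus $\widehat p(0,q_0)$ is orthogonal to every supporting hyperplane, which pins down the direction of the subdifferential. Differentiability then comes from homogeneity once one checks $\widehat p(0,q_0)\ne0$, and the proportionality constant $-1$ is read off by comparing the Hamilton--Jacobi--Bellman equation with the identity ${\cal H}=0$ along the optimal lift. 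The point $q_0=0$ is handled by homogeneity exactly as you do.

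Your approach is more hands-on: a direct perturbation of the cost, integration by parts against the adjoint, and convexity to close the sandwich. It has the advantage of giving the constant $-1$ for free (no appeal to HJB) and of producing an explicit upper remainder $O(|\delta q|^{5/2})$; in fact your construction is precisely what the paper uses in the \emph{next} lemma to obtain the quadratic upper bound needed for the Lipschitz property of $E$. The paper's route, by contrast, leans on PMP with transversality and is more conceptual but less quantitative at this stage.

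One small slip: on the corrective segment you should not invoke ``time-optimal control with $\widetilde\Omega$ in place of $\Omega$'' --- that remark gives a \emph{lower} bound on hitting time via an enlarged control set, and $\widetilde\Omega$-controls are not admissible. Simply continue with the $\Omega$-optimal control from the residual state $(\delta x_0+T(q_0)\delta y_0,\delta y_0)$ and use the already-proven upper bound $T(q)\le C_2\max\{\sqrt{|x|},|y|\}$; this gives $\tau=O(\sqrt{|\delta q|})$ and $\int_{T(q_0)}^{T(q_0)+\tau}|x|^2\,dt=O(|\delta q|^{5/2})$ exactly as you need. With that correction your proof stands.
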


Here and in the sequel by $\bellman'(q_0)\in U^*$ we denote the differential of the Bellman function at the point $q_0$.

\begin{proof}

Let us start with the case $q_0\ne 0$. We shall prove that the differential of the Bellman function consists of a unique point. This will immediately yield $\bellman\in C^1$ outside of the origin. Consider problem (\ref{problem:model}) with the condition $q(0)=q_0$ replaced by the condition $q(0)\in l$, where $l$ is any supporting hyperplane to the set $\{q:\bellman(q)\leq\bellman(q_0)\}$ at the point $q_0$. By virtue of the strict convexity of the function $\bellman(q)$ the trajectory $\widehat q(t,q_0)$ is also the solution of the new problem for any choice of $l$. In accordance with the Pontryagin Maximum Principle there exists an adjoint function $p^l(t)$ satisfying equations
(\ref{eq:model_pmp_system}) and the transversality conditions $p^l(0)\perp l$. By a reasoning similar to the proof of Corollary \ref{conj:p_uniqueness} we obtain that $p^l(t) = \widehat p(t,q_0)$. Hence the vector
 $\widehat p(0,q_0)$ is orthogonal  to any hyperplane $l$. If
$\widehat p(0,q_0)\ne 0$, then the surface level of the function $\bellman$ is smooth and the differentiability of the Bellman function $\bellman$ follows from its homogeneity, $\bellman(g(\lambda) q) = \lambda^5\bellman(q)$ for all $\lambda>0$. In this case $\widehat p(0,q_0) = \lambda(q_0)\bellman'(q_0)$, where $\lambda(q_0)$ is the coefficient of proportionality. Let us show that $\widehat p(0,q_0)\ne 0$. Assume the contrary. Then $0\in\partial\bellman (q_0)$, since the sub-differential of a convex function is upper semi-continuous. But the Bellman function is strictly convex and has its minimum at $0$. This leads to a contradiction.

Now let us show that $\lambda(q_0) = -1$. In accordance with the Bellman equation	
	
	 	$$
		\frac{1}{2}\bigl<x_0,x_0\bigr> + \bigl<\bellman'_x(q_0),\widehat y(0,q_0)\bigr> +
		\bigl<\bellman'_y(q_0),\widehat u(0,q_0)\bigr> = 0.
	$$
	On the other hand, since
 $\dot{\mathcal H} = 0$, and $\widehat q(t,q_0)$ and
 $\widehat p(t,q_0)$ vanish for $t>T(q_0)$, we must have
 ${\mathcal H} = 0$. It follows that
	
	$$
		-\frac{1}{2}\bigl<x_0,x_0\bigr> + \bigl<\widehat \phi(0,q_0),\widehat y(0,q_0)\bigr>
		+ \bigl<\widehat \psi(0,q_0),\widehat u(0,q_0)\bigr> = 0.
	$$
	
	\noindent Hence $\lambda(q_0)=-1$ and $\bellman'(q_0) = -\widehat p(0,q_0)$ for $q_0\ne 0$.
	
Let us show the differentiability at $0$. Let $C=\max\{\bellman(x,y)| |x|^2+|y|^4=1\}$. Then from	
$\bellman(g(\lambda) q) = \lambda^5\bellman(q)$ for all
 $\lambda>0$ we obtain that $0\le\bellman(q) \le C (|x|^2+|y|^4)^\frac{5}{4}$. This completes the proof.
\end{proof}

\begin{lemma}
\label{lm:map_E_is_homeo}	
	The mapping $E:q_0\mapsto \widehat p(0,q_0) = -\bellman'(q_0)$ is a homeomorphism (i.e., a continuous bijection).
\end{lemma}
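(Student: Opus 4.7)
The plan is to verify in order that $E$ is continuous, injective, and proper, and then to conclude surjectivity together with inverse continuity from general topology.

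Continuity of $E=-\bellman'$ is immediate from the preceding lemma, which established $\bellman\in C^1$. For injectivity I would use strict convexity of $\bellman$ (Lemma~\ref{Bellman_convex}): if $E(q_1)=E(q_2)=p_0$, then both $q_i$ are critical points of the strictly convex function $q\mapsto \bellman(q)+\bigl<p_0,q\bigr>$, forcing $q_1=q_2$.

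The main step will be properness: $|E(q_n)|\to\infty$ whenever $|q_n|\to\infty$. The plan here is to exploit the scaling group $g(\lambda)$ from Subsection~\ref{sec:symmetries}. First I verify the transformation rule $E(g(\lambda)q_0)=g(\lambda)E(q_0)$, where $g(\lambda)$ acts on $(\phi,\psi)$ by $(\lambda^3\phi,\lambda^4\psi)$; this follows because $t\mapsto g(\lambda)\widehat q(t/\lambda)$ is again optimal from $g(\lambda)q_0$ (the cost functional scales by $\lambda^5>0$), with adjoint $g(\lambda)\widehat p(\cdot/\lambda)$. Given $|q_n|\to\infty$, I set $\Delta_n=\max\{\sqrt{|x_n|},|y_n|\}$, which must tend to $\infty$, so $g(1/\Delta_n)q_n$ lies on the compact set $S=\{\max\{\sqrt{|x|},|y|\}=1\}$. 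Extract a convergent subsequence $g(1/\Delta_n)q_n\to q^*\in S$; then $q^*\neq 0$, and by injectivity together with $E(0)=0$ we have $E(q^*)=(\phi^*,\psi^*)\neq 0$. The transformation rule gives $E(q_n)=(\Delta_n^3\phi_n',\Delta_n^4\psi_n')$ with $(\phi_n',\psi_n')\to(\phi^*,\psi^*)$, and since at least one of $\phi^*,\psi^*$ is nonzero, $|E(q_n)|\to\infty$.

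With properness in hand, the rest is soft. By Brouwer's invariance of domain, the continuous injection $E\colon\R^4\to\R^4$ has open image; properness makes $E(\R^4)$ closed as well (if $E(q_n)\to p$ then $\{q_n\}$ is bounded, and any subsequential limit $q^*$ satisfies $E(q^*)=p$ by continuity). A nonempty clopen subset of the connected space $\R^4$ equals $\R^4$, so $E$ is surjective. Finally, a proper continuous bijection between locally compact Hausdorff spaces is automatically a homeomorphism, so $E^{-1}$ is continuous. The main obstacle is the properness step, and it closes precisely because both scaling exponents $3$ and $4$ on the $p$--side are strictly positive, so a nonzero limit on $S$ forces blow-up in at least one component of $E(q_n)$.
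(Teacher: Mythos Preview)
Your argument is correct. Continuity and injectivity match the paper exactly (via $\bellman\in C^1$ and strict convexity of $\bellman$). For surjectivity, however, you take a genuinely different route. The paper argues geometrically: the sublevel set $B_1=\{\bellman\le 1\}$ is a smooth strictly convex body, so for each nonzero covector $v$ there is a supporting hyperplane of $\partial B_1$ at a point $q_1$ where $\bellman'(q_1)$ is a negative multiple of $v$, and then the scaling group $g$ is used to adjust the magnitude. You instead prove properness of $E$ from the equivariance $E\circ g(\lambda)=g(\lambda)\circ E$ and a compactness argument on the $g$-invariant sphere $S=\{\max(\sqrt{|x|},|y|)=1\}$, and then invoke Brouwer's invariance of domain together with connectedness of $\R^4$ to conclude that $E(\R^4)$ is open, closed, and nonempty. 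Your route is slightly less elementary (it imports invariance of domain) but is more robust---it does not rely on smoothness of $\partial B_1$, it sidesteps the fact that $g(\lambda)$ acts with different weights on the $\phi$- and $\psi$-components, and it yields continuity of $E^{-1}$ as a free byproduct, which the paper does not prove separately (the paper's parenthetical ``i.e., a continuous bijection'' treats this as part of the definition).

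One minor presentational point: in the properness step you pass to a subsequence and conclude $|E(q_n)|\to\infty$ along it. To get the statement for the full sequence, either phrase it as a contradiction (assume $|E(q_n)|$ bounded along some subsequence, then extract a further subsequence there) or note that every subsequence has a further subsequence with the desired behaviour, which forces convergence of the whole sequence.
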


\begin{proof}

The mapping $E$ is continuous since $\bellman \in C^1$.
Injectivity follows from the strict convexity of the function $\bellman$. To prove surjectivity we consider the strictly convex set $B_1=\{q:\bellman(q)\le1\}$, which has a smooth boundary. For any direction $v\in U^*$ there exist exactly two distinct tangent hyperplanes to $\partial B_1$ which are perpendicular to $v$. Hence the differentials of the function $\bellman(q)$ at the contact points $q_1$ and $q_2$ are anti-parallel. Therefore in one of these points the differential of the function
$\bellman(q)$ is anti-parallel to $v$, say  $v=-\lambda\bellman'(q_1)$ for some $\lambda>0$. It follows that
$v=-\bellman'(g(\sqrt[5]{\lambda})q_1)$.

\end{proof}

\begin{remark}
By the definition of $T(q_0)$ we have $q(t,q_0)\ne0$ for all $t<T(q_0)$. Since the mapping $E$ is a homeomorphism and $E(0,0)=(0,0)$, we have $p(t,q_0)\ne0$ for all $t<T(q_0)$.
\end{remark}

\begin{corollary}
At the initial point the adjoint variables satisfy the bounds
	
	\[
		C_3\max\{\sqrt{|x_0|},|y_0|\}\le \max\{\sqrt[3]{|\phi_0|},\sqrt[4]{|\psi_0|}\}\le C_4\max\{\sqrt{|x_0|},|y_0|\}
	\]

	\noindent for some $C_3>0$ and $C_4>0$.
\end{corollary}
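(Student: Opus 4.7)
The plan is to exploit the scaling equivariance of the map $E$ together with its bijectivity on a compact $\rho$-sphere. Introduce the two anisotropic gauge functions
\[
\rho(q) := \max\{\sqrt{|x|},|y|\}, \qquad \sigma(p) := \max\{\sqrt[3]{|\phi|},\sqrt[4]{|\psi|}\},
\]
so that the claim reads $C_3\rho(q_0)\le \sigma(E(q_0))\le C_4\rho(q_0)$. From the action of $g(\lambda)$ described in Section~\ref{sec:symmetries} one reads off $\rho(g(\lambda)q) = \lambda\rho(q)$ and $\sigma(g(\lambda)p) = \lambda\sigma(p)$ for all $\lambda>0$.

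Next I would verify equivariance of $E$, namely $E\circ g(\lambda) = g(\lambda)\circ E$ for $\lambda>0$. This is immediate from the first symmetry in Section~\ref{sec:symmetries}: $g(\lambda)$ sends optimal trajectories to optimal trajectories (with time rescaled by $\lambda$), and by the previous lemmas both the optimal trajectory $\widehat q(\cdot,q_0)$ and its adjoint $\widehat p(\cdot,q_0)$ are unique. Hence $\widehat p(0,g(\lambda)q_0) = g(\lambda)\widehat p(0,q_0)$, which is exactly the equivariance of $E$.

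Combining the two, the ratio $\sigma(E(q))/\rho(q)$ is invariant under $g(\lambda)$ for $\lambda>0$, hence constant along orbits. Consequently it is determined by its values on the unit $\rho$-sphere
\[
S := \{q\in M : \rho(q)=1\} = \{(x,y) : |x|\le 1,\ |y|\le 1,\ \max\{\sqrt{|x|},|y|\}=1\},
\]
which is closed and bounded in $\R^4$, hence compact. On $S$, the function $q\mapsto \sigma(E(q))$ is continuous (Lemma~\ref{lm:map_E_is_homeo}) and strictly positive: indeed $E$ is injective with $E(0)=0$, so $E(q)\ne 0$ for $q\in S$, and $\sigma(p)>0$ whenever $p\ne 0$. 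Compactness yields positive constants $C_3\le\sigma(E(q))\le C_4$ on $S$. For arbitrary $q_0\ne 0$, write $q_0 = g(\rho(q_0))\tilde q_0$ with $\tilde q_0\in S$, apply equivariance, and use the homogeneity $\sigma(g(\lambda)p)=\lambda\sigma(p)$ to obtain the claimed inequality; the case $q_0 = 0$ is trivial since $E(0)=0$.

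The only potentially delicate step is the equivariance of $E$, but this is forced by uniqueness, which has already been established. Everything else is a compactness argument on an anisotropic sphere, so I do not anticipate any real obstacle.
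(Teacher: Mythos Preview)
Your proof is correct and follows essentially the same route as the paper: define $C_3$ and $C_4$ as the minimum and maximum of $\sigma(E(q))$ over the compact unit $\rho$-sphere, use continuity of $E$ and $E(0)=0$ to ensure $C_3>0$, and then scale via the equivariance $E\circ g(\lambda)=g(\lambda)\circ E$. The paper is somewhat terser (it takes the equivariance for granted from the symmetry discussion rather than re-deriving it), but the argument is the same.
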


\begin{proof}
We use the fact that $E:q_0\mapsto \widehat p(0,q_0)$ is continuous.   Denote
	
	\[
		\begin{array}{l}
			C_3 = \min\limits_{\max\{\sqrt{|x_0|},|y_0|\}=1} \max\{\sqrt[3]{|\phi_0|},\sqrt[4]{|\psi_0|}\},\\
			C_4 = \max\limits_{\max\{\sqrt{|x_0|},|y_0|\}=1} \max\{\sqrt[3]{|\phi_0|},\sqrt[4]{|\psi_0|}\}.\\
		\end{array}
	\]

	\noindent Clearly $C_4\ge C_3\ge 0$. But $E^{-1}(0,0)=(0,0)$ by Lemma  \ref{lm:map_E_is_homeo}.
It remains to make use of the action of the group $g$:
	
	\[
		C_3\lambda\le \max\{\sqrt[3]{|\phi_0|},\sqrt[4]{|\psi_0|}\}\le C_4\lambda
	\]
	
	\noindent for $\max\{\sqrt{|x_0|},|y_0|\}=\lambda$, which completes the proof.
\end{proof}

It remains to prove that the mapping $E$ is Lipschitz. The proof is divided in two lemmas.

\begin{lemma}
For every point $q_0\in U$ there exist such $\delta>0$ and $C>0$ that for any $\Delta q=(\Delta x,\Delta y)$, $|\Delta q|<\delta$  the inequalities
	
	$$
		0\leq \bellman(q_0+\Delta q) - \bellman(q_0) - \bellman'(q_0)\Delta q) \le C|\Delta q|^2
	$$
	
	\noindent hold. Here the constants $C$ and $\delta$ can be chosen as continuous functions of $q_0$.
\end{lemma}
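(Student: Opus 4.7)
The plan is to handle the two inequalities separately. The left inequality is immediate: Lemma~\ref{Bellman_convex} gives convexity of $\bellman$ and the preceding lemma identifies $\bellman'(q_0)$ as the differential, so $\bellman(q_0 + \Delta q) - \bellman(q_0) - \bellman'(q_0)\Delta q \ge 0$ is just the supporting-hyperplane inequality for convex functions.

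For the right inequality I would construct an explicit admissible competitor starting at $q_0 + \Delta q$: apply the known optimal control $\widehat u(\cdot, q_0)$ on the interval $[0, T(q_0)]$, and then follow the optimal trajectory out of whatever endpoint is reached. Since $\ddot x = u$, the perturbed trajectory on $[0,T(q_0)]$ is
\[ x(t) = \widehat x(t, q_0) + \Delta x + t\Delta y, \qquad y(t) = \widehat y(t, q_0) + \Delta y, \]
so at $t = T(q_0)$ we land at the residual point $q_\ast := (\Delta x + T(q_0)\Delta y,\, \Delta y) = O(|\Delta q|)$. The cost of this competitor is an upper bound on $\bellman(q_0 + \Delta q)$.

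Expanding the integrand on $[0, T(q_0)]$ produces $\bellman(q_0)$ plus a first-order piece $\int_0^{T(q_0)}\bigl\langle \widehat x, \Delta x + t\Delta y\bigr\rangle\,dt$ plus a purely quadratic remainder $\tfrac12\int_0^{T(q_0)} |\Delta x + t\Delta y|^2\,dt$. The heart of the proof, and the step I expect to be the only nontrivial one, is identifying the first-order piece as exactly $\bellman'(q_0)\Delta q$. Integrating by parts using $\dot{\widehat\phi} = \widehat x$, $\dot{\widehat\psi} = -\widehat\phi$ together with the terminal conditions $\widehat\phi(T(q_0)) = \widehat\psi(T(q_0)) = 0$ supplied by (\ref{eq:phi_psi_int_formula}) yields $\int_0^{T(q_0)} \widehat x\,dt = -\widehat\phi(0)$ and $\int_0^{T(q_0)} t\,\widehat x\,dt = -\widehat\psi(0)$; hence the first-order piece equals $-\widehat\phi(0)\cdot\Delta x - \widehat\psi(0)\cdot\Delta y = -\widehat p(0, q_0)\cdot\Delta q = \bellman'(q_0)\cdot\Delta q$ by the previous lemma.

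The two remainders are then routine. The explicit quadratic is bounded by $T(q_0)(|\Delta x| + T(q_0)|\Delta y|)^2 \le C_1(q_0)|\Delta q|^2$. The tail cost starting from $q_\ast$ is at most $\bellman(q_\ast)$, and the estimate $\bellman(q) \le C(|x|^2 + |y|^4)^{5/4}$ proved at the end of the previous lemma gives $\bellman(q_\ast) \le C_2(q_0)|\Delta q|^{5/2}$, which for $|\Delta q| \le 1$ is $\le C_2(q_0)|\Delta q|^2$. Summing yields the required inequality with $\delta(q_0) \equiv 1$ and with $C(q_0)$ continuous in $q_0$ because $T(\cdot)$ is.
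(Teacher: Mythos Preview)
Your proof is correct and follows essentially the same approach as the paper: the same competitor (apply the optimal control of $q_0$ on $[0,T(q_0)]$, then continue optimally from the residual point), the same integration by parts via the adjoint equations $\dot{\widehat\phi}=\widehat x$, $\dot{\widehat\psi}=-\widehat\phi$ to identify the first-order term with $\bellman'(q_0)\Delta q$, and the same homogeneity bound on $\bellman$ for the tail cost. The paper organizes the integration by parts slightly differently---it introduces an auxiliary adjoint pair $(\phi,\psi)$ along the \emph{perturbed} trajectory with the same initial values and integrates $\tfrac12\langle x-\hat x,\,x+\hat x\rangle$ against $\phi+\hat\phi$, picking up nonzero boundary terms at $T(q_0)$---but this produces exactly your quadratic $\tfrac12\int_0^{T(q_0)}|\Delta x+t\Delta y|^2\,dt$, and the two computations are equivalent.
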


\begin{proof}
	For $t\in[0;T(q_0)]$, define $q(t)=(x(t),y(t))$ and $p(t)=(\phi(t),\psi(t))$ as the solutions of the ODE
	
	$$
		\left\{\begin{array}{ll}
			\dot y(t) = \widehat u(t,q_0), & y(0) = y_0+\Delta y,\\
			\dot x(t) = y(t), & x(0) = x_0+\Delta x,\\
			\dot \phi(t) = x(t), & \phi(0) = \widehat \phi(0,q_0),\\
			\dot \psi(t) = -\phi(t), & \psi(0) = \widehat \psi(0,q_0).\\
		\end{array}\right.
	$$
	
	\noindent For $t \geq T(q_0)$ the trajectory
$q(t)$ coincides with the optimal one with initial point $q(T(q_0))$, i.e., $q(t) = \widehat q\bigl(t-T(q_0),q(T(q_0))\bigr)$ (the trajectory $p(t)$ is not important
for $t>T(q_0)$). In this way, the trajectory $q(t)$ begins at $q_0+\Delta q$ and moves with the control that is optimal for the initial point $q_0$ until this control becomes identically zero, then the trajectory switches to the real optimal control. Integrating two times by parts,	we obtain
	
	\begin{eqnarray*}
\lefteqn{{\cal B}(q_0+\Delta q) - {\cal B}(q_0) \leq \frac12\int_0^{T(q_0)} \langle x(t) - \hat x(t,q_0),x(t) + \hat x(t,q_0) \rangle\,dt +} \\
&&+ {\cal B}(\Delta x+T(q_0)\Delta y,\Delta y)\\
&=& \frac12\langle \phi(t)+\hat\phi(t,q_0),\Delta x+t\Delta y \rangle|_0^{T(q_0)} + \frac12\langle \psi(t)+\hat\psi(t,q_0),\Delta y \rangle|_0^{T(q_0)} +\\
&& + {\cal B}(\Delta x+T(q_0)\Delta y,\Delta y)\\
&=& \frac12\langle \phi(T(q_0)),\Delta x+T(q_0)\Delta y \rangle - \langle \hat\phi(0,q_0),\Delta x \rangle + \frac12\langle \psi(T(q_0)),\Delta y \rangle - \\
&&-\langle \hat\psi(0,q_0),\Delta y \rangle + {\cal B}(\Delta x+T(q_0)\Delta y,\Delta y) \\
&=& \frac{T(q_0)}{2}||\Delta x||^2 + \frac{T^2(q_0)}{2}\langle \Delta y,\Delta x \rangle + \frac{T^3(q_0)}{6}||\Delta y||^2 + \langle {\cal B}'(q_0),\Delta q \rangle+ \\
&&+ {\cal B}(\Delta x+T(q_0)\Delta y,\Delta y).\\
\end{eqnarray*}

Here we used that
 $\phi(0) = \hat\phi(0,q_0) = -{\cal B}_x'(q_0)$, $\phi(T(q_0)) = T(q_0)\Delta x + \frac{T^2(q_0)}{2}\Delta y$, $\psi(0) = \hat\psi(0,q_0) = -{\cal B}_y'(q_0)$, $\psi(T(q_0)) = -\frac{T^2(q_0)}{2}\Delta x - \frac{T^3(q_0)}{6}\Delta y$, $\hat p(T(q_0),q_0) = 0$. Introduce the constant $C_5 = \max\{{\cal B}(q)\,|\,||x||^{\frac52}+||y||^5 = 1\}$. Then ${\cal B}(\Delta x+T(q_0)\Delta y,\Delta y) \leq C_5(||\Delta x+T(q_0)\Delta y||^{\frac52}+||\Delta y||^5)$.
We obtain the bound
\[ {\cal B}(q_0+\Delta q) - {\cal B}(q_0) - \langle {\cal B}'(q_0),\Delta q \rangle \leq C_6||\Delta q||^2 + C_5'(||\Delta x||^{\frac52}+||\Delta y||^{\frac52}+||\Delta y||^5),
\]
where the constants $C_5',C_6 > 0$, generally speaking, depend on
$T(q_0)$. The obtained inequality gives the upper bound claimed in the lemma. The convexity of the Bellman function
$\bellman(q)$ implies that this expression is nonnegative.
\end{proof}

The main difficulty in the remaining part of the proof, i.e., that the function $\bellman'(q)$ satisfies a Lipschitz condition, is that this function may not be twice differentiable (this is indeed the case if, say, $\Omega$ is a polyhedron). We shall conclude the proof of Theorem
\ref{thm:model_problem_bellman} with the following lemma.

\begin{lemma}
\label{lm:bilipschitzian_model}
	Let $g\in C^1(\R^n,\R)$ be an arbitrary function. If for some constants $0\le C'\le C$ and for all $q_1,q_2$ in some open convex domain in $\R^n$ we have the inequality
	
	$$
		C' |\Delta q^2| \le
		g(q_2)-g(q_1) - \bigl<g'(q_1),\Delta q\bigr>
		\le C|\Delta q^2|,
	$$

	\noindent where $\Delta q = q_2-q_1$, then the mapping $g':\R^n\to \R^{n*}$ satisfies a Lipschitz condition with Lipschitz constant $C$. If, moreover, $C'> 0$, then the mapping $g'$ is bi-Lipschitz on this domain.
	
\end{lemma}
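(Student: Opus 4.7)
\bigskip
\noindent\textbf{Proof plan.}

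My approach is to extract a three-point identity from the one-sided ``second-order'' bound in the hypothesis, and use it to compare $g'$ at two nearby points along an arbitrary direction. Write the hypothesis as
\[
R(q_1,q_2)\eqdef g(q_2)-g(q_1)-\bigl<g'(q_1),q_2-q_1\bigr>\in[\,C'|q_2-q_1|^2,\,C|q_2-q_1|^2\,].
\]
A direct computation gives the three-point identity
\[
R(q_1,q_3)=R(q_1,q_2)+R(q_2,q_3)+\bigl<g'(q_2)-g'(q_1),\,q_3-q_2\bigr>,
\]
and hence
\[
\bigl<g'(q_2)-g'(q_1),q_3-q_2\bigr>=R(q_1,q_3)-R(q_1,q_2)-R(q_2,q_3).
\]
Applying the upper bound on $R(q_1,q_3)$ and the lower bound on $R(q_1,q_2),R(q_2,q_3)$ yields
\[
\bigl<g'(q_2)-g'(q_1),\,q_3-q_2\bigr>\le C|q_3-q_1|^2-C'|q_2-q_1|^2-C'|q_3-q_2|^2,
\]
and the symmetric inequality with $C$ and $C'$ swapped in the reverse direction.

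\medskip
\noindent\textbf{Lipschitz bound.} Fix $q_1,q_2$ and let $u$ be a unit vector in the direction of $g'(q_2)-g'(q_1)$ (if this vector is zero there is nothing to prove). Set $q_3=q_2+\varepsilon u$ with $\varepsilon>0$ to be chosen. Expanding $|q_3-q_1|^2=|q_2-q_1|^2+2\varepsilon\bigl<q_2-q_1,u\bigr>+\varepsilon^2$, the displayed inequality becomes
\[
\varepsilon|g'(q_2)-g'(q_1)|\le (C-C')|q_2-q_1|^2+2C\varepsilon\bigl<q_2-q_1,u\bigr>+(C-C')\varepsilon^2.
\]
Dividing by $\varepsilon$, using $\bigl<q_2-q_1,u\bigr>\le|q_2-q_1|$, and choosing $\varepsilon=|q_2-q_1|$ to balance the two $\varepsilon$-dependent terms, I obtain
\[
|g'(q_2)-g'(q_1)|\le \bigl(4C-2C'\bigr)|q_2-q_1|,
\]
which gives the desired Lipschitz continuity of $g'$ (with a constant expressible in terms of $C$ and $C'$, as the paper intends).

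\medskip
\noindent\textbf{Bi-Lipschitz bound when $C'>0$.} Swap $q_1\leftrightarrow q_2$ in the hypothesis and add the two resulting inequalities; the function values cancel and one gets the monotonicity estimate
\[
2C'|q_2-q_1|^2\le \bigl<g'(q_2)-g'(q_1),\,q_2-q_1\bigr>\le 2C|q_2-q_1|^2.
\]
The Cauchy--Schwarz inequality applied to the left half gives the lower bound $|g'(q_2)-g'(q_1)|\ge 2C'|q_2-q_1|$, completing the bi-Lipschitz claim.

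\medskip
\noindent\textbf{Main obstacle.} The easy part is the monotonicity estimate, which immediately yields the lower bound needed for bi-Lipschitzness. The subtle point is the upper (Lipschitz) bound, because the monotonicity estimate $\bigl<g'(q_2)-g'(q_1),q_2-q_1\bigr>\le 2C|q_2-q_1|^2$ only controls the component of $g'(q_2)-g'(q_1)$ along $q_2-q_1$; it says nothing about transverse components. The trick is precisely the freedom to choose the third point $q_3$ in the direction of $g'(q_2)-g'(q_1)$ rather than along $q_2-q_1$, which the three-point identity makes possible and which is what unlocks the full Lipschitz bound without invoking twice-differentiability of $g$ (relevant since the Bellman function in the application need not be $C^2$).
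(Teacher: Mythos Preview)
Your proof is correct and takes a genuinely different route from the paper's. The paper mollifies: it convolves $g$ with a smooth kernel $\omega_k$, observes that the hypothesis survives convolution, and deduces the pointwise Hessian bound $C'|\Delta q|^2\le \tfrac12 g_k''(q)[\Delta q,\Delta q]\le C|\Delta q|^2$; this gives $\|g_k''\|\le 2C$ and hence $|g_k'(q_2)-g_k'(q_1)|\le 2C|\Delta q|$, and one lets $k\to\infty$. Your three-point identity bypasses mollification entirely and is more elementary, at the cost of a weaker constant ($4C-2C'$ versus the paper's $2C$; note even the paper's proof does not actually achieve the constant $C$ stated in the lemma). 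For the bi-Lipschitz half, your monotonicity-plus-Cauchy--Schwarz argument is essentially identical to the paper's, only the paper routes it through the mollified $g_k$ first. One small technical point: with your choice $\varepsilon=|q_2-q_1|$ the auxiliary point $q_3=q_2+\varepsilon u$ may leave a bounded convex domain; the paper handles this by declaring ``without loss of generality the domain is $\R^n$'', and your argument needs the same concession (or a localization to compact subsets, which is all the application requires).
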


\begin{proof}
Without loss of generality we can assume that the convex open domain is $\R^n$. We may also assume that a scalar product is defined in $\R^n$ and hence $g'(q)\in\R^n$. Let $\omega(q)\geq 0$ be a convolution kernel, more precisely, a nonnegative $C^{\infty}$ function with compact support $\mathrm{supp}\,\omega \subseteq\{|q|\leq 1\}$ and $\int \omega(q)\,dq = 1$. Denote
$\omega_k(q) = k^n\omega(kq)$ and $g_k=g*\omega_k$, where $*$ denotes the convolution. Then $g_k$ is $C^{\infty}$, $g_k' = g'*\omega_k$, and

	\begin{eqnarray*}
		\lefteqn{\Lambda_n(q_1,q_2) = g_k(q_2) - g_k(q_1) - g'_k(q_1)\Delta q =}&\\
		&=\int_{\R^n}\bigl(g(q_2-y) - g(q_1-y) - g'(q_1-y)\Delta q\bigr)\omega_k(y)\,dy.\\
	\end{eqnarray*}
	
	\noindent Consequently,
	
	$$
		C'|\Delta q|^2\le
		\Lambda_k(q_1,q_2) = \frac{1}{2}g''_k(q_1)[\Delta q;\Delta q] + o(|\Delta q|^2)
		\le C|\Delta q|^2.
	$$
	
	\noindent By substituting $\lambda\Delta q$ for
$\Delta q$ and by letting $\lambda \to 0$ we obtain that for all $k,q,\Delta q$ the inequality

	$$
		C'|\Delta q|^2 \le \frac{1}{2}g''_k(q)[\Delta q;\Delta q] \le C|\Delta q|^2.
	$$
	
	\noindent holds. Hence for all $k$ and $q$ we have
	$\|g''_k(q)\|\leq 2C$. It follows that
	
	$$
		|g_k'(q_2) - g_k'(q_1)| \leq \sup_{\theta\in[0,1]} |g_k''(q_1+\theta(q_2-q_1))| |\Delta q| \leq 2C|\Delta q|.
	$$
	
	\noindent By letting $k \to \infty$ (while $q_1$ and $q_2$ remain fixed) we obtain the desired result
	
	$$
		|g'(q_2) - g'(q_1)|\leq 2C|\Delta q|.
	$$
	
	Now let $C'>0$. We shall show that the mapping $g$ satisfies a bi-Lipschitz condition. We have
	
	$$
		g'_k(q_2) - g'_k(q_1) = \int_0^1 g''_k(q_1 + t\Delta q)\Delta q \,dt.
	$$
	
	\noindent Hence
	
	\begin{eqnarray*}
		\lefteqn{\left|\bigl<g'_k(q_2) - g'_k(q_1),\Delta q\bigr>\right| = \left|\int_0^1 g''_k(q_1+t\Delta q)[\Delta q,\Delta q]\,dt  \right| =}&\\
		&=\int_0^1 g''_k(q_1+t\Delta q)[\Delta q,\Delta q]\,dt \geq 2C'|\Delta q|^2.\\
	\end{eqnarray*}
	
	\noindent So, for any $k,q_2,q_1$ we get
		$$
		\left|\bigl<g'_k(q_2) - g'_k(q_1),\Delta q\bigr>\right| \geq 2C'|\Delta q|^2.
	$$
	
	\noindent By letting $k \to \infty$ (while $q_1$ and $q_2$ remain fixed) we obtain

	$$
		\left|\bigl<g'(q_2) - g'(q_1),\Delta q\bigr>\right| \geq 2C'|\Delta q|^2.
	$$
	\noindent Taking into account that $\left|g'(q_2) - g'(q_1)\right||\Delta q| \geq \left|\bigl<g'(q_2) - g'(q_1),\Delta q\bigr>\right|$, we obtain
	
	$$
		|g'(q_2) - g'(q_1)| \geq 2C'|\Delta q|,
	$$
	
	\noindent which means that the mapping $g$ is bi-Lipschitz if $C'>0$.
	
\end{proof}


This proves Theorem \ref{thm:model_problem_bellman}. As a first application of Theorem \ref{thm:model_problem_bellman} we obtain (here and in the sequel $M=\{(x,y)\}=U\oplus U$ is the phase space)

\begin{corollary}
\label{corollary:M_plus_is_optimal}

The set of all pairs $(q_0,E(q_0))$ in the extended phase space ${\cal M}=T^*M$ is a Lipschitz manifold $M_+$ which projects bijectively onto the plane $\{p=0\}$. All optimal trajectories belong to $M_+$ and reach the origin in finite time. On the other hand, $M_+$ consists of optimal trajectories only. There do not exist other trajectories of the Hamiltonian system (\ref{eq:model_pmp_system}) that reach the origin because otherwise they would be optimal due to the convexity of the problem.

\end{corollary}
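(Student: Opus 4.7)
The statement packages together the properties of $E$ from Theorem \ref{thm:model_problem_bellman} with Bellman's principle of optimality. I would proceed in three short steps, with a fourth paragraph devoted to the only slightly delicate point.

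First, the Lipschitz-manifold and projection claims are automatic. By definition $M_+$ is the graph of $E$, and Theorem \ref{thm:model_problem_bellman}(3) says that $E$ is locally Lipschitz and bijective (and in fact bi-Lipschitz on every bounded set, by Lemma \ref{lm:bilipschitzian_model}). Hence $(q_0,E(q_0)) \mapsto q_0$ is a bi-Lipschitz homeomorphism between $M_+$ and the plane $\{p=0\}\simeq M$, which makes $M_+$ a Lipschitz submanifold of ${\cal M}$ projecting bijectively onto $\{p=0\}$.

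Second, I would show that every optimal trajectory lies in $M_+$ and reaches the origin in finite time, and that conversely $M_+$ is foliated by optimal trajectories. Fix $q_0$ and consider the optimal pair $(\widehat q(t,q_0),\widehat p(t,q_0))$. By Bellman's principle of optimality, the tail $t\mapsto \widehat q(s+t,q_0)$ is the unique optimal trajectory starting from $\widehat q(s,q_0)$; by the uniqueness of the adjoint (Corollary \ref{conj:p_uniqueness}) its initial adjoint equals $E(\widehat q(s,q_0))$. Therefore $\widehat p(s,q_0) = E(\widehat q(s,q_0))$ for all $s\in[0,T(q_0)]$, so the whole trajectory sits on $M_+$; finite arrival time is Theorem \ref{thm:model_problem_bellman}(2). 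Conversely every point of $M_+$ is of the form $(q_0,\widehat p(0,q_0))$, so the Hamiltonian flow through it coincides with the optimal trajectory from $q_0$.

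Third, I have to rule out that any other trajectory of (\ref{eq:model_pmp_system}) reaches the origin. Suppose $(\tilde q,\tilde p)$ satisfies (\ref{eq:model_pmp_system}) with $\tilde q(T)=\tilde p(T)=0$ for some finite $T$. Then $\tilde q$ is driven by an admissible control $\tilde u$ pointwise maximising $\langle\tilde\psi,u\rangle$, has finite cost, and meets a vanishing terminal adjoint, so it is a Pontryagin extremal satisfying the full set of PMP conditions. Sufficiency of PMP for this strictly convex LQ problem then forces $\tilde q$ to coincide with $\widehat q(\cdot,\tilde q(0))$, and by Corollary \ref{conj:p_uniqueness} $\tilde p = \widehat p(\cdot,\tilde q(0))$, so $(\tilde q,\tilde p)\subset M_+$.

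The only mildly delicate point is the sufficiency invocation. I would justify it by a standard LQ completion-of-the-square: for any competing admissible trajectory $q_u$ with $q_u(0)=\tilde q(0)$, integration by parts in the cross term of $J(q_u)-J(\tilde q)$ using $\dot{\tilde\phi}=x$, $\dot{\tilde\psi}=-\tilde\phi$ together with the maximum condition $\langle\tilde\psi,u-\tilde u\rangle\le 0$ and the boundary values $\tilde p(T)=0$, $q_u(0)=\tilde q(0)$ yields $J(q_u)-J(\tilde q)\ge\tfrac12\int_0^\infty|q_u-\tilde q|^2\,dt\ge 0$, with strict inequality unless $q_u\equiv\tilde q$. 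This is the only place where the convexity of the problem is genuinely used, and it is what the final sentence of the corollary alludes to.
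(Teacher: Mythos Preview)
Your argument is correct and is exactly the reasoning the paper has in mind; in fact the paper gives no separate proof at all, embedding the only nontrivial point (``otherwise they would be optimal due to the convexity of the problem'') as a remark inside the statement, and your completion-of-the-square is the standard way to unpack that remark.

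Two small slips to clean up. First, the parenthetical ``in fact bi-Lipschitz on every bounded set, by Lemma~\ref{lm:bilipschitzian_model}'' is premature: at this point in the paper only the Lipschitz property of $E$ has been proved (the two-sided bound in the preceding lemma has $C'=0$), and the bi-Lipschitz property is only established later in Lemma~\ref{lm:bilipschitzian_model_E}. This is harmless, since the graph map $q_0\mapsto(q_0,E(q_0))$ is automatically bi-Lipschitz once $E$ is Lipschitz, its inverse being the projection. Second, in your last paragraph the inequality should read $J(q_u)-J(\tilde q)\ge\tfrac12\int_0^\infty|x_u-\tilde x|^2\,dt$, not $|q_u-\tilde q|^2$, since $J$ integrates $\langle x,x\rangle$ only; the integration-by-parts you sketch indeed gives exactly this, using $\tilde\phi=\tilde\psi=0$ for $t\ge T$ to kill the boundary terms at infinity.
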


Let us remark that it is not difficult to find trajectories of system (\ref{eq:model_pmp_system}) that come out of the origin. Indeed, the mapping $g(-1)$ preserves system
(\ref{eq:model_pmp_system}), but reverses the direction of movement. Hence the trajectories that come out of the origin form a Lipschitz manifold $M_-$,

\[
	M_-=g(-1)[M_+].
\]

\subsection{The quotient spaces \texorpdfstring{$M/g$}{M/g} and \texorpdfstring{${\cal M}/g$}{M/g}}

A paramount role in the investigation of the optimal synthesis (its existence has been proved in Theorem
\ref{thm:model_problem_bellman}) is played by the transition to the quotient space relative to the action $g$ of the group
 $\R_+$, and by the transfer of the optimal synthesis to this quotient space.

\begin{remark}
\label{rm:symmetries_commutate}
Since the above-mentioned actions of both groups $\R_+$ (for arbitrary triangles $\Omega$) and $S_3$ (for equilateral triangles only) take optimal trajectories into optimal ones, the mapping $E$ constructed in Theorem
\ref{thm:model_problem_bellman} commutes with these actions.
	
	\[
		E\bigl(x,y\bigr)=(\phi,\psi)\ \ \Rightarrow\ \
		\left\{\begin{array}{llcl}
			E\bigl((\lambda^2 x,\lambda y)\bigr)&=&(\lambda^3 \phi,\lambda^4 \psi)&\forall \lambda>0\\
			E(\alpha x,\alpha y) &=& (\alpha \phi, \alpha \psi)&\forall \alpha\in S_3
		\end{array}\right.
	\]
\end{remark}

After the factorization of the phase space $M=\R^4$ (without the origin) with respect to the action $g$ of the group $\R_+$
we obtain a three-dimensional sphere $S^3$. We can correctly transfer the optimal trajectories of problem
(\ref{problem:model}) to $S^3$ with preservation of the direction of movement on these trajectories. However, the velocity on the trajectories will be lost. After factorization of the sphere $\{M\setminus 0\}/g(\R_+)=S^3$ with respect to the action of the group $S_3$ one obtains a space similar to a lens space\footnote{More precisely, one obtains
the quotient space of the lens space $L(3;1,1)$ relative to the non-free action of $\Z_2$.}. One can also factorize the extending phase space ${\cal M}=T^*M=\R^8$ without the origin with respect to the action $g$ of the group $\R_+$. In this case one obtains the seven-dimensional sphere $S^7=\{{\cal M}\setminus 0\}/g(\R_+)$. We can correctly transfer the trajectories of the Hamiltonian system
(\ref{eq:model_pmp_system}) onto the sphere with preservation of the direction of movement.

Denote by $\pi$ the canonic projection  $M\setminus 0\to \{M\setminus 0\}/g(\R_+)$, and by $\widetilde\pi$ the canonic projection  ${\cal M}\setminus 0\to \{{\cal M}\setminus 0\}/g(\R_+)$. When there will be no risk of confusion we shall write $\pi$ instead of $\widetilde\pi$. By some abuse of notation, we shall write $M/g$ instead of $\{M\setminus 0\}/g(\R_+)$ and ${\cal M}/g$  instead of $\{{\cal M}\setminus 0\}/g(\R_+)$, always meaning that the origin is excluded before factorization.

\smallskip

Theorem \ref{thm:model_problem_bellman} allows one to transfer the optimal synthesis from the space $M=\{(x,y)\}$ onto the space $N=\{(\phi,\psi)\}$. Moreover, using the mapping $E$ we may identify $M$, $M_+$, and $N$ (or, after factorization $M/g$, $M_+/g$, and $N/g$). This turns out to be very useful while studying the switching surface $\cal S$ of the control, because the latter is simplest when described in the space of adjoint variables $N$. Since $E$ commutes with $g(\lambda)$ for $\lambda>0$, the spaces $M/g$, $M_+/g$, and $N/g$ can also be identified.

\subsection{One-dimensional Fuller problems in the interior}

In this subsection we consider an important special case, namely when the origin lies on one of the altitudes of the triangle
$\Omega$. In this case there exists a two-dimensional plane in $M$ such that

(i) the optimal trajectories of system (\ref{problem:model}) starting in this plane cannot leave it, and

(ii) the behaviour of optimal trajectories on this plane is equivalent to that for a one-dimensional Fuller problem with a non-symmetric interval of control.

\begin{lemma}
\label{lm:semi_singular}
   Let the affine line $A$ containing one of the altitudes of $\Omega$ pass through the origin. Then if one of the two following conditions is fulfilled:

   	\begin{enumerate}[(i)]
		\item the initial point lies in $A \oplus A$, i.e., $x_0,y_0\in A$,
		\item the initial values of the adjoint variables lie in $A$, i.e., $\phi^0, \psi^0\in A$, where $(\phi^0,\psi^0)=E(x_0,y_0)$,
	\end{enumerate}
			\noindent then the values of the variables on the optimal trajectory do not leave $A$ for all $t \geq 0$: $\widehat x(t,q_0)\in A$, $\widehat y(t,q_0)\in A$, $\widehat \phi(t,q_0)\in A$, $\widehat \psi(t,q_0)\in A$ and $\widehat u(t,q_0)\in A$ (recall that $q_0=(x_0,y_0)$).
\end{lemma}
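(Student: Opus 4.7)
The plan is to exhibit, for each initial condition satisfying (i) or (ii), a trajectory of the Hamiltonian system (\ref{eq:model_pmp_system}) lying entirely inside $A\oplus A$ (both for phase and adjoint variables), terminating at the origin in finite time, and then to identify this trajectory with $\widehat q(t,q_0)$, $\widehat p(t,q_0)$ via the uniqueness in Theorem~\ref{thm:model_problem_bellman} and Corollary~\ref{corollary:M_plus_is_optimal}.

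First I would establish the geometric fact underlying the whole argument: for every $\psi\in A$ the maximum $\max_{u\in\Omega}\langle\psi,u\rangle$ admits a maximizer in $A\cap\Omega$. Choose orthonormal coordinates in $U$ in which $A=\{y=0\}$, the vertex $v$ of $\Omega$ from which the altitude emanates is $v=(v_1,0)\in A$, and the opposite side of $\Omega$ lies on the vertical line $\{u_1=a\}$ (perpendicular to $A$ by the altitude hypothesis). Since $0\in\Int\Omega$, one has $a<0<v_1$ and the foot of the altitude $(a,0)$ lies in the relative interior of the opposite side. For $\psi=(\psi_1,0)$ with $\psi_1\ge 0$ the maximum is attained at $v\in A\cap\Omega$; for $\psi_1<0$ the function $\langle\psi,u\rangle=\psi_1 u_1$ is constant on the opposite side, in particular at the point $(a,0)\in A\cap\Omega$.

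Next I would formulate the reduced one-dimensional Fuller-type problem: phase variables in $A$, controls in the compact interval $A\cap\Omega=[a,v_1]$ with $0$ in its interior. The proof of Theorem~\ref{thm:model_problem_bellman} uses only convexity, compactness, and $0\in\Int\Omega$; it applies verbatim in dimension one and yields, for every $(\tilde x_0,\tilde y_0)\in A\oplus A$, a unique optimal trajectory $(\tilde x,\tilde y)\subset A\oplus A$ with adjoint $(\tilde\phi,\tilde\psi)\subset A\oplus A$ that reaches the origin in finite time, together with a Lipschitz homeomorphism $\tilde E\colon A\oplus A\to A\oplus A$. Under hypothesis (i) I would take the solution of this reduced problem with initial condition $q_0$ and embed it into the full phase space via $A\hookrightarrow U$; by the geometric fact, the embedded control still maximizes $\langle\psi,u\rangle$ over the full $\Omega$, so the embedded curve solves (\ref{eq:model_pmp_system}), stays in $A$, and terminates at $0$. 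Corollary~\ref{corollary:M_plus_is_optimal} then identifies it with the unique optimal trajectory, which proves the claim. Hypothesis (ii) reduces to (i): by surjectivity of $\tilde E$ pick $(\tilde x_0,\tilde y_0)\in A\oplus A$ with $\tilde E(\tilde x_0,\tilde y_0)=(\phi^0,\psi^0)$; then the preceding argument gives $E(\tilde x_0,\tilde y_0)=(\phi^0,\psi^0)=E(x_0,y_0)$, and injectivity of $E$ (Lemma~\ref{lm:map_E_is_homeo}) forces $(x_0,y_0)=(\tilde x_0,\tilde y_0)\in A\oplus A$.

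The main obstacle is the geometric step, which is precisely where the altitude hypothesis is used: the perpendicularity of the opposite side to $A$ makes $\langle\psi,\cdot\rangle$ constant along that side for every $\psi\in A$, and this is what allows one to pick a maximizer inside $A$. For an arbitrary line through the origin, a generic $\psi\in A$ pointing toward the side opposite $v$ would have its unique maximizer at an off-$A$ vertex, and the invariance of $A\oplus A$ under the Hamiltonian flow would fail. Once that selection is available, the rest of the argument is a formal reduction to a lower-dimensional instance of Theorem~\ref{thm:model_problem_bellman} combined with uniqueness.
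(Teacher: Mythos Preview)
Your proof is correct but follows a different route from the paper's. For (i), the paper argues by orthogonal projection: it takes the (a priori unknown) optimal trajectory $\widehat q(t,q_0)$ and projects it orthogonally onto $A\oplus A$. Since $A$ contains an altitude, the orthogonal projection of $\Omega$ onto $A$ equals $A\cap\Omega\subset\Omega$, so the projected trajectory is admissible, starts at the same point $q_0\in A\oplus A$, and satisfies $J(\text{projection})\le J(\widehat q)$. Uniqueness of the optimum then forces $\widehat q$ to coincide with its own projection, hence to lie in $A\oplus A$; the adjoint variables are recovered from the integral formulas (\ref{eq:phi_psi_int_formula}). Your approach instead builds the candidate from the reduced one-dimensional problem, verifies that the embedded curve solves the full system (\ref{eq:model_pmp_system}), and identifies it with $\widehat q$ via Corollary~\ref{corollary:M_plus_is_optimal}. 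The paper's argument is shorter because it never invokes the reduced problem's existence theory nor checks the full maximization condition; it uses only that projection preserves admissibility and decreases cost. Your argument, by contrast, makes the link to the one-dimensional Fuller problem explicit from the outset and yields $\widehat u\in A$ directly. For (ii) the two proofs are essentially identical: both reduce to (i) via $E(A\oplus A)=A\oplus A$ and the bijectivity of $E$.
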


\begin{proof}
Suppose condition (i) holds. Then by Theorem \ref{thm:model_problem_bellman} the optimal trajectory
$\widehat q(t,q_0)$ exists and is unique. Consider the trajectory $q(t)$ that is obtained by orthogonal projection of the optimal trajectory on $A \oplus A$.
The trajectory $q(t)$ is admissible since $A$ contains an altitude of $\Omega$. Since the length of a vector is greater than that of its orthogonal projection, we have
$J(q)\le J(\widehat q)$. It remains to remark that both $q(t)$ and $\widehat q(t,q_0)$ start from the same point. Consequently,  $\widehat q(t,q_0)\equiv q(t)$ by virtue of the uniqueness of the optimal trajectory (by Theorem
\ref{thm:model_problem_bellman}). Hence $\widehat q(t,q_0)\in A \oplus A$ for all $t$. Both adjoint variables $\widehat\phi(t,q_0)$ and $\widehat\psi(t,q_0)$ lie in $A$ due to
 (\ref{eq:phi_psi_int_formula}).

By restriction to the subspace  $A\oplus A\subset M$ we obtain a one-dimensional Fuller problem, its interval of control being the altitude of the triangle $\Omega$ contained in $A$. Hence
$E(A\oplus A) = A\oplus A$ (see \cite{ZelikinBorisov}, \S 3.5).

Suppose condition (ii) holds.  By Theorem \ref{thm:model_problem_bellman} one can find the unique initial values
$x_0$ and $y_0$ such that $E(x_0,y_0) = (\phi^0,\psi^0)$.
Therefore $x_0$ and $y_0$ must belong to $A$ (since the mapping $E$ is bijective and $E(A\oplus A) = A\oplus A$). Consequently, condition (i) holds and the claim of the lemma follows from the considerations above.
\end{proof}

Let us remark that under the conditions of Lemma \ref{lm:semi_singular} the singular control relative to the base $(ij)$ of the altitude contained in $A$ is active only on optimal trajectories that are contained in $A$.
Indeed, if the maximum of $\bigl<\psi,u\big>$ is attained simultaneously at all points of the edge $(ij)$, then the vector $\psi$ is perpendicular to the edge $(ij)$. For a singular control it is hence necessary that the vector $\psi$ is perpendicular to the edge $(ij)$ during some open interval of time $t\in(t_1,t_2)$. By differentiating $\psi(t)$ along the trajectories of system (\ref{eq:model_pmp_system}) one obtains that all vectors
$\phi(t)$, $x(t)$, $y(t)$ and $u(t)$ are perpendicular to the edge $(ij)$ for this interval.

\begin{defn}
We shall call an optimal trajectory {\it semi-singular} relative to the edge $(ij)$ of the triangle $\Omega$ if the trajectory belongs to the line $A$ containing the altitude of $\Omega$ which is perpendicular to the edge $(ij)$.
\end{defn}

\subsection{Barycentric coordinates for the case of an equilateral triangle}

The proof of the theorems on the chaotic nature of the optimal synthesis relies on key elements of the optimal synthesis of problem
(\ref{problem:model}) for the case when the triangle $\Omega$ is equilateral. Therefore, the main goal of this subsection will be
the determination of the key element of the optimal synthesis in this important special case. The triangle $\Omega$  will be supposed to be equilateral and centered on the origin (in what follows for the sake of brevity we shall call this case "the case of an equilateral triangle").

To describe explicitly some important examples of optimal trajectories we need a suitable coordinate system. Since the set $\Omega$ is an equilateral triangle centered on the origin, the most convenient way is to use the barycentric coordinate system. (Generally speaking, the sum of the coordinates must equal 1 in barycentric coordinates. Nevertheless, with a little abuse of notation we shall use the term "barycentric".)

\[
	\left\{\begin{array}{llll}
		x=(x^1,x^2,x^3), & x^1+x^2+x^3 = 0; & y=(y^1,y^2,y^3), & y^1+y^2+y^3 = 0;\\
		\phi=(\phi_1,\phi_2,\phi_3), & \phi_1+\phi_2+\phi_3 = 0; & \psi=(\psi_1,\psi_2,\psi_3), & \psi_1+\psi_2+\psi_3 = 0;\\
		u=(u^1,u^2,u^3), & u^1+u^2+u^3 = 0; &u_i\le1, i=1,2,3.&\\
	\end{array}\right.
\]

\noindent The vertices of the triangle $\Omega$ are the points
\footnote{We consider the case when the distance from the vertices to the origin equals $\sqrt{6}$. Any case where the triangle $\Omega$ has a different size can be reduced to that particular one
be an evident change of coordinates.} $(-2,1,1)$, $(1,-2,1)$, and $(1,1,-2)$. We shall call these the first, the second, and the third vertex of the triangle $\Omega$, respectively. In terms of barycentric coordinates, the condition of the maximum in (\ref{eq:model_pmp_system}) is rewritten in the form

\[
	u^i = -2, u^j=u^k=1,\mbox{ if } \psi_i<\min(\psi_j,\psi_k), \mbox{ where } \{i,j,k\} = \{1,2,3\},
\]

\noindent with the exception of the cases when $\psi_i=\psi_j\le\psi_k$.

The action of the discrete group $S_3$ on the barycentric coordinates consists in a permutation of the indices. Let us denote by $A_{ij}$ the line in $U$ that is symmetric relative to the transposition $(ij)\in S_3$. By Lemma \ref{lm:semi_singular} it follows that by restriction to each of the three lines $A_{12}$, $A_{13}$, and $A_{23}$ we obtain a one-dimensional Fuller problem with non-symmetric interval of control, namely

\[
	\begin{array}{l}
		\int\limits_0^{+\infty} x^2\,dt\to\min;\\
		\dot x=y;\ \ \dot y = u;\ \ u\in\bigl[-\sqrt{6},\frac{1}{2}\sqrt{6}\bigr].
	\end{array}
\]

Here $x$, $y$, and $u$ are one-dimensional. This problem is well-studied, its optimal synthesis has been designed and can be rewritten as an explicit expression of the coordinates, its Bellman function and the mapping $E$ have also been found explicitly (see \cite{ZelikinBorisov}). The geometrical properties of the optimal synthesis are as follows. On the plane $\R^2$ there exists a one-parametric family of trajectories that is taken onto itself by the action $g$ of the scale group $\R_+$. Each optimal trajectory reaches the origin in finite time. The optimal control has an infinite number of switchings from the left end of the interval to the right end and vice versa. After factorization of the phase space $\R^2$ (without the origin) with respect to the action of the scale group $g$
one gets the circle $S^1$, which at the same time is a periodic cycle $Z$. All optimal trajectories are taken to this cycle $Z\simeq S^1$. The cycle $Z$ is divided in two open intervals $Z^l$ and $Z^r$ by two switching points. The control on $Z^l$ is at the left end of the interval and on $Z^r$ on the right end. The countable number of control switchings on the optimal trajectories corresponds to a countable number of rotations along the cycle $Z$.

\smallskip

Therefore the mapping of the optimal synthesis of problem (\ref{problem:model}) onto the quotient space $M/g\simeq S^3$ takes all optimal trajectories from $A_{ij}$ onto the same trajectory $Z_{ij}\simeq S^1$. The trajectory $Z_{ij}$ is a periodic cycle which is divided by two switching points in two smooth intervals $Z_{ij}^s$ and $Z_{ij}^n$. On $Z_{ij}^n$ the control in the vertex $k$ of $\Omega$ is used, i.e., $u_i=u_j=1$, $u_k=-2$, $k\ne i,j$, and on $Z_{ij}^s$  the midpoint of the edge of $(ij)$ is used, $u_i=u_j=-\frac{1}{2}$, $u_k=1$. We call the first control {\it non-singular} and the second one {\it semi-singular}\footnote{The symbols $s$ and $n$ in the indices $Z_{ij}^s$ and $Z_{ij}^n$ stand for singular and non-singular} with respect to the edge $(ij)$ of the triangle $\Omega$.

So, on the quotient space $M/g$ three mutually linked cycles $Z_{ij}$ are defined. It is easy to check that their mutual linking numbers equal $1$ \cite{ZMHBasic}.

\subsection{Most important examples of periodic trajectories on \texorpdfstring{$M/g$}{M/g}}
\label{subsec:automodel_trajectories}

A very important role in constructing the optimal synthesis of problem (\ref{problem:model}) is played by the self-similar trajectories. We use this term for trajectories that are periodic up to the action $g$.

\begin{defn}
\label{defn:automodel}
We call a trajectory $(x(t),y(t),\phi(t),\psi(t))$ of system (\ref{eq:model_pmp_system}) {\it self-similar} if there exist a time instant $t_0>0$ and a number $\lambda_0>0$ such that

	\[
		g(\lambda_0)(x(t),y(t),\phi(t),\psi(t)) =
		\bigl(
			x(t_0+\lambda_0 t),y(t_0+\lambda_0 t),
			\phi(t_0+\lambda_0 t),\psi(t_0+\lambda_0 t)
		\bigr).
	\]
\end{defn}

\begin{remark}
\label{rm:lambda_0_less_1}
	If $\lambda_0<1$, then the self-similar trajectory reaches the origin in finite time $\frac{1}{1-\lambda_0}t_0$, hence by Theorem \ref{thm:model_problem_bellman} it lies in
	$M_+$ and is optimal. If $\lambda_0>1$, then the self-similar trajectory lies in $M_-$ and leaves the origin in finite time with an infinite number of switchings. If $\lambda_0=1$, then the self-similar trajectory is periodic and does not tend to the origin neither as $t\to+\infty$, nor as $t\to-\infty$. 	
\end{remark}

Since the action $g$ respects the optimal synthesis, we have that the map $g(\lambda)$ takes any self-similar trajectory with $\lambda_0<1$ to a self-similar trajectory with the same
$\lambda_0$. All trajectories of the corresponding family are projected onto the same periodic trajectory on the quotient space $M_+/g$. As it will be shown below, there exists a countable number of periodic trajectories on $M_+/g$, and consequently, there exists a countable number of families of self-similar trajectories.

We shall now describe some specific periodic trajectories on $M_+/g$. The proof of the chaotic nature of the optimal synthesis on $M_+/g$ will be based on this description. We begin with exact definitions of the Poincar\'e first return map and of the switching surface. Usually by switching surface one understands the set of points where the control on the trajectories of the Pontryagin Maximum Principle is discontinuous. But due to the existence of trajectories with semi-singular control at the midpoints of the edges of $\Omega$ we extend the definition.

\begin{defn}
\label{defn:switch_surface}
	By switching surface $\cal S$ of problem (\ref{problem:model}) will denote the set of points of the extended phase space ${\cal M}=T^*M$ where the maximum over $u$ of the scalar product $\bigl<\psi,u\bigr>$ in
(\ref{eq:model_pmp_system}) is attained at more than one point of the triangle $\Omega$. That means that	
	
	\[
		{\cal S}={\cal S}_{12}\cup{\cal S}_{23}\cup{\cal S}_{13} \subset {\cal M}.
	\]
	
	\noindent Here ${\cal S}_{ij}\subset{\cal M}$ is the set of points where $\argmax_{u\in\Omega}\bigl<\psi,u\bigr>$ contains the edge $(ij)$ of the triangle $\Omega$ (in this case the co-vector $\psi$ is perpendicular to the edge $(ij)$ of the triangle $\Omega$). By ${\cal S}_{123}$ we denote the intersection
	
	\[
		{\cal S}_{123} = {\cal S}_{12}\cap{\cal S}_{23}\cap{\cal S}_{13} = \{\psi=0\}\subset {\cal M}
	\]

	\noindent If the triangle $\Omega$ is equilateral, then in barycentric coordinates we have
	
	\[
		{\cal S}_{ij} = \{\psi_i=\psi_j\le\psi_k\},\quad k\ne i,j.
	\]	
\end{defn}

The set ${\cal S}\setminus {\cal S}_{123}$ is a smooth co-dimension 1 non-compact manifold without boundary. At any point ${\cal S}\setminus{\cal S}_{123}$ the velocity of system (\ref{eq:model_pmp_system}) suffers a tangent jump. More precisely, at any point of ${\cal S}\setminus{\cal S}_{123}$ the difference of the limits of the velocities of system (\ref{eq:model_pmp_system}) from both sides of
${\cal S}\setminus{\cal S}_{123}$ is tangent to ${\cal S}\setminus{\cal S}_{123}$. If the velocity at some point
${\cal S}\setminus{\cal S}_{123}$ is transversal to ${\cal S}\setminus{\cal S}_{123}$ from both sides, then the trajectory of system (\ref{eq:model_pmp_system}) which begins at that point does not intersect $\cal S$ during a finite interval of time. The Poincar\'e first return map $\Phi$ is defined on a subset of the manifold ${\cal S}\setminus{\cal S}_{123}$ and takes a point $q\in{\cal S}\setminus{\cal S}_{123}$ to the first intersection of the trajectory of the Hamiltonian system (\ref{eq:model_pmp_system}) starting at $q$ with the switching surface ${\cal S}$. Denote the image of $q$ by $\Phi(q)\in{\cal S}$. If the point $q$ lies on an optimal trajectory, then its image
$\Phi(q)$ lies on the same optimal trajectory. Taking into account that $M_+\cap {\cal S}_{123} = \{0\}$, we obtain $\Phi(({\cal S}\setminus{\cal S}_{123})\cap M_+)\subseteq ({\cal S}\setminus{\cal S}_{123})\cap M_+$. Let us remark that the mapping $\Phi$ is not defined for all points of $({\cal S}\setminus{\cal S}_{123})\cap M_+$.

\bigskip

Let us investigate the periodic optimal trajectories on $M_+/g$ for the case of an equilateral triangle. Clearly the trajectories $Z_{ij}$ are periodic on $M_+/g$. The remaining optimal trajectories intersect the switching surface ${\cal S}\setminus{\cal S}_{123}$ transversally. More precisely, let $q_0$ be an initial point and let $(x_1,y_1,\phi^1,\psi^1)$ be the intersection point of the optimal trajectory through $q_0$ with the switching surface ${\cal S}_{ij}$, taking place at some time instant $t_1 \in\bigl(0, T(q_0)\bigr)$. Suppose that at the intersection point not only the condition $\psi^1_i=\psi^1_j$, but also the condition $\phi^1_i=\phi^1_j$ holds, which is equivalent to the condition that the intersection is not transversal. Then by Lemma \ref{lm:semi_singular} the trajectory lies on $A_{ij}$ for $t\ge t_1$. Hence for any optimal trajectory only three cases are possible: (i) on the optimal trajectory $x(t),y(t),\phi(t),\psi(t)\in A_{ij}$ holds for all $t\ge 0$; (ii) the optimal trajectory intersects the switching surface transversally\footnote{Of course the velocity on the trajectories experiences a jump on the switching surface, but its limits from both sides of the switching surface are transversal to it.} for $0<t<T(q_0)$, and (iii) the optimal trajectory intersects the switching surface transversally until it reaches $A_{ij}$ for some $t_1 \in\bigl(0, T(q_0)\bigr)$ at some point $(x_1,y_1,\phi^1,\psi^1)$, and after that it remains on $A_{ij}$ for all $t\ge t_1$. We have proved the following lemma:

\begin{lemma}
\label{lm:periodic_transversal_S}
All periodic optimal trajectories on $M_+/g$ of system (\ref{eq:model_pmp_system}) with the exception of the semi-singular ones (see Lemma \ref{lm:semi_singular}) define periodic orbits of the Poincar\'e mapping $\Phi$ of the switching surface to itself. Moreover, any such trajectory intersects the switching surface transversally.
\end{lemma}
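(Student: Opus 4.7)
The plan is to exploit the trichotomy established in the paragraph preceding the lemma: for any non-trivial optimal trajectory out of $q_0\neq 0$, exactly one of the following holds: (i) $x,y,\phi,\psi\in A_{ij}$ for all $t\geq 0$; (ii) the trajectory crosses $\mathcal{S}\setminus\mathcal{S}_{123}$ only transversally on $(0,T(q_0))$; (iii) it crosses $\mathcal{S}\setminus\mathcal{S}_{123}$ transversally on $(0,t_1)$ for some $t_1\in(0,T(q_0))$ and then lies in $A_{ij}\oplus A_{ij}$ for $t\geq t_1$. I need to show that a non-semi-singular periodic orbit on $M_+/g$ must be of type (ii), and then extract the statement about $\Phi$.

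Case (i) is excluded by hypothesis. To rule out case (iii), let $\gamma\subset M_+/g$ be a closed orbit of the projected flow and suppose for contradiction that some (equivalently every) lift of $\gamma$ to $M_+$ is of type (iii). At time $t_1$ the full phase point $(\widehat q,\widehat p)$ enters $A_{ij}\oplus A_{ij}$, and by Lemma \ref{lm:semi_singular} it remains there for all subsequent $t$, hence coincides with a semi-singular extremal. Its projection to $M_+/g$ therefore lies on the closed orbit $Z_{ij}\subset M_+/g$ for every $t\geq t_1$. By Theorem \ref{thm:model_problem_bellman} the optimal trajectory through any point of $M_+$ is unique, so the induced flow on $M_+/g$ is deterministic, and two closed orbits sharing even a single point must coincide. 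Therefore $\gamma=Z_{ij}$, placing us back in case (i) and contradicting the case-(iii) requirement that $[q_0]\notin Z_{ij}$.

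Consequently $\gamma$ is of type (ii). By Remark \ref{rm:lambda_0_less_1}, its lift to $M_+$ is a self-similar trajectory with $\widehat q(T,q_0)=g(\lambda_0)q_0$, $\lambda_0<1$, over one period of length $T$. The corresponding arc in $M_+$ is compact (bounded away from the origin), the crossings with the codimension-one manifold $\mathcal{S}\setminus\mathcal{S}_{123}$ are all transversal and hence isolated, so there are only finitely many crossings $\tau_1<\cdots<\tau_n$ in $[0,T)$. Set $q_k=\widehat q(\tau_k,q_0)\in\mathcal{S}$. Then $\Phi(q_k)=q_{k+1}$ for $k<n$ and $\Phi(q_n)=g(\lambda_0)q_1$. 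Since $\Phi$ commutes with the scale action of $g$, the induced return map on $\mathcal{S}/g$ admits $\{[q_1],\ldots,[q_n]\}$ as a genuine periodic orbit, which is the claim.

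The principal delicate step is the exclusion of case (iii); it rests entirely on the uniqueness part of Theorem \ref{thm:model_problem_bellman}, which upgrades the projected flow on $M_+/g$ to a deterministic dynamical system and thereby forbids two distinct closed orbits from meeting. The remainder is a transversality-plus-compactness counting argument together with the $g$-equivariance of $\Phi$.
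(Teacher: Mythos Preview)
Your proof is correct and follows essentially the same route as the paper, which simply declares the lemma as an immediate consequence of the trichotomy (i)--(iii) in the preceding paragraph. You have filled in the two steps the paper leaves implicit: the exclusion of case (iii) for a closed orbit on $M_+/g$ via forward determinism of the optimal flow (so a closed orbit touching $Z_{ij}$ must equal $Z_{ij}$), and the finiteness of transversal crossings per period by compactness. The only cosmetic point is that the passage from a closed orbit on $M_+/g$ to a self-similar lift with $\lambda_0<1$ is not literally the content of Remark~\ref{rm:lambda_0_less_1} but rather of Definition~\ref{defn:automodel} together with uniqueness of optimal trajectories; the conclusion is unaffected.
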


Let us remark that the arc $\gamma$ of the trajectory which joins $q$ with $\Phi(q)$ is a smooth curve, and the vectors $x(t)$, $y(t)$, $\phi(t)$, and $\psi(t)$ on this arc
$\gamma$ are polynomials in $t$ of second, first, third, and fourth degree, respectively.

Since the action $g$ respects the Hamiltonian system
(\ref{eq:model_pmp_system}), it commutes with the Poincar\'e mapping $\Phi$, and it induces a mapping of a subset of
$({\cal S}\cap M_+)/g$ to itself which with some abuse of notation will also be denoted by $\Phi$.

Consider a periodic trajectory (i.e., a cycle) on $M_+/g$ with $k$ intervals of smoothness, i.e., which intersects the quotient ${\cal S}/g$ of the switching surface successively at points $z_0$, $z_1$, ..., $z_{k-1}$, $z_k=z_0$. One can find such trajectories by considering the system of equations

\[
	\Phi(z_0) = z_1,\ldots,\Phi(z_{k-1})=z_0\quad \Leftrightarrow\quad \Phi^k(z_0)=z_0.
\]

\begin{figure}
	\centering
	\begin{subfigure}[b]{0.28\textwidth}
		\centering
		\includegraphics[width=0.6\textwidth]{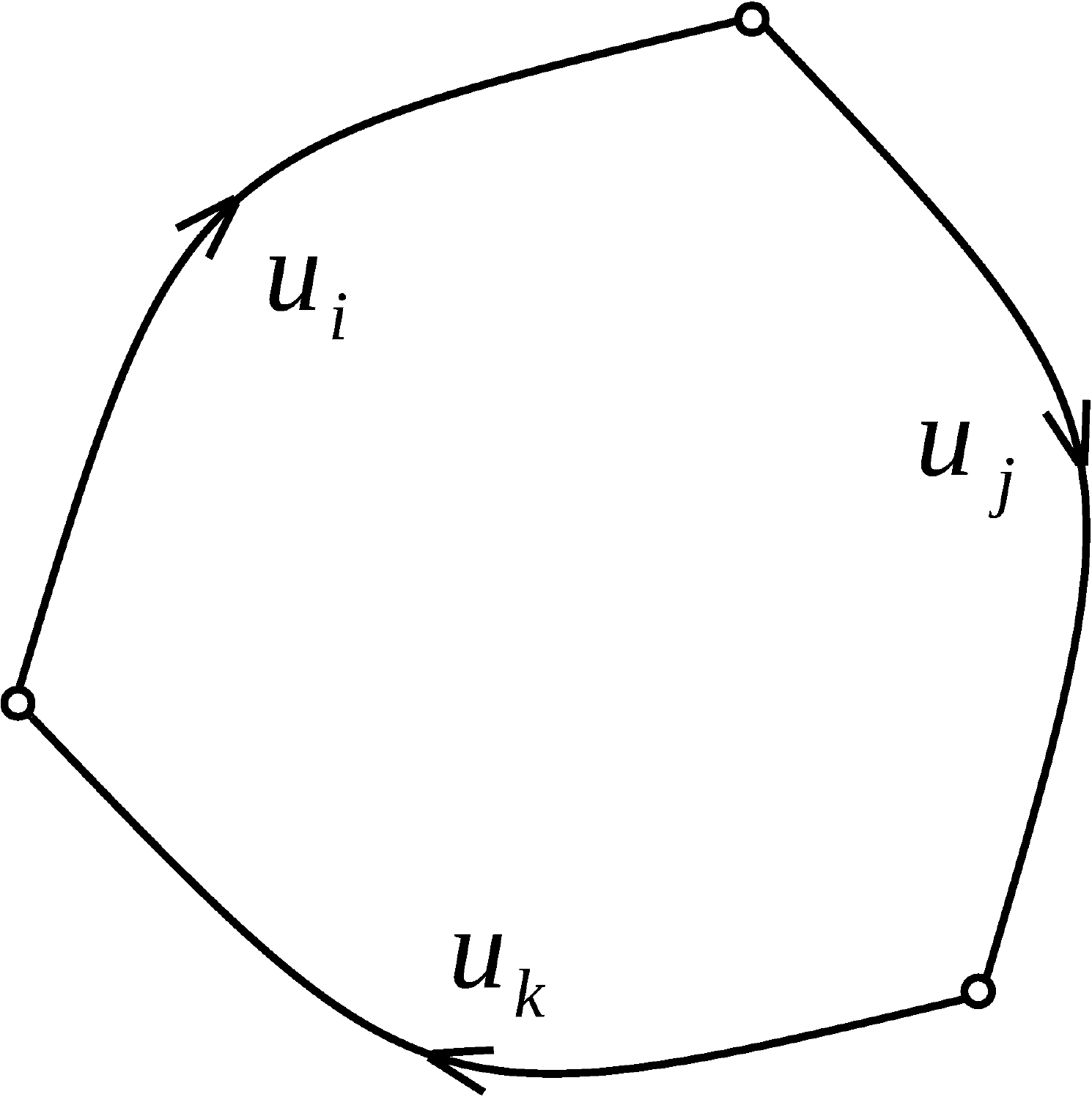}
		\caption{Three-link cycle $Z^\pm$.}
	\end{subfigure}
	\ 
	\begin{subfigure}[b]{0.28\textwidth}
		\centering
		\includegraphics[width=0.6\textwidth]{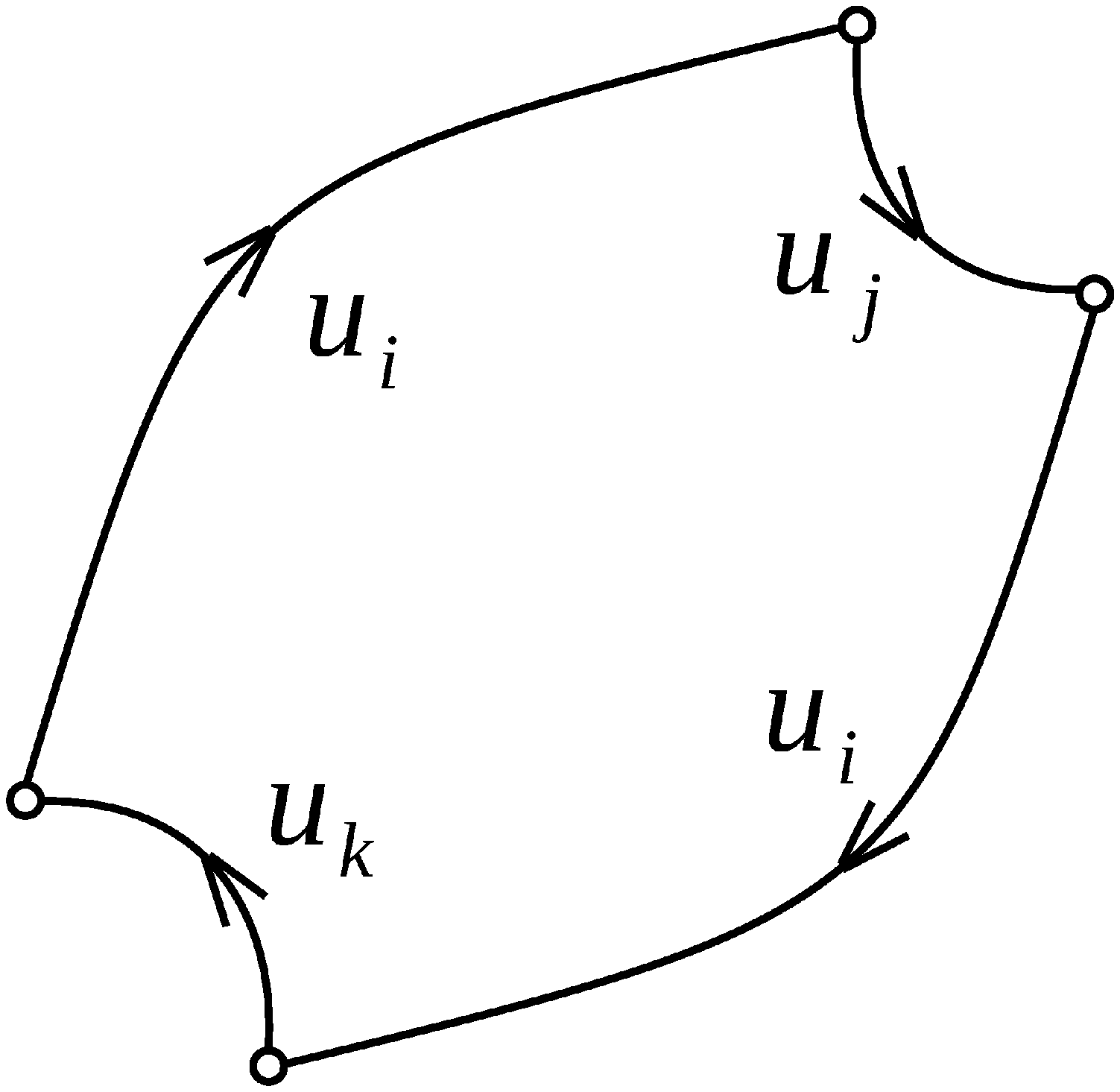}
		\caption{Four-link cycle $Q^i$}
	\end{subfigure}
	\ 
	\begin{subfigure}[b]{0.28\textwidth}
		\centering
		\includegraphics[width=0.6\textwidth]{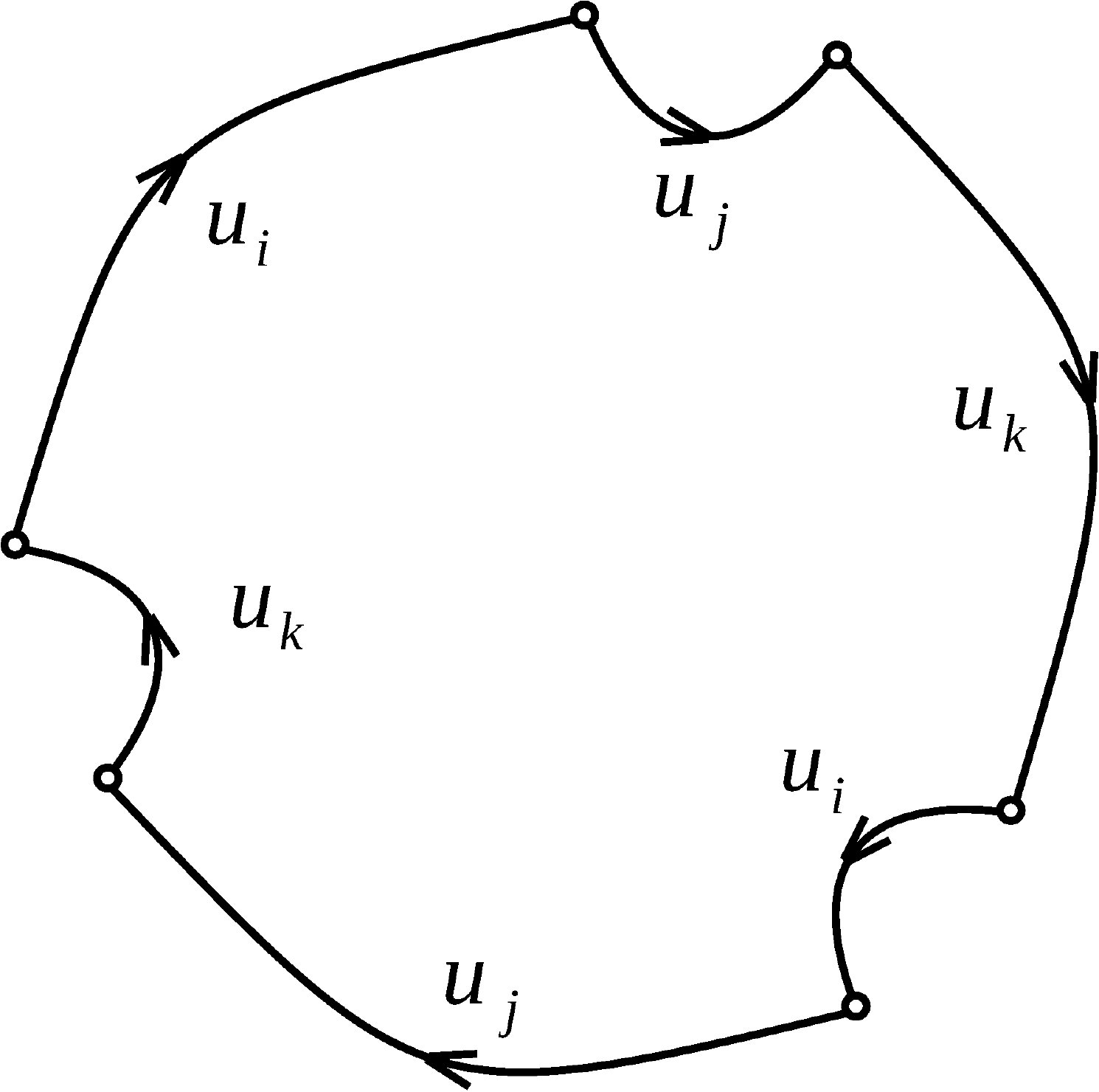}
		\caption{Six-link cycle $R^\pm$}
	\end{subfigure}
	\caption{Schematic depicting of the periodic trajectories $Z^\pm$, $Q^i$, and $R^\pm$ on the sphere $M_+/g$.}
	\label{fig:3_4_6_cycles}
\end{figure}

\begin{lemma}[\cite{ZMHBasic}]
\label{lm:model_probel_3_4_6_cycles}
   There exist the following periodic trajectories on $M_+/g$:
		\begin{enumerate}[(i)]
		\item two three-link cycles $Z^{\pm}$.
The control on each of these cycles runs cyclically through all three vertices of the triangle $\Omega$, clockwise $1\to2\to3\to1$	and counter-clockwise $1\to3\to2\to1$, respectively.			
        \item three four-link cycles $Q^i$, $i=1,2,3$.
The control on $Q^i$ alternates on the vertices of the triangle $\Omega$ in the following order:
$i\to j\to i\to k\to i$, where $j,k\ne i$.

		\item two six-link cycles $R^\pm$. The control on $R^+$ runs through all three vertices of the triangle $\Omega$ in the order $1\to2\to3\to1\to2\to3\to1$, and on
		$R^-$ in the reverse order.
	\end{enumerate}
\end{lemma}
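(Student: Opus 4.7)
The plan is to exploit the discrete symmetry group $S_3$ acting on $\Omega$ together with the scale group $g$, reducing the construction of each listed periodic orbit to a fixed-point problem for a symmetry-reduced Poincar\'e map on a low-dimensional slice of ${\cal S}/g$. In each cell where the control is constant at a vertex of $\Omega$, system (\ref{eq:model_pmp_system}) integrates explicitly: $y(t)$ is linear, $x(t)$ quadratic, $\phi(t)$ cubic, and $\psi(t)$ quartic in $t$. Hence the first-return map $\Phi$ restricted to a single link is a semi-algebraic map (polynomial flow plus smallest positive root of a degree-four polynomial), and the $S_3$-action induced on ${\cal M}/g$ commutes with $\Phi$ and with $g$, so any $S_3$-equivariant reduction is legitimate.

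For the three-link cycles $Z^\pm$, let $\sigma_+$ and $\sigma_-$ denote the two three-cycles in $S_3$. A trajectory whose control runs $1\to 2\to 3\to 1$ has consecutive switching points related by $z_{k+1}=\sigma_+(z_k)$, so $Z^+$ is produced as a fixed point on ${\cal S}_{12}/g$ of the reduced map $\sigma_+^{-1}\Phi$, and analogously for $Z^-$. For the four-link cycles $Q^i$, the pattern $i\to j\to i\to k\to i$ is invariant under the transposition $\tau_{jk}$ composed with a half-period shift, so $Q^i$ corresponds to a fixed point of $\tau_{jk}\Phi^2$, which lies in the one-dimensional $\tau_{jk}$-fixed locus of ${\cal S}_{ij}/g$. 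For the six-link cycles $R^\pm$ one seeks a fixed point of $\sigma_\pm^{-1}\Phi^3$ which is not already a fixed point of $\sigma_\pm^{-1}\Phi$; this latter exclusion distinguishes $R^\pm$ from $Z^\pm$ and forces the cycle to have six rather than three distinct switching points.

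To locate each candidate I would parameterize the initial point by the free parameters left after the symmetry reduction and after quotienting by $g$, and express the first switching time as the smallest positive root of the algebraic equation $\psi_i(t)=\psi_j(t)$. The symmetry-reduced return equation then becomes a polynomial system in the initial parameters and the switching times, solvable explicitly (or, when necessary, rigorously via interval arithmetic). For each solution one checks that the resulting self-similar trajectory has scaling coefficient $\lambda_0<1$; by Remark \ref{rm:lambda_0_less_1} this places the trajectory in the optimal manifold $M_+$, so that the constructed orbit is truly an optimal periodic orbit on $M_+/g$.

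The main obstacle I expect is not the existence of formal solutions of the return equations but the verification of the strict inequality $\psi_k\ge\psi_i=\psi_j$ in the definition of ${\cal S}_{ij}$ along each link: an algebraic solution of the reduced return equation could violate this inequality, or the polynomial $\psi_i-\psi_j$ could acquire a smaller positive root than expected, in which case another stratum ${\cal S}_{ik}$ or ${\cal S}_{jk}$ is crossed before the prescribed switch, destroying the pattern. Controlling the positive roots of the three degree-four polynomials $\psi_i-\psi_j$, $\psi_i-\psi_k$, $\psi_j-\psi_k$ on each candidate link is therefore the essential technical step, and it is precisely what must be carried out to assert the existence of all eight cycles $Z^\pm,\ Q^1,Q^2,Q^3,\ R^\pm$ simultaneously rather than of some proper subset.
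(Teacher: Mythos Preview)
Your overall strategy coincides with the paper's: exploit the $S_3$-symmetry to reduce each periodic orbit to a fixed point of $\alpha\circ\Phi^m$ for an appropriate $\alpha\in S_3$ and small $m$, integrate the polynomial flow on one link, normalise via the $g$-action, and then verify $\lambda_0\in(0,1)$ together with the switching inequalities. The paper in fact normalises the link time to $t_0=1$, solves the remaining equations linearly in $(x_0,y_0,\phi^0,\psi^0)$, and reduces everything to a single reciprocal polynomial in $\lambda_0$ whose root in $(0,1)$ is isolated by the Sturm method rather than interval arithmetic.

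There is, however, a concrete mistake in your reduction for the six-link cycles. Fixed points of $\sigma_\pm^{-1}\Phi^3$ satisfy $\Phi^3(z)=\sigma_\pm(z)$ and hence $\Phi^9(z)=\sigma_\pm^3(z)=z$: they produce nine-link (or three-link) orbits, not six-link ones. Equivalently, on the six-link pattern $1\to2\to3\to1\to2\to3$ the switching points $z_0$ and $z_3=\Phi^3(z_0)$ lie on the \emph{same} stratum ${\cal S}_{31}$, and no $3$-cycle in $S_3$ preserves that stratum, so no equation of the form $\sigma_\pm^{-1}\Phi^3(z)=z$ can encode $R^\pm$. The correct reduction, used in the paper, is $(\alpha\circ\Phi^2)(z)=z$ with $\alpha$ a $3$-cycle: then $\Phi^2(z)=\alpha^{-1}(z)$ and $\Phi^6(z)=z$, and the exclusion of the trivial solution $\Phi^2(z)=\alpha^{-1}(z)$ with $\Phi(z)=\alpha^{-1}(z)$ (i.e.\ of $Z^\pm$) yields the genuine six-link cycle.

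A smaller point: for $Q^i$ the reduced equation $\tau_{jk}\Phi^2(z)=z$ is right, but your claim that the solution lies in the $\tau_{jk}$-fixed locus of ${\cal S}_{ij}/g$ is misplaced. The transposition $\tau_{jk}$ sends ${\cal S}_{ij}$ to ${\cal S}_{ik}$, so there is no $\tau_{jk}$-fixed locus inside ${\cal S}_{ij}$; the fixed-point equation simply asserts $\Phi^2(z)=\tau_{jk}(z)\in{\cal S}_{ik}$, not that $z$ itself is $\tau_{jk}$-invariant.
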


\begin{proof}

We shall detail the proof from \cite{ZMHBasic} for item (i). It is rather difficult to directly solve the equation $\Phi^3(z)=z$, $z\in{\cal S}/g$, because the arising polynomials have too large degrees. To simplify the calculation we will use the discrete group of symmetries $S_3$ of the optimal synthesis, which was described in the subsection \ref{sec:symmetries}. Let
$\alpha=(123)\in S_3$. We assume that the sought cycle is invariant under the action of $\alpha$ and we shall find the solution of the corresponding simplified equation
	
		\begin{equation}
	\label{eq:automodel_alpha_Phi_z_eq_z}
		\bigl(\alpha\circ\Phi\bigr)(z)=z.
	\end{equation}
	
	\noindent The solution of the simplified equation will be evidently a solution of the initial equation $\Phi^3(z)=z$, because $\alpha$ and $\Phi$ commute.
		
Let some pre-image $\widetilde z\in\pi^{-1}(z)$ have the coordinates $\widetilde z = (x_0,y_0,\phi^0,\psi^0)$ and denote by
$\widetilde\gamma$ the trajectory of system (\ref{eq:model_pmp_system}) in $\cal M$ which passes through
$\widetilde z$ at $t = 0$, $\widetilde\gamma(0) = \widetilde z$.
Let us now describe the switching of the control. Suppose at the point $\widetilde z$ the control switches from the third vertex of the triangle $\Omega$ to the first one. This implies that
	
	\begin{equation}
	\label{eq:ineq_switch_point}
		\psi_1^0=\psi_3^0<\psi_2^0\quad\mbox{ and }\quad\phi_1^0>\phi_3^0.
	\end{equation}
	
	\noindent Since the control $u_0$ is constant on the arc from the point $z$ to the point $\Phi(z)$ and given by $u_0=(u^1_0,u^2_0,u^3_0) = (-2,1,1)$, we have that $\widetilde \gamma$ is given by	
		
	\begin{equation}
	\label{eq:x_y_phi_psi_ot_t}
		\left\{\begin{array}{l}
			y(t)    = y_0 + u_0t\\
			x(t)    = x_0 + y_0t + \frac{1}{2}u_0t^2\\
			\phi(t) = \phi^0 + x_0t + \frac{1}{2}y_0t^2 + \frac{1}{6}u_0t^3\\
			\psi(t) = \psi^0 - \phi^0t -\frac{1}{2}x_0t - \frac{1}{6}y_0t^2 - \frac{1}{24}u_0t^3\\
		\end{array}\right.
	\end{equation}
	
on this arc. Suppose the trajectory $\widetilde\gamma$ intersects the switching surface again at the time instant $t_0>0$, then $\alpha\bigr(\pi(\widetilde\gamma(t_0))\bigl) = z$, because $z$ satisfies equation (\ref{eq:automodel_alpha_Phi_z_eq_z}). In other words, there exists a number $\lambda_0>0$ such that
$\alpha(\widetilde\gamma(t_0)) = g(\lambda_0)\widetilde z$. 	
This last condition can be rewritten by virtue of (\ref{eq:x_y_phi_psi_ot_t}) as a system of algebraic equations that are linear in $x^0$, $y^0$, $\phi^0$, and
$\psi^0$, and polynomial in $t_0$ and $\lambda_0$. The solution of this system is defined only up to the action $g$. This follows from the fact that the point $\widetilde z\in\pi^{-1}(z)$ is defined only up to the action $g$ too.	 Hence, without loss of generality, one can set $t_0=1$. Solving the resulting system with respect to the linearly entering variables $x^0$, $y^0$, $\phi^0$, and $\psi^0$ and substituting the solution into the equality $\psi_1^0=\psi_3^0$ from (\ref{eq:ineq_switch_point}), we obtain a polynomial equation on $\lambda_0$:

	\[
		P_Z(\lambda_0) = \lambda_0^6 - 4\lambda_0^4 - 7\lambda_0^3 - 4 \lambda_0^2 + 1 = 0.
	\]
	
	\noindent By the Sturm method we obtain that the polynomial $P_Z(\lambda_0)$ has exactly one root in the interval $(0,1)$. This root yields a point $z$ satisfying inequalities (\ref{eq:ineq_switch_point}).
Hence on $M_+/g$ there exists a unique periodic trajectory $Z^+$ which is invariant with respect to the action of $\alpha$. The other trajectory $Z^-\in M_+/g$ is obtained from $Z^+$ by application of the reflection $(12)\in S_3$ or by replacement of the permutation $\alpha=(123)$ by $\alpha=(132)$.

	\smallskip

Items (ii) and (iii) are proved similarly to item (i). To obtain the four-link cycle on $M_+/g$ it is necessary to solve the equation $(\alpha\circ\Phi^2)(z)=z$, where $\alpha$
is one of the three transpositions $(12)$, $(23)$ or $(13)$ from $S_3$. The method leads to the polynomial equation

	\[
		\begin{array}{lll}
		P_Q(\lambda) =&
		1 + 4 \lambda + 60 \lambda^2 + 220 \lambda^3 - 607 \lambda^4 - 5080 \lambda^5 - 19\,700 \lambda^6 - 73\,944 \lambda^7 -\\
		&- 192\,258 \lambda^8 - 416\,272 \lambda^9 - 918\,956 \lambda^{10} - 1\,609\,184 \lambda^{11} - 2\,528\,300 \lambda^{12} -\\
		&- 4\,868\,880 \lambda^{13} - 5\,019\,696 \lambda^{14}
		- 10\,839\,184 \lambda^{15} - 8\,659\,545 \lambda^{16} -\\ &-18\,568\,404 \lambda^{17} - 12\,399\,696 \lambda^{18} - 27\,180\,572 \lambda^{19} - 14\,695\,579 \lambda^{20}\\
		& - 31\,988\,656 \lambda^{21} - 16\,556\,344 \lambda^{22} - 31\,988\,656 \lambda^{23} - 14\,695\,579 \lambda^{24} -\\
		&- 27\,180\,572 \lambda^{25} - 12\,399\,696 \lambda^{26} - 18\,568\,404 \lambda^{27} - 8\,659\,545 \lambda^{28} -\\
		&-10\,839\,184 \lambda^{29} - 5\,019\,696 \lambda^{30} - 4\,868\,880 \lambda^{31} - 2\,528\,300 \lambda^{32} - \\
		&-1\,609\,184 \lambda^{33} -918\,956 \lambda^{34} - 416\,272 \lambda^{35} - 192\,258 \lambda^{36} - 73\,944 \lambda^{37} -\\
		&-19\,700 \lambda^{38}- 5080 \lambda^{39} - 607 \lambda^{40} + 220 \lambda^{41} + 60 \lambda^{42} + 4 \lambda^{43} + \lambda^{44} = 0,\\
		\end{array}
	\]

	\noindent which also has a unique root in the interval
$(0,1)$. The coordinates of the points $z$ and $\Phi(z)$
satisfy inequalities which are similar to (\ref{eq:ineq_switch_point}), and consequently, on $M_+/g$
there exist three four-link periodic trajectories which are invariant with respect to a permutation of order 2.
	
To obtain the six-link cycles on $M_+/g$ it is necessary to solve the equation $(\alpha\circ\Phi^2)(z)=z$, where $\alpha$
is one of the cyclic permutations $(123)$, $(132)$ from $S_3$. As a result one obtains

	\[
		\begin{array}{ll}
			P_R(\lambda) =&
			\lambda^{20} - 12 \lambda^{19} + 30 \lambda^{18} + 66 \lambda^{17} - 117 \lambda^{16} - 504 \lambda^{15} - 207 \lambda^{14} +\\
			&+ 942 \lambda^{13} + 1271 \lambda^{12} - 390 \lambda^{11} - 1599 \lambda^{10} - 390 \lambda^9 + 1271 \lambda^8 + 942 \lambda^7 -\\
			&- 207 \lambda^6 - 504 \lambda^5 - 117 \lambda^4 + 66 \lambda^3 + 30 \lambda^2 - 12 \lambda + 1 = 0.\\
		\end{array}
	\]
	
	\noindent This polynomial also has a unique root in the interval $(0,1)$, and the coordinates of the points $z$ and
$\Phi(z)$ also satisfy inequalities which are similar to (\ref{eq:ineq_switch_point}). Consequently on $M_+/g$ we have two six-link periodic trajectories which are invariant with respect to a permutation of order 3.	
	\end{proof}

Note that the polynomials $P_Z$, $P_Q$ and $P_R$ are reciprocal. This is because the described method allows one to find self-similar trajectories on $\cal M$ disregarding the condition
$\lambda_0<1$. It remains to say that the mapping $g(-1)$
takes any self-similar trajectory on $\cal M$ to another self-similar trajectory, but it changes also the direction of the time flow. Hence $\lambda_0$ is replaced by $\frac{1}{\lambda_0}$, which also must be a root of the corresponding polynomial.

\subsection{Behaviour of optimal trajectories in the neighbourhood of periodic trajectories}
\label{subsec:periodic_2_4_6_fuller}

We shall show that almost all optimal trajectories of problem (\ref{problem:model}) are attracted in the backward time direction to one of the two three-link cycles $Z^\pm$ found in the preceding subsection, and in the forward time direction almost all optimal trajectories reach in finite time with a countable number of switches one of the three two-link cycles $Z_{ij}$. Here "almost all" means that on $M/g$ there exists a set of points $X$ of full Lebesgue measure which satisfy this property. The cycles $Q^i$ and $R^\pm$ do not belong to $X$. Moreover, there exists a countable number of periodic trajectories that do not belong to $X$. The set $(M/g)\setminus X$ has a non-integer dimension, and the behaviour of the trajectories in it is chaotic.

To obtain these results it is necessary to recall the results of \cite{ZMHBasic}. In this subsection we shall describe the results of \cite{ZMHBasic} along with the necessary explanations.

We begin with the behaviour of optimal trajectories in the vicinity of the cycles $Z^\pm$ and $Z_{ij}$. The cycles
$Z^\pm$ that have been found in the preceding subsection are repelling on the sphere $M/g$. More precisely, consider the Poincar\'e mapping $\Phi$  of the switching surface to itself. Each of the cycles $Z^\pm$ intersects the switching surface at three points $z^\pm_i$, $i=1,2,3$. Each such intersection point is a fixed point of the mapping $\Phi^3$. In the paper
\cite{ZMHBasic} the eigenvalues of the differential of the mapping $\Phi^3$ at the points $z^\pm_i$ have been calculated explicitly. These eigenvalues, naturally, do not depend on the choice of the point $z^\pm_i$ and all their absolute values are strictly greater than 1. Hence in the vicinity of $Z^\pm$ any trajectory will approach the corresponding cycle $Z^\pm$ in backward time. Let us remark that, generally speaking, this does not yet prove that almost all trajectories on
 $M/g$ tend in backward time to $Z^\pm$.

The behaviour of the optimal trajectories in the neighbourhood of the cycles $Z_{ij}$ has a different structure. As mentioned above, each two-link cycle consists of two parts
$Z_{ij}^n$ and $Z_{ij}^s$. The control on the interval
$Z_{ij}^n$ is non-singular and lies at the $k$ vertex of the triangle $\Omega$, $k\ne i,j$. The control on the interval
$Z_{ij}^s$ is semi-singular relative to the edge $(ij)$ and lies at its midpoint.

\smallskip

Consider a point $(x_0,y_0)\in M$ on $A_{ij}$, i.e., $x_0,y_0\in A_{ij}$. Hence by Lemma
\ref{lm:semi_singular} we have $\pi(x_0,y_0)\in Z_{ij}$. Suppose at first that the point $(x_0,y_0)$ is not a switching point and $\pi(x_0,y_0)$ lies on $Z_{ij}^n$. The control in $\pi(x_0,y_0)$ is the $k$ vertex of the triangle $\Omega$, $k\ne i,j$. Then for a point $(x_1,y_1)\in M$ sufficiently close to $(x_0,y_0)$,
the point $E(x_1,y_1)$ is sufficiently close to
$E(x_0,y_0)$ by Theorem \ref{thm:model_problem_bellman}. Consequently, the control on the optimal trajectory that emanates from $(x_1,y_1)$ coincides during some interval of time with the non-singular control on $Z_{ij}^n$.

Now suppose that $\pi(x_0,y_0)$ lies on $Z_{ij}^s$ and, consequently, the optimal control on $Z_{ij}^s$ is the midpoint of the edge $(ij)$ of the triangle $\Omega$. Denote
$(\phi^0,\psi^0)=E(x_0,y_0)$. Then $\psi^0_i=\psi^0_j>\psi^0_k$. If the point $(x_1,y_1)$ belongs to the neighbourhood of $(x_0,y_0)$, $(\phi^1,\psi^1)=E(x_1,y_1)$, and $\psi^1_i\ne\psi^1_j$, then due to the continuity of the mapping $E$, the control on the optimal trajectory passing through the point $(x_1,y_1)$ must be either in the $i$ or the $j$ vertex of the triangle $\Omega$.

Moreover, in  \cite{ZMHBasic} it has been shown that there exists a neighbourhood $V$ of the interval $Z_{ij}^s$ in $M/g$ with the following property. Let $(x_1,y_1)$ be an arbitrary point such that the image $\pi(x_1,y_1)$ lies in $V$. Then the optimal trajectory emanating from $(x_1,y_1)$ reaches $\pi^{-1}[Z_{ij}^s]$ in finite time without quitting the preimage $\pi^{-1}[V]$. Therefore the optimal control experiences a countable number of switchings between the $i$ and the $j$ vertex of the triangle $\Omega$. This process is similar to the chattering mode in the one-dimensional Fuller problem. The only difference is that the role of the interval of admissible controls is played by the edge $(ij)$ of the triangle
$\Omega$. A detailed proof and exact analytic
formulae can be found in \cite{ZMHBasic}.

\section{First theorem on the chaotic behaviour of trajectories in integral vortices}

In this section we prove Theorem \ref{thm:model_chaos_any_triangle} on the chaotic behaviour of trajectories in the integral vortex\footnote{By an {\it integral vortex} we mean the union of all trajectories of the dynamical system which pass through a given singular point.} of Hamiltonian system (\ref{eq:model_pmp_system}) in the case when the triangle $\Omega$ in model problem (\ref{problem:model}) is sufficiently close to an equilateral one. This theorem is the first in a series of theorems on the chaotic nature of trajectories in Hamiltonian systems with discontinuous right-hand side which are proven in this paper. Moreover, the main results on the structure of the optimal synthesis in problem (\ref{problem:model}) obtained in this section also serve as a starting point in the proof that such chaotic behaviour of trajectories in integral vortices is generic for high-dimensional Hamiltonian systems with discontinuous right-hand side. For the case of an equilateral triangle we shall prove Theorem \ref{thm:model_chaos_equilateral_triangle} in the subsequent sections, which is a generalization of Theorem \ref{thm:model_chaos_any_triangle} and which among other results furnishes bounds on the Hausdorff and Minkowski dimensions of the non-wandering set and the corresponding topological entropy.

\subsection{Statement of the first theorem on the chaotic behaviour of trajectories in the model problem}

Let us begin with the statement of the theorem on the chaotic nature of optimal trajectories in optimal control problem (\ref{problem:model}) and furnish all necessary explanations. The first three assertions of the theorem describe the set $\Xi \subset M_+$, which is the object of the theorem and is comprised of trajectories of Hamiltonian system (\ref{eq:model_pmp_system}). The last assertion describes the chaotic dynamics of the trajectories on this set. The set $\Xi$ is the analog of the set of non-wandering trajectories which is typical for integral vortices.

Every trajectory on $\Xi$ intersects the stratified manifold ${\cal S}={\cal S}_{12}\cup{\cal S}_{13}\cup{\cal S}_{23}$, which represents the set of discontinuities of the right-hand side of system (\ref{eq:model_pmp_system}), a countable number of times. In Theorem \ref{thm:model_chaos_any_triangle} we therefore describe the chaotic dynamics of these trajectories in terms of the intersections of the strata ${\cal S}_{ij}$. More precisely, the sequence of intersections of these strata can be encoded by elements of the space $\Sigma_{01}$ of bilaterally infinite words consisting of the letters 0 and 1, equipped with the standard direct product topology. The space $\Sigma_{01}$ is homeomorphic to Smale Horseshoe. Denote by $l:\Sigma_{01}\to\Sigma_{01}$ the topological Markov chain of the Bernoulli shift, i.e., let $l$ be the mapping that shifts every word on position to the left.

\begin{thm}
\label{thm:model_chaos_any_triangle}
There exists $\varepsilon>0$ such that if the angles of the triangle $\Omega$ in problem (\ref{problem:model}) are different from $\frac{\pi}{3}$ by no more than $\varepsilon$ and the distance from the center\footnote{As the center of $\Omega$ one may take, e.g., the orthocenter, the centroid, the center of the incircle or the circumcenter. This is possible because the triangle is close to equilateral and the distances between these points do not exceed $\varepsilon C\diam\Omega$ for some fixed $C > 0$.} of $\Omega$ to the origin does not exceed $\varepsilon\,\diam\Omega$, then there exists a subset $\Xi$ of the extended phase space ${\cal M}=T^*M=\{(x,y,\phi,\psi)\}$ of Hamiltonian system (\ref{eq:model_pmp_system}) with the following properties:
	
	\begin{enumerate}[(I)]
		\item For every point $z\in\Xi$ there exists a time instant $T(z)<\infty$ such that the trajectory $X(t,z)$ of Hamiltonian system (\ref{eq:model_pmp_system}) which passes through $z$ is well-defined and unique for all $t\in[-\infty,T(z)]$. Moreover, the trajectory $X(t,z)$ hits the origin in time $T(z)$, i.e., $X(T(z),z)=0$.
		
		\item The set $\Xi$ comprises trajectories of Hamiltonian system (\ref{eq:model_pmp_system}) and is invariant with respect to this system in the following sense. If $z\in \Xi$, then $X(t,z) \in \Xi$ for all $t\in[-\infty,T(z))$.
		
		\item The projection of the trajectory $X(t,z)$ on the phase space $M$, prolonged by 0 for $t>T(z)$, is optimal for all $z\in \Xi$ (i.e., $\Xi\subset M_+$). The trajectory $X(t,z)$ intersects the switching surface  $\cal S$ a countable, infinite number of times at time instants $\ldots<t_{-1}<t_0<t_1<t_2\ldots<T(z)$, i.e., $X(t_k,z)\in{\cal S}$, and where $t_0\le 0< t_1$, and $t_k\to T(z)$ as $k\to+\infty$ and $t_k\to-\infty$ as $k\to-\infty$.
		
		\item Consider the dynamical system defined by the map $\Phi:\Xi\cap{\cal S}\to\Xi\cap{\cal S}$, which takes a point $z\in\Xi$ on ${\cal S}$ to the next intersection point of the trajectory $X(t,z)$ with ${\cal S}$, i.e., $\Phi(z) = X(t_1,z)$. There exists an integer $n>0$, independent of the triangle $\Omega$, such that the map $\Phi^n$ is semi-conjugate to the topological Markov chain defined by the Bernoulli shift on the disjoint union of two copies of Smale Horseshoe. In other words, there exists a surjective continuous map $\Psi_{01}$ from $\Xi\cap{\cal S}$ to the space $\bigsqcup\limits^2 \Sigma_{01}$ such that the following diagram commutes:
		
		\begin{center}
		\begin{tikzpicture}[description/.style={fill=white,inner sep=2pt}]
		        \matrix (m) [matrix of math nodes, row sep=1em, column sep=2em, text height=1.5ex, text depth=0.25ex]
		        {    {\Xi\cap S}                  & {\Xi\cap S} \\
		             \bigsqcup\limits^2 \Sigma_{01} & \bigsqcup\limits^2 \Sigma_{01} \\ };
		        \path[->,font=\scriptsize]
		        (m-1-1) edge node[auto] {$\Phi^n$} (m-1-2)
		        (m-1-2) edge node[auto] {$\Psi_{01}$} (m-2-2)
		        (m-1-1) edge node[auto] {$\Psi_{01}$} (m-2-1)
		        (m-2-1) edge node[auto] {$l$} (m-2-2);
		\end{tikzpicture}
		\end{center}
		
		\noindent Here $l$ denotes the left shift on each copy of $\Sigma_{01}$.
		
	\end{enumerate}
\end{thm}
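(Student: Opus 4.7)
I would first prove the theorem for the equilateral case (invoking the full $S_3$ symmetry) and then extend to nearby triangles by structural stability of hyperbolic invariant sets. The key hyperbolic objects are the three-link cycles $Z^{\pm}$ from Lemma \ref{lm:model_probel_3_4_6_cycles}. Restricted to the four-dimensional manifold $M_+$, each intersection point of $Z^{\pm}$ with $\mathcal{S}$ is a periodic point of $\Phi^3$ of saddle type: the one-dimensional stable direction runs along the scaling action $g$ with contraction factor $\lambda_0<1$ (the self-similarity coefficient, i.e. the relevant root of $P_Z(\lambda)=0$ from the proof of Lemma \ref{lm:model_probel_3_4_6_cycles}), while the two transverse eigenvalues of modulus strictly greater than $1$ come from the expanding eigenvalues of $\Phi^3$ on the quotient sphere $M_+/g$ obtained in \cite{ZMHBasic}. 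Thus on the three-dimensional cross-section $\mathcal{S}\cap M_+$ the map $\Phi^3$ has hyperbolic periodic orbits at $Z^{\pm}$ with $1$-dimensional stable and $2$-dimensional unstable manifolds.

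Next I would establish a transverse homoclinic intersection for $\Phi^3$ at $Z^+$ and, by the reflection in $S_3$ exchanging $Z^+$ and $Z^-$, a symmetric one at $Z^-$. Since $W^u(Z^+)$ is two-dimensional inside the three-dimensional $\mathcal{S}\cap M_+$, its intersection with the one-dimensional $W^s(Z^+)$ is generically transverse and zero-dimensional. Using the polynomial parametrization (\ref{eq:x_y_phi_psi_ot_t}) of the switching arcs together with the $S_3$-equivariance (the cyclic permutation $\alpha=(123)$ fixes $Z^+$), candidate homoclinic points can be located as roots of an explicit polynomial system and their transversality verified by computing tangent planes in closed form. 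This is the step I expect to be the main obstacle: the resulting polynomials are likely of large degree (comparable to the degree-$44$ equation $P_Q(\lambda)=0$ for the four-link cycles), so that the heart of the argument is a careful computation combining symmetry reductions with Sturm-type isolation of roots, rather than a qualitative abstract argument.

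Once the transverse homoclinic intersections are in place, the Smale--Birkhoff homoclinic theorem provides, for an appropriate iterate $\Phi^n$ (with $n=3m$, where $m$ is the return count along the homoclinic orbit), an invariant compact hyperbolic set near $Z^+$ on which $\Phi^n$ is conjugate to the Bernoulli shift on two symbols, and a disjoint symmetric one near $Z^-$. Their union, saturated under the scaling action $g$, will serve as $\Xi$. Properties (I)--(III) then follow from Theorem \ref{thm:model_problem_bellman} and Corollary \ref{corollary:M_plus_is_optimal}: since $\Xi\subset M_+$, each trajectory is optimal and reaches $0$ in finite time; since the hyperbolic set is separated from the chattering cycles $Z_{ij}$, the intersections with $\mathcal{S}$ form the doubly-infinite sequence $\ldots<t_{-1}<t_0<t_1<\ldots\to T(z)$; and $\Xi$ is invariant by construction. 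Property (IV) is exactly the semi-conjugacy supplied by Smale--Birkhoff (the passage from conjugacy on the Cantor set to semi-conjugacy on $\Xi\cap\mathcal{S}$ accounting for orbits on the boundaries of the Markov rectangles and for the $g$-fibers collapsed by $\Psi_{01}$).

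Finally, to extend the statement to triangles close to equilateral, I would invoke $C^1$-structural stability of hyperbolic basic sets and of transverse homoclinic intersections. Perturbing the angles of $\Omega$ within $\varepsilon$ of $\pi/3$ and its center within $\varepsilon\,\diam\Omega$ of the origin produces a $C^1$-small perturbation of the Hamiltonian vector field on each stratum of smoothness, together with a $C^1$-small perturbation of the stratified switching surface $\mathcal{S}$. For $\varepsilon$ small enough the hyperbolic sets and the transverse homoclinic intersections persist with the same topological integer $n$, and $\Psi_{01}$ can be transported to the perturbed system so that the semi-conjugacy diagram still commutes.
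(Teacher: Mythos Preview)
Your overall architecture (find a hyperbolic periodic orbit, locate a transverse homoclinic, invoke Smale--Birkhoff, then perturb) matches the paper's, but your choice of periodic orbit makes the plan unworkable as stated.

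You propose to build the horseshoe around the three-link cycles $Z^{\pm}$, counting a one-dimensional stable direction coming from the scaling group $g$ and two unstable directions from the quotient. The dimension count is correct, but precisely because both eigenvalues of $d\widetilde\Phi^{3}$ at $Z^{\pm}$ on the quotient have modulus $>1$ (as you yourself cite from \cite{ZMHBasic}), the local stable manifold of $Z^{\pm}$ on $\mathcal S\cap M_+$ is \emph{exactly} the $g$-orbit of the periodic point: the vertical $\mu$-fiber after blowup. The local unstable manifold, on the other hand, is tangent to the quotient directions and therefore lies in the invariant zero section $\mathcal C_0=\{\mu=0\}$. These two invariant manifolds can meet only at the fixed point itself; there is no nontrivial homoclinic point, and Smale--Birkhoff does not apply. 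In short, the contraction coming from $g$ is not ``transverse'' hyperbolicity in any useful sense---it is shared by every point and cannot produce a homoclinic tangle.

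The paper avoids this by working with the six-link cycles $R^{\pm}$, which are genuinely saddle-type \emph{on the quotient} $\mathcal C_0$: exactly one eigenvalue of $d\widetilde\Phi^{6}$ has modulus $<1$ there, giving a one-dimensional stable curve inside $\mathcal C_0\cap\mathcal S$ and a five-dimensional unstable manifold. Lemma~\ref{lm:homoclinic_point_any_triangle} then tracks this stable curve across the discontinuity set $\mathcal S_{123}$ (where it branches) and exhibits a transverse homoclinic intersection with the unstable manifold, verified numerically in the equilateral case. The horseshoe so obtained lives on $\mathcal C_0$; lifting it to $\mathcal C\cap\{\mu>0\}$ requires an additional step---controlling the growth of $\mu$ on excursions near the homoclinic point---and this is where the integer $K$ (and hence your $n$) actually comes from, not from the return count along the homoclinic orbit alone. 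Your perturbation step via $C^{1}$-stability is essentially Lemmas~\ref{lm:Poincare_transversal_robust} and~\ref{lm:robust_automodel}, and is correct in spirit.
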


\subsection{Blowup of the singularity at the vertex of an integral vortex}
\label{subsec:blowing_procedure}

For the proof of Theorem \ref{thm:model_chaos_any_triangle} we need to perform a modified blowup of the origin. Form the topological viewpoint this means that we glue a sphere $S^7$ into the origin. For this construction $\Omega$ does not need to be an equilateral triangle. The set $\Omega$ can be an arbitrary convex compact set, the only condition is that $0\in\Int\Omega$, i.e., the conditions of Theorem \ref{thm:model_problem_bellman} have to be satisfied.

Let us note, however, that the optimal trajectories of problem (\ref{problem:model}) reach the origin in finite time. Hence after the blowup the velocity vector field of (\ref{eq:model_pmp_system}) will degenerate as we approach the sphere which was glued in.

\begin{defn}
\label{defn:blowing}
	The blowup of the singularity at the origin will be performed by a mapping

	\[
		\blowing:(x,y,\phi,\psi) \mapsto (\mu,\widetilde x,\widetilde y,\widetilde \phi,\widetilde \psi),
	\]

	\noindent where $\mu\in \R_+$, and $\widetilde x\in\R^2$, $\widetilde y\in\R^2$, $\widetilde \phi\in\R^2$, and $\widetilde \psi\in\R^2$ lie on the manifold

	\begin{equation}
	\label{eq:blowing_spheroid}
		{\cal C}_0=\bigl\{|\widetilde y|^{24}+|\widetilde x|^{12}+|\widetilde \phi|^8+|\widetilde \psi|^6 = 1\bigr\}\subset \R^8.
	\end{equation}

	\noindent The mapping $\blowing$ is given by the formulae

	\begin{equation}
	\label{eq:blowing_y_x_phi_psi}
		\widetilde y = y/\mu,\quad\widetilde x = x/\mu^2,\quad
		\widetilde \phi = \phi/\mu^3\ \ \mbox{and}\ \ \widetilde \psi = \psi/\mu^4,
	\end{equation}

	\noindent where

	\begin{equation}
	\label{eq:blowing_model_mu}
		\mu = \bigl(|y|^{24}+|x|^{12}+|\phi|^8+|\psi|^6\bigr)^\frac{1}{24}.
	\end{equation}
\end{defn}

Let us carry over the action $g$ of the group $\R_+$ to the cylinder given by the coordinates $(\mu,\widetilde x,\widetilde y,\widetilde \phi,\widetilde \psi)$ by virtue of $\blowing$, such that the map $\blowing$ is equivariant with respect to the action $g$,

\[
	\blowing \circ g(\lambda) \eqdef g(\lambda) \circ \blowing \quad \Longrightarrow \quad
	g(\lambda)\bigl(\mu,\widetilde x,\widetilde y,\widetilde \phi,\widetilde \psi\bigr) =
	(\lambda\mu,\widetilde x,\widetilde y,\widetilde \phi,\widetilde \psi)\quad \forall\lambda>0.
\]

The blowup of the singularity of system (\ref{eq:model_pmp_system}) at the origin has been performed by relations (\ref{eq:blowing_y_x_phi_psi},\ref{eq:blowing_model_mu}) precisely because the action $g$ of the group $\R_+$ can then be written in a very simple form.

\begin{defn}
\label{defn:cylinder_C_model_problem}
	Let us denote the cylinder ${\cal C}_0\times\{\mu\in\R\}$ over ${\cal C}_0$ by ${\cal C}$. The manifold ${\cal C}_0$ itself will be identified with the zero section, ${\cal C}_0={\cal C}\cap\{\mu=0\}$. The switching surfaces ${\cal S}_{ij}$ shall be prolonged on ${\cal C}_0$ in a natural way. By "vertical direction" we shall assume the direction of the tangent vector $\frac{\partial}{\partial\mu}$.
\end{defn}

\begin{lemma}
\label{lm:blowing_is_diffeo}
	The blowup map $\blowing$ is a diffeomorphism from ${\cal M}\setminus0$ onto ${\cal C}\cap\{\mu>0\}$.
\end{lemma}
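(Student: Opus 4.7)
The plan is to verify three things in sequence: that $\blowing$ is a well-defined smooth map from ${\cal M}\setminus 0$ into ${\cal C}\cap\{\mu>0\}$, that it admits an explicit smooth inverse, and that ${\cal C}_0$ is a smooth submanifold of $\R^8$ so that the word \emph{diffeomorphism} is meaningful in this setting.

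First I would observe that $\mu^{24} = |y|^{24}+|x|^{12}+|\phi|^8+|\psi|^6$ is a sum of even integer powers of Euclidean norms, hence a polynomial in the coordinates of ${\cal M}$, and in particular $C^\infty$ on all of ${\cal M}$; moreover, it is strictly positive exactly off the origin. Consequently $\mu=(\mu^{24})^{1/24}$ is smooth and positive on ${\cal M}\setminus 0$, and the defining formulas (\ref{eq:blowing_y_x_phi_psi}) yield a smooth map into $\R^8\times\R_+$. A direct substitution confirms
\[
|\widetilde y|^{24}+|\widetilde x|^{12}+|\widetilde\phi|^8+|\widetilde\psi|^6 = \mu^{-24}\bigl(|y|^{24}+|x|^{12}+|\phi|^8+|\psi|^6\bigr) = 1,
\]
so the image lies in ${\cal C}\cap\{\mu>0\}$.

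Next I would write down the inverse explicitly by sending $(\mu,\widetilde x,\widetilde y,\widetilde\phi,\widetilde\psi)\in{\cal C}\cap\{\mu>0\}$ to $y=\mu\widetilde y$, $x=\mu^2\widetilde x$, $\phi=\mu^3\widetilde\phi$, $\psi=\mu^4\widetilde\psi$, which is visibly smooth in a neighbourhood of any point with $\mu>0$. Checking the two compositions is routine: starting from the cylinder side, the sum $|y|^{24}+|x|^{12}+|\phi|^8+|\psi|^6$ equals $\mu^{24}\cdot 1=\mu^{24}$, so applying $\blowing$ afterwards recovers the original $\mu$ and the rescaled coordinates; the other composition is immediate from the definition.

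Finally I would justify that ${\cal C}_0$ is a smooth embedded hypersurface of $\R^8$. At every point of ${\cal C}_0$ at least one of $\widetilde x,\widetilde y,\widetilde\phi,\widetilde\psi$ is nonzero, since otherwise the left-hand side of (\ref{eq:blowing_spheroid}) would vanish. Each summand $|\cdot|^{2k}$ is a smooth function of its Euclidean argument whose gradient is nonzero away from the corresponding origin, so the gradient of the defining function $|\widetilde y|^{24}+|\widetilde x|^{12}+|\widetilde\phi|^8+|\widetilde\psi|^6-1$ is nonzero everywhere on ${\cal C}_0$. This makes ${\cal C}={\cal C}_0\times\R$ a smooth manifold. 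The only subtlety in the whole argument is keeping track of smoothness of $\mu$ at points where three of the four sub-vectors vanish, but this is resolved at once by the observation that $\mu^{24}$ is a \emph{polynomial} in the ambient coordinates and is strictly positive on ${\cal M}\setminus 0$, so its positive fractional root is smooth there. Combined with equivariance of $\blowing$ with respect to $g(\lambda)$, this completes the proof.
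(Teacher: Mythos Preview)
Your proof is correct and complete. The paper takes a slightly different route: rather than exhibiting the inverse explicitly and checking smoothness of both directions, it argues that $\blowing$ is smooth and bijective and then verifies that $d\blowing$ is non-degenerate by working on the single level set $\blowing^{-1}[\{\mu=1\}]$ (where the restriction is visibly a diffeomorphism and the transverse direction is handled via the group action $g(\lambda)$), extending to the rest of ${\cal M}\setminus 0$ by equivariance.

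Your approach is more direct: since the inverse map $(\mu,\widetilde x,\widetilde y,\widetilde\phi,\widetilde\psi)\mapsto(\mu^2\widetilde x,\mu\widetilde y,\mu^3\widetilde\phi,\mu^4\widetilde\psi)$ is polynomial, its smoothness is immediate, and the composition checks are elementary. You also make explicit the regular-value argument showing ${\cal C}_0$ is a smooth hypersurface, which the paper leaves tacit. The paper's route, on the other hand, foregrounds the equivariance structure that is used repeatedly later, so it serves a mild expository purpose even if it is less economical for this particular lemma. Either argument is entirely adequate here.
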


\begin{proof}
	From relations (\ref{eq:blowing_y_x_phi_psi},\ref{eq:blowing_model_mu}) it easily follows that the map $\blowing$ is well-defined on ${\cal M}\setminus 0$ and maps ${\cal M}\setminus 0$ bijectively onto ${\cal C}\cap\{\mu>0\}$. Moreover, the map $\blowing$ is smooth on ${\cal M}\setminus 0$.
	
	Let us show that the differential $d\blowing$ is non-degenerate on the sphere given by $\{|y|^{24}+|x|^{12}+|\phi|^8+|\psi|^6=1\} = \blowing^{-1}[\{\mu=1\}]$. Regularity of $d\blowing$ at all other points will then follows from the equivariance of $\blowing$ with respect to the action $g$. The restriction of $\blowing$ on the spheroid $\blowing^{-1}[\{\mu=1\}]$ is by virtue of (\ref{eq:blowing_y_x_phi_psi}) a diffeomorphism. Hence the restriction $d\blowing|_{T_z\blowing^{-1}[\{\mu=1\}]}$ of the differential is non-degenerate at every point $z\in\blowing^{-1}[\{\mu=1\}]$. On the other hand, for $z\in{\cal M}\setminus 0$ we have
	
	\[
		\frac{\displaystyle d}{\displaystyle d\lambda} \bigl(\blowing(g(\lambda)z)\bigr)\big|_{\lambda=1} =
		\mu\frac{\displaystyle \partial}{\displaystyle \partial\mu}\big|_{\blowing(z)} \in T_{\blowing(z)}{\cal C}.
	\]
	
	\noindent If $z\in\blowing^{-1}[\{\mu=1\}]$, then the tangent vector $\frac{\displaystyle \partial}{\displaystyle \partial\mu}\big|_{\blowing(z)}$ does not lie in the tangent space $T({\cal C}\cap\{\mu=1\})$. Hence the map  $d\blowing|_z$ is surjective and therefore non-degenerate.
\end{proof}
Let us remark that the map $\blowing^{-1}$ is formally defined only on ${\cal C}\cap\{\mu>0\}$. However, we may define it on ${\cal C}\cap\{\mu<0\}$ by the same relations

\[
	\blowing^{-1}:(\mu,\widetilde x,\widetilde y,\widetilde \phi,\widetilde \psi) \mapsto
	(x,y,\phi,\psi),\mbox{ where }y=\mu \widetilde y,\ x=\mu^2\widetilde x,\ \phi=\mu^3\widetilde\phi,\ \psi=\mu^4\widetilde\psi.
\]

\noindent Then $\blowing^{-1}$ becomes a two-fold covering map over ${\cal M}\setminus 0$.

If the set $\Omega$ is a triangle, then a switching surface $\widetilde{\cal S}$ is defined on the cylinder ${\cal C}$. Namely, denote by $\widetilde {\cal S}_{ij}$ the closure of the set of all points in ${\cal C}$ which are taken to ${\cal S}_{ij}$ by the map $\blowing^{-1}$. In other words, the set $\widetilde{\cal S}_{ij}$ consists of those points $(\mu,\widetilde x,\widetilde y,\widetilde \phi,\widetilde \psi)$ such that $\argmax_{u\in\Omega}\bigl<\widetilde\psi,u\bigr>$ contains the edge $(ij)$ of the triangle $\Omega$. Let us define also $\widetilde{\cal S}_{123}=\widetilde{\cal S}_{12}\cap\widetilde{\cal S}_{23}\cap\widetilde{\cal S}_{13}$ and $\widetilde{\cal S}=\widetilde{\cal S}_{12}\cup\widetilde{\cal S}_{23}\cup\widetilde{\cal S}_{13}$. For ease of notation we will henceforth omit the tilde over $\widetilde{\cal S}_{ij}$, $\widetilde{\cal S}_{123}$, and $\widetilde{\cal S}$, since this will never lead to any confusion.

\subsection{Reparametrization of time}
\label{subsec:reparametrize_time}

The discontinuous vector field on the right-hand side of (\ref{eq:model_pmp_system}) can be written as follows in the coordinates $(\mu,\widetilde x,\widetilde y,\widetilde \phi,\widetilde \psi)$:

\begin{equation}
\label{eq:blowing_model_hamilton_vector_field}
	\left\{
		\begin{array}{l}
			\dot\mu = \Upsilon(\widetilde x,\widetilde y,\widetilde \phi,\widetilde \psi,u) =
				\frac{1}{24}\bigl(
					24|\widetilde y|^{22}\bigl<\widetilde y,u\bigr> +
					12|\widetilde x|^{10}\bigl<\widetilde x,\widetilde y\bigr> +
					8 |\widetilde \phi|^6\bigl<\widetilde \phi,\widetilde x\bigr> -
					6 |\widetilde \psi|^4\bigl<\widetilde \psi,\widetilde \phi\bigr>
				\bigr)\\
			\dot{\widetilde \psi} =
				\frac{1}{\mu} \bigl(-\widetilde\phi - 4\Upsilon\widetilde \psi\bigr);\\
			\dot{\widetilde \phi} =
				\frac{1}{\mu} \bigl(\widetilde x - 3\Upsilon\widetilde \phi\bigr);\\
			\dot{\widetilde x} =
				\frac{1}{\mu} \bigl(\widetilde y - 2\Upsilon\widetilde x\bigr);\\
			\dot{\widetilde y} =
				\frac{1}{\mu} \bigl(u - \Upsilon\widetilde y\bigr);\\
			\bigl<\widetilde\psi,u\bigr> \to \max\limits_{u\in\Omega}.
		\end{array}
	\right.
\end{equation}
Here the triangle $\Omega$ does not need to be equilateral.

\noindent The solutions of the ODEs (\ref{eq:blowing_model_hamilton_vector_field}) and (\ref{eq:model_pmp_system}) will be understood in the classical sense of Filippov (see \cite{Filippov}) as solutions of a system of ODEs with discontinuous right-hand side.

Denote the vector field ${\cal C}\cap\{\mu>0\}$ on the right-hand side of (\ref{eq:blowing_model_hamilton_vector_field}) by $\xi$. Formally the vector field $\xi$ is defined for $\mu>0$, but we prolong it by the same formulas onto the lower half ${\cal C}\cap\{\mu<0\}$ of the cylinder. Then $\blowing^{-1}$ will take the vector field $\xi$ to the vector field of system (\ref{eq:model_pmp_system}) for $\mu>0$ as well as for $\mu<0$.

Note that for $\mu\to 0$ the field $\xi$ grows as $\frac{1}{\mu}$. However, the field $\mu\xi$ can already be prolonged onto the zero section ${\cal C}_0={\cal C}\cap\{\mu=0\}$ of the cylinder ${\cal C}$ at all points where the covector $\widetilde\psi$ is not orthogonal to any of the edges of the triangle $\Omega$. The integral curves of the vector field $\mu\xi$ either do not intersect the zero section ${\cal C}_0$ or are contained in it. This is because we have $\dot\mu = \mu \Upsilon$ along the vector field $\mu\xi$, where the vector field $\Upsilon$ does not depend on $\mu$. Moreover, the components of the vector field $\mu\xi$ which correspond to the coordinates $(\widetilde x,\widetilde y,\widetilde \phi,\widetilde \psi)$ do not depend on $\mu$. Hence every integral curve of the vector field $\mu\xi$ which lies in ${\cal C}_0$ can be lifted in a unique manner to ${\cal C}\cap\{\mu\ne 0\}$ if the initial value of $\mu$ is given. On the other hand, every trajectory lying in ${\cal C}\cap\{\mu\ne 0\}$ projects to a trajectory in ${\cal C}_0$.

Hence the integral curves of the vector fields $\xi$ and $\mu\xi$ coincide on ${\cal C}\cap\{\mu\ne0\}$, but the velocity on the trajectories is different. Let us denote the time parameter for the movement on the trajectories of the vector field $\mu\xi$ by $s$. Then the parameters $s$ and $t$ are related by

\[
	ds=\frac{1}{\mu}dt.
\]

The time parameter $s$ allows to characterize the optimal trajectories by their limiting behaviour as they approach ${\cal C}_0$ and is hence of advantage in the subsequent considerations. This behaviour is described in the following two lemmas. In the first lemma we prove that the function $\mu(s)$ decays exponentially on any optimal trajectory, and in the second lemma we give a sufficient condition of optimality of a trajectory in terms of the decay rate of the quantity $\mu(s)$ on this trajectory.

\begin{lemma}
\label{lm:trajectory_exp_decrease}
	Consider the image of an optimal trajectory 
	
	$$\bigl(\widehat x(t,q_0),\widehat y(t,q_0),\widehat\phi(t,q_0),\widehat\psi(t,q_0)\bigr)$$
	
	\noindent on ${\cal C}\cap \{\mu>0\}$ for $t<T(q_0)$. Fix time instants $t_0=t(s_0)<T(q_0)$ and $t_1=t(s_1)<T(q_0)$. Then there exist positive constants $\gamma_1$ and $\gamma_2$, independent of the trajectory and of the choice of $t_0$ and $t_1$, such that
	
	\[
		D_1\mu_0 e^{-\gamma_1 (s_1-s_0)} \le \mu_1 \le D_2\mu_0 e^{-\gamma_2 (s_1-s_0)},
	\]

	\noindent where $\mu_k=\mu\bigl(\widehat x(t_k,q_0),\widehat y(t_k,q_0),\widehat\phi(t_k,q_0),\widehat\psi(t_k,q_0)\bigr)$, $k=0,1$, and $D_1=\frac{1}{D_2} = \frac{\gamma_2}{\gamma_1}$.
\end{lemma}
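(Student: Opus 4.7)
The plan is to show that, on any optimal trajectory, the function $\mu$ is two-sidedly comparable to the remaining time $T(q_0)-t$ until the trajectory hits the origin, and then to integrate the resulting one-dimensional ODE in the reparametrized time $s$.

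First, the explicit formula (\ref{eq:blowing_model_mu}) immediately yields that $\mu$ is comparable, with universal constants, to $\max\{|y|,\sqrt{|x|},\sqrt[3]{|\phi|},\sqrt[4]{|\psi|}\}$: the upper bound from the triangle inequality applied to the $\ell^{24}$-sum, and the lower bound from the fact that the sum is at least any of its four summands. Restricting to an optimal trajectory, the corollary following Lemma \ref{lm:map_E_is_homeo} bounds $\max\{\sqrt[3]{|\phi|},\sqrt[4]{|\psi|}\}$ from above and below by constant multiples of $\max\{\sqrt{|x|},|y|\}$, so on $M_+$ the maximum collapses to the state variables alone. Combining this with the estimates of Theorem \ref{thm:model_problem_bellman} on $T(q_0)$ in terms of $\max\{\sqrt{|x_0|},|y_0|\}$, and using the semigroup identity $T(\widehat q(t,q_0))=T(q_0)-t$ which follows from uniqueness of the optimal trajectory, gives
\[
\tilde c_1\bigl(T(q_0)-t\bigr)\;\le\;\mu(t)\;\le\;\tilde c_2\bigl(T(q_0)-t\bigr)
\]
for positive constants $\tilde c_1,\tilde c_2$ independent of the trajectory. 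Uniformity is guaranteed because both $\mu$ and $T$ are homogeneous of degree one under the group $g$, so the ratio $\mu/T$ descends to a continuous strictly positive function on the compact quotient $(M_+\setminus\{0\})/g$ and is bounded above and below there.

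Next, I would introduce $\tau(s):=T(q_0)-t(s)$. Since $ds=dt/\mu$, one has $d\tau/ds=-\mu$, and the previous comparison becomes the two-sided differential inequality
\[
-\tilde c_2\,\tau(s)\;\le\;\frac{d\tau}{ds}\;\le\;-\tilde c_1\,\tau(s).
\]
Applying Gronwall's lemma to each one-sided inequality yields
\[
\tau(s_0)\,e^{-\tilde c_2(s_1-s_0)}\;\le\;\tau(s_1)\;\le\;\tau(s_0)\,e^{-\tilde c_1(s_1-s_0)}.
\]
Re-expressing $\tau$ in terms of $\mu$ at the endpoints $s_0$ and $s_1$ via the comparison of the first step produces
\[
\frac{\tilde c_1}{\tilde c_2}\,\mu_0\,e^{-\tilde c_2(s_1-s_0)}\;\le\;\mu_1\;\le\;\frac{\tilde c_2}{\tilde c_1}\,\mu_0\,e^{-\tilde c_1(s_1-s_0)},
\]
which is the claim of the lemma with $\gamma_1=\tilde c_2$, $\gamma_2=\tilde c_1$, $D_2=\tilde c_2/\tilde c_1$, $D_1=\tilde c_1/\tilde c_2$, and in particular $D_1=1/D_2=\gamma_2/\gamma_1$.

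The main obstacle is the very first step, namely establishing the uniform two-sided comparison between $\mu$ and $T(q_0)-t$, with constants depending only on the problem data and not on the individual trajectory. This uses the full strength of Theorem \ref{thm:model_problem_bellman} (the bounds on $T(q_0)$ together with the fact that $E$ is bi-Lipschitz and commutes with $g$) and a compactness argument after quotienting by the one-dimensional scaling symmetry; once this comparison is in hand, everything else is an elementary Gronwall-type computation on a scalar ODE.
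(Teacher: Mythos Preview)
Your proof is correct and follows essentially the same approach as the paper: first establish the two-sided comparison $\gamma_2(T(q_0)-t)\le\mu(t)\le\gamma_1(T(q_0)-t)$ from the estimates in Theorem~\ref{thm:model_problem_bellman}, then integrate the relation $ds=dt/\mu$ to convert this into exponential bounds in $s$, and finally translate back to $\mu$. The only cosmetic difference is that you package the integration step as a Gronwall argument on $\tau(s)=T(q_0)-t(s)$, whereas the paper computes the integral $s_1-s_0=\int_{t_0}^{t_1}dt/\mu(t)$ directly with the bounds on $\mu$; these are the same computation. Your discussion of uniformity of the constants (via homogeneity under $g$ and compactness of the quotient) is more explicit than the paper's, which simply asserts that the bounds of Theorem~\ref{thm:model_problem_bellman} ``immediately yield'' the comparison.
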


\begin{proof}
	Denote short-hand 
	
	$$\mu(t) = \mu \bigl(\widehat x(t,q_0),\widehat y(t,q_0),\widehat\phi(t,q_0),\widehat\psi(t,q_0)\bigr).$$ 
	
	\noindent The bounds obtained in Theorem \ref{thm:model_problem_bellman} immediately yield the existence of  $\gamma_1>0$ and $\gamma_2>0$ such that for every initial point $q_0$ we have
	
	\begin{equation}
	\label{eq:mu_estimate_via_t}
		\gamma_2\bigl(T(q_0) - t\bigr) \le \mu(t) \le \gamma_1\bigl(T(q_0) - t\bigr)
	\end{equation}

	\noindent on the optimal trajectory emanating from $q_0$. Hence
	
	\[
		s_1-s_0 = \int\limits_{t_0}^{t_1} \frac{1}{\mu(t)}\,dt\ \
		\begin{array}{l}
			\le \frac{1}{\gamma_2} \ln \frac{T(q_0)-t_0}{T(q_0)-t_1};
			\vspace{0.2cm}\\
			\ge \frac{1}{\gamma_1} \ln \frac{T(q_0)-t_0}{T(q_0)-t_1}.\\
		\end{array}
	\]

	\noindent It follows that
	
	\[
		e^{-\gamma_1(s_1-s_0)} \le \frac{T(q_0)-t_1}{T(q_0)-t_0}\le e^{-\gamma_2(s_1-s_0)}.
	\]

	The proof is concluded by application of bounds (\ref{eq:mu_estimate_via_t}).
\end{proof}

\begin{corollary}
	On any optimal trajectory we have $\mu\to +0$ and $s\to+\infty$ as $t\to T(q_0)-0$.
\end{corollary}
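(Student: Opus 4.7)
The corollary is a direct consequence of the bounds already established in the proof of Lemma \ref{lm:trajectory_exp_decrease}, so the plan is essentially to read off the two conclusions from the machinery just assembled.

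First I would establish $\mu \to +0$. Along any optimal trajectory, inequality (\ref{eq:mu_estimate_via_t}) gives the two-sided bound $\gamma_2(T(q_0)-t) \le \mu(t) \le \gamma_1(T(q_0)-t)$. Letting $t\to T(q_0)-0$, the upper bound forces $\mu(t)\to 0$, while positivity of the lower bound for $t<T(q_0)$ confirms $\mu(t)>0$ on the open interval, giving convergence from above.

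Next I would establish $s\to +\infty$. Fixing a reference time $t_0 < T(q_0)$ and using the reparametrization $ds = dt/\mu$ together with the upper bound in (\ref{eq:mu_estimate_via_t}), I compute
\[
s(t_1) - s(t_0) = \int_{t_0}^{t_1} \frac{dt}{\mu(t)} \ge \frac{1}{\gamma_1}\int_{t_0}^{t_1} \frac{dt}{T(q_0)-t} = \frac{1}{\gamma_1}\ln\frac{T(q_0)-t_0}{T(q_0)-t_1}.
\]
As $t_1\to T(q_0)-0$, the right-hand side tends to $+\infty$, whence $s(t_1)\to+\infty$. Monotonicity of $s(t)$ (clear since $\mu>0$) ensures that this is the limit and not merely an accumulation.

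There is no genuine obstacle here — both statements are quantitative consequences of bounds already proved. The only thing worth being careful about is making sure the lower integrability bound $1/\mu \ge 1/(\gamma_1(T(q_0)-t))$ is applied to an integrand that is indeed non-negative on the whole interval $[t_0,t_1]$ with $t_1<T(q_0)$, so that the elementary integral of $1/(T(q_0)-t)$ legitimately produces the logarithmic divergence.
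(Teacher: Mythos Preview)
Your proposal is correct and follows exactly the route the paper itself implicitly takes: the corollary is stated without proof because both conclusions are immediate from the two-sided estimate (\ref{eq:mu_estimate_via_t}) and the logarithmic integral inequality already displayed in the proof of Lemma \ref{lm:trajectory_exp_decrease}, which is precisely what you invoke.
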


\begin{lemma}
\label{lm:trajectory_vanish_to_C0}
	Let $\widetilde z(s)$, $s\in\R$, be a trajectory of the vector field $\mu\xi$ on the upper half ${\cal C}\cap\{\mu>0\}$ of the cylinder ${\cal C}$. If $\mu(s)\to 0$ as $s\to+\infty$ and
	
	\[
		T=\int_0^{+\infty}\mu(s)ds<\infty,
	\]
	
	\noindent then on the trajectory $\widetilde z(s)$ we have $s(t)\to+\infty$ as $t\to T-0$ and $s(t)\to-\infty$ as $t\to-\infty$. Moreover, the trajectory $\blowing^{-1}\bigl(\widetilde z(s(t))\bigr)$ lies in $M_+$ and is optimal, given we prolong it by zero for $t\ge T$.
\end{lemma}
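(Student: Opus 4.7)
The plan is to exploit the time change $dt = \mu\,ds$ to transfer the given trajectory on the cylinder back to a genuine trajectory of (\ref{eq:model_pmp_system}), and then combine Corollary \ref{corollary:M_plus_is_optimal} with the continuity of $T(\cdot)$ from Theorem \ref{thm:model_problem_bellman}. First, I would set $t(s) := T - \int_s^{+\infty}\mu(s')\,ds'$, so that $t(s)$ is strictly increasing, $t(s)\to T$ as $s\to+\infty$ (which yields the first claim $s(t)\to+\infty$ immediately), and $t(s)\to t_- := T - \int_{-\infty}^{+\infty}\mu(s)\,ds \in [-\infty,T)$ as $s\to-\infty$. Setting $z(t) := \blowing^{-1}(\widetilde z(s(t)))$ for $t\in(t_-,T)$ produces a trajectory of (\ref{eq:model_pmp_system}) in the original time parameter.

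Next I would prolong $z$ by zero for $t\ge T$. Since $\mu(s)\to 0$ while $(\widetilde x,\widetilde y,\widetilde \phi,\widetilde \psi)$ remains on the compact set ${\cal C}_0$, the identities (\ref{eq:blowing_y_x_phi_psi}) give $|x|,|y|,|\phi|,|\psi|\to 0$, so the prolongation is continuous at $t=T$. The resulting curve is a Filippov solution of (\ref{eq:model_pmp_system}) on $(t_-,+\infty)$ (using $u=0$ at the equilibrium $0$) that reaches the origin at the finite time $T$. By Corollary \ref{corollary:M_plus_is_optimal}, any trajectory of (\ref{eq:model_pmp_system}) hitting the origin in finite forward time is optimal and lies in $M_+$; this gives the optimality assertion of the lemma. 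In particular, writing $q(t)$ for the projection of $z(t)$ onto $M$, one obtains $T(q(t)) = T - t$ for all $t\in(t_-,T]$.

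It remains to rule out $t_- > -\infty$. Arguing by contradiction, the linearity of the right-hand side of (\ref{eq:model_pmp_system}) in $(x,y,\phi,\psi)$ together with boundedness of $u$ implies that $z(t)$ is uniformly Lipschitz on bounded sub-intervals and hence extends continuously to a limit $z_* := \lim_{t\to t_-^+} z(t)$. I would then split into two cases. If $z_* \ne 0$, then $\mu(\blowing(z_*))>0$, so by continuity $\mu$ stays bounded below by some $\mu_0>0$ on a left neighbourhood of $t_-$ in $(t_-,T)$; but then $s(t) - s(t_1) = \int_{t_1}^{t}\mu(\tau)^{-1}\,d\tau$ remains bounded as $t\to t_-^+$, contradicting $s(t)\to -\infty$. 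If instead $z_* = 0$, then $q_*:=\lim q(t) = 0$, and combining continuity of $T(\cdot)$ at the origin with the identity $T(q(t)) = T - t$ yields $T(0) = T - t_-$; since $T(0)=0$, this forces $t_- = T$, contradicting $t_-<T$.

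The main obstacle is precisely the case $z_* = 0$ in this last step: ruling it out requires genuine input from the optimal control problem, namely the continuity of the finite-time-to-origin function $T(\cdot)$ established in Theorem \ref{thm:model_problem_bellman}, rather than any purely ODE-theoretic argument about extension of $z(t)$.
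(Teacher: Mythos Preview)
Your argument is correct. The first half (time change, continuity at $t=T$, optimality via Corollary \ref{corollary:M_plus_is_optimal}) matches the paper exactly, and your case split to rule out $t_->-\infty$ is sound. One wording slip: when $z_*\ne 0$ you want a \emph{right} neighbourhood of $t_-$ inside $(t_-,T)$, not a left one.

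The paper handles the second half more directly, without contradiction or case analysis. Once optimality is established, it observes that $T(q(t))=T-t\ge T$ for all $t\le 0$; then the quantitative upper bound $T(q)\le C_2\max\{\sqrt{|x|},|y|\}$ from Theorem \ref{thm:model_problem_bellman} forces $q(t)$, and hence $z(t)$, to stay outside a fixed neighbourhood of the origin for all $t\le 0$. This immediately gives a uniform lower bound $\mu(s)\ge\mu_0>0$ for $s\le 0$, so $\int_0^{-\infty}\mu(s)\,ds=-\infty$ and $t(s)\to-\infty$. Your approach extracts the same information (via continuity of $T(\cdot)$ at $0$) but packages it as a contradiction with an extension argument and a case split; the paper's route avoids extending $z$ to $t_-$ altogether by working on the fixed half-line $t\le 0$.
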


\begin{proof}
	Since $dt=\mu ds$, we have
	
	\[
		t=\int_0^s \mu(\sigma)d\sigma
	\]

	\noindent on the trajectory $\widetilde z(s)$. By positivity of $\mu(s)$ we have that $t\to T-0$ if and only if $s\to+\infty$.
	
	Now consider the trajectory $z(t)=\blowing^{-1}\bigl(\widetilde z(s(t))\bigr)$, which is defined for $t<T$. By virtue of (\ref{eq:blowing_model_mu}) we get $x,y,\phi,\psi\to0$ as $t\to T-0$. Since the trajectory $z(t)$ is a trajectory of Hamiltonian system (\ref{eq:model_pmp_system}) we have by virtue of Corollary \ref{corollary:M_plus_is_optimal} that $z(t)$ is optimal if prolonged by zero for $t\ge T$.
	
	The time when the origin is reached by an optimal trajectory can by Theorem \ref{thm:model_problem_bellman} be bounded from above by a function of the distance to the origin. Hence there exists a neighbourhood of 0 in $\cal M$ such that the trajectory $z(t)$ stays outside of this neighbourhood for all $t<0$. Hence the function $\mu(s)$ is bounded away from zero for $s<0$. It follows that $\int_0^{-\infty} \mu(s)d s=-\infty$, i.e., $t(s)\to-\infty$ as $s\to-\infty$.
\end{proof}

\begin{remark}
	Note that the section ${\cal C}_0$ can be identified with the quotient space $({\cal M}\setminus 0)/g$, and the trajectories of the vector field $\mu\xi$ can be interpreted as images of the trajectories of system (\ref{eq:model_pmp_system}) under the natural projection $({\cal M}\setminus 0)\to ({\cal M}\setminus 0)/g$. However, the concrete representation of $({\cal M}\setminus 0)/g$ as the section ${\cal C}_0$ has some advantages. It yields a system of ODEs, defined by the vector field $\mu\xi$, on the whole cylinder ${\cal C}$, it yields the uniform convergence of optimal trajectories to ${\cal C}_0$, and it open the possibility to define optimality by virtue of the preceding lemmas.
\end{remark}

\subsection{Robustness of the self-similar trajectories}

In this subsection we investigate what happens to Hamiltonian system (\ref{eq:model_pmp_system}) if the triangle $\Omega$ is replaced by a nearby triangle $\Omega'$.

Let the triangle $\Omega'$ be such that each vertex of $\Omega'$ is located in an $\varepsilon$-neighbourhood of the corresponding vertex of $\Omega$, where $\varepsilon > 0$ is sufficiently small. Then $0\in\Int\Omega'$ and  $\Omega'$ fulfills the requirements of problem (\ref{problem:model}). All objects which were introduced above and which are related to the triangle $\Omega'$ will be marked by a prime. E.g., the vector field (\ref{eq:blowing_model_hamilton_vector_field}) corresponding to the triangle $\Omega'$ will be denoted by $\xi'$.

Since for all $i\ne j$ the edges $(ij)$ and $(ij)'$ of the triangles $\Omega$ and $\Omega'$ are close to each other, the switching hyperplanes ${\cal S}_{ij}$ and ${\cal S}'_{ij}$ will be situated at an angle $\alpha_{ij}$ in extended phase space ${\cal M}$ which is close to zero. In order to define closeness of mappings defined on ${\cal S}_{ij}$ and ${\cal S}'_{ij}$ in the sequel, we will identify the hyperplanes ${\cal S}_{ij}$ and ${\cal S}'_{ij}$ by virtue of the map $\mathrm{pr}$ given by

\[
	\mathrm{pr}(x,y,\phi,\psi) = (x,y,\phi,O_{\alpha_{ij}}\psi)\in{\cal S}'_{ij},\mbox{ where }(x,y,\phi,\psi)\in{\cal S}_{ij},
\]

\noindent and $O_{\alpha_{ij}}\in O(2,\R)$ denotes a rotation of the plane by the angle $\alpha_{ij}$. The map $\mathrm{pr}$ commutes with the action $g$ of the group $\R_+$ and can be continued to the intersection ${\cal S}_{ij}\cap{\cal C}_0$ of the switching hyperplane with the zero section in a smooth manner.

\begin{lemma}
\label{lm:Poincare_transversal_robust}
	Consider a trajectory on ${\cal C}_0$ of the vector field $\mu\xi$ for the triangle $\Omega$. Suppose the trajectory intersects the switching surfaces ${\cal S}_{ij}\cap{\cal C}_0$ and ${\cal S}_{jk}\cap{\cal C}_0$ transversally\footnote{The jump of the vector field $\mu\xi$ at the point $A$ ($B$) on the hypersurface ${\cal S}_{ij}$ (${\cal S}_{jk}$) is tangent to this hypersurface. Therefore it is irrelevant for the definition of the transversality of the intersection from which side we take the limit of the velocity vector as we approach the switching surface.} at points $A$ and $B$, respectively, and assume $A,B\notin {\cal S}_{123}$. Here we may have $k=i$, but we assume $j\ne i,k$. Then for all $r\in\N$ and $\alpha>0$ there exists $\varepsilon>0$ such that if each vertex of some triangle $\Omega'$ is located in the $\varepsilon$-neighbourhood of the corresponding vertex of the triangle $\Omega$, then the Poincar\'e return maps $\Phi$ and $\Phi'$ defined by the vector fields $\mu\xi$ and $\mu\xi'$ are $\alpha$-close diffeomorphisms of neighbourhoods of $A$ and $A'=\mathrm{pr}\,A$ in the $C^r$ metric.
\end{lemma}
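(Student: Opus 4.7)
The plan is to reduce the statement to standard smooth dependence of flows on parameters plus an implicit function theorem argument for the hitting time. The key observation is that on the open arc between $A$ and $B$ the trajectory lies strictly in one smoothness stratum of $\mu\xi$: the maximizer of $\langle\widetilde\psi,u\rangle$ over $\Omega$ is the unique vertex $v_j$ common to the edges $(ij)$ and $(jk)$. Consequently, on a tubular neighbourhood $V$ of this arc (disjoint from $\cal S$), the field $\mu\xi$ is polynomial in $(\mu,\widetilde x,\widetilde y,\widetilde\phi,\widetilde\psi)$ with coefficients depending polynomially on $v_j$. For any $\Omega'$ whose vertices are $\varepsilon$-close to those of $\Omega$, the analogous vector field $\mu\xi'$ on the analogous stratum uses the perturbed vertex $v_j'$, so $\mu\xi'\to\mu\xi$ in $C^\infty_{\mathrm{loc}}(V)$ as $\varepsilon\to0$.

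First I would fix a compact piece of the arc from $A$ to $B$ together with a small transverse disk $D\subset{\cal S}_{ij}\cap{\cal C}_0$ around $A$ and a slightly larger transverse disk $D_B\subset{\cal S}_{jk}\cap{\cal C}_0$ around $B$. Since the intersection of the unperturbed trajectory with ${\cal S}_{jk}$ at $B$ is transversal (in the one-sided sense that matters, i.e.\ with the post-switch velocity corresponding to control $v_j$), and since $B\notin{\cal S}_{123}$ so that ${\cal S}_{jk}$ is a smooth hypersurface near $B$, the flow time $\tau(z)$ of $\mu\xi$ from $z\in D$ to $D_B$ is well-defined and $C^\infty$ on $D$ by the implicit function theorem, and $\Phi(z)=\mathrm{Fl}_{\mu\xi}^{\tau(z)}(z)$ is a $C^\infty$ diffeomorphism of $D$ onto its image.

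Next I would transfer this construction to $\Omega'$. The switching hyperplanes ${\cal S}'_{ij}, {\cal S}'_{jk}$ are linear in $\widetilde\psi$ with coefficients depending smoothly (in fact linearly) on the vertex positions of $\Omega'$, so the identification $\mathrm{pr}$ is $C^\infty$-close to the identity and $\mathrm{pr}(D)\subset{\cal S}'_{ij}$, $\mathrm{pr}(D_B)\subset{\cal S}'_{jk}$ are smooth transverse disks for $\mu\xi'$. Smooth dependence of ODE solutions on right-hand side and on initial conditions gives, for any $r$ and any preassigned $C^r$-error, an $\varepsilon>0$ such that $\mathrm{Fl}_{\mu\xi'}^{s}$ is $C^r$-close to $\mathrm{Fl}_{\mu\xi}^{s}$ uniformly for $s$ in a compact interval containing the travel times. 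The transversality condition that defines the hitting time of ${\cal S}'_{jk}$ is an open condition, so shrinking $\varepsilon$ if necessary, the perturbed hitting time $\tau'(z)$ is well-defined on $\mathrm{pr}(D)$ and $C^r$-close to $\tau\circ\mathrm{pr}^{-1}$ by the smooth implicit function theorem with parameters. Combining these two smoothness statements yields that
\[
\Phi'(z')=\mathrm{Fl}_{\mu\xi'}^{\tau'(z')}(z'),\qquad z'\in\mathrm{pr}(D),
\]
is $C^r$-close to $\Phi$ when $\mathrm{pr}(D)$ is identified with $D$.

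The main technical point to be careful about is ensuring that nothing in the construction breaks on the stratified set $\cal S$ itself: the arc from $A$ to $B$ must stay in the interior of the stratum of control $v_j$, so I would shrink $D$ and restrict attention to trajectories of $\mu\xi$ starting in $D$ that cross into this interior stratum (this uses $A\notin{\cal S}_{123}$, so that locally $\cal S$ near $A$ is just ${\cal S}_{ij}$ and the two adjacent strata are smooth). Once $D$ is small enough, all perturbed trajectories starting on $\mathrm{pr}(D)$ enter and remain in the single perturbed stratum with control $v_j'$ until they reach ${\cal S}'_{jk}$, so no further switching occurs in between; this avoids any difficulty related to the piecewise-smooth nature of the full Filippov flow. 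The uniform transversality at $B$ (which survives small $\varepsilon$) then guarantees that the first-hit map is indeed this single-stratum return map, completing the proof.
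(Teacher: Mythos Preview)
Your proposal is correct and follows essentially the same approach as the paper: both arguments freeze the control at the vertex $v_j$ on a tubular neighbourhood of the arc from $A$ to $B$, thereby replacing the discontinuous field $\mu\xi$ by a smooth field to which standard smooth dependence of flows and the implicit function theorem for the transversal hitting time apply. Your write-up is in fact more explicit about the implicit function step and the role of $A,B\notin{\cal S}_{123}$ than the paper's own proof.
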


\begin{proof}
	In order to work in some neighbourhoods of ${\cal S}_{ij}$ and ${\cal S}_{jk}$ we have to extend the vector field $\mu\xi$ in a smooth manner onto these neighbourhoods. This can be accomplished by relaxing the last condition in  (\ref{eq:blowing_model_hamilton_vector_field}) and setting the control equal to the vertex $j$ of $\Omega$ throughout the neighbourhoods. We shall denote the resulting velocity vector field by $\mu\widetilde\xi$. Performing a similar operation with the system corresponding to the triangle $\Omega'$ we obtain a vector field $\mu\widetilde\xi'$.
	
	Since the switching surfaces are closed and the vector field $\mu\xi$ is transversal to ${\cal S}_{ij}$ and ${\cal S}_{jk}$ at $A$ and $B$, we have that every trajectory which emanates from a point on ${\cal S}_{ij}$ sufficiently close to $A$ will intersect ${\cal S}$ in the vicinity of $B$ for the first time. Hence the Poincar\'e return map will not change if we replace the vector field $\mu\xi$ by the vector field $\mu\widetilde\xi$. The relevant arcs of the trajectories, which come out of a neighbourhood of $A$ on ${\cal S}_{ij}$ and lead to a neighbourhood of $B$ on ${\cal S}_{jk}$ without control switchings, are left unchanged. A similar statement holds true if the vector field $\mu\xi'$ is replaced by the vector field $\mu\widetilde\xi'$.
	
	Since the vector fields $\mu\widetilde\xi$ and $\mu\widetilde\xi'$ are smooth, the assertion of the lemma follows immediately from the transversality of the vector field $\mu\widetilde\xi$ to the switching surfaces ${\cal S}_{ij}$ and ${\cal S}_{jk}$ at the points $A$ and $B$, respectively.
\end{proof}

Each trajectory $z(t)$ of system (\ref{eq:model_pmp_system}) can be put in correspondence to its image $\pi(z(t))$ on ${\cal C}_0={\cal M}/g$ under the canonical projection $\pi:{\cal M}\to{\cal M}/g$. This image can be found explicitly as follows. We send the trajectory $z(t)$ to the cylinder ${\cal C}$ by virtue of the map $\blowing$ and discard the coordinate $\mu$. Under this operation the self-similar trajectories transform to periodic trajectories of the vector field $\mu\xi$ on ${\cal C}_0$.

In this subsection we study the robust periodic trajectories on ${\cal C}_0$. We shall apply the obtained results to the six-link cycles $R^\pm$ which have been found in Lemma \ref{lm:model_probel_3_4_6_cycles}.

\begin{defn}
\label{defn:robust_automodel}
	We shall call an optimal self-similar trajectory $z(t)$ of system (\ref{eq:model_pmp_system}) {\it robust} if it is not semi-singular, does not intersect ${\cal S}_{123}$, and its image $\pi(z(t))$ on ${\cal C}_0$ is a robust periodic trajectory. Here the latter means that (i) $\pi(z(t))$ consists of a finite number of smooth pieces (links), and (ii) the differential of the Poincar\'e return map at the switching points of $\pi(z(t))$ does not have eigenvalues equal to 1, apart from the trivial eigenvalue 1 corresponding to the velocity vector along the trajectory.
\end{defn}

The definition above is well-defined, in the sense that the Poincar\'e return map along such a periodic trajectory is a smooth diffeomorphism by virtue of Lemmas \ref{lm:periodic_transversal_S} and \ref{lm:Poincare_transversal_robust}. It turns out that the robust self-similar optimal trajectories are preserved under small changes of the triangle $\Omega$.

\begin{lemma}
\label{lm:robust_automodel}
	Let $z(t)\in{\cal M}$ be a robust optimal self-similar trajectory of problem (\ref{problem:model}). Then for all $r\in\N$ and $\alpha>0$ there exists a number $\varepsilon>0$ such that if every vertex of some triangle $\Omega'$ is located in the $\varepsilon$-neighbourhood of the corresponding vertex of the triangle $\Omega$, then in problem (\ref{problem:model}) defined for the triangle $\Omega'$ there exists a robust optimal self-similar trajectory $z'(t)$ with the following properties. The periodic trajectories $\pi(z(t))$ and $\pi(z'(t))$ are $\alpha$-close in the $C^0$-metric, and the Poincar\'e return maps on the cylinder ${\cal C}$ defined for systems (\ref{eq:model_pmp_system}) with triangles $\Omega$ and $\Omega'$ are $\alpha$-close in the $C^r$-metric in the neighbourhood of the switching points of the trajectories $\pi(z(t))$ and $\pi(z'(t))$, respectively.
\end{lemma}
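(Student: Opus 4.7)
The plan is to reduce the problem to an application of the implicit function theorem to the Poincaré return map on the zero section ${\cal C}_0$, using the robustness hypothesis to get invertibility of the linearization and Lemma \ref{lm:Poincare_transversal_robust} to get smooth dependence on the triangle.

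First I would pass from the self-similar trajectory $z(t) \subset \cal M$ to its projection $\gamma = \pi(z(t)) \subset {\cal C}_0$, which by definition of robustness is a periodic orbit of $\mu\xi$ on ${\cal C}_0$ with exactly $k$ smooth links, separated by transversal switchings at points $z_0, z_1, \ldots, z_{k-1}$ on surfaces ${\cal S}_{i_1 j_1}, \ldots, {\cal S}_{i_k j_k}$ (none in ${\cal S}_{123}$, and not semi-singular). Because every consecutive intersection is transversal and disjoint from ${\cal S}_{123}$, Lemma \ref{lm:Poincare_transversal_robust} gives a composition of $k$ local Poincaré maps, and hence the $k$-fold return map $\Phi^k$ is a well-defined $C^\infty$ diffeomorphism from a neighbourhood of $z_0$ in ${\cal S}_{i_1 j_1} \cap {\cal C}_0$ into itself, with $\Phi^k(z_0)=z_0$. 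The robustness hypothesis says exactly that the spectrum of $d\Phi^k(z_0)$ (acting on the Poincaré section, so the trivial eigenvalue along the flow is already factored out) does not contain $1$; equivalently $d\Phi^k(z_0) - I$ is invertible.

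Next, given $r$ and $\alpha$, choose $\varepsilon > 0$ small enough so that for every triangle $\Omega'$ whose vertices lie within $\varepsilon$ of those of $\Omega$, Lemma \ref{lm:Poincare_transversal_robust} can be applied along each of the $k$ links with tolerance $\alpha/k$ in the $C^{r+1}$ metric. Identifying the surfaces ${\cal S}_{ij}$ and ${\cal S}'_{ij}$ by the smooth bundle map $\mathrm{pr}$ (whose $C^\infty$ distance to the identity is $O(\varepsilon)$), the composed return map $\Phi'^k$ of the perturbed system is then $\alpha$-close to $\Phi^k$ in $C^r$ on a fixed neighbourhood of $z_0$. Because $d\Phi^k(z_0) - I$ is an isomorphism, the implicit function theorem (applied to $F(z,\Omega') = \Phi'^k(z) - z$) yields a unique fixed point $z_0' = z_0 + O(\varepsilon)$ of $\Phi'^k$ near $z_0$; its $k$-fold orbit $\gamma' = \{z_0', \Phi'(z_0'), \ldots, \Phi'^{k-1}(z_0')\}$ is then a periodic orbit of the perturbed vector field $\mu\xi'$ on ${\cal C}_0$ with $k$ transversal switchings, still disjoint from ${\cal S}'_{123}$ and not semi-singular (by continuity of the transversality and non-semi-singularity conditions, both of which are open). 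Moreover, $\pi(z') := \gamma'$ is $C^0$-close to $\gamma$ because the flow of $\mu\xi'$ depends continuously on $\Omega'$ over the bounded time interval needed to traverse one period.

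Lifting $\gamma'$ back to ${\cal M}$ is where self-similarity and optimality have to be checked. A periodic orbit of $\mu\xi'$ on ${\cal C}_0$ lifts to a curve in $\cal M$ on which, after traversing one period in the reparametrized time $s$, the variable $\mu$ is multiplied by $\lambda_0' = \exp\!\bigl(\int_0^{\tau'}\Upsilon' \,ds\bigr)$; since $\gamma$ lifted to an optimal self-similar trajectory it satisfies $\lambda_0 < 1$ (Remark \ref{rm:lambda_0_less_1}), and by continuity of $\tau'$ and $\Upsilon'$ in the perturbation we still have $\lambda_0' < 1$ for small $\varepsilon$. By Remark \ref{rm:lambda_0_less_1} the lift therefore reaches the origin in finite time, hence lies in $M_+$ and is optimal by Corollary \ref{corollary:M_plus_is_optimal}. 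Robustness of the new trajectory is immediate: the eigenvalues of $d\Phi'^k(z_0')$ depend continuously on the perturbation and remain away from $1$. Finally, the $C^r$-closeness of the Poincaré return maps near the switching points of $\gamma$ and $\gamma'$ follows from Lemma \ref{lm:Poincare_transversal_robust} applied once more, combined with the $C^0$-closeness of the switching points themselves.

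The main obstacle, and the only nontrivial step, is producing a smooth $k$-fold Poincaré map across the discontinuities of $\mu\xi$ to which the IFT can be applied; this is precisely the content of Lemma \ref{lm:Poincare_transversal_robust}, where the trick of extending the vector field smoothly across each switching surface (using the fact that on the jump side the vector field is only tangent to the surface) makes each individual leg $C^\infty$ and jointly $C^r$-close under perturbation. Once that is in hand, the remainder is a routine persistence-of-hyperbolic-periodic-orbit argument adapted to piecewise-smooth flows.
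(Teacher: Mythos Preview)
Your proof is correct and follows essentially the same strategy as the paper: reduce to the Poincar\'e return map, invoke Lemma \ref{lm:Poincare_transversal_robust} to get smoothness and $C^r$-closeness of the composed map under perturbation of $\Omega$, and apply persistence of a robust fixed point (you cite the implicit function theorem, the paper cites the equivalent Proposition 1.1.4 of \cite{Katok}). The only cosmetic difference is that the paper takes its transversal section $L$ at a smooth (non-switching) point of the orbit in the full cylinder ${\cal C}$ and uses formula (\ref{eq:blowed_Poincare_map}) to see that the extra eigenvalue in the $\mu$-direction equals $\lambda_0\ne 1$, whereas you work directly on ${\cal C}_0$ with the switching surface as section and then separately handle the lift by showing $\lambda_0'=\exp\bigl(\int_0^{\tau'}\Upsilon'\,ds\bigr)<1$ via continuity.
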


\begin{proof}
	Consider the periodic trajectory $\widetilde z(s)$ of the vector field $\mu\xi$ that is obtained as the vertical projection of the image $\blowing(z(t))$ on ${\cal C}_0$. Let $s_0$ be a time instant such that $\widetilde z(s)$ is smooth in a neighbourhood of $s=s_0$, and let $L\ni \widetilde z(s_0)$ be a small smooth piece of hypersurface of codimension 1 in ${\cal C}$ which is transversal to $\widetilde z(s)$. Since $\dot{\widetilde z}(s)\in T_{\widetilde z(s)}{\cal C}_0$, we have that the hypersurface $L$ is also transversal to ${\cal C}_0$. Denote by $\widehat\Phi$ the Poincar\'e return map $\widehat\Phi:L\to L$ along the trajectories of the vector field $\mu\xi$. Then the point $\widetilde z(s_0)$ is a fixed point of the map $\widehat\Phi$.
	
	Let now $s_1<\ldots<s_m$ be the points of non-smoothness (the switching points) on the trajectory $\widetilde z(s)$, labeled in consecutive manner and such that $s_1$ is the first point of non-smoothness after $s_0$. Denote by $L_i$ the intersection of a small $\delta$-neighbourhood of the point $\widetilde z(s_i)$ with the corresponding hypersurface of discontinuity ${\cal S}_{ij}$ of the vector field $\mu\xi$. Then the Poincar\'e return map $\widehat\Phi:L\to L$ can be written as a composition of the maps
	
	\[
		\widehat\Phi: L \xrightarrow{\Phi_0} L_1
		\xrightarrow{\Phi_1} L_2 \xrightarrow{\Phi_2} \ldots
		\xrightarrow{\Phi_{m-1}} L_m \xrightarrow{\Phi_m} L.
	\]

	\noindent It follows by virtue of Lemma \ref{lm:Poincare_transversal_robust} that $\widehat\Phi$ is a smooth diffeomorphism.
	
	Let $\widetilde \Phi$ be the restriction of $\widehat\Phi$ on ${\cal C}_0$. By virtue of the definition of robustness of the self-similar trajectory $z(t)$ the differential $d\widetilde \Phi$ is robust at the point $\widetilde z(s_0)$, i.e., it has no non-trivial eigenvalues equal to 1. The map $\widehat\Phi$ can be expressed explicitly in terms of $\widetilde \Phi$ by
	
	\begin{equation}
	\label{eq:blowed_Poincare_map}
		\widehat\Phi(\mu,\widetilde x,\widetilde y,\widetilde \phi,\widetilde \psi) =
		(\lambda_0\mu,\widetilde\Phi(\widetilde x,\widetilde y,\widetilde \phi,\widetilde \psi)),
	\end{equation}
	
	\noindent where $\lambda_0$ comes from the definition \ref{defn:automodel} of self-similarity. Hence the differential $d\widehat\Phi|_{z(s_0)}$ contains the additional eigenvalue $\lambda_0$ as compared to $d\widetilde\Phi|_{z(s_0)}$. By optimality of $z(t)$ we have $0<\lambda_0<1$ (see Remark \ref{rm:lambda_0_less_1}). Hence $d\widehat\Phi|_{z(s_0)}$ fulfills the conditions of robustness.
	
	\smallskip
	
	Let us now describe how the map $\widehat\Phi$ changes if the triangle $\Omega$ is replaced by a close-by triangle $\Omega'$. Consider the vector field $\mu\xi'$ on ${\cal C}$ which is obtained from system (\ref{eq:model_pmp_system}) with the triangle $\Omega'$. Since the triangles $\Omega$ and $\Omega'$ are close, we immediately obtain that the maps $\Phi_k$ and $\Phi_k'$ are close in the $C^r$-metric. Hence also the maps $\widehat\Phi$ and $\widehat\Phi'$ are close. Since the point $z(s_0)\in L$ is a robust fixed point of the map $\widehat\Phi$, it will be preserved by a small perturbation of $\widehat\Phi$ in the $C^1$-metric (see e.g., \cite{Katok}, Proposition 1.1.4).
\end{proof}

\begin{figure}
	\centering
	\includegraphics[width=0.33\textwidth]{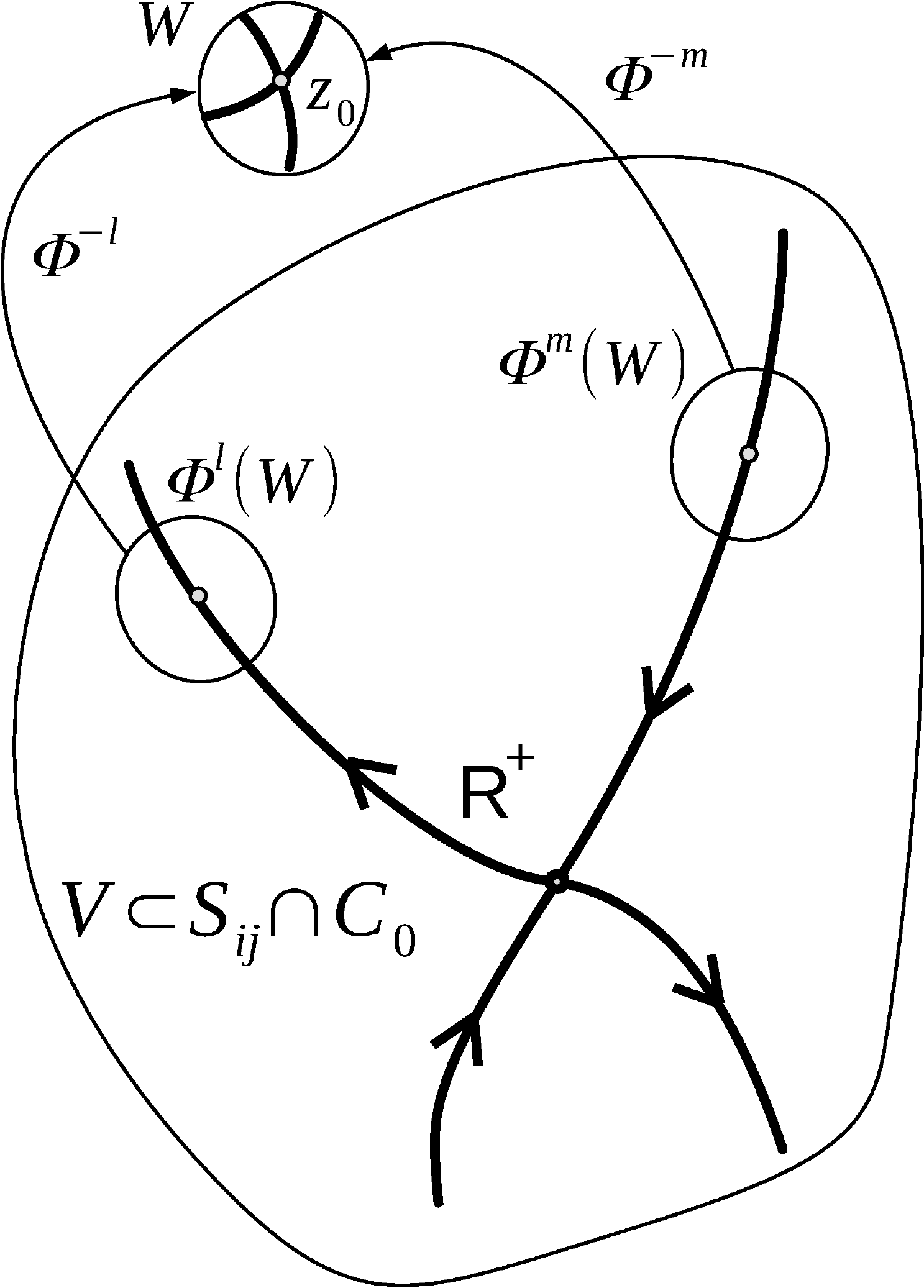}
	\caption{Schematic representation of the results of Lemma \ref{lm:homoclinic_point_any_triangle}.}
	\label{fig:homoclinic_point}
\end{figure}

Let us finally note that if the triangle $\Omega$ is contracted homothetically by a factor of $\lambda>0$, then the trajectories of system (\ref{eq:model_pmp_system}) will be left unchanged. What changes is the velocity on them, which will change by a factor of $\lambda$. A rotation of the triangle $\Omega$ about the origin leads to a rotation of the whole optimal synthesis by the same angle. Hence the assertions of the preceding lemma can be reformulated in terms of closeness of the angles and the centers of the triangles $\Omega$ and $\Omega'$, as it was done for the equilateral triangle in the formulation of Theorem \ref{thm:model_chaos_any_triangle}.

\subsection{Homoclinic orbit on the zero section of the cylinder \texorpdfstring{${\cal C}$}{C}}

In this subsection we construct a homoclinic orbit of the blown-up system (\ref{eq:blowing_model_hamilton_vector_field}) for the case of an equilateral triangle $\Omega$ and show that it is stable under small perturbations of the triangle $\Omega$.

Let hence $\Omega$ be an equilateral triangle with center in the origin. In the paper \cite{ZMHBasic} it has been shown that the self-similar trajectories $Z^\pm$, $Q^i$, and $R^\pm$ from Lemma \ref{lm:model_probel_3_4_6_cycles} are robust (see \cite{ZMHBasic}, Propositions 6.3, 6.4, and 6.5). Moreover, it has been shown that the periodic orbit $Z^\pm$ is repelling (more precisely, all nontrivial eigenvalues of the differential $d\widetilde\Phi$ are real and have modulus strictly greater than 1), and the periodic orbits $Q^i$ and $R^\pm$ are hyperbolic (more precisely, exactly one non-trivial eigenvalue of $d\widetilde\Phi$ has modulus strictly smaller than 1, all others have modulus strictly greater than 1).  Hence if the triangle $\Omega$ is close to equilateral and its center is close to the origin, then by virtue of Lemma
\ref{lm:robust_automodel} there still exist periodic orbits of system (\ref{eq:model_pmp_system}) on ${\cal C}_0$ which are close to the orbits $Z^\pm$, $Q^i$, and $R^\pm$. We shall keep the notations for these orbits. The Poincar\'e return maps along these periodic orbits are also close to those for the case of an equilateral triangle.

It follows that on ${\cal S}\cap{\cal C}_0$ there exists a 1-dimensional smooth stable manifold and a 5-dimensional smooth unstable manifold of the Poincar\'e return map $\widetilde \Phi:{\cal S}\to{\cal S}$ in the neighbourhood of each point of the intersections $Q^i\cap{\cal S}$ and $R^\pm\cap{\cal S}$. Indeed, by virtue of Lemma \ref{lm:Poincare_transversal_robust} the map $\widetilde\Phi:{\cal S}\to{\cal S}$ is smooth in the neighbourhood of the intersections $Q^i\cap{\cal S}$ and $R^\pm\cap{\cal S}$. But in general the map $\widetilde\Phi$ is discontinuous on $\cal S$. Moreover, if we extend the 1-dimensional stable manifold further away from the intersections $Q^i\cap{\cal S}$ and $R^\pm\cap{\cal S}$, it will stringently encounter the discontinuity set of the map $\widetilde\Phi$. Nevertheless, we have the following lemma.

\begin{lemma}
\label{lm:homoclinic_point_any_triangle}
	Assume that the triangle $\Omega$ is sufficiently close to an equilateral triangle with center at the origin. Then there exists a homoclinic point  $z_0\in {\cal C}_0$ on the switching surface $\cal S$ such that the iterations $\widetilde\Phi^n(z_0)$ tend to the six-link periodic orbit $R^+\cap{\cal S}$ as $n\to\pm\infty$ (see Fig.~\ref{fig:homoclinic_point}). Moreover, for every sufficiently small $\varepsilon$-neighbourhood $V\subset {\cal C}_0$ of $R^+\cap{\cal S}$ there exists a $\delta$-neighbourhood $W\subset {\cal C}_0 \cap{\cal S}$ of $z_0$ and integers $m>0$ and $l<0$ such that (i) the images ${\widetilde\Phi}^m(W)$ and ${\widetilde\Phi}^l(W)$ are contained in one connection component of $V$; (ii) the restrictions of the maps ${\widetilde\Phi}^m$ and ${\widetilde\Phi}^l$ to $W$ are diffeomorphisms; and (iii) the image under the map ${\widetilde\Phi}^{-m}$ of the stable manifold of the periodic orbit $R^+\cap{\cal S}$ in the ${\widetilde\Phi}^m(W)$ and the image under the map ${\widetilde\Phi}^{-l}$ of the unstable manifold in ${\widetilde\Phi}^l(W)$ intersect at the homoclinic point $z_0$, and this intersection is transversal.
	
	A similar statement holds for the periodic orbit $R^-\cap{\cal S}$.
\end{lemma}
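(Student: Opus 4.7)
The plan is to first establish the statement for the special case of an equilateral $\Omega$ by explicit construction on ${\cal C}_0$, exploiting the $S_3$ symmetry, and then to extend it by perturbation using Lemma \ref{lm:robust_automodel} together with the openness of transverse intersections. In the equilateral case, by \cite{ZMHBasic} the six-link cycle $R^+$ is hyperbolic, with a 1-dimensional local stable manifold $W^s_{loc}$ and a 5-dimensional local unstable manifold $W^u_{loc}$ in ${\cal S}\cap{\cal C}_0$ at each switching point $p_0 \in R^+ \cap {\cal S}$; these exist by the standard hyperbolic fixed-point theorem applied to $\widetilde\Phi^6$, which is a local diffeomorphism near $p_0$ by Lemma \ref{lm:Poincare_transversal_robust}. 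I would then extend $W^s_{loc}$ backward and $W^u_{loc}$ forward under $\widetilde\Phi$, each transverse crossing of a switching stratum being smooth again by Lemma \ref{lm:Poincare_transversal_robust}, until they can be compared globally.

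To exhibit a transverse homoclinic point $z_0$, I would use that $R^+$ is invariant under the cyclic permutation $\alpha = (123) \in S_3$. Searching for an orbit invariant under a combination of the symmetry and a time-shift leads, exactly as in the proof of Lemma \ref{lm:model_probel_3_4_6_cycles}, to an equation of the form $(\alpha^k \circ \widetilde\Phi^N)(z_0) = z_0$ for suitable $k, N$, supplemented by inequality constraints encoding a switching itinerary consistent with an orbit asymptotic to $R^+$ in both time directions. This reduces to a polynomial system in the switching data whose real roots in the correct inequality cell furnish candidate $z_0$'s. Transversality at $z_0$ amounts to the 1D tangent direction of $W^s$ not being contained in the 5D tangent space of $W^u$ --- a codimension-zero condition in the 6D ambient ${\cal S} \cap {\cal C}_0$ --- which can be certified by an explicit linear algebra computation using the contracting eigenvector of $d\widetilde\Phi^6$ at $p_0$ pulled back along the orbit and the pushforwards of the unstable subspace from $\widetilde\Phi^l(z_0)$. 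The claims (i)--(iii) then follow from asymptoticity $\widetilde\Phi^n(z_0) \to R^+ \cap {\cal S}$ as $n \to \pm\infty$: for any preassigned $\varepsilon$-neighbourhood $V$ one chooses $m > 0$ and $l < 0$ so that $\widetilde\Phi^m(z_0)$ and $\widetilde\Phi^l(z_0)$ lie in the same connected component of $V$; shrinking $W$ further so that the finite orbit arc of length $m - l$ stays uniformly away from ${\cal S}_{123}$ and from tangential crossings of ${\cal S}$, repeated application of Lemma \ref{lm:Poincare_transversal_robust} makes $\widetilde\Phi^m|_W$ and $\widetilde\Phi^l|_W$ diffeomorphisms, and the transversality is pulled back to $z_0$ under these diffeomorphisms.

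For triangles $\Omega$ close to equilateral, Lemma \ref{lm:robust_automodel} yields that $R^+$ persists as a robust self-similar cycle and that the Poincaré return map is $C^r$-close to its equilateral counterpart in a neighbourhood of $R^+ \cap {\cal S}$; iterating Lemma \ref{lm:Poincare_transversal_robust} along the finite orbit segment between $\widetilde\Phi^l(z_0)$ and $\widetilde\Phi^m(z_0)$ extends this $C^r$-closeness to the relevant iterates. By the stable manifold theorem for hyperbolic periodic orbits, $W^s$ and $W^u$ depend $C^r$-continuously on $\Omega$, and since transverse intersections of smooth submanifolds are open in the $C^1$-topology, the homoclinic point $z_0$ and properties (i)--(iii) persist for all $\Omega$ sufficiently close to equilateral. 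The statement for $R^-$ is obtained by replacing $\alpha = (123)$ with $\alpha = (132)$ throughout the argument. The main obstacle I anticipate is the explicit construction of $z_0$ in the equilateral case: although the $S_3$-reduction cuts down the unknowns, the polynomial system will be of high degree (comparable to $P_Q$ or $P_R$ from Lemma \ref{lm:model_probel_3_4_6_cycles}) and the inequality cell corresponding to the correct switching itinerary is delicate to identify; both the existence of a real root in that cell and the transversality condition are likely to require a combination of Sturm-type root isolation and explicit eigenvector computation, and this is the computationally heaviest part of the argument.
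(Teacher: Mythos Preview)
Your perturbation argument---persisting $R^+$ via Lemma~\ref{lm:robust_automodel}, continuous dependence of $W^s$ and $W^u$ on $\Omega$, and openness of transverse intersections---is correct and matches the paper's reduction to the equilateral case.

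The gap is in your construction of $z_0$ for the equilateral triangle. You propose to find it from an equation of the form $(\alpha^k\circ\widetilde\Phi^N)(z_0)=z_0$, as in Lemma~\ref{lm:model_probel_3_4_6_cycles}. But any solution of such an equation lies on a \emph{periodic} orbit of $\widetilde\Phi$ (modulo the $S_3$-action), not on a homoclinic one. A transverse homoclinic point to $R^+$ has an aperiodic orbit: its forward and backward iterates accumulate on $R^+\cap{\cal S}$, yet $\widetilde\Phi^N(z_0)\ne z_0$ for every finite $N$ (and likewise after composing with any $\alpha^k$). So the polynomial system you would set up simply does not encode the object you are looking for; the analogy with the three-, four-, and six-link cycles breaks down here.

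What the paper actually does is different in kind. It tracks the one-dimensional local stable curves $\sigma_k\subset{\cal S}\cap{\cal C}_0$ emanating from the six switching points of $R^+$ outward, analyzing explicitly how they branch or develop corners when they meet the codimension-two stratum ${\cal S}_{123}$ (where $\widetilde\Phi$ is discontinuous). After one such passage the continued curve $\sigma_0$ is shown---by numerical computation of the trajectories of the blown-up system---to intersect the five-dimensional local unstable manifold $H_5^0$ transversally; this intersection is $z_0$. The paper then reads off $m=7$, $l=0$ directly from the switching itinerary of the resulting orbit, and checks numerically that the finite orbit segment stays bounded away from ${\cal S}_{123}$, which is what makes your diffeomorphism claims (ii) legitimate. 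In short, the existence and transversality of $z_0$ in the equilateral case are established by rigorous numerical verification of a geometric picture (tracking $W^s$ into $W^u$), not by root-finding for a symmetry-reduced fixed-point equation.
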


\begin{proof}
	It is sufficient to prove the existence of a transversal homoclinic point $z_0$ for the case of an equilateral triangle $\Omega$ and to insure that the trajectory on which $z_0$ is situated is bounded away from the set ${\cal S}_{123}$ of discontinuity of the Poincar\'e return map. Both conditions have been verified by a numerical simulation of system (\ref{eq:blowing_model_hamilton_vector_field}) for the case of an equilateral triangle, as we shall detail below.

	Denote the 5-dimensional unstable manifold of the map $\tilde\Phi^6$ in the neighbourhood of some switching point $\tilde z(s_0)$ on the periodic orbit $R^+$ by $H_5 \subset {\cal C}_0 \cap {\cal S}$. Let $H_5^0$ be the connection component of the set $H_5 \setminus {\cal S}_{123}$ which contains the point $\tilde z(s_0)$. Then the restriction $\tilde\Phi^{-6}|_{H_5^0}$ is smooth, $\tilde\Phi^{-6}[H_5^0] \subset H_5^0$, and $\tilde z(s_0) \in H_5^0$ is the unique fixed point and attractor of the map $\tilde\Phi^{-6}$.

	Let $\sigma_0,\sigma_1,\dots,\sigma_6 = \sigma_0$ be the 1-dimensional stable manifolds of the map $\tilde\Phi^6$ emanating from the points $\tilde z(s_0),\tilde z(s_1),\dots,\tilde z(s_6)=\tilde z(s_0)$ of the periodic orbit $R_+$, respectively. Note that the map $\tilde\Phi$ takes the curve $\sigma_k$ to the curve $\sigma_{k+1}$ and simultaneously contracts the image curve towards the switching point $\tilde z(s_{k+1})$. The set of trajectories of system (\ref{eq:blowing_model_hamilton_vector_field}) which pass through the curves $\sigma_k$ form a cylindrical 2-dimensional surface $\Sigma$, which contains the periodic trajectory $R^+$. The curves $\sigma_k$ are a subset of the intersection $\Sigma \cap {\cal S}$.

	Let us consider the behaviour of the curve $\sigma_k$ when it encounters the discontinuity hypersurface ${\cal S}_{123} \subset {\cal S}$ of the map $\tilde\Phi$. Recall that ${\cal S}_{123}$ is the intersection of the three strata ${\cal S}_{12},{\cal S}_{13},{\cal S}_{23}$ of the switching surface ${\cal S}$. Assume that the trajectories of system (\ref{eq:blowing_model_hamilton_vector_field}) intersect the switching surface ${\cal S}$ transversally in the neighbourhood of some point $z \in {\cal S}_{123}$. Then these trajectories experience one switching on one side with respect to ${\cal S}_{123}$ and two consecutive switchings on the other side. This situation is schematically depicted in Fig.~\ref{Fig3Strata}. Hence the curve  $\sigma_k$ either splits into two branches or two branches merge into one for each encounter of ${\cal S}_{123}$. The resulting net of curves still lies on the cylindrical surface $\Sigma$ and equals the intersection $\Sigma \cap {\cal S}$. Let us remark that the preimages of the branching points under the map $\tilde\Phi$ are points of non-smoothness of the curves making up the set $\Sigma \cap {\cal S}$.

	\begin{figure}
		\begin{center}
		\begin{subfigure}[t]{0.45\textwidth}
			\begin{center}
			\includegraphics[width=\textwidth]{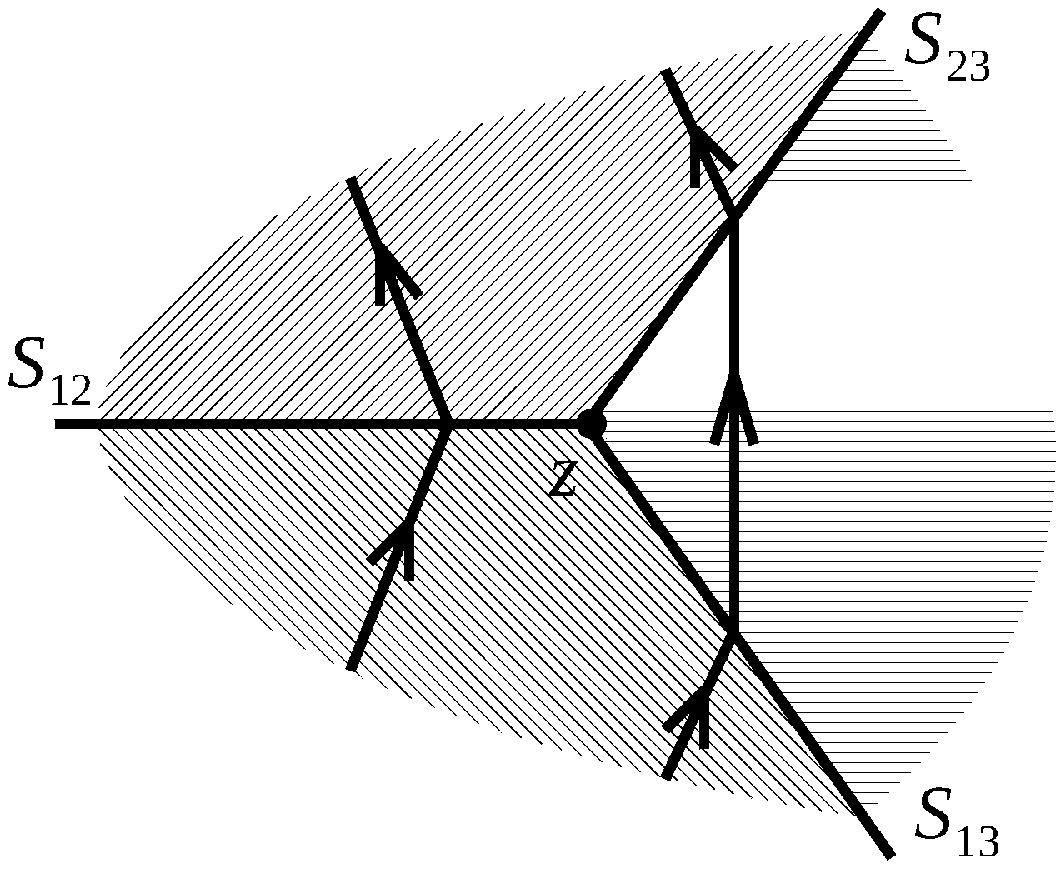}
			\end{center}
			\caption{Behaviour of trajectories in the vicinity of ${\cal S}_{123}$}
			\label{Fig3Strata}
		\end{subfigure}
		\ \ \
		\begin{subfigure}[t]{0.45\textwidth}
			\begin{center}
			\includegraphics[width=\textwidth]{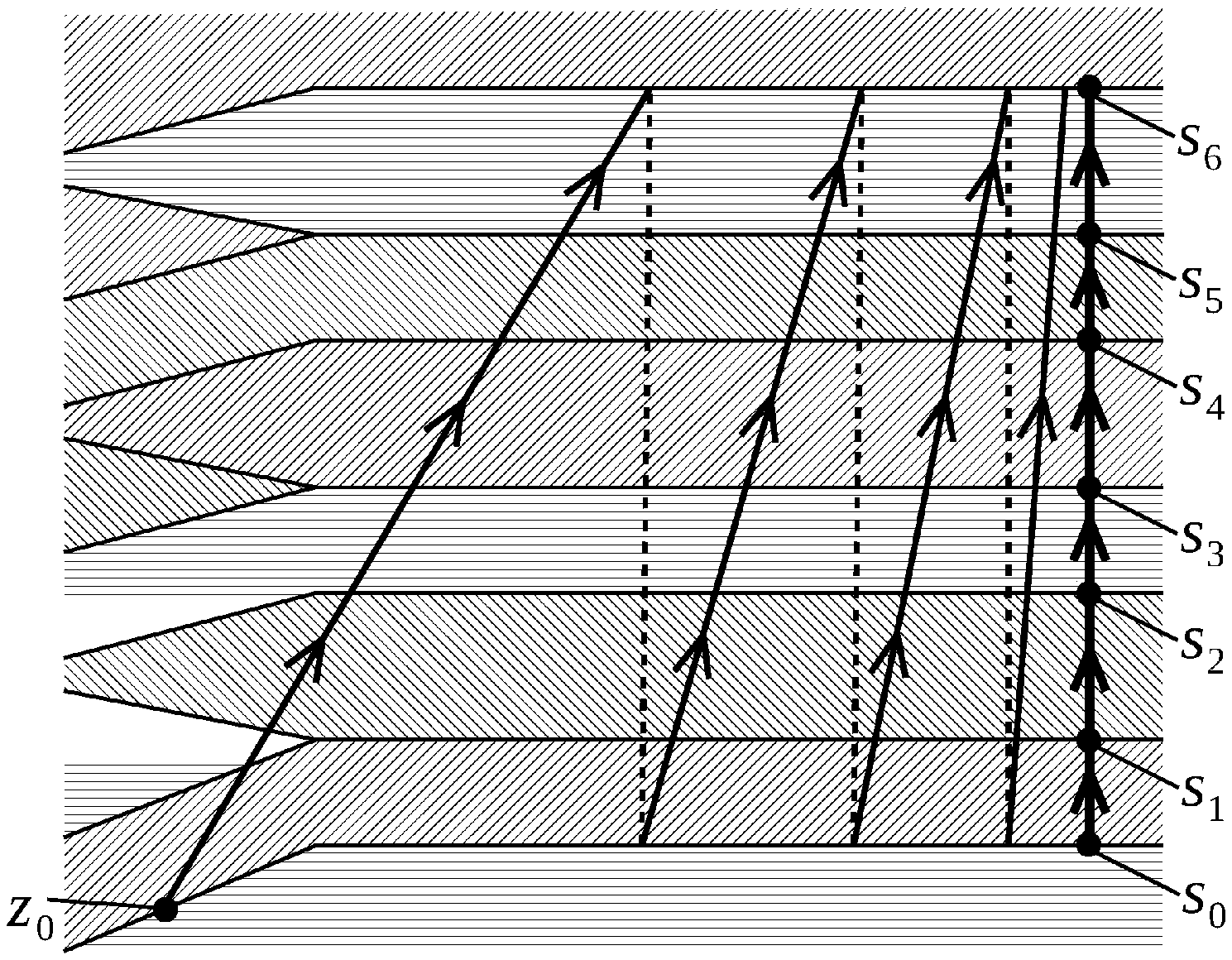}
			\end{center}
			\caption{Cylindrical surface $\Sigma$, curves $\sigma_k$, homoclinic point $z_0$ and trajectory of system (\ref{eq:blowing_model_hamilton_vector_field}) passing through $z_0$}
			\label{Fig7Switch}
		\end{subfigure}
		\end{center}
		\caption{Schematic representation of the curves $\sigma_k$.}
	\end{figure}

	If we track the curves $\sigma_k$ in the direction pointing away from the switching points $\tilde z(s_k)$ of the periodic orbit $R^+$, then we see that the curves $\sigma_k$ with odd index $k$ pass through a branching point, and the curves $\sigma_k$ with even index $k$ encounter a point of non-smoothness, which is the preimage of the above-mentioned branching points\footnote{Whether the curves with odd indices encounter branching points and the curves with even indices non-smoothness points or vice versa depends of course on the indexation of the switching points on $R_+$ and has no intrinsic meaning. The point is that the branching points and the points of non-smoothness alternate.}. After passage through the point of non-smoothness the curve $\sigma_0$ transversally intersects the unstable manifold $H_5^0$, thus defining the sought homoclinic point $z_0$. This situation is schmatically depicted in Fig.~\ref{Fig7Switch}. On this figure we see the cylindrical surface $\Sigma$. In order to obtain the cylindrical topology, one should identify the top and the bottom of the picture. The hatchings encode the control used in system (\ref{eq:blowing_model_hamilton_vector_field}), taking values in the three vertices of the equilateral triangle $\Omega$. The periodic orbit $R^+$ is represented by the bold vertical line, the curves $\sigma_k$ by the horizontal semi-bold lines. The thin line represents the trajectory passing through the point $z_0$, the dashed lines symbolize the identification of the intersection points of this trajectory with the curves $\sigma_0$ and $\sigma_6$, respectively. The periodicity of the whole picture follows from the invariance of the periodic orbit $R^+$ with respect to a permutation of order 3 of the symmetry group $S_3$.

	From the above it follows that we can set the numbers $m,l$, e.g., to $m = 7$ and $l = 0$. Moreover, it is clear that the trajectory passing through $z_0$ is bounded away from the discontinuity surface ${\cal S}_{123}$.

	The proof for the periodic orbit $R^-$ follows similar lines.

\end{proof}

Let us remark that the neighbourhood $W$ in Lemma \ref{lm:homoclinic_point_any_triangle} can be replaced by an arbitrarily small neighbourhood that is contained in $W$.

\subsection{Conclusion of the proof of the first theorem on the chaotic behaviour}

It is well-known that a homoclinic point generates a Smale Horseshoe in the neighbourhood of a periodic point. In our case the Poincar\'e return map is formally non-smooth, but Lemma \ref{lm:homoclinic_point_any_triangle} allows to ignore the discontinuities of the return map and to directly use  the classical theorem stating that a transversal homoclinic point generates a Smale Horseshoe in an arbitrarily small neighbourhood of the periodic point.

\begin{proof}[Proof of Theorem \ref{thm:model_chaos_any_triangle}]
	We shall start with the construction of the set $\Xi$. For this we need to lift the Smale Horseshoe in the neighbourhood of the 6-point periodic orbit $R_+\cap{\cal S}$ of the map $\widetilde\Phi$ from the zero section ${\cal C}_0$ to the whole cylinder $\cal C$ and to show that all trajectories in this lift tend to ${\cal C}_0$ and satisfy the conditions of Lemma \ref{lm:trajectory_vanish_to_C0}.

	Consider the map $\Phi^6$, defined on ${\cal S}\subset {\cal C}$ in the neighbourhood of the points of $R_+\cap{\cal S}$. Since the cycle $R_+$ is obtained from a self-similar trajectory with $\lambda_0<1$, we get by virtue of (\ref{eq:blowed_Poincare_map}) that $d\Phi^6(\frac{\partial}{\partial \mu})=\lambda_0\frac{\partial}{\partial\mu}$ at the points of $R_+\cap{\cal S}$. Let us fix one of the six points of $R_+\cap{\cal S}$ and consider its neighbourhood in which the map $\Phi^6$ is contracting in the vertical direction. This neighbourhood can be chosen as a cylinder over some neighbourhood in ${\cal C}_0$, since by virtue of (\ref{eq:blowed_Poincare_map}) the contraction condition does not depend on $\mu$.
	
	Let hence $V_0\subset V$ be a neighbourhood of one of the six points of $R_+\cap{\cal S}$ in ${\cal C}_0$ such that the map $\Phi^6$ is contracting by a factor of $\widetilde\lambda_0<1$ on $V_0\times\{\mu\in\R\}$ in the vertical direction. By virtue of Lemma \ref{lm:homoclinic_point_any_triangle} on $V_0$ there exists a Smale Horseshoe $\Lambda_0$ for some iteration $(\widetilde\Phi^6)^{N'}$ (see, e.g., \cite{Katok}, Theorem 6.5.5). Set $N=6N'$.
	
	The Smale Horseshoe $\Lambda_0$ is partitioned in two subsets as follows. Define two subsets $W_0$ and $W_1$ of the neighbourhood $V$. The set $W_0$ is defined as $\Phi^l(W)$, where the neighbourhood $W\ni z_0$ and the power $l$ are taken from Lemma \ref{lm:homoclinic_point_any_triangle} (see Fig.~\ref{fig:homoclinic_point}). Hence for every point $z\in W_0$ the relation $\Phi^{m-l}(z)\in V$ holds, and the map $\Phi^{m-l}|_{W_0}$ is a diffeomorphism. The set $W_1$ consists of those points $z\in V$ which do not leave this neighbourhood of $R_+\cap{\cal S}$ under the iterations $\Phi^k$, $0\le k\le m-l$, i.e., $W_1=\bigcap_{k=0}^{m-l}\Phi^{-k}(V)$. The map $\Phi^{m-l}|_{W_1}$ is a diffeomorphism. Clearly $W_0\cap W_1=\emptyset$. Hence the Smale Horseshoe $\Lambda_0$ consists of two disjoint subsets, $\Lambda_0 = (\Lambda_0\cap W_0)\sqcup(\Lambda_0\cap W_1)$.
	
	Let $z\in (W_0\sqcup W_1)\cap\Phi^{-N}(W_0\sqcup W_1)$. Consider the trajectory of the vector field $\mu\xi$, i.e., of system (\ref{eq:blowing_model_hamilton_vector_field}), emanating from $z$. Denote by $S(z)$ the time (in the parametrization by $s$) which the trajectory needs in order to cross $N$ consecutive times the strata ${\cal S}_{ij}$ of the switching surface. Since the restrictions of the map $\Phi^{m-l}$ to $W_0$ and $W_1$ are diffeomorphisms and the trajectories of the vector field $\mu\xi$ in $(W_0\sqcup W_1)\cap\Phi^{-N}(W_0\sqcup W_1)$ intersect the switching surface ${\cal S}$ transversally, we have the bounds
	
	\begin{equation}
	\label{eq:Smin_Sz_Smax}
		0<S_{\min} \le S(z) \le S_{\max}
	\end{equation}
for some constants $S_{\min}$ and $S_{\max}$.

Note that for every point of the Smale Horseshoe $\Lambda_0$ the trajectory of the vector field $\mu\xi$ through this point exists and is unique for all $s\in(-\infty,+\infty)$. Indeed, every power $\Phi^n$ of the Poincar\'e return map is well-defined on the points of $\Lambda_0$, since  $\Lambda_0\subset V_0\cap\Phi^{-N}(V_0)$ and $\Phi^N(\Lambda_0)=\Lambda_0$. Moreover, the trajectory of the vector field $\mu\xi$ emanating from an arbitrary point of $\Lambda_0$ intersects the switching surface only transversally and is hence unique. Its existence for all $s\in(-\infty,+\infty)$ follows from the bounds (\ref{eq:Smin_Sz_Smax}) on the transition times.
	
	To every point $z\in\Lambda_0$ of the Smale Horseshoe we put in correspondence an infinite sequence $\Psi(z)\in\Sigma_{01}$ consisting of 0 and 1. We define the position $j\in\Z$ of $\Psi(z)$ to be equal to 0 if all iterations $\Phi^k(z)$ for $k$ between $jN$ and $(j+1)N$ lie in $V_0$, and we set it equal to 1 if $\Phi^k(z)$ is in the neighbourhood of the homoclinic point $z_0$ for some $k\in(jN,(J+1)N)$. Note that the iterations $\Phi^{jN}(z)$ lie in $V_0$ in any case.
	
	The restriction of $\Phi^{N}$ onto the cylinder over $\Lambda_0\times\{\mu\in\R\}$ in $\cal C$ is in general not contracting in the vertical direction, regardless of the fact that $\Phi^6$ is contracting in the neighbourhood of $R_+\cap{\cal S}$. This happens because during the $N$ iterations of the map $\Phi$ the points of the Smale Horseshoe $\Lambda_0$ may leave the neighbourhood of $R_+\cap{\cal S}$ and visit the neighbourhood of the homoclinic point $z_0$.

	Let $\lambda_{\max}$ be the maximal factor of expansion of the map $\Phi^N$ in the vertical direction for the points in the cylinder over $\Lambda_0$,
	
	$$
		\lambda_{\max} = \max\limits_{z\in\Lambda_0\times{\R\setminus 0}}\frac{\mu(\widetilde\Phi^N(z))}{\mu(z)}.
	$$
	
	\noindent The maximum is well-defined, because $\Lambda_0$ is compact, and the continuous function $\frac{\mu(\Phi^{N}(z))}{\mu(z)}$ does not change if we replace $z$ by $g(\lambda)z$ for any $\lambda\ne 0$.
	
	It follows that if the position $j$ of the sequence $\Psi(z)$ equals 0, then the map $\Phi^N$ is contracting in the vertical direction by a factor of $\widetilde\lambda_0^{N/6}$ in the neighbourhood of the point $\Phi^{jN}(z)$, because the iterations $\Phi^k(z)$ do not leave the neighbourhood of $R_+\cap{\cal S}$ for $k\in[jN,(j+1)N]$. However, if the position $j$ of the sequence $\Psi(z)$ equals 1, then the map $\Phi^N$ extends the vertical direction by a factor of at most $\lambda_{\max}$ in the neighbourhood of $\Phi^{jN}(z)$.
	
	Let us choose $K\in\N$, $K>2$ such that $\widetilde\lambda_0^{KN/6}\lambda_{\max}<1$. Consider the Smale Horseshoe $\Lambda_1\subset\Lambda_0$ consisting of those points $z\in\Lambda_0$ for which the sequence $\Psi(z)$ has zeros at all position indices which are not divisible by $K$. At the positions whose indices are divisible by $K$ we can still have 0 as well as 1. Then the restriction of the map $\Phi^{KN}$ on  $\Lambda_1\times\{\mu\in\R\}$ contracts the vertical direction by a factor of at least $\lambda_1=\widetilde\lambda_0^{KN/6}\lambda_{\max}<1$. Set the integer $n$ from item (IV) of Theorem \ref{thm:model_chaos_any_triangle} equal to $KN$.
	
	Define the set $\Xi_1$ as follows. Consider the upper half of the cylinder $\Lambda_1\times\R_+$ and pass a trajectory of the vector field $\mu\xi$ through every point of the obtained set for $s\in(-\infty,+\infty)$. Transport the obtained set to the original extended phase space $\cal M$ by virtue of the map $\blowing^{-1}$.
	
	In the neighbourhood of the trajectory $R_-$ there also exists a Smale Horseshoe $\Lambda_2$ for the map $\Phi^n$. Without restriction of generality we can take equal powers of the map $\Phi$ for the Horseshoes $\Lambda_1$ and $\Lambda_2$. Define a set $\Xi_2$ in a similar way as we defined $\Xi_1$. These two sets are disjoint. Indeed, for every point $z\in\Lambda_1$ the majority of the iterations $\Phi^k(z)$ lie in the neighbourhood of $R_+\cap{\cal S}$, namely at least $(K-1)N$ out of every $KN$, where $K>2$. A similar statement holds for the Horseshoe $\Lambda_2$ in the neighbourhood of $R_-\cap{\cal S}$, and we immediately obtain $\Xi_1\cap\Xi_2=\emptyset$.
	
	Define $\Xi=\Xi_1\sqcup\Xi_2$.
	
	The time $T(y)$ in item (I) of Theorem \ref{thm:model_chaos_any_triangle} exists by virtue of Lemma \ref{lm:trajectory_vanish_to_C0}. Indeed, the map $\Phi^n:\Lambda_1\times\{\mu>0\}\to\Lambda_1\times\{\mu>0\}$ contracts the coordinate $\mu(z)$ of every point $z\in \Lambda_1\times\{\mu>0\}$ by a factor of at least $\lambda_1=\widetilde{\lambda_0}^{(K-1)N/6} \lambda_{\max}<1$. The parameter $s$ on the trajectory $X(z,s(t))$ grows by an amount of $KS(z)\in[KS_{\min},KS_{\max}]$. Hence the parameter $\mu(z(s))$ exponentially decays on the trajectory emanating from $z$. Therefore the conditions of Lemma \ref{lm:trajectory_vanish_to_C0} are satisfied. Uniqueness follows from the definition of the Poincar\'e return map.
	
	Items (II) and (III) of Theorem \ref{thm:model_chaos_any_triangle} follow from the construction of the set $\Xi$, optimality also follows from Lemma \ref{lm:trajectory_vanish_to_C0}. In order to prove item (IV) we define the sequence $\Psi_{01}(z)\in\Sigma_{01}$ for $z\in \Xi_1$ as the subsequence of $\Psi(\pi(z))$ consisting of the elements at positions with indices divisible by $K$, $\Psi_{01}(z)_j = \Psi(z)_{jK}$ for all $j\in\Z$. For $z\in\Xi_2$ the sequence $\Psi_{01}(z)$ lies in the second copy of the Smale Horseshoe $\Sigma_{01}$ and is defined in a similar way.
\end{proof}

Theorem \ref{thm:model_chaos_any_triangle} proven above allows to find the elements of the dynamical system defined by the Bernoulli shift $l:\Sigma_{01}\to\Sigma_{01}$ with positive entropy in the original model problem (\ref{problem:model}). For instance, we have the following corollary.

\begin{corollary}
	If the triangle $\Omega$ satisfies the conditions of Theorem \ref{thm:model_chaos_any_triangle}, then there exists a countable, infinite number of different 1-parametric families of self-similar trajectories in the optimal synthesis of problem (\ref{problem:model}).
\end{corollary}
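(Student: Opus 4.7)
The plan is to deduce the corollary from the dynamics of the Bernoulli shift provided by Theorem~\ref{thm:model_chaos_any_triangle}. The key observation is that the shift $l:\Sigma_{01}\to\Sigma_{01}$ has a countable infinite set of periodic orbits: for every period $k\in\N$ there are $2^k$ fixed points of $l^k$, so the primitive periodic orbits of $l$ form a countable collection indexed by $k$ and by the non-shift-equivalent sequences of that period.

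First, I would lift each periodic sequence $w\in\Sigma_{01}$ with $l^k(w)=w$ to a periodic orbit of the Poincar\'e map $\Phi^n$ in the set $\Xi\cap {\cal S}$. Inspecting the construction of $\Xi$ in the proof of Theorem~\ref{thm:model_chaos_any_triangle}, the intersection $\Xi\cap{\cal S}$ is generated by the two Smale horseshoes $\Lambda_1,\Lambda_2\subset {\cal C}_0$ built in the neighbourhoods of the periodic cycles $R^\pm\cap{\cal S}$. By the standard theory of horseshoes, together with the hyperbolicity of the homoclinic intersection established in Lemma~\ref{lm:homoclinic_point_any_triangle}, the restriction $\Phi^n|_{\Lambda_i}$ is topologically conjugate (not merely semi-conjugate) to the Bernoulli shift $l$. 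Consequently each periodic sequence $w$ of minimal period $k$ yields exactly one periodic point of $\Phi^n$ in each $\Lambda_i$ whose $\Phi^n$-orbit has the same minimal period $k$.

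Next, I would project such a periodic $\Phi^n$-orbit back to the extended phase space ${\cal M}$. By the identification of the zero section ${\cal C}_0$ with the quotient $({\cal M}\setminus 0)/g$ indicated after Lemma~\ref{lm:trajectory_vanish_to_C0}, a closed trajectory of the vector field $\mu\xi$ on ${\cal C}_0$ lifts to a trajectory in ${\cal M}$ along which the quadruple $(\widetilde x,\widetilde y,\widetilde \phi,\widetilde \psi)$ returns to its initial value at some time $t_0>0$ after rescaling by a factor $\lambda_0>0$; this is precisely the self-similarity condition of Definition~\ref{defn:automodel}. The orbit of $g(\R_+)$ through any such lift furnishes the desired one-parameter family. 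Optimality of the lift, and with it the inequality $0<\lambda_0<1$ demanded by Remark~\ref{rm:lambda_0_less_1}, follows automatically because the construction of $\Xi$ ensures $\Xi\subset M_+$ and supplies a uniform vertical contraction factor $\lambda_1<1$ on the cylinders $\Lambda_i\times\R_+$.

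Finally, I would argue that distinct shift-periodic orbits produce distinct self-similar families. The coding map $\Psi_{01}$ is $g$-invariant by construction (the vertical action $g(\lambda):(\mu,\widetilde x,\widetilde y,\widetilde \phi,\widetilde \psi)\mapsto (\lambda\mu,\widetilde x,\widetilde y,\widetilde \phi,\widetilde \psi)$ preserves the itinerary through the boxes $W_0,W_1$), so all members of a single family share the same symbolic code; trajectories in different families project to different periodic orbits in $\Lambda_1\sqcup\Lambda_2$ and therefore correspond to different periodic sequences of $l$. Since the set of primitive periodic sequences of $l$ is countably infinite, we obtain countably infinitely many distinct one-parameter families of self-similar optimal trajectories, which completes the proof. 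The main technical point requiring care is the verification that, on the horseshoes $\Lambda_i$, the semi-conjugacy of Theorem~\ref{thm:model_chaos_any_triangle} is in fact a conjugacy, so that every shift-periodic word really lifts to a periodic point; this is a standard consequence of transversality at the homoclinic point and uniform hyperbolicity, both of which are already at our disposal from Lemma~\ref{lm:homoclinic_point_any_triangle} and from the estimates used to build $\Xi$.
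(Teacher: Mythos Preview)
Your proposal is correct and follows essentially the same route as the paper. The paper's proof is a two-sentence argument: take any periodic orbit of the Bernoulli shift $l$ (of which there are countably infinitely many) and observe that its preimage under $\Psi_{01}$ is a $1$-parametric (with respect to $g(\R_+)$) family of self-similar trajectories. You unpack this more carefully, and in particular you rightly flag the one nontrivial point the paper leaves implicit: Theorem~\ref{thm:model_chaos_any_triangle} only states a \emph{semi}-conjugacy, so one needs to know that on the horseshoes $\Lambda_i\subset{\cal C}_0$ the coding is actually a conjugacy in order to guarantee that each periodic word lifts to a genuine periodic $\Phi^n$-orbit (and hence to a self-similar trajectory) rather than merely to some invariant set. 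As you note, this is standard for a transversal homoclinic Smale horseshoe (the reference used in the proof of Theorem~\ref{thm:model_chaos_any_triangle} is \cite{Katok}, Theorem~6.5.5), so the gap is easily closed.
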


\begin{proof}
	Consider an arbitrary periodic trajectory of the Bernoulli shift $l$. Clearly there exists an infinite number of such trajectories. The preimage of an arbitrary such trajectory under the mapping $\Psi_{01}$ yields a 1-parametric (with respect to the action $g$ of the group $\R_+$) family of self-similar trajectories.
\end{proof}

\section{Topological properties of the Poincar\'e return map}
\label{sec:topological_properties_of_Poincare_map}

In Theorem \ref{thm:model_chaos_any_triangle} we obtained a semi-local result on the existence of chaos in the optimal synthesis of problem (\ref{problem:model}) with an arbitrary triangle $\Omega$, given it is sufficiently close to an equilateral triangle centered at the origin. For the equilateral triangle the optimal synthesis can be described exactly. In Theorem \ref{thm:model_chaos_equilateral_triangle} we describe the non-wandering set of trajectories and the graph of the topological Markov chain conjugated with the Poincar\'e return map $\Phi$. We find bounds on the Hausdorff and Minkowski dimensions of the non-wandering set of trajectories and the corresponding topological entropy. In order to obtain these results, we start with the consideration of the global topological properties of the return map $\Phi$ of the switching surface ${\cal S}$ on itself.

In this section we assume that the triangle $\Omega$ is equilateral and centered on the origin, if not we state it explicitly.

\subsection{Topological structure of the switching surface}

The investigation of the structure of the optimal synthesis in the model problem (\ref{problem:model}) with equilateral triangle $\Omega$ is based on a thorough study of the Poincar\'e return map $\Phi$. As was stated earlier, the map $\Phi$ is defined on some subset of the switching surface. In this subsection we first give an exact topological description of the switching surface itself and of the subset of points on which the map $\Phi$ is defined.

\begin{figure}
	\centering
	\includegraphics[width=0.5\textwidth]{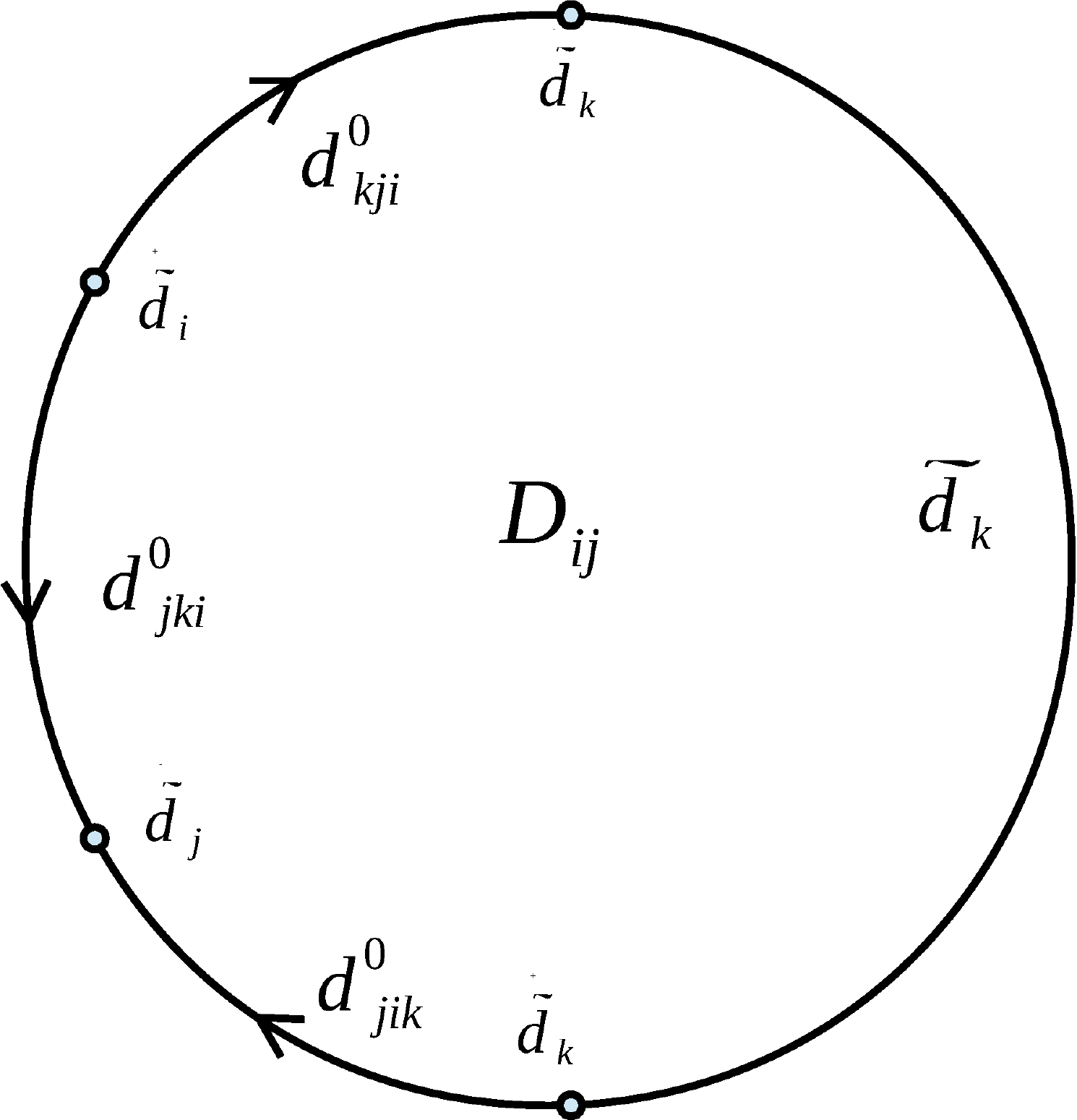}
	\caption{Schematic representation of the disc $D_{ij}$ of the switching surface $\pi({\cal S})$.}
	\label{fig:switch_serface}
\end{figure}

The switching surface $\cal S$ can be described by means of the variables $\psi_i$. Hence the intersection $M_+\cap{\cal S}$ is most easily described in the space of adjoint variables $N=\{(\phi,\psi)\}$. We shall identify this space with $M$ and $M_+$ by virtue of the map $E$. Let $\varepsilon:N/g\to N$ be a right inverse of the canonical projection $\pi:N\setminus 0\to N/g$. Let us for convenience define the embedding $\varepsilon$ by

\[
	\bigl(\varepsilon\circ\pi\bigr)(\phi,\psi) = (\mu^{-3}\phi,\mu^{-4}\psi)\mbox{, where }\mu = \sqrt[12]{|\phi|^4 + |\psi|^3}.
\]

\noindent We have chosen this way of defining $\varepsilon$ because then $\varepsilon$ commutes with $g$, and the projection $\pi$ can be imagined as mapping the point $(\phi_0,\psi_0)\in N\setminus 0$ to the intersection of its orbit $g(\R_+)(\phi_0,\psi_0)$ with the spheroid $\varepsilon(N/g) = \{(\phi,\psi):|\phi|^4 + |\psi|^3 = 1\}$. The space $N/g$ is endowed with the structure of a smooth Riemannian manifold, the map $\pi$ becomes a smooth surjection, and $\varepsilon$ is a smooth embedding.

The intersection ${\cal S}\cap M_+$ consists of three 2-dimensional half-spheres $D_k$, defined by the relations $\psi_i=\psi_j\le\psi_k$. These half-spheres intersect each other in the circle given by $\psi=0$,

\[
	D_k=\varepsilon(N/g)\cap\{(\phi,\psi)\in N: \psi_i=\psi_j\le \psi_k\}
\]

\noindent and

\[
	D_1\cap D_2\cap D_3 = \varepsilon(N/g)\cap\{(\phi,\psi)\in N:\psi=0\}.
\]

\noindent Each half-sphere $D_k$ is in turn divided into two discs, $D_k=D_{ij}\cup D_{ji}$, by the plane $\phi_i=\phi_j$, where

\[
	D_{ij} = \varepsilon(N/g)\cap\{(\phi,\psi)\in N: \psi_i=\psi_j\ge \psi_k\mbox{ and }\phi_i\le\phi_j\}.
\]
Define also
\[
	\tilde d_k = D_{ij}\cap D_{ji} = D_k\cap\{\phi_i=\phi_j\}.
\]

\noindent On every optimal trajectory which passes through a point in $D_{ij} \setminus \widetilde d_k$ the control switches from the vertex $i$ of the triangle $\Omega$ to the vertex $j$.

The circle $\varepsilon(N/g)\cap\{\psi=0\}$ is divided into 6 arcs (see Fig.~\ref{fig:switch_serface})

\[
	d_{ijk}^0=\varepsilon(N/g)\cap\{(\phi,\psi):\psi=0\mbox{ and }\phi_i\le\phi_j\le\phi_k\}.
\]

\noindent The arcs are separated by the points

\[
	\tilde d_k^+=\varepsilon(N/g)\cap\{\phi_k>\phi_i=\phi_j\}\mbox{ and }
	\tilde d_k^-=\varepsilon(N/g)\cap\{\phi_k<\phi_i=\phi_j\}.
\]

Thus the image $\pi({\cal S})$ of the switching surface on the quotient space $N/g$ is a 2-dimensional finite CW-complex with cells $D_{ij}$, $d_{ijk}^0$, $\tilde d_k$, and $d_k^\pm$. It is not hard to see that the surface $\pi({\cal S})$ is homotopically equivalent to the wedge sum of two 2-dimensional spheres. Let us choose orientations on the arcs $d_{ijk}^0$ as a function of the parity of the permutation $(ijk)$ as is shown on Fig.~\ref{fig:switch_serface}. The orientations will be useful when factoring with respect to the discrete symmetry group $S_3$.

The Poincar\'e return map $\Phi$ is then defined at all points of the image $\pi({\cal S})$ except those on $\widetilde{d_k}$.


Since the return map $\Phi$ commutes with the action of the group $S_3$, we may restrict our consideration to one of the discs $D_{ij}$. The action of $S_3$ on $\pi({\cal S})$ respects the structure of the cell complex and can be described as follows:

\begin{enumerate}
	\item The group $S_3$ acts freely and transitively on the 0-dimensional cells $d_i^+$ and $d_i^-$.
	\item The group $S_3$ acts freely and transitively on the 1-dimensional cells $d_{ijk}^0$ and preserves their orientation.
	\item A normal subgroup of $S_3$ acts freely and transitively on the 1-dimensional cells $\widetilde{d_k}$. The stabilizer of the cell $\widetilde{d_k}$ is the subgroup generated by the transposition $(ij)\in S_3$.
	\item The group $S_3$ acts freely and transitively on the 2-dimensional cells $D_{ij}$.
\end{enumerate}

\subsection{Factorization with respect to the action of the group \texorpdfstring{$S_3$}{S3}}

Before we start with the description of the properties of the map $\Phi$, let us first introduce some conventions which will facilitate the text and make it more comprehensible. The image $\Phi(z)$ of an arbitrary point $z\in D_{ij}$ lies either in $D_{jk}$ or in $D_{ji}$. By application of the corresponding permutation $\sigma\in S_3$ we may return it to the surface $D_{ij}$, $(\sigma\circ\Phi)(z)\in D_{ij}$. In this way the trajectory $\{\Phi^n(z)\}_{n \in \mathbb Z}$ of an arbitrary point $z\in D_{ij}$ can be represented on $D_{ij}$ only. In this section, if we say that the image of a point $z\in D_{ij}$ under the map $\Phi$ is some point $z'\in D_{ij}$, we always mean that the corresponding permutation $\sigma\in S_3$ has been applied. By some abuse of notation we shall nonetheless write $\Phi(z)=z'$.

We could also pass directly to the quotient space ${\cal S}/S_3$. The Poincar\'e return map $\Phi$ can be correctly carried over to ${\cal S}/S_3$, since it commutes with the action of $S_3$. However, the way described above, albeit equivalent, seems preferable to us, because it leaves the topology of the disc $D_{ij}$ unchanged.

We shall perform all computations in the quotient space $N/g$, identifying it with $M/g$ and $M_+/g$ by virtue of the map $E$ according to Theorem \ref{thm:model_problem_bellman}. Then the disc $D_{ij}\subset (N/g)$ can be seen as a subset of $N$, given explicitly by the system of inequalities

\[
	\left\{\begin{array}{l}
		|\psi|^2+|\phi|^2=1;\\
		\psi_i=\psi_j\le \psi_k;\\
		\phi_i\le\phi_j;\\
		\phi_1+\phi_2+\phi_3=0;\\
		\psi_1+\psi_2+\psi_3=0.\\
	\end{array}\right.
\]

Note that in the preceding subsection we used the normalization $|\psi|^3+|\phi|^4=1$ which has the advantage that it is compatible with the action $g$ of the group $\R_+$. But it will be more convenient for us to perform the numerical calculations with the normalization $|\psi|^2+|\phi|^2=1$.

On the disc $D_{ij}$ we introduce the coordinates

\[
	\left\{\begin{array}{l}
		\chi_1 = \frac{1}{2}(\phi_i-\phi_j) - \sqrt{3}\psi_i,\\
		\chi_2 = \sqrt{\frac{3}{2}}\phi_k.\\
	\end{array}\right.
\]

Then the disc $D_{ij}$ is mapped to the oblate disc $2\chi_1^2+\chi_2^2\le 1$, and the variables $\phi$ and $\psi$ can be recovered from the coordinates $\chi_1,\chi_2$ by the formulas

\[
	\left\{\begin{array}{ll}
	\phi_i= \frac{1}{2}\bigl(-\sqrt{2/3}\chi_2-\chi_1+\sqrt{1-\chi_1^2-\chi_2^2}\bigl);&
	\psi_i= -\frac{\chi_1+\sqrt{1-\chi_1^2-\chi_2^2}}{2\sqrt{3}}.\\
	\phi_j=\frac{1}{2}\bigl(-\sqrt{2/3}\chi_2-\chi_1+\sqrt{1-\chi_1^2-\chi_2^2}\bigr); &
	\psi_j= -\frac{\chi_1+\sqrt{1-\chi_1^2-\chi_2^2}}{2\sqrt{3}};\\
	\phi_k= \sqrt{\frac{2}{3}}\chi_2; &
	\psi_k= \frac{\chi_1+\sqrt{1-\chi_1^2-\chi_2^2}}{\sqrt{3}}.\\
	\end{array}\right.
\]

\smallskip


\subsection{Known pieces of the synthesis}

For definiteness, we shall consider the map $\Phi$ as acting on the disc $D_{31}$. In this subsection we describe the subsets of $D_{31}$ which correspond to pieces of the optimal synthesis described in the paper \cite{ZMHBasic}.

The arc $\tilde d_2$ borders a domain $C$ such that the trajectory emanating in the forward time direction from each switching point in $C$, with switching from control 3 to control 1, experiences an infinite number of switchings from control 1 to control 3 and back and then hits the semi-singular submanifold corresponding to the edge $[A_1 A_3]$ of the triangle $\Omega$. This domain $C$ is bounded by the arc $\tilde d_2$, part of the arc $d^0_{132}$, and some curve $c$ linking the point $\tilde d_2^-$ to the arc $d^0_{132}$. On Fig.~\ref{first_reg} the domain $C$ is painted in green. The Poincar\'e return map takes the domain $C$ onto a subset $\tilde C \subset C$. This map leaves the arc $\tilde d_2$ point-wise fixed, and the curve $c$ is taken to a subarc $\tilde c$ joining the point $\tilde d_2^-$. The image $\tilde C$ is painted in dark green on Fig.~\ref{first_reg}. The curve separating $\tilde C$ from its complement $C \setminus \tilde C$ joins the point $\tilde d_2^+$. This curve is the image under the Poincar\'e map $\Phi$ of the intersection of the arc $d^0_{132}$ with the boundary of $C$.

\begin{figure}[H]
\centering
\includegraphics[width=\textwidth]{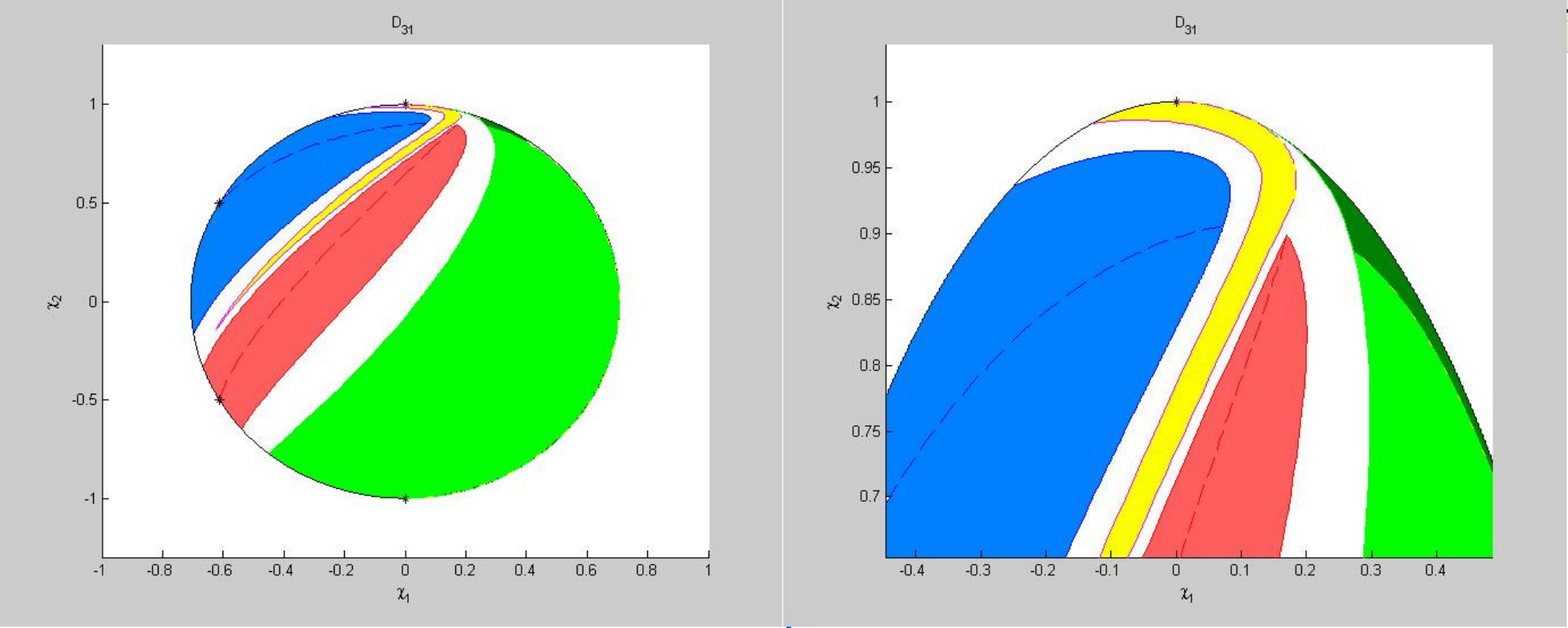}
\caption{Known regions}
\label{first_reg}
\end{figure}

The inverse $\Phi^{-1}$ of the Poincar\'e return map takes the complement $C \setminus \tilde C$, painted in light green on Fig.~\ref{first_reg}, consecutively to the domains painted in blue, red, and yellow. Recall that the Poincar\'e map in the original space makes no difference between the arcs $d^0_{132},d^0_{123}$. Thus $\Phi^{-1}$ takes the boundary of the domain $C \setminus \tilde C$ as follows to the boundary of the blue domain. The curve separating the domain $\tilde C$ from the complement $C \setminus \tilde C$ is taken to a sub-arc of the arc $d^0_{123}$. The sub-arc of $d^0_{132}$ maps to a sub-arc of $d^0_{213}$, and the intersection of the boundary of $C \setminus \tilde C$ with $D_{31}$ is mapped to the intersection of the boundary of the blue domain with $D_{31}$. 

The restriction of the inverse Poincar\'e map $\Phi^{-1}$ on the blue domain, having the red domain as its image, or the restriction of the map $\Phi$ on the red domain, having the blue domain as its image, is discontinuous in the topology of $D_{31}$. Namely, the intersections of the arcs $d^0_{132},d^0_{123}$ with the boundary of the red domain are glued together and are taken by $\Phi$ to the dark-blue dashed line, and the intersections of the arcs $d^0_{213},d^0_{123}$ with the boundary of the blue domain are glued together and taken by $\Phi^{-1}$ to the dark-red dashed line.

The map $\Phi^{-1}$ takes the boundary of the red domain as follows to the boundary of the yellow domain. The sub-arc of $d^0_{132}$ is taken to the sub-arc of $d^0_{213}$, and the sub-arc of $d^0_{123}$ is taken to the dashed part of the boundary of the yellow region.

Note also that in an arbitrarily small neighbourhood of the point $\tilde d_2^-$ there exist points which do not belong neither to the yellow nor to the green domain. From the results in paper \cite{ZMHBasic} it follows that all such points, given they are sufficiently close to $\tilde d_2^-$, are taken to the yellow domain by a finite power of the map $\Phi$. These mappings correspond to switchings from control 3 to control 1 and back. For each given point the number of switchings until the hit of the yellow region is finite, but it grows unbounded as the initial point in question approaches the boundary of the domain $C$.

This yields a complete description of the behaviour of the Poincar\'e return map in some neighbourhood of the closure of the arc $\tilde d_2$. Let us establish the correspondence with the domains on the quotient of the switching surface with respect to the Fuller group $g$ which are depicted on Fig.~4 of the paper \cite{ZMHBasic}. The stratum $C_{ij}^{ij}$ from Fig.~4 corresponds to the green domain, part of the yellow domain and other points in a sufficiently small neighbourhood of the point $\tilde d_2^-$.

\subsection{Global structure of the Poincar\'e map}

Let us now consider the image of the neighbourhood of the arcs $d^0_{132},d^0_{123}$ under the map $\Phi$. Recall that this map glues these neighbourhoods together along these arcs. The image of these arcs is given by some curve $\sigma_b$ joining the points $\tilde d_2^+$ and $\tilde d_3^+$. The dark-blue dashed line and the boundary between the dark green and the light green domains are parts of this curve. The curve $\sigma_b$ partitions the disc $D_{31}$ in two sub-domains. Let us denote the sub-domain containing the dark green region by $T_b$ and the sub-domain containing the light green region by $R_b$.

Consider the image of the neighbourhood of the arcs $d^0_{213},d^0_{123}$ under the inverse map $\Phi^{-1}$. This map glues these neighbourhoods together along these arcs and maps the arcs to some curve $\sigma_f$ joining the points $\tilde d_1^-$ and $\tilde d_2^-$. The dark red dashed line and the dashed part of the boundary of the yellow region are parts of this curve. The curve $\sigma_f$ partitions the disc $D_{31}$ in two sub-domains. Denote the sub-domain containing the green region by $T_f$, and the sub-domain containing the blue region by $R_f$.

Thus the map $\Phi$ is continuous on the sub-domains $T_f,R_f$ and takes them bijectively to the sub-domains $T_b,R_b$, respectively. If we extend the map $\Phi$ continuously to the boundary of $T_f$, then it maps in the following way to the boundary of the sub-domain $T_b$. The arc $\tilde d_2$ remains fixed, the arc $d^0_{132}$ maps to the curve $\sigma_b$, and the curve $\sigma_f$ maps to the arc $d^0_{213}$. If we extend the map $\Phi$ continuously to the boundary of $R_f$, this boundary is mapped as follows to the boundary of the sub-domain $R_b$. The arc $d^0_{213}$ is mapped to the arc $d^0_{132}$, the arc $d^0_{123}$ is mapped to the curve $\sigma_b$, and the curve $\sigma_f$ is mapped to the arc $d^0_{123}$. The Poincar\'e map $\Phi$ is schematically depicted in Fig.~\ref{Poincare_skizze}.

\begin{figure}[H]
\centering
\includegraphics[width=\textwidth]{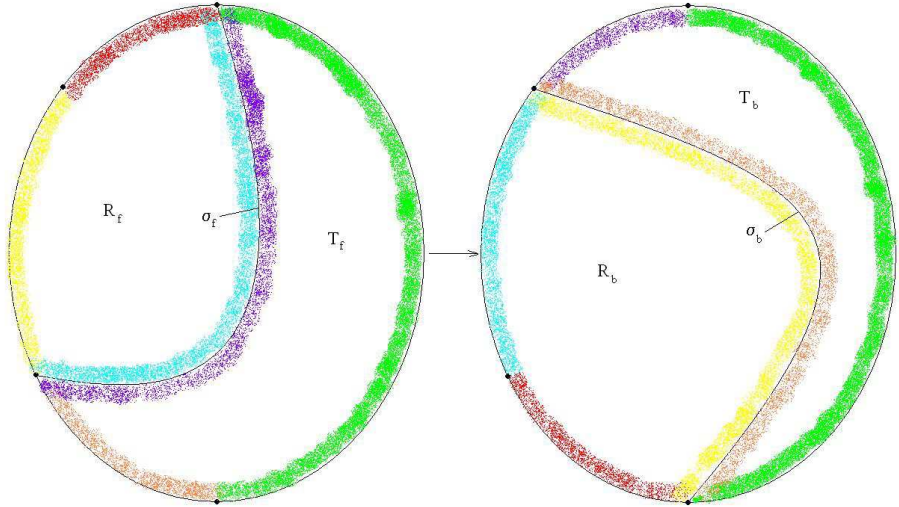}
\caption{Global structure of the Poincar\'e map}
\label{Poincare_skizze}
\end{figure}


The curves $\sigma_f$ and $\sigma_b$ intersect each other in one point. The coordinates of this point are solutions of a certain system of algebraic equations, because this point is taken to the yellow region by three-fold application of the map $\Phi$, and the variables $x,y$ can be written as explicit algebraic functions of the variables $\phi,\psi$ in the yellow region. Numerical calculations of the coordinates of the intersection point yield $\chi_1 \approx 0.174146349178$, $\chi_2 \approx 0.907326683652$. 

\subsection{Transitions of rotational type and transpositional type}

Consider the Poincar\'e return map $\Phi$ taking some switching point from control $i$ to control $j$ to some switching point from control $j$ to control $k$. Introduce the following definition.

If it happens that $i = k$, then we will call the corresponding transition {\it transpositional}. If on the other hand $i \not= k$, and hence $\{i,j,k\} = \{1,2,3\}$, then we call the transition {\it rotational}. Let us denote the types of transitions by $T$ (transposition) and $R$ (rotation), respectively.

The advantage of describing the sequence of controls on a given trajectory in the terms above is that it is invariant under the action of the permutation group $S_3$ which shuffles the vertices of the set of admissible controls $\Omega$.

Note that the Poincar\'e mapping from the sub-domain $T_f$ to $T_b$ induces a transpositional transition, while the Poincar\'e map from the sub-domain $R_f$ to $R_b$ induces a rotational transition. The indices $b$ and $f$ stand for {\it backward} and {\it forward}, respectively. A trajectory with a switching point, e.g., in the intersection $R_f \cap T_b$, will experience a rotational transition in the forward time direction and a transpositional transition in the backward time direction. Note also that every point in the green region $C$ experiences during its evolution in forward time only transpositional transitions, until it hits a semi-singular trajectory in finite time, but after an infinite number of switchings.

\subsection{An attractor in the backward time direction}

In \cite{ZMHBasic} the existence on $D_{31}$ of a repelling fixed point $F_R$ of the Poincar\'e map was proven. This fixed point corresponds to the three-link periodic cycles in the quotient space with respect to the Fuller group $g$ (\cite{ZMHBasic}, Theorem 3). Further, an hyperbolic periodic orbit of the map $\Phi$ was found which consists of two points $F_R^1,F_R^2$ and corresponds to the six-link periodic cycles from \cite{ZMHBasic}, Theorem 5. On Fig.~\ref{region1} the point $F_R$ and the pair of points $F_R^1,F_R^2$ are marked by blue and red crosses, respectively. 

\begin{figure}[H]
\centering
\includegraphics[width=\textwidth]{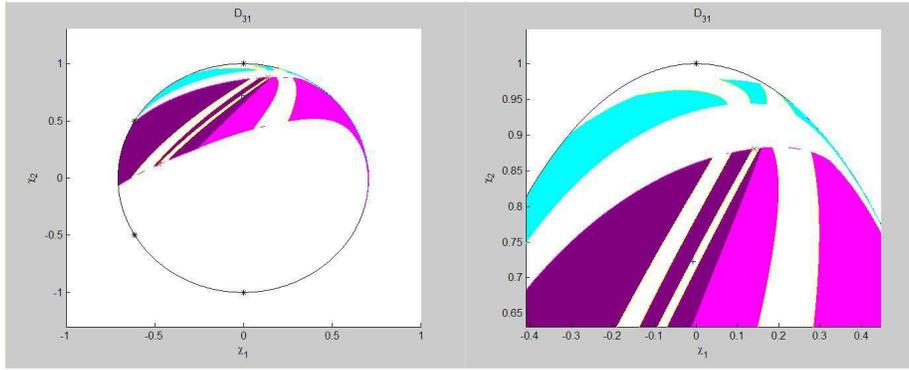}
\caption{Regions I and Ib}
\label{region1}
\end{figure}

The self-similar trajectories corresponding to the periodic orbits enumerated above experience a cyclic sequence of all three controls. Hence the corresponding transitions are of rotational type, and all three points $F_R,F_R^1,F_R^2$ lie in the intersection of the sub-domains $R_f,R_b$.

The point $F_R$ is repelling in the forward time direction and hence attracting in the backward time direction. Hence there exists a neighbourhood of the point $F_R$ such that every point of this neighbourhood tends to $F_R$ under repeated applications of the inverse Poincar\'e map $\Phi^{-1}$ and experience exclusively rotational transitions on the way. Denote the largest such neighbourhood by I. Clearly the region I is a subset of the domain $R_b$.

Numerical computations revealed that one of the branches of the stable submanifold of the hyperbolic periodic orbit $(F_R^1,F_R^2)$ lies in region I. It follows that both points $F_R^1,F_R^2$ lie on the boundary of region I, and region I is actually bounded by the unstable submanifolds of the periodic orbit $(F_R^1,F_R^2)$. Hence region I is nothing but the intersection of the strip lying between the unstable manifolds of the periodic orbit $(F_R^1,F_R^2)$ and the sub-domain $R_b$. The remaining parts of the boundary of region I consists of parts of the arc $d^0_{123}$ and the curve $\sigma_b$.

The intersections of region I with the blue, yellow, red, and green domain is painted in magenta on Fig.~\ref{region1}. Denote the image of region I under the inverse Poincar\'e map by Ic. Since the fixed point $F_R$ is attracting in backward time, the region Ic is a subset of region I. The intersection of region Ic with the blue, yellow, and red domains are painted in dark magenta on Fig.~\ref{region1}. Denote the complement of region Ic in region I by Ia. The intersection of region Ia with the red and green domain is painted in light magenta on Fig.~\ref{region1}. The boundary between the regions Ia and Ic is part of the curve $\sigma_f$. When region Ic is mapped by $\Phi$ on its image, this boundary arc is taken to a part of the boundary of region I, namely a part of the arc $d^0_{123}$. This latter sub-arc of $d^0_{123}$ is also part of the boundary of region Ic. The map $\Phi$ takes it on the boundary of region I, namely on a part of the curve $\sigma_b$.

It follows from the preceding that the image of region I under the map $\Phi$ is the union of region I with the image of region Ia. Let us denote this image $\Phi[Ia]$ by Ib. The intersections of region Ib with the blue, part of the yellow, and green domains is painted in cyan on Fig.~\ref{region1}. When region Ia is mapped to its image by the Poincar\'e map $\Phi$, then the sub-arc of the curve $\sigma_f$ on the boundary of region Ia is taken to a sub-arc of $d^0_{213}$, and the sub-arc of $\sigma_b$ is taken in the interior of the dark green region. Note that the intersection of the curve $\sigma_f$ with region Ib lies entirely on the boundary of the yellow region, but the boundary of the yellow region does not contain the intersection point of the curves $\sigma_f$, $\sigma_b$.

\subsection{Intermediate regions}

In the preceding subsection we established that the points lying in the blue and red domain are taken to the green domain by one or two applications of the Poincar\'e map $\Phi$, respectively, from where they join the semi-singular trajectory after an infinite number of transitions of transpositional type. On the other hand, the inverse map $\Phi^{-1}$ takes the points of region Ib to region I, where they further to the point $F_R$, experiencing transitions exclusively of rotational type. In this subsection we consider the remaining points of the disc $D_{31}$.

Every point of $D_{31}$ which does not lie in the closure of the green, blue, or red domain or one of the regions I, Ib, must lie in the closure of one of the following four regions.

Region II is bounded by a sub-arc of $d^0_{132}$, part of the boundary of the red domain, part of the boundary of region Ic, and part of the boundary of the blue domain. Region III is bounded by part of the boundary of region I, part of the boundary of the green domain, part of the boundary of region Ib, and part of the boundary of the blue domain. Region IV is bounded by a sub-arc of $d^0_{123}$, part of the boundary of the green domain, part of the boundary of region Ia, and part of the boundary of the red domain. Region V is bounded by part of the boundary of region Ib, part of the boundary of the green domain, and a sub-arc of $d^0_{213}$. The location of these regions is schematically depicted in Fig.~\ref{regions}, see also Fig.~\ref{regions2345}. 

\begin{figure}[H]
\centering
\includegraphics[width=\textwidth]{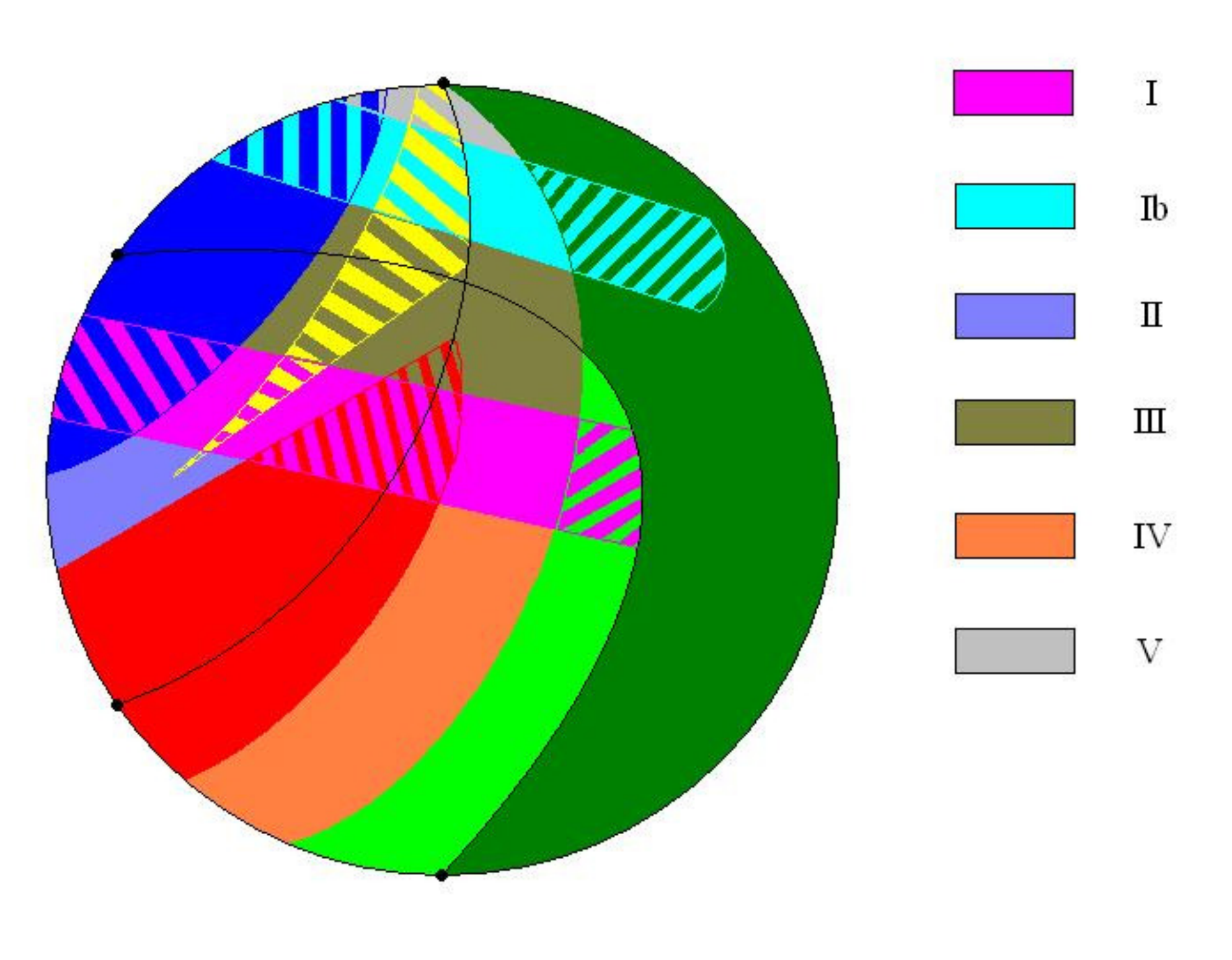}
\caption{Schematic location of the regions}
\label{regions}
\end{figure}

\begin{figure}[H]
\centering
\includegraphics[width=\textwidth]{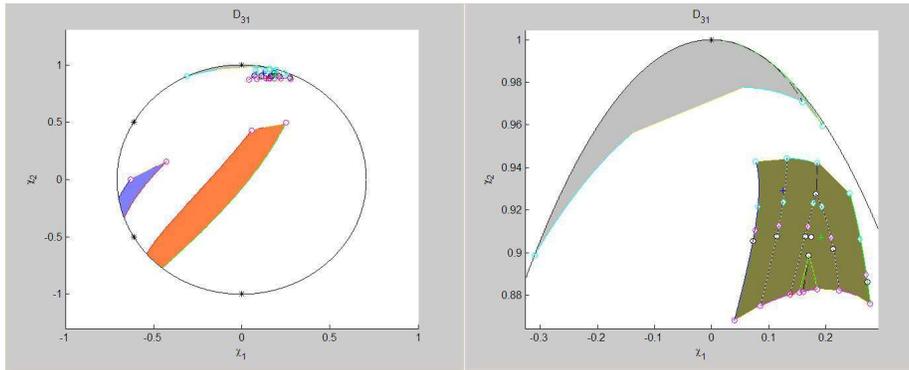}
\caption{Regions II,III,IV,V}
\label{regions2345}
\end{figure}

Let us now consider the dynamics on these intermediate regions which is induced by the Poincar\'e map. Note that both curves $\sigma_f$ and $\sigma_b$ partition region III in two parts, respectively. Denote the subsets delimited by the curve $\sigma_f$ by IIIl and IIIr, respectively, and the subsets delimited by the curve $\sigma_b$ by IIIu and IIId, respectively. Here region IIIl borders the blue domain, IIIr the green domain, IIIu borders region Ib, and IIId borders region I. Note also that the curve $\sigma_f$ divides region V in two parts, which we denote by Vl and Vr. Here region Vl borders the arc $d^0_{213}$, and Vr the green domain.

Then the Poincar\'e map $\Phi$ maps region II bijectively onto region IIId, here the part of the boundary of the blue domain is taken to a part of the boundary of the green domain, the part of the boundary of the red domain is taken to the part of the boundary of the blue domain, the part of the boundary of region Ic is taken to the part of the boundary of region I, and the sub-arc of $d^0_{132}$ to a part of the curve $\sigma_b$. The corresponding transition is of rotational type.

The Poincar\'e map maps also region IV bijectively onto region IIIu, here the part of the boundary of the light green domain is taken to the part of the boundary of the dark green domain, the part of the boundary of the red domain is taken to the part of the boundary of the blue domain, the part of the boundary of region Ia is taken to the part of the boundary of region Ib, and the sub-arc of $d^0_{123}$ to a part of the curve $\sigma_b$. The corresponding transition is of transpositional type.

From Fig.~\ref{Poincare_skizze} it is clear that the image of region Vl is bounded by the arcs $d^0_{132}$ and $d^0_{123}$. Here the intersection of the boundaries of regions Ib and Vl is taken to some curve joining these two arcs. This curve cannot intersect region I, since the image of region I under the inverse map $\Phi^{-1}$ does not intersect region Vl. The endpoints of this curve lie in the blue and yellow domains, respectively, hence the endpoints of the image of this curve lie in the green and red domain, respectively. Therefore $\Phi$ takes region Vl to the union of some subsets of the red and green domains and a subset of region IV. The corresponding transition is of rotational type.

Now we shall consider how $\Phi$ acts on region Vr. Consider the image of region V under the inverse map $\Phi^{-1}$. From the preceding it follows that $\Phi^{-1}[V]$ is bounded by the boundary of the green domain, the curve $\sigma_f$, and the boundary of region Ia. In particular, this image contains the region Vr. Hence $\Phi$ takes region Vr to a subset of region V. The corresponding transition is of transpositional type.

From the preceding paragraph it follows that region IIIr is contained in the preimage $\Phi^{-1}[V]$ of region V. Hence region IIIr is taken by $\Phi$ to a subset of region V. The corresponding transition is of transpositional type.

Consider now the image of region IIIl under $\Phi$. The intersection of the boundaries of region IIIl and the blue domain is taken to part of the boundary of the green domain. The intersection of the boundaries of regions IIIl and Ic is taken to part of the boundary of region I. The part of the curve $\sigma_f$ bordering region IIIl is taken to a sub-arc of $d^0_{123}$. The endpoints of this part of the curve $\sigma_f$ lie in the blue and yellow domain, respectively. Hence the endpoints of the sub-arc of $d^0_{123}$ lie in the blue and red domain, respectively. Therefore the image $\Phi[IIIl]$ is a union of region II and subsets of the red and blue domains and region IV. The corresponding transition is of rotational type.

The images of regions II, IIIl, IIIr, IV, Vl, and Vr under the map $\Phi$ are schematically depicted in Fig.~\ref{images2345}. On Fig.~\ref{images2345} on the right it can be seen that region IV is divided by the images of regions III and V into three strips IVa,IVb,IVc. Here we define region IVa as the intersection of region IV with the image of region III, region IVc as the intersection of region IV with the image of region V, and region IVb as the complement of IVa and IVc in region IV.

\begin{figure}[H]
\centering
\includegraphics[width=\textwidth]{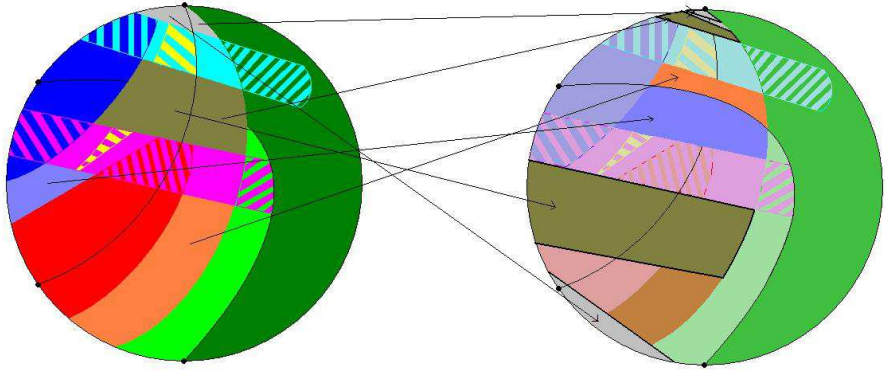}
\caption{Images under $\Phi$ of regions II,III,IV,V}
\label{images2345}
\end{figure}

Let us now consider the preimages of regions II-V under the map $\Phi$. The inverse map $\Phi^{-1}$ takes region II to a subset of region IIIl. Here the sub-arc of $d^0_{123}$ bordering region II is taken to an arc on the curve  $\sigma_f$. The parts of the boundaries of the blue and red domains are taken to parts of the boundaries of the red and yellow domains, respectively, and the part of the boundary of region Ic to another part of the boundary of region Ic on the other side. The corresponding transition is of transpositional type.

Consider the preimage of region IV under $\Phi$. The sub-arc of $d^0_{132}$ bordering region II is mapped by $\Phi^{-1}$ to a sub-arc of $d^0_{213}$, the parts of the boundaries of the green and red domains are taken to parts of the boundaries of the blue and yellow domains, respectively, and the part of the boundary of region Ia is mapped to a part of the boundary of region Ic. Hence the preimage of region IV is the union of subsets of regions IIIl, Vl, and Ib. Denote this subset of region Vl by Va. The corresponding transition is of rotational type.

The preimages of regions IIIu and IIId are the regions IV and II, respectively. The corresponding transitions are of transpositional and rotational types, respectively.

Finally, the preimage of region V has been described above. It is the union of the regions IIIr and Vr and a subset of region Ib. The corresponding transition is of transpositional type.

The preimages of regions II, IIId, IIIu, IV, and V under the map $\Phi$ are depicted schematically in Fig.~\ref{preimages2345}.

\begin{figure}[H]
\centering
\includegraphics[width=\textwidth]{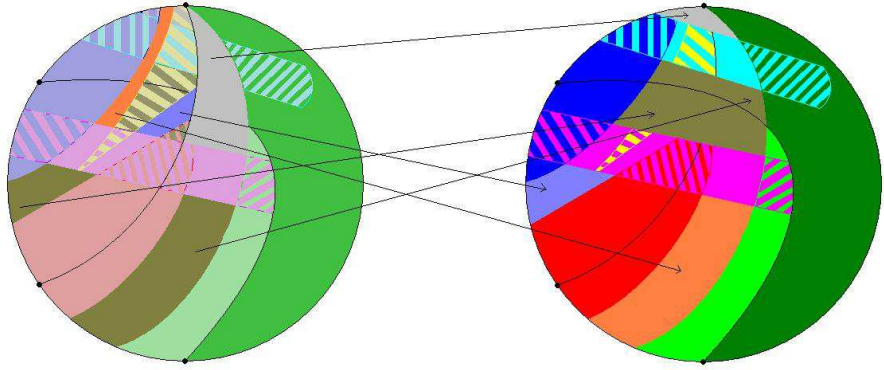}
\caption{Images under $\Phi^{-1}$ of the regions II,III,IV,V}
\label{preimages2345}
\end{figure}

The dynamics of the Poincar\'e map on the regions II-V can be described by the oriented graph presented in Fig.~\ref{graph2345}. The outgoing edges denote transitions into points of the red, blue, or green domains, where they are attracted by the semi-singular trajectory. The incoming edges denote transitions from points in the regions I or Ib. Under iterations of the inverse map $\Phi^{-1}$ these points are attracted by the fixed point $F_R$. The letters t,r denote the type of transition, transpositional or rotational, respectively.

\begin{figure}[H]
\centering
\begin{tikzpicture}[->,>=stealth',shorten >=1pt,auto,node distance=3.5cm,thick,main node/.style={font=\sffamily\LARGE\bfseries}, minimum size=4mm]

	\node[main node] (II) {II};
	\node[main node] (IV) [below right of = II] {IV};
	\node[main node] (III) [above right of = IV] {III};
	\node[main node] (V) [below right of = III] {V};
	\node[main node] (nl) [left of = IV] {};
	\node[main node] (vr) [right of = III] {};
	\node[main node] (nr) [right of = V] {};

	\path[every node/.style={font=\sffamily\large}]
        (II) edge [bend left=10] node [above] {r} (III)
	(III) edge [bend left=10] node [below] {r} (II)
	(III) edge node [below] {r} ([xshift=-2ex]vr.west)
	([xshift=2ex]nl.east) edge node [below] {r} (IV)
	(V) edge node [below] {r} (IV)
	(V) edge node [below] {r} ([xshift=-2ex]nr.west)
	(IV) edge [bend left=10] node [above left] {t} (III)
	(III) edge [bend left=10] node [below right] {r} (IV)
	(III) edge node [below left] {t} (V)
	([xshift=2ex,yshift=-2ex]vr.south) edge node [below right] {t} (V);
	\path[->,every loop/.style={looseness=20},every node/.style={font=\sffamily\large}]
	(V) edge [loop below] node [below] {t} (V)
        ;
\end{tikzpicture}
\caption{Transition graph}
\label{graph2345}
\end{figure}
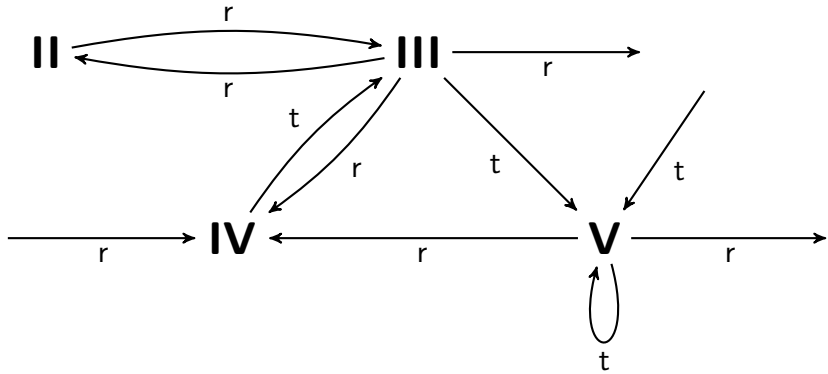

\subsection{Resolution of the dynamics on region V}

Consider the intersection of region V with the images of regions III and V on the one hand, and with the preimages of regions IV and V, on the other hand. These intersections are schematically depicted on the right-hand side of Fig.~\ref{region5}. On the left-hand side of Fig.~\ref{region5} one sees the exact locations of the points which are denoted by dots on the right-hand side.
\begin{figure}[H]
\centering
\includegraphics[width=\textwidth]{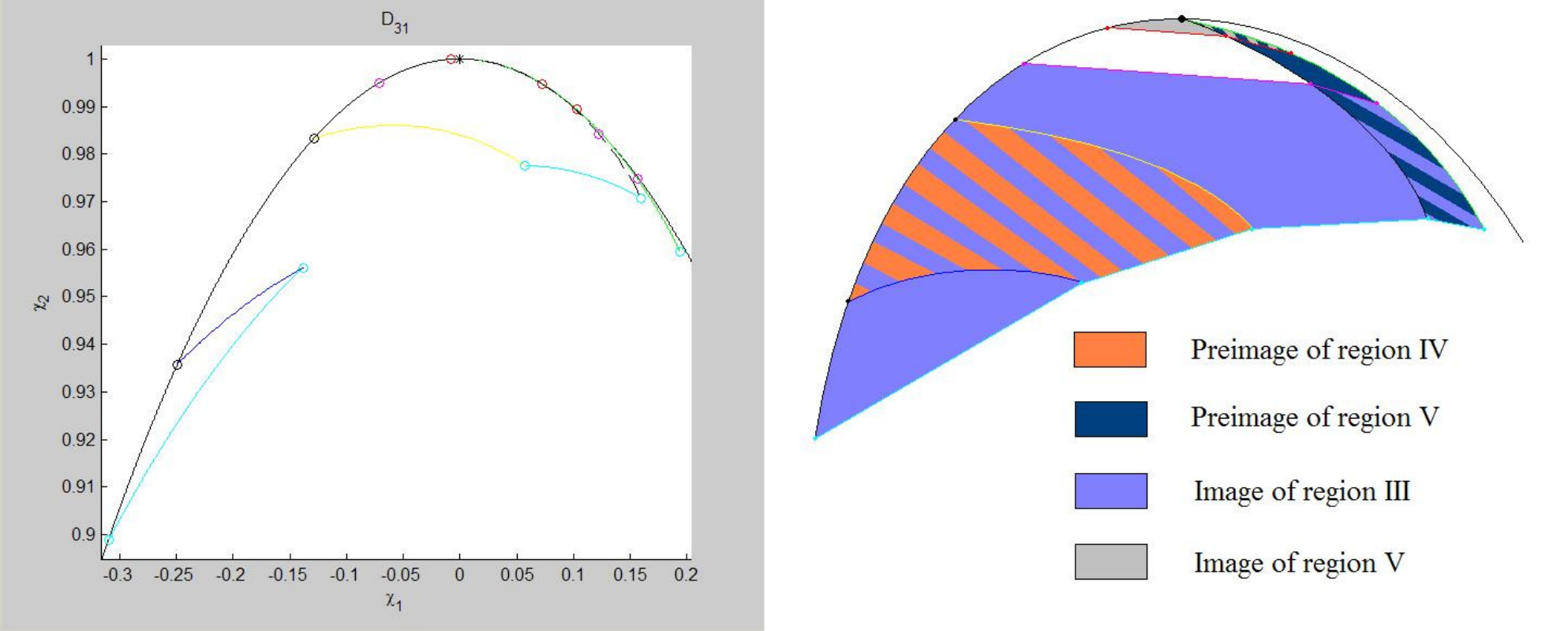}
\caption{Dynamics on region V}
\label{region5}
\end{figure}

From Fig.~\ref{region5} it becomes clear that region Va is a subset of the intersection of the image of region III with region V. Hence every point which comes into region V from outside of region III can never return to any of the regions II, III, and IV. Such a point either lies in the intersection of region V with the blue or the yellow domain, or hits the yellow domain after a finite number of applications of the map $\Phi$, where the corresponding transitions are of transpositional type only. From the yellow domain it gets after three applications of $\Phi$ to the green domain, from where it is attracted by the semi-singular trajectory.

The graph on Fig.~\ref{graph2345} implies that every trajectory that is not eventually attracted in forward time direction by the semi-singular trajectory, and in backward time direction by the fixed point $F_R$, has to pass an infinite number of times through the closure of region III. Hence when studying these trajectories, we can restrict our attention to the mapping of the closure of region III on itself.

The graph on Fig.~\ref{graph2345} implies that this mapping can proceed in three different ways. Either through two transitions of rotational type passing region II, or through two transitions, of transpositional and rotational type, respectively, passing region IV, or finally through three transitions, of transpositional, rotational, and again transpositional type, respectively, passing regions V and IV. Let us denote these possibilities of the mapping of the closure of region III to itself by $A,B,C$, respectively. Hence every trajectory which is not eventually attracted in forward time direction by the semi-singular trajectory, and in backward time direction by the fixed point $F_R$, can be represented by a bilaterally infinite sequence of symbols $A,B,C$. In the sequel we shall argue in terms of transitions of type $A,B,C$ instead of transitions of transpositional and rotational types. Here the former are actually composed by the latter.

Consider a point in the closure of region III. When applying recursively the map $\Phi$ to this point, it will pass a number of times $n_f$ through the closure of region III. Here $n_f$ may be finite or infinite. The corresponding transitions will be labeled by symbols $i_1,i_2,\dots,i_{n_f} \in \{A,B,C\}$. When applying the inverse map $\Phi^{-1}$ to this point, it will pass a number of times $n_b$ through the closure of region III, where $n_b$ also may be finite or infinite. The corresponding transitions will be labeled by symbols $j_1,j_2,\dots,j_{n_b} \in \{A,B,C\}$. We then associate to the initial point the two-sided sequence $j_{n_b}j_{n_b-1}\dots j_1.i_1i_2\dots i_{n_f}$. Here the period separates transitions which lie in the future (with respect to the time flow defined by the iterations of $\Phi$) from transitions which lie in the past.

On the other hand, let us associate to the two-sided sequence 

$$j_{n_b}j_{n_b-1}\dots j_1.i_1i_2\dots i_{n_f}$$

\noindent the subset of points in the closure of region III which experience the transitions $i_1,i_2,\dots,i_{n_f} \in \{A,B,C\}$ under iterations of the map $\Phi$, and the transitions $j_1,j_2,\dots,j_{n_b}$ under the iterations of the inverse map $\Phi^{-1}$. Thus, e.g., the set $A.B$ is the intersection of the closure of region III with the preimage of transition $B$ and the image of transition $A$.

The set $A.$ then coincides with the closure of region IIId, the set $B.$ is the image of the region IVc, and the set $C.$ is the image of region IVa. On the other hand, the set $.A$ is the closure of the preimage of region II, the set $.B$ the closure of the intersection of the preimage of region IV with region III, and the set $.C$ is the preimage of the region Va. The sets $.A,.B,.C$ are depicted on the left-hand side of Fig.~\ref{region3_im}, the sets  $A.,B.,C.$ on its right-hand side. The transitions $A,B,C$ map the former to the latter, respectively.

\begin{figure}[H]
\centering
\includegraphics[width=\textwidth]{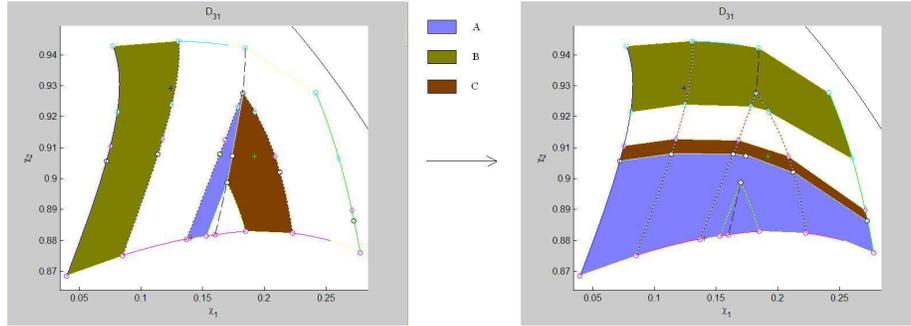}
\caption{Dynamics on region III}
\label{region3_im}
\end{figure}

Note that the sets $.A,.C$ intersect each other in some arc of the curve $\sigma_f$, which is at the same time part of their boundaries. Both sets border region I and the red domain. The set $.A$ borders also the yellow domain, and the set $.C$ borders the preimage of the intersection of the yellow domain with region V. On the other hand, the set $.B$ borders the regions I and Ib and the blue and yellow domains. The set $B.$ borders region Ib, the blue and green domains, and the image of region IVb. The set $C.$ borders the set $A.$ along the curve $\sigma_b$, and both sets border the blue and green domains. The set  $C.$ borders also the image of region IVb, while $A.$ border region I.

Note that all three sets on the left-hand side of Fig.~\ref{region3_im} intersect all three sets on the right-hand side. Hence any type of transition can follow any type. Moreover, each of the transitions $A,B,C$ features a fixed point. On Fig.~\ref{region3_im} these fixed points are represented by a red, blue, and green cross, respectively. The fixed points of $A$ and $B$ correspond to the six-link and the four-link cycle, respectively, which have been found in the paper \cite{ZMHBasic}, and the fixed point of $C$ corresponds to a nine-link cycle (which has been discovered in 2002).

\subsection{Resolution of the dynamics of transitions of type \texorpdfstring{$B$}{B}}
\label{subsec:resolve_B}

From Fig.~\ref{dynamicsB1} it is clear that the sets $.AA$, $.AC$, $.CA$, $.CC$ have an empty intersection with the set $B.$. Hence the sequence of symbols corresponding to a trajectory of the system cannot contain any subsequence of the form $Bij$, where $i,j \in \{A,C\}$. By induction it follows that if the sequence somewhere contains two consecutive symbols of the set $\{A,C\}$, then the symbol $B$ can occur only to the right of this pair.

\begin{figure}[H]
\centering
\includegraphics[width=\textwidth]{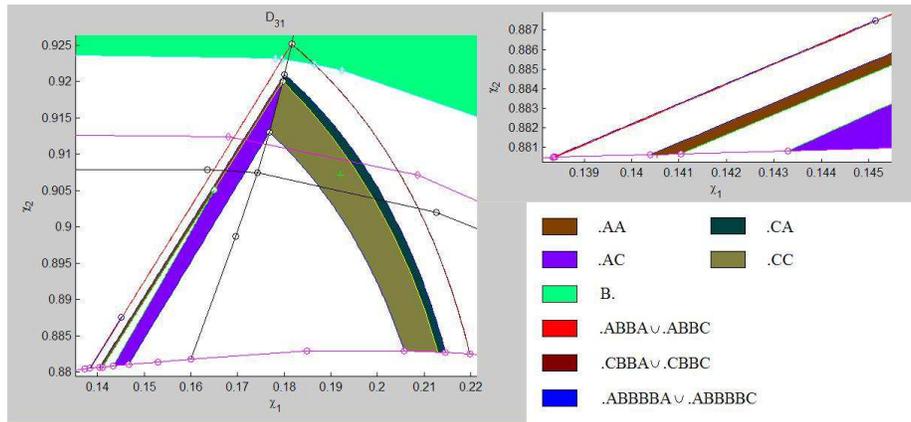}
\caption{Dynamics with participation of transition B}
\label{dynamicsB1}
\end{figure}

The sets $.BA$, $.BC$ are subsets of $B.$, and the sets $.BBA,.BBC$ intersect all three sets $A.$, $B.$, $C.$. The sets $.BBBA$, $.BBBC$ are also subsets of $B.$, and the sets $.BBBBA$, $.BBBBC$ are subsets of $A.$ (see Fig.~\ref{dynamicsB2}). The sets $.ABBBBA$, $.ABBBBC$ are also subsets of $A.$, whereas $.ABBA$, $.ABBC$, $.CBBA$, $.CBBC$ have a non-empty intersection with the set $B.$ (see Fig.~\ref{dynamicsB1}). 

\begin{figure}[H]
\centering
\includegraphics[width=0.85\textwidth]{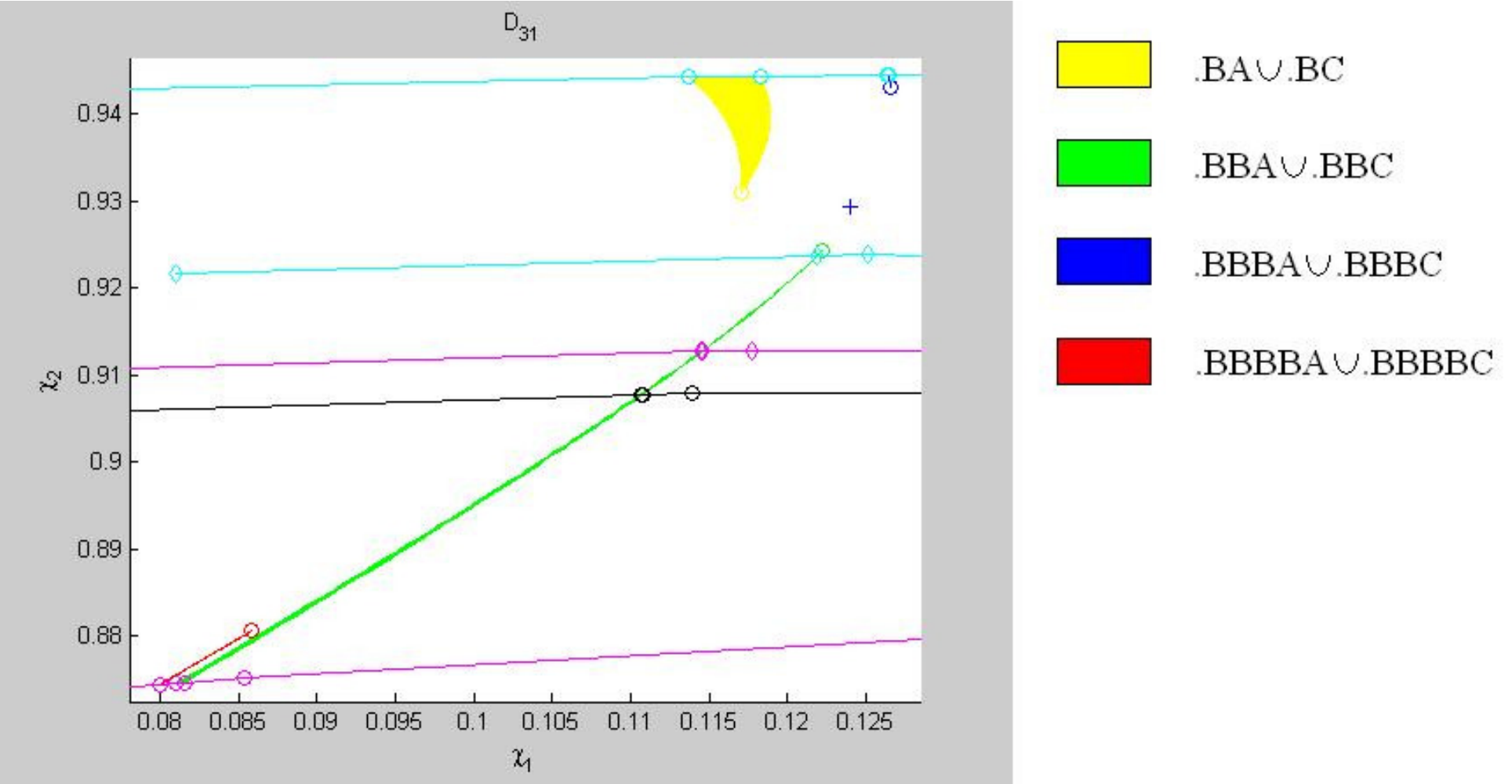}
\caption{Dynamics with participation of transition B}
\label{dynamicsB2}
\end{figure}

The sets $.BABBA,.BABBC,.BCBBA,.BCBBC$, which are painted in blue on Fig.~\ref{dynamicsB3}, are subsets of the union $.BA \cup .BC$ and hence also subsets of $B.$. However, the sets $.BBABBA,.BBABBC,.BBCBBA,.BBCBBC$, which are painted in red on Fig.~\ref{dynamicsB3}, are already only subsets of $A.$, and the sets $.ABBABBA,.ABBABBC,.ABBCBBA,.ABBCBBC$, which are painted in green on Fig.~\ref{dynamicsB3}, do not intersect $B.$. 

\begin{figure}[H]
\centering
\includegraphics[width=0.6\textwidth]{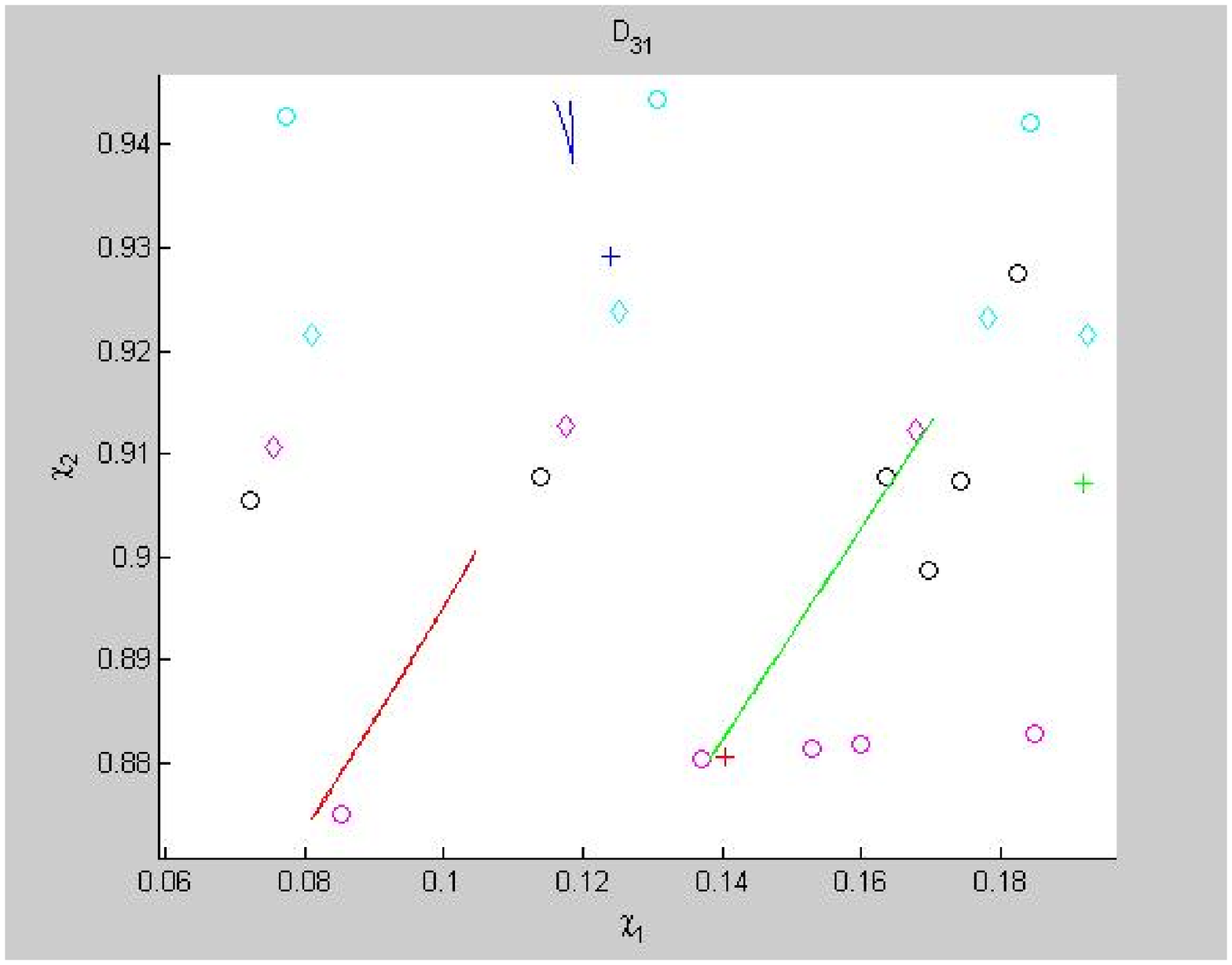}
\caption{Dynamics with participation of transition B}
\label{dynamicsB3}
\end{figure}

From the preceding it follows that if the sequence of symbols contains a subsequence of the form $Bi$, where $i \in \{A,C\}$, then this subsequence can be continued to the left only by one of $\dots AABBBBi$, $\dots ABBjBBi$, $\dots BBi$, where $j \in \{A,C\}$ and the dots denote a subsequence of arbitrary length, but consisting only of symbols in $\{A,C\}$.

In particular, a sequence which is infinite on both sides either (i) consists entirely of symbols $B$, in which case it corresponds to the fixed point of the transition $B$, i.e., the four-link periodic cycle, or (ii) it contains a subsequence consisting entirely of symbols $B$ and stretching out infinitely far to the right, a subsequence not containing any symbol $B$ and stretching out infinitely far to the left, and between these two infinite subsequences maximally six additional symbols, or (iii) it does not contain the symbol $B$ at all. Therefore, if the corresponding trajectory of the dynamics on region III is not the fixed point of transition $B$, then it hits in backward time direction the union of the sets $A.A$, $A.C$, $C.A$, $C.C$ and remains there indefinitely.

\subsection{Dynamics of the transitions of types \texorpdfstring{$A$}{A} and \texorpdfstring{$C$}{C}}
\label{subsec:AC_dynamics}

We shall now consider the dynamics of the transitions $A,C$ on the union of the sets $A.A$, $A.C$, $C.A$, $C.C$, or more precisely, on the closed region IIIb defined as the union of the sets $A.A$, $A.C$, $C.A$, $C.C$ and the closure of the intersection of the red domain and region III. Hence region IIIb borders region I, the yellow domain, the image of region IVb, and the preimage of the intersection of the yellow domain with region V. The region IIIb is depicted on the left-hand side of Fig.~\ref{region3bi} and on the right-hand side of Fig.~\ref{region3bpi}.

\begin{figure}[H]
\centering
\includegraphics[width=\textwidth]{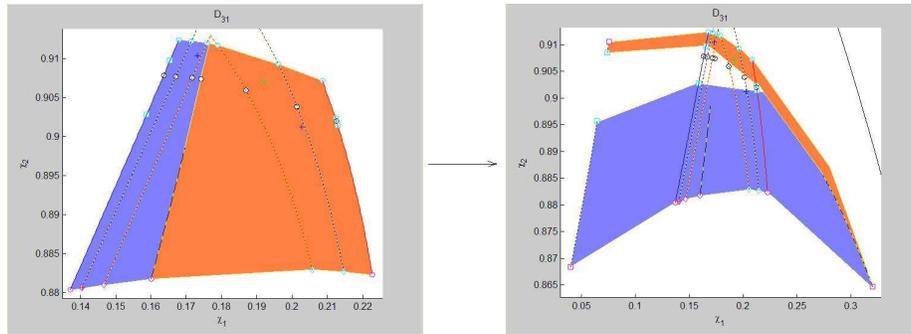}
\caption{Image of region IIIb under $F$}
\label{region3bi}
\end{figure}
\begin{figure}[H]
\centering
\includegraphics[width=\textwidth]{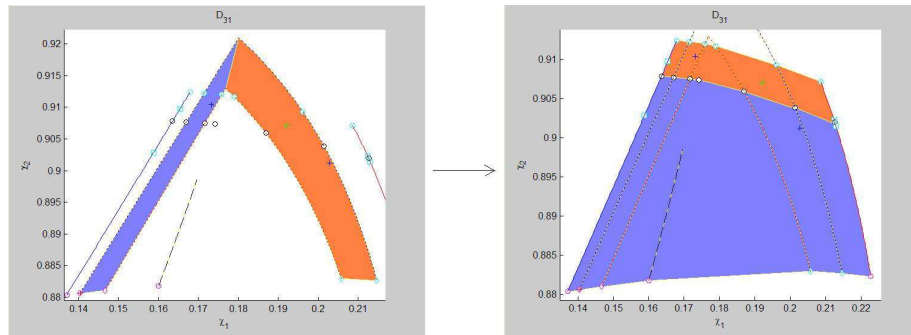}
\caption{Image of region IIIb under $F^{-1}$}
\label{region3bpi}
\end{figure}

Let us define mappings $F,F^{-1}: \mbox{IIIb} \to D_{31}$ as follows. Let $\chi$ be a point in the strip between the yellow domain and the curve $\sigma_f$. We then define its image $F(\chi)$ as the image of $\chi$ under the transition $A$, i.e., under the power $\Phi^2$ of the Poincar\'e map. Both applications of $\Phi$ induce transitions of rotational type. Let now $\chi$ be a point in the strip between the preimage of the yellow domain and the curve $\sigma_f$. We then define its image $F(\chi)$ as the image of $\chi$ under the transition $C$, i.e., under the power $\Phi^3$ of the Poincar\'e map. The three applications of $\Phi$ induce transitions of transpositional, rotational, and again transpositional type, respectively. The map $F$ can be extended continuously on the intersection of region IIIb with the curve $\sigma_f$. Let $\chi$ be a point in the strip between region I and the curve $\sigma_b$. We define its image $F^{-1}(\chi)$ under the inverse map $F^{-1}$ by the preimage of $\chi$ under the transition $A$, i.e., as $\Phi^{-2}(\chi)$. Here both transitions generated by the application of $\Phi^{-1}$ have rotational type. Let finally $\chi$ be a point in the strip between the image of region IVb and the curve $\sigma_b$. We define $F^{-1}(\chi)$ as the preimage of $\chi$ under the transition $C$, i.e., as $\Phi^{-3}(\chi)$. Here the three transitions generated by the application of $\Phi^{-1}$ have transpositional, rotational, and again transpositional type, respectively. The map $F^{-1}$ can be extended continuously on the intersection of region IIIb with the curve $\sigma_b$.

The map $F$ takes an arc on the curve $\sigma_f$ to an arc on the curve $\sigma_b$, and $F^{-1}$ vice versa, an arc on $\sigma_b$ to an arc on $\sigma_f$. The images of these arcs do not intersect region IIIb, and the intersection of region IIIb with $F[IIIb]$ as well as the intersection of region IIIb with $F^{-1}[IIIb]$ both consist of two connection components, respectively. Thus the topology of the map of region IIIb on itself is equivalent to the topology of the map in the classical Smale Horseshoe. The images of region IIIb under the maps $F,F^{-1}$ are depicted in Fig.~\ref{region3bi}, Fig.~\ref{region3bpi}.

On Fig.~\ref{region3bi}, \ref{region3bpi} the fixed points of $F$ are represented by red and green crosses. These two points are the fixed points of the transitions $A$ and $C$, respectively, and have been already depicted in Fig.~\ref{region3_im}. The blue crosses in Fig.~\ref{region3bi}, \ref{region3bpi} denote a periodic orbit of the map $F$ consisting of two points, namely the fixed points of the compositions $A \circ C$ and $C \circ A$. This orbit corresponds to a 15-link periodic cycle on the quotient space $M_+/g$, which is invariant with respect to a permutation of order 3 in $S_3$. All three periodic orbits are of hyperbolic type. The stable eigenvalues are approximately equal to 

$$0.789389405489713,\ -0.195427788394708,\ -0.064093706436160,$$

\noindent respectively, and the unstable eigenvalues to 

$$25.416902415438734,\ -2.553017991174260,\ -73.29685207454,$$ 

\noindent respectively, for the fixed points of the mappings $A,C,A\circ C$.

Let us remark that the map $F$ is continuously differentiable and even algebraic on a dense subset of region IIIb. This subset is the set of points which under repeated application of the Poincar\'e map $\Phi$ eventually hit the interior of the green domain, without leaving the interior of the disc $D_{31}$ on their way. Therefore any such point can be obtained by repeated application of the inverse map $\Phi^{-1}$ to some point in the green, or equivalently, blue, domain.

\newcommand{\dx}{\Delta X}
\newcommand{\dy}{\Delta Y}
\newcommand{\du}{\Delta u}
\newcommand{\dv}{\Delta v}

\newcommand{\nw}[1]{\mathrm{NW}(#1)}

\newcommand{\uxv}{\overline{u_x}}
\newcommand{\uxn}{\underline{u_x}}
\newcommand{\uyv}{\overline{u_y}}

\newcommand{\vyv}{\overline{v_y}}
\newcommand{\vyn}{\underline{v_y}}
\newcommand{\vxv}{\overline{v_x}}

\newcommand{\xuv}{\overline{x_u}}
\newcommand{\xun}{\underline{x_u}}
\newcommand{\xvv}{\overline{x_v}}

\newcommand{\yvv}{\overline{y_v}}
\newcommand{\yvn}{\underline{y_v}}
\newcommand{\yuv}{\overline{y_u}}

\section{Chaotic dynamics of the Poincar\'e map}

In this section we prove the main theorem of the first part of this paper, on the semi-conjugateness of the dynamics of the optimal synthesis in model problem (\ref{problem:model}) with equilateral triangle with a certain Markov chain. We find bounds on the Hausdorff and Minkowski dimensions of the non-wandering set and compute the topological entropy.

\subsection{Bilipschitz property of the Poincar\'e return map}

The main tool used in the investigation of the optimal synthesis of problem (\ref{problem:model}) in the previous sections was Theorem \ref{thm:model_problem_bellman} on the passage to the adjoint variables, or more precisely, the map $E$ allowing this passage. Therefore a proof of the bilipschitz property of the map $\Phi$ needs to commence with an investigation of this property for the map $E$.

In Theorem \ref{thm:model_problem_bellman} we have shown that in a broad class of optimal control problems, including model problem (\ref{problem:model}), the map $E$ is locally Lipschitz and bijective. In general, the inverse map $E^{-1}$ does not need to be even locally Lipschitz for the general problem (\ref{problem:model}).

In the next lemma we show that the map $E$ in model problem (\ref{problem:model}) is locally bilipschitz in the neighbourhood of any point not lying on the submanifolds $A_{ij}$.

\begin{lemma}
\label{lm:bilipschitzian_model_E}
	Consider a point $q_0=(x_0, y_0)$ which does not lie on any of the planes $A_{ij}$, i.e., $x_0\notin\cup A_{ij}$ or $y_0\notin\cup A_{ij}$. Then there exists a neighbourhood of $q_0$ on which the map $E$ satisfies a bilipschitz property.
\end{lemma}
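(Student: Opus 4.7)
The plan is to apply Lemma \ref{lm:bilipschitzian_model} to $g = \bellman$, which reduces the bilipschitz property of $E = -\bellman'$ near $q_0$ to the two-sided quadratic estimate
\begin{equation*}
	C'|\Delta q|^2 \le \bellman(q_0+\Delta q) - \bellman(q_0) - \langle \bellman'(q_0),\Delta q\rangle \le C|\Delta q|^2
\end{equation*}
with $0 < C' \le C$ on a small neighbourhood of $q_0$. The right-hand inequality, together with the nonnegativity coming from convexity, has been obtained in the preceding lemma; the content of the present lemma is therefore the strict positive lower bound with $C' > 0$.

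To produce this lower bound I would isolate an initial time interval on which the optimal control is genuinely non-degenerate. The hypothesis that $x_0$ or $y_0$ lies off every $A_{ij}$, combined with Lemma \ref{lm:semi_singular} and the bijectivity of $E$, ensures that the initial adjoint covector $\widehat\psi(0,q_0) = -\bellman'_y(q_0)$ is not perpendicular to any edge of $\Omega$. Consequently $\argmax_{u \in \Omega}\langle \widehat\psi(0,q_0), u\rangle$ is attained strictly at a single vertex $v$ of $\Omega$. By the continuity of $\widehat\psi(t,q)$ in $(t,q)$ (Corollary \ref{corollary:map_q0_to_optimal_q_is_continuous} together with (\ref{eq:phi_psi_int_formula})), there exist $\tau > 0$ and a neighbourhood $U$ of $q_0$ such that $\widehat u(t, q) \equiv v$ for all $q \in U$ and $t \in [0,\tau]$. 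On $[0,\tau]$ the trajectory is then an explicit affine function of $\Delta q$, namely $\widehat x(t,q_0+\Delta q) = \widehat x(t,q_0) + \Delta x + t\Delta y$ and $\widehat y(t,q_0+\Delta q) = \widehat y(t,q_0) + \Delta y$.

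Bellman's principle on $[0,\tau]$ then gives the decomposition
\begin{equation*}
	\bellman(q_0+\Delta q) = \int_0^\tau \tfrac{1}{2}\bigl\langle \widehat x(t,q_0)+\Delta x+t\Delta y,\, \widehat x(t,q_0)+\Delta x+t\Delta y\bigr\rangle\,dt + \bellman\bigl(\widehat q(\tau,q_0) + (\Delta x+\tau\Delta y,\Delta y)\bigr).
\end{equation*}
The first summand contributes, besides a constant and a linear-in-$\Delta q$ part, the strictly positive definite quadratic form $Q(\Delta q) = \int_0^\tau \tfrac12|\Delta x+t\Delta y|^2\,dt \ge c(\tau)|\Delta q|^2$ (whose $4\times 4$ Gram matrix has positive determinant for $\tau > 0$). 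The second summand, as a function of $\Delta q$, is $\bellman$ evaluated at a point depending affinely on $\Delta q$; by the convexity of $\bellman$ (Lemma \ref{Bellman_convex}) its second-order deviation from the first-order Taylor expansion is nonnegative. Using the integration by parts based on (\ref{eq:phi_psi_int_formula}) one verifies that the combined linear parts of the two summands reduce to $\langle \bellman'(q_0), \Delta q\rangle$, whence
$\bellman(q_0+\Delta q) - \bellman(q_0) - \langle \bellman'(q_0),\Delta q\rangle \ge c(\tau)|\Delta q|^2$, which is the desired bound. The principal technical obstacle is verifying that $\tau$, $c(\tau)$, and the neighbourhood $U$ can be chosen continuously in $q_0$ on the open non-degenerate set; this reduces to the continuity of the first switching time of the optimal control as a function of the initial point, which is a standard consequence of transversal intersection with the switching surface.
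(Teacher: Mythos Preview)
Your overall strategy coincides with the paper's: reduce to Lemma~\ref{lm:bilipschitzian_model} by proving a strictly positive quadratic lower bound for the Bregman remainder of $\bellman$, using that on a short initial interval $[0,\tau]$ the optimal controls from nearby initial points coincide, so that the difference of optimal trajectories is $\Delta x + t\Delta y$ and its contribution $\int_0^\tau \tfrac12|\Delta x+t\Delta y|^2\,dt$ dominates $C'|\Delta q|^2$.

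There is, however, a genuine gap in your justification of the existence of such a $\tau$. You assert that Lemma~\ref{lm:semi_singular} and the bijectivity of $E$ force $\widehat\psi(0,q_0)$ not to be perpendicular to any edge of $\Omega$. But Lemma~\ref{lm:semi_singular} only gives $E(A_{ij}\oplus A_{ij})=A_{ij}\oplus A_{ij}$, so the hypothesis excludes $(\phi^0,\psi^0)\in A_{ij}\oplus A_{ij}$, not $\psi^0\in A_{ij}$ alone. It is perfectly possible that $\psi^0_i=\psi^0_j$ while $\phi^0_i\ne\phi^0_j$: then $q_0$ lies on the switching surface $\mathcal{S}_{ij}$, the maximizing vertex jumps across any neighbourhood of $q_0$, and your ``constant control on $[0,\tau]$ for all $q\in U$'' fails right at $t=0$. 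The paper treats exactly this case separately: when $q_0$ is a switching point one first flows forward a small time $t_0>0$ on every nearby optimal trajectory; the transversality of the crossing (which \emph{is} guaranteed by $\phi^0_i\ne\phi^0_j$, and this does follow from the hypothesis) ensures one lands off $\mathcal{S}$, after which the same estimate applies. Your closing remark about transversal intersection is precisely the missing ingredient, but you deploy it only to argue continuity of $\tau$ in $q_0$, not to dispose of the switching-point case itself.

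One organizational difference worth noting: the paper sidesteps your verification that the linear parts combine correctly by deriving the inequality $\bellman(q_1)-\bellman(q_0)-\bellman'(q_0)\Delta q\ge J(\Delta X)$ in one stroke, differentiating the convexity identity $(1-\lambda)J(X_0)+\lambda J(X_1)-J(X_\lambda)=\lambda(1-\lambda)J(\Delta X)$ of Lemma~\ref{Bellman_convex} at $\lambda=0$. Your Bellman-principle decomposition reaches the same lower bound but requires the integration-by-parts check you allude to; the paper's route is cleaner here.
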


\begin{proof}
	The proof consists in obtaining a lower bound on the difference $\bellman(q_1)-\bellman(q_0) - \bellman'(q_0)\Delta q$ and an application of Lemma \ref{lm:bilipschitzian_model}. Here we use the usual notations $q_i=(x_i,y_i)$ and $\Delta q=(\Delta x,\Delta y)=q_1-q_0$.
	
	Let $X_0(t)$ and $X_1(t)$ be two arbitrary trajectories emanating from points $x_0$ and $x_1$, respectively. For an arbitrary number $\lambda\in[0,1]$ we define $X_\lambda(t) = (1-\lambda)X_0(t) + \lambda X_1(t)$. Then the functional $J$ defined in (\ref{problem:model}) satisfies the relation (see Lemma \ref{Bellman_convex})
	
	\[
		(1-\lambda)J(X_0) + \lambda J(X_1) - J(X_\lambda) = \lambda(1-\lambda)J(\Delta X),
	\]
	
	\noindent where $\Delta X(t) = X_1(t) - X_0(t)$. If the trajectories $X_0$ and $X_1$ are optimal and $\dot X_i(0) = y_i$, $i=1,2$, then
	
	\[
		(1-\lambda)\bellman(q_0) + \lambda\bellman(q_1) - \bellman(q_\lambda) \ge
		\lambda(1-\lambda)J(\Delta X),
	\]
	
	\noindent where $q_\lambda=(1-\lambda)q_0 + \lambda q_1$. Since both sides of the inequality coincide at $\lambda=0$, we have after differentiation with respect to $\lambda$ at $\lambda=0$ that
	
	\[
		\bellman(q_1) - \bellman(q_0) - \bellman'(q_0)\Delta q \ge J(\Delta X).
	\]
	
	It rests to estimate $J(\Delta X)$ from below. The simplest way to do this, namely $J(\Delta X)\ge\bellman(\Delta q)$ yields an insufficient result of an order which is too high, namely $\bellman(\Delta q)\ge A \Delta x^{\frac{5}{2}} + B\Delta y^5$. In general, one cannot have a better bound on the Bellman function at an arbitrary point.
		
	However, if the point $q_0$ does not lie on $A_{ij}$, we can have a better bound on the functional $J(\Delta X)$. Assume first that $q_0\notin \cup (A_{ij}\times A_{ij})$ is not a switching point. This means that in a sufficiently small neighbourhood $V$ of the point $q_0$ there exists $\tau>0$ such that for time instants $t < \tau$ all optimal trajectories emanating from points $q_1\in V$ are steered by the same control $u_0$ residing in one of the vertices of the triangle $\Omega$. Indeed, the lift of the optimal trajectory $(\widehat x(t),\widehat y(t),\widehat \phi(t),\widehat \psi(t))$ emanating from $\bigl(q_0,E(q_0)\bigr)$ intersects the switching surface $\cal S$ in some point $(x',y',\phi',\psi')\ne\bigl(q_0,E(q_0)\bigr)$ at some time instant $t_0>0$ for the first time. Since the surface $\cal S$ is closed, we have that for every $\varepsilon>0$ there exists $\delta>0$ such that the trajectory of Hamiltonian system (\ref{eq:model_pmp_system}) emanates from a $\delta$-neighbourhood of the point $\bigl(q_0,E(q_0)\bigr)$, then it intersects ${\cal S}$ for the first time at a time instant not smaller than $t_0-\varepsilon$. Now note that by virtue of Theorem \ref{thm:model_problem_bellman} the map $E$ is continuous, hence if $q_1$ is close to $q_0$, then also $(q_1,E(q_1))$ is close to $(q_0,E(q_0))$.
	
	For all $t\in[0,\tau]$ we have the equation $\Delta X(t) = \Delta x + t\Delta y$, because the control on the optimal trajectories emanating from the points $q_0$ and $q_1$ coincides on this time interval, given the point $q_1$ lies in a neighbourhood of $q_0$. Hence there exists $C'>0$ such that
	
	\[
		\bellman(q_1) - \bellman(q_0) - \bellman'(q_0)\Delta q\ge
		J(\Delta X) \ge \frac{1}{2}\int_0^\tau |\Delta x + t\Delta y)|^2\,dt \ge C'(|\Delta x|^2 + |\Delta y|^2).
	\]
	
	Application of Lemma \ref{lm:bilipschitzian_model} then yields the proof of the assertion of Lemma \ref{lm:bilipschitzian_model_E} for all points $q_0$ not lying on a switching surface. Suppose, on the contrary, that the point $q_0$ lies on a switching surface. In order to get a lower bound on $J(\Delta X)$ we have to move on all optimal trajectories emanating from a neighbourhood of $q_0$ by a small time interval $t_0>0$ and to repeat the reasoning above.
\end{proof}

Hence\footnote{On the sets $M$, $N$, $M_+$ the structure of a metric space is defined in a natural way by the ambient vector space ${\cal M}=T^*M=M\times N$. However, the set $M_+$ is not a smooth submanifold of ${\cal M}$.},

\begin{enumerate}
	\item The map $E:M\to N$ is a locally Lipschitz homeomorphism, which is bilipschitz at all points outside of $\cup (A_{ij}\times A_{ij})$;
	\item The map $q\mapsto (q,E(q))$ is a locally bilipschitz homeomorphism from $M$ to $M_+$;
	\item The map $(q,E(q))\mapsto E(q)$ is a locally Lipschitz homeomorphism from $M_+$ to $N$, which is bilipschitz at all points outside of $\cup (A_{ij}\times A_{ij})$.
\end{enumerate}

All these maps retain the enumerated properties after factorization with respect to the action $g$.

Let us now consider the bilipschitz property of the Poincar\'e map $\Phi$. Let $x\in({\cal S}\cap M_+)/g$ be a point which lies neither on $\bigcup A_{ij}^{\times 4}/g$ nor on ${\cal S}_{123}/g$. By virtue of Lemma  \ref{lm:semi_singular} the image $\Phi(x)\in ({\cal S}\cap M_+)/g$ is then well-defined. Let us assume that $\Phi(x)\notin {\cal S}_{123}/g$ and $\Phi(x)\notin \bigcup A_{ij}^{\times 4}/g$. Then the projection of the trajectory of Hamiltonian system (\ref{eq:model_pmp_system}) through $x$ intersects the switching surface transversally both in the point $x$ and in the point $\Phi(x)$. The map $\Phi:{\cal S}/g\to {\cal S}/g$ is hence well-defined on a neighbourhood of $x$ in ${\cal S}/g$ and is a local homeomorphism of a neighbourhood of $x$ in ${\cal S}/g$ onto its image. We then get immediately that the restriction of $\Phi$ on $({\cal S}\cap M_+)/g$ is bilipschitz in a neighbourhood of $x$. By virtue of the bilipschitz property of the map $E$ we obtain the following lemma.

\begin{lemma}
\label{lm:bilipschitzian_model_Phi}
	The Poincar\'e return map $\Phi:({\cal S}\cap M_+)/g\to ({\cal S}\cap M_+)/g$ is bilipschitz in the neighbourhood of every point $x$ such that neither $x$ nor $\Phi(x)$ do not lie on ${\cal S}_{123}/g$ or $\bigcup A_{ij}^{\times 4}/g$. The bilipschitz property is conserved if we consider the map $\Phi$ as acting in the spaces $M/g$ or $N/g$ by identification by virtue of the map $E$.
\end{lemma}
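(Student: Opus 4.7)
The plan is to first work in the adjoint space $N/g$, where the switching surface has a simple algebraic description, and then transfer the result back to $M_+/g$ via the map $E$ from Theorem \ref{thm:model_problem_bellman}. Fix a point $x$ satisfying the hypotheses of the lemma; by the identification via $E$, write $\tilde x$ and $\widetilde{\Phi(x)}$ for the corresponding points in $N/g$.

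First I would show that the Poincar\'e return map $\Phi$, viewed on ${\cal S}/g \subset N/g$, is a smooth (in fact algebraic) local diffeomorphism in a neighbourhood of $\tilde x$. Indeed, between two consecutive switching instants the control $u$ equals a fixed vertex $v_k$ of $\Omega$, and system (\ref{eq:model_pmp_system}) is linear with constant forcing, so $\phi(t)$ and $\psi(t)$ are polynomials in $t$ of degrees at most three and four, respectively, whose coefficients depend polynomially on the initial data $(x_0,y_0,\phi^0,\psi^0)$. The return time $t_1$ is determined implicitly by the polynomial equation $\psi_i(t_1) = \psi_j(t_1)$ for the pair $(i,j)$ labelling the next stratum of ${\cal S}$ that the trajectory hits. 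Since $\dot \psi_i - \dot \psi_j = -(\phi_i - \phi_j)$, transversality at the switching instant is equivalent to $\phi_i \ne \phi_j$ there. The hypotheses $x, \Phi(x) \notin {\cal S}_{123}/g \cup \bigcup A_{ij}^{\times 4}/g$ ensure this at both endpoints, so the implicit function theorem gives smooth dependence of $t_1$ and of the return point on the initial data, and the differential of $\Phi$ in these coordinates is non-degenerate.

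Next I would transfer the bilipschitz property from $N/g$ back to $M_+/g$ (and to $M/g$). By Lemma \ref{lm:bilipschitzian_model_E} and the list of consequences following it, the identification $q \mapsto E(q) \bmod g$ is a bilipschitz homeomorphism between suitable neighbourhoods of $x$ and $\tilde x$, and between neighbourhoods of $\Phi(x)$ and $\widetilde{\Phi(x)}$, precisely because neither of the two points lies on $\bigcup A_{ij}^{\times 4}/g$. The map $\Phi$ on $({\cal S}\cap M_+)/g$ then factors as a composition of these bilipschitz identifications with the smooth local diffeomorphism produced in the first step, and composition preserves the bilipschitz property in both directions. The same argument, using the enumerated consequences of Lemma \ref{lm:bilipschitzian_model_E}, gives the claim in the $M/g$ picture.

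The main obstacle I foresee is making the transversality argument fully rigorous at both endpoints of the trajectory arc: one has to rule out tangential intersections with the switching strata. This is exactly what the exclusion of $\bigcup A_{ij}^{\times 4}/g$ buys us via Lemma \ref{lm:semi_singular}: a point of ${\cal S}_{ij}$ at which also $\phi_i = \phi_j$ has $(\phi,\psi) \in A_{ij}$, and then condition (ii) of Lemma \ref{lm:semi_singular} forces the whole trajectory (and in particular $(x,y)$) to live in $A_{ij}^{\times 4}$, contradicting the hypothesis. Once this is in place, the rest of the proof reduces to the implicit function theorem and to chaining bilipschitz maps, both routine.
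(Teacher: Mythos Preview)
Your proposal is correct and follows essentially the same approach as the paper: establish transversality of the trajectory with the switching strata at both $x$ and $\Phi(x)$ (using the exclusion of ${\cal S}_{123}/g$ and $\bigcup A_{ij}^{\times 4}/g$ together with Lemma~\ref{lm:semi_singular}), deduce that $\Phi$ is a smooth local diffeomorphism on the full switching surface ${\cal S}/g\subset{\cal M}/g$, and then transfer the bilipschitz property to $M/g$ and $N/g$ via the bilipschitz identifications coming from Lemma~\ref{lm:bilipschitzian_model_E}. Your write-up is in fact more explicit than the paper's, which compresses the argument into a short paragraph preceding the lemma; the only imprecision is the phrase ``${\cal S}/g\subset N/g$'', since the Poincar\'e map really lives on ${\cal S}/g$ inside the full extended quotient ${\cal M}/g$ (you use all coordinates $(x_0,y_0,\phi^0,\psi^0)$ yourself), but this does not affect the argument.
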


\begin{corollary}
	The map $F:A.A\cup A.C \cup C.A \cap C.C\to A.A\cup A.C \cup C.A \cap C.C$, which was defined in Subsection \ref{subsec:AC_dynamics}, is bilipschitz.
\end{corollary}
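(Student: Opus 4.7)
The plan is to reduce the bilipschitz property of $F$ to the local bilipschitz property of $\Phi$ established in Lemma \ref{lm:bilipschitzian_model_Phi}, exploiting that $F$ is a finite piecewise composition of $\Phi$-iterates. Denote the region $A.A \cup A.C \cup C.A \cup C.C$ by IIIb. Recall from Subsection \ref{subsec:AC_dynamics} that $F$ equals $\Phi^2$ on the substrip of IIIb lying between the yellow domain and $\sigma_f$, and equals $\Phi^3$ on the substrip between $\sigma_f$ and the preimage of the yellow domain, the two definitions agreeing on $\sigma_f$. Symmetrically, $F^{-1}$ is piecewise $\Phi^{-2}$ and $\Phi^{-3}$ on the two substrips of $F[\mbox{IIIb}]$ separated by $\sigma_b$.

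First I would verify that every intermediate point along these orbits lies away from the exceptional set ${\cal S}_{123}/g \cup \bigcup_{i\ne j} A_{ij}^{\times 4}/g$ appearing in the hypothesis of Lemma \ref{lm:bilipschitzian_model_Phi}. The fact that the two (for $A$) or three (for $C$) intermediate switchings are transversal crossings of a single stratum ${\cal S}_{ij}$, rather than of ${\cal S}_{123}/g$, is built into the very definitions of the transition types: each letter $A$ or $C$ encodes ordinary rotational or transpositional switchings between single edges of $\Omega$. Similarly, the intermediate points lie outside $\bigcup A_{ij}^{\times 4}/g$, for a trajectory meeting a semi-singular stratum is, by Lemma \ref{lm:semi_singular}, captured there forever, whereas every orbit under $F$ leaves region III, passes through the interior of the red, blue or yellow domain, and returns.

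Next, applying Lemma \ref{lm:bilipschitzian_model_Phi} at each of the finitely many intermediate points gives a local bilipschitz property of $\Phi$ near them; composing two or three such local homeomorphisms yields a local bilipschitz property of $F$ (respectively $F^{-1}$) at every point of IIIb (respectively $F[\mbox{IIIb}]$). Because IIIb is a closed subset of the compact disc $D_{31}$, hence itself compact, a finite subcover argument produces uniform Lipschitz constants $L_A,L_C$ for $F$ on the closures of the two substrips separated by $\sigma_f$, and similarly for $F^{-1}$ on the two substrips of the image separated by $\sigma_b$. To pass from piecewise to globally bilipschitz, I would use the standard interpolation trick: given $x,y$ on opposite sides of $\sigma_f$, the segment from $x$ to $y$ (or any short path inside IIIb) meets $\sigma_f$ at some $z$, and continuity of $F$ together with the one-sided Lipschitz bounds yields $|F(x)-F(y)|\le|F(x)-F(z)|+|F(z)-F(y)|\le\max(L_A,L_C)(|x-z|+|z-y|)$, with an analogous chain for $F^{-1}$ across $\sigma_b$.

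The main obstacle I anticipate is the geometric verification in the first step: one must check that the intermediate $\Phi$-iterates along every point of IIIb genuinely avoid both the codimension-two set ${\cal S}_{123}/g$ and the semi-singular submanifolds $A_{ij}^{\times 4}/g$, uniformly enough to obtain a positive distance on the compact region IIIb. Once that is in place, the composition, compactness, and interpolation arguments are routine, and the bilipschitz conclusion follows.
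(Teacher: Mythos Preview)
Your proposal is correct and follows exactly the line the paper intends: the corollary is stated without proof immediately after Lemma~\ref{lm:bilipschitzian_model_Phi}, so the paper treats it as an immediate consequence of that lemma together with the fact that $F$ is a finite composition of $\Phi$ (two or three iterates) whose intermediate points stay away from ${\cal S}_{123}/g$ and the semi-singular loci. Your write-up simply makes explicit the compactness and piecewise-gluing steps that the paper leaves to the reader; the only cosmetic point is that the domain in the corollary is $A.A\cup A.C\cup C.A\cup C.C$ rather than all of IIIb, but since your argument works on the larger compact set it covers the stated one a fortiori.
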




\subsection{Conditions of Lipschitz hyperbolicity}

\begin{figure}
\centering
\begin{tikzpicture}[->,>=stealth',shorten >=1pt,auto,node
		distance=3.4cm,thick,main node/.style={circle,fill=blue!10,
		draw,font=\sffamily\Large\bfseries}, minimum size=4mm]

		\node[main node] (11) {$A.A$};
		\node[main node] (12) [right of=11] {$A.C$};
		\node[main node] (21) [below of=11] {$C.A$};
		\node[main node] (22) [right of=21] {$C.C$};

		\path[->,font=\scriptsize]
		(11) edge (12)
		(11) edge[loop above] (11)
		(12) edge (22)
		(12) edge[bend right=5] (21)
		(21) edge (11)
		(21) edge[bend right=5] (12)
		(22) edge (21)
		(22) edge[loop below] (22);
\end{tikzpicture}
\caption{Graph $\widetilde \Gamma$}\label{fig:graph_AA_CC}
\end{figure}

Let us study the dynamics of the map $F$ in detail. As was shown in the previous section, we may choose disjoint rectangular neighbourhoods $B_1$, $B_2$, $B_3$, $B_4$ of the sets $A.A$, $A.C$, $C.A$, $C.C$ and corresponding coordinates $(X_i,Y_i)$ on $B_i$, $i=1,2,3,4$. Denote by $F_{ij}$ the restriction of $F$ on the subset of those points in $B_i$ which are mapped to $B_j$ by one iteration of the mapping $F$, i.e., $F_{ij}=F|_{B_i\cap F^{-1}(B_j)}$. The maps $F_{ij}$ are defined in accordance with the graph $\widetilde \Gamma$ on Fig.~\ref{fig:graph_AA_CC}: the map $F_{ij}$ is defined if and only if there exists an edge linking the corresponding vertices in the graph $\widetilde\Gamma$.

\begin{table}
	\centering
	\begin{tabular}{|c|cc|c|cc|c|}
		\hline
		$F$	& $\uxn$&$\uxv$ &$\uyv$	&$\vyn$	&$\vyv$	&$\vxv$\\
		\hline
		$F_{11}$& 21	& 25.6	& 0.57	& 0.739	& 0.792	& 0.15\\
		$F_{31}$& 20.3	& 22.4	& 0.37	& 0.4	& 0.48	& 0.14\\
		$F_{12}$& 22.5	& 34	& 3.1	& 0.72	& 0.785	& 0.5\\
		$F_{32}$& 21	& 25	& 1.9	& 0.4	& 0.48	& 0.07\\
		$F_{23}$& 3.08	& 3.28	& 0.06	& 0.07	& 0.18	& 0.06\\
		$F_{43}$& 3.11	& 3.28	& 0.09	& 0.176	& 0.019	& 0.0591\\
		$F_{24}$& 2.35	& 2.9	& 0.75	& 0.1	& 0.22	& 0.1\\
		$F_{44}$& 2.3	& 2.9	& 0.8	& 0.175	& 0.21	& 0.1\\
		\hline
	\end{tabular}
	\begin{tabular}{|c|cc|c|cc|c|}
		\hline
		$F^{-1}$&$\yvn$	&$\yvv$	&$\yuv$	&$\xun$	&$\xuv$	&$\xvv$\\
		\hline
		$F_{11}^{-1}$	& 1.26	& 1.36	& 0.098	& 0.039	& 0.049	&0.37\\
		$F_{31}^{-1}$	& 2.09	& 2.51	& 0.16	& 0.0444	& 0.049	&0.045\\
		$F_{12}^{-1}$	& 1.342	& 1.384	& 0.0195	& 0.029	& 0.045	&0.18\\
		$F_{32}^{-1}$	& 2.1	& 2.5	& 0.0061	& 0.04	& 0.048	&0.2\\
		$F_{23}^{-1}$	& 5.6	& 14.1	& 0.25	& 0.305	& 0.323	&0.171\\
		$F_{43}^{-1}$	& 5.3	& 5.66	& 0.0094	& 0.303	& 0.323	&0.151\\
		$F_{24}^{-1}$	& 5	& 14.4	& 0.4	& 0.37	& 0.394	&1.41\\
		$F_{44}^{-1}$	& 4.6	& 5.6	& 0.21	& 0.364	& 0.381	&1.55\\
		\hline
	\end{tabular}
	\caption{Constants $\uxv$, $\uxn$ etc. for the maps $F$ and $F^{-1}$.}
	\label{table:uv_xy}
\end{table}

\begin{table}
	\begin{center}
	\begin{tabular}{|c|cccc|}
		\hline
			& $B_1$	& $B_2$	& $B_3$	& $B_4$\\
		\hline
		$c$	&0.00743	&0.0225	&0.022	& 0.0488\\
		$d$	&0.146	&0.321	&0.103	& 0.374\\
		\hline
	\end{tabular}
	\end{center}
	\caption{Constants $c_i$ and $d_i$ for the maps $F$ and $F^{-1}$ on the rectangles $B_i$.}
	\label{table:cd}
\end{table}

\begin{table}
	\begin{center}
	\begin{tabular}{|c|cccccc|}
		\hline
		$F$	&$\sigma_1$&$\sigma_2$&$\sigma_3$&$\sigma_4$&$\sigma_5$&$\sigma_6$\\
		\hline
		$\lambda^-_\sigma$	&0.797	&0.249	&0.0748	&0.0238	&0.0201	&0.0883\\
		$\lambda'^-_\sigma$	&0.734	&0.137	&0.0178	&0.00462	&0.0304	&0.0271\\
		$\alpha^-_\sigma$		&0.739	&0.701	&0.643	&0.695	&0.674	&0.673\\
		\hline
	\end{tabular}
	\end{center}
	\caption{Constants $\lambda^-_\sigma$, $\lambda'^-_\sigma$, and $\alpha^-$ for the map $F$.}
	\label{table:cicles_F}
\end{table}

\begin{table}
	\begin{center}
	\begin{tabular}{|c|cccccc|}
		\hline
		$F^{-1}$&$\sigma_1$&$\sigma_2$&
		$\sigma_3$&$\sigma_4$&$\sigma_5$&$\sigma_6$\\
		\hline
		$\lambda^+_\sigma$	&0.0484	&0.46	&0.000785&0.00829&0.000403&0.0162\\
		$\lambda'^+_\sigma$	&0.0386	&0.288	&0.000341&0.00294&0.0000864&0.0117\\
		$\alpha^+_\sigma$		&0.931	&0.625	&0.895	&0.822	&0.835&0.927\\
		\hline
	\end{tabular}
	\end{center}
	\caption{Constants $\lambda^+_\sigma$, $\lambda'^+_\sigma$, and $\alpha^+$ for the map $F^{-1}$.}
	\label{table:cicles_F_obr}
\end{table} 

Denote by $U_{ij}(X_i,Y_i)$ and $V_{ij}(X_i,Y_j)$ the first and the second coordinate of the image $F_{ij}(X_i,Y_i)$ in the rectangle $B_j$, respectively, such that $F_{ij}(X_i,Y_i) = \bigl(U_{ij}(X_i,Y_i),V_{ij}(X_i,Y_j)\bigr)$. According to the previous subsection the map $F_{ij}$ satisfies the inequalities

\[
    \begin{array}{c}
        \left\{\begin{array}{ccccc}
            \uxn^{ij} |\dx| & \leq & \bigl|U_{ij}(X_i+\dx,Y_i)-U_{ij}(X_i,Y_i)\bigl| & \leq & \uxv^{ij} |\dx|,\\
            &&\bigl|U_{ij}(X_i,Y_i+\dy)-U_{ij}(X_i,Y_i)\bigl| & \leq & \uyv^{ij} |\dy|,\\
        \end{array}\right. \bigskip\\
        \left\{\begin{array}{ccccc}
            \vyn^{ij} |\dy| & \leq & \bigl|V_{ij}(X_i,Y_i+\dy) - V_{ij}(X_i,Y_i)) & \leq & \vyv^{ij} |\dy|,\\
            &&\bigl|V_{ij}(X_i+\dx,Y_i) - V_{ij}(X_i,Y_i)\bigr| & \leq & \vxv^{ij} |\dy|,\\
        \end{array}\right.
    \end{array}
\]

\noindent at every point $(X_i,Y_i)\in B_i\cap F^{-1}(B_j)$  and for every feasible variation $(\Delta X,\Delta Y)$, where $\uxv^{ij}$, $\uxn^{ij}$ etc. denote positive constants whose numerical values are given on the left-hand side of Table \ref{table:uv_xy}\footnote{The numerical values in the tables are rounded in the right direction, i.e., such that the inequalities are not violated.}. Moreover, the inverse mappings $F_{ij}^{-1}$ satisfy the same sort of inequalities with other constants $\xuv^{ij}$, $\xun^{ij}$ etc. whose numerical values are given on the right-hand side of Table \ref{table:uv_xy}.

Note that the values of the constants $\uxv$, $\uxn$ etc. depend on the choice of coordinates $X_i$ and $Y_i$. The values given in Table \ref{table:uv_xy} are valid for coordinates $X_i$ and $Y_i$ explicitly defined by

\begin{eqnarray*}
X_1 = X_3 &=& 3636.746266824261\chi_1 -
 4481.163862258486\chi_2 +\\
 &+&3855.150222649458\chi_1^2 -9508.946041171987\chi_1\chi_2+\\
 &+&5858.268250499643\chi_2^2 + 1403.048095297728\chi_1^3 - \\
 &-&5062.144152379995\chi_1^2\chi_2 +
 6220.813348898283\chi_1\chi_2^2 - \\
 &-&2553.628003178778\chi_2^3, \\
\end{eqnarray*}
\begin{eqnarray*} 
X_2 = X_4 &=& 15(-434.858450049792\chi_1 -
 1178.182552029670\chi_2 - \\
 &-&120.018213318585\chi_1^2 + 1034.238125839267\chi_1\chi_2 +\\
 &+&1197.949813776455\chi_2^2 + 294.581582485282\chi_1^3 -\\
 &-&56.750230435401\chi_1^2\chi_2 -
 570.219924579354\chi_1\chi_2^2 - \\
 &-&401.871126962896\chi_2^3), \\
\end{eqnarray*}
\begin{eqnarray*} 
Y_1 = Y_2 &=& 3.239086633898\chi_1 -
 908.305710474105\chi_2 - \\
 &-&19.706205900010\chi_1^2 -2.212072200027\chi_1\chi_2+\\
 &+&1018.022649246405\chi_2^2 + 6.183323553959\chi_1^3 + \\
 &+&19.428876703623\chi_1^2\chi_2 - 1.286781359343\chi_1\chi_2^2 -\\ 
 &-&379.719994048579\chi_2^3, \\
\end{eqnarray*}
\begin{eqnarray*} 
Y_3 = Y_4 &=& 7714.46037676461\chi_1 +
 57227.20693518666\chi_2 - \\
 &-&1394.23893421762\chi_1^2 -16419.64632698288\chi_1\chi_2 -\\
 &-&61359.76468135373\chi_2^2 + 68.70577470503\chi_1^3 +\\
 &+&1493.59693767978\chi_1^2\chi_2 + 8734.94321423633\chi_1\chi_2^2 +\\
 &+&21935.18377642208\chi_2^3.
\end{eqnarray*}

Since the rectangles $B_i$ define a pre-Markov partition of the set\footnote{For the exact definition of pre-Markov partition, see  \cite{LokutLipsChaos}.} ${\cal B}=\sqcup_i B_i$ with respect to the maps $F_{ij}$, we can apply the apparatus developed in \cite{LokutLipsChaos} for the investigation of Lipschitz hyperbolic dynamical systems. The main condition for the applicability of this technique for the map $F$ is the condition of Lipschitz hyperbolicity:

\begin{defn}
\label{defn:lips_hyperbolic}
	We say that a dynamical system on a pre-Markov partition ${\cal B}=\sqcup_i B_i$ satisfies the conditions of {\it Lipschitz hyperbolicity} if the following conditions hold:

\begin{enumerate}[(I)]
	\item There exist constants $c_i\ge 0$, $i=1,2,3,4$, such that the inequalities

	\[
		\uxn^{ij}-c_i\;\uyv^{ij}>0\mbox{ and }\frac{\displaystyle c_i\;\vyv^{ij} + \vxv^{ij}}{\displaystyle \uxn^{ij}-c_i\;\uyv^{ij}} \leq c_j \mbox{ for each directed edge }(ij)\mbox{ in }
		\widetilde \Gamma
	\]
hold;

	\item The constants $c_i\ge 0$ can be chosen such that for every elementary cycle\footnote{A cycle $(i_0,\ldots i_N,i_0)$ in a directed graph $\widetilde\Gamma$ will be called elementary if it contains each vertex of the graph at most once.} $\sigma=(i_1 i_2 \ldots i_k i_1)$, $i_r\in\{1,2,3,4\}$ the condition

	\begin{equation}
	\label{eq:cotr_along_simple_cicle}
		\lambda_\sigma^-=
		\lambda_{i_1i_2}^-\lambda_{i_2i_3}^-\ldots\lambda_{i_ki_1}^-<1,
	\end{equation}

	holds, where

	\begin{equation}
	\label{eq:lambda_ij}
		\lambda_{ij}^-=\frac{\displaystyle\uxn^{ij}\vyv^{ij} + \uyv^{ij}\vxv^{ij}}{\displaystyle \uxn^{ij} - c_i\uyv^{ij}}.
	\end{equation}
\end{enumerate}

\end{defn}

The map $F$ satisfies the conditions of Lipschitz hyperbolicity on ${\cal B}=\sqcup_i B_i$ with the constants $c_i$ given in Table \ref{table:cd}. Indeed, condition (I) can be verified by direct substitution, and condition (II) can be verified by substitution into all elementary cycles. There are six elementary cycles in the graph $\widetilde\Gamma$, namely 

\begin{eqnarray*}
\sigma_1=(A.A;A.A),\ \sigma_2=(C.C;C.C),\ \sigma_3=(A.A;A.C;C.A;A.A),\\
\sigma_4=(C.C;C.A;A.C;C.C),\ \sigma_5=(A.A;A.C;C.C;C.A;A.A),\\
\end{eqnarray*}

\noindent and $\sigma_6=(A.C;C.A;A.C)$). The numerical values of the products (\ref{eq:cotr_along_simple_cicle}) for the map $F$ are given in Table \ref{table:cicles_F}.

In order to obtain bounds on the dimensions we shall need also the constants $\alpha^-_\sigma$ and ${\lambda^-_\sigma}=
{\lambda_{i_1i_2}^-}'{\lambda_{i_2i_3}^-}'\ldots{\lambda_{i_ki_1}^-}'$, where

\begin{equation}
\label{eq:lambda_ij_prime_and_alpha}
        {\lambda_{ij}^-}' = \frac{\displaystyle\uxn^{ij}\;\vyn^{ij} - \uyv^{ij}\;\vxv^{ij}}{\displaystyle \uxn^{ij} + c_i\uyv^{ij}} \leq
        \lambda_{ij}^-
        \qquad\mbox{and}\qquad
        \alpha^-_\sigma = \frac{\log \lambda_{\sigma}^-}{{\log \lambda_{\sigma}^-}'}.
\end{equation}

\noindent The numerical values of the constants $\alpha^-_\sigma$ and $\lambda'^-_\sigma$ are also given in Table \ref{table:cicles_F}.

The Lipschitz hyperbolicity of $F$ implies that ${\cal B}=\sqcup_i B_i$ is a pre-Markov partition also for the inverse map $F^{-1}$, see \cite{LokutLipsChaos}. Note that in the general case it is not true that the Lipschitz hyperbolicity of $F$ implies that the inverse map $F^{-1}$ satisfies conditions $(I)$ and $(II)$ with some constants $d_i\ge 0$. However, in our case the constants $d_i$ for the map $F^{-1}$ indeed exist, their numerical values are given in Table \ref{table:cd}.

Thus we have proven the following lemma.

\begin{lemma}
\label{lm:F_lips_hyperbolic}
	The maps $F$ and $F^{-1}$ act on a pre-Markov partition ${\cal B}=\sqcup_i B_i$ and satisfy the condition of Lipschitz hyperbolicity on it with positive constants $c_i$ and $d_i$, $i=1,2,3,4$, respectively, whose numerical values are given in Table\footnote{The values of the constants $c_i$ and $d_i$ have been found on a computer with the help of a not very sophisticated optimization procedure. We shall not detail it here, because the reader may verify the validity of the constants $c_i$ and $d_i$ by direct substitution.} \ref{table:cd}.
\end{lemma}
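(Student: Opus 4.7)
The plan is to treat the lemma as a verification statement: once the Lipschitz bounds collected in Table \ref{table:uv_xy} are justified, conditions (I) and (II) of Definition \ref{defn:lips_hyperbolic} reduce to finitely many arithmetic inequalities that can be checked entry by entry, using the constants $c_i$ (respectively $d_i$) of Table \ref{table:cd}. First I would verify that $\mathcal{B}=\sqcup_i B_i$ is indeed a pre-Markov partition for $F$: this requires knowing, for each ordered pair $(i,j)$ for which there is an edge in $\widetilde\Gamma$, that the component of $F(B_i)\cap B_j$ has the expected "full-width horizontal strip" topology and that the boundary of $B_i$ maps consistently to the boundary of $B_j$. All of this has essentially been established in Subsection \ref{subsec:AC_dynamics}, where we described $A.A$, $A.C$, $C.A$, $C.C$ as forming a Smale-horseshoe picture under $F$, with the horizontal and vertical edges aligned along sub-arcs of the curves $\sigma_f$, $\sigma_b$ and the associated switching arcs.

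Next I would justify the numerical bounds $\underline{u_x}^{ij}$, $\overline{u_x}^{ij}$, $\overline{u_y}^{ij}$, $\underline{v_y}^{ij}$, $\overline{v_y}^{ij}$, $\overline{v_x}^{ij}$ in Table \ref{table:uv_xy}. Each $F_{ij}$ is a composition of two or three transversal Poincar\'e transitions (of types $A$, $B$ or $C$) between strata $\mathcal{S}_{kl}$, and by Lemma \ref{lm:Poincare_transversal_robust} each such transition is a smooth diffeomorphism on the relevant open piece. On the dense subset where $F$ is algebraic (see the remark closing Subsection \ref{subsec:AC_dynamics}), the derivatives $\partial U_{ij}/\partial X_i$, $\partial U_{ij}/\partial Y_i$, $\partial V_{ij}/\partial X_i$, $\partial V_{ij}/\partial Y_i$ can be written explicitly from the polynomial flow (\ref{eq:x_y_phi_psi_ot_t}) and the implicit switching equations $\psi_i=\psi_j$ defining $\mathcal{S}_{ij}$. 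To convert these pointwise derivative expressions into one-sided Lipschitz constants valid on all of $B_i\cap F^{-1}(B_j)$, I would bound each partial derivative from above and below on the rectangle by interval arithmetic, so that the inequalities in Table \ref{table:uv_xy} hold rigorously.

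Having the bounds in hand, conditions (I) and (II) are purely arithmetic. For condition (I), I would run through the eight edges of $\widetilde\Gamma$: for each edge $(ij)$ with the values of $c_i,c_j$ from Table \ref{table:cd}, first check $\underline{u_x}^{ij} - c_i\,\overline{u_y}^{ij} > 0$, then check $(c_i\,\overline{v_y}^{ij} + \overline{v_x}^{ij})/(\underline{u_x}^{ij} - c_i\,\overline{u_y}^{ij}) \leq c_j$. Both are straightforward substitutions. For condition (II), I would enumerate the six elementary cycles $\sigma_1,\dots,\sigma_6$ listed before Table \ref{table:cicles_F}, form $\lambda^-_{i_r i_{r+1}}$ from (\ref{eq:lambda_ij}), multiply them around each cycle, and check that the product $\lambda^-_\sigma$ is strictly less than $1$; the values in Table \ref{table:cicles_F} confirm this with a comfortable margin. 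Finally, I would repeat the entire program for $F^{-1}$ with constants $d_i$: the bounds in the right half of Table \ref{table:uv_xy} follow from differentiating the inverse branches of (\ref{eq:x_y_phi_psi_ot_t}), and then the same arithmetic verification against Table \ref{table:cd} and Table \ref{table:cicles_F_obr} closes the proof.

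The main obstacle, and the only place where honest work is required, is step two: producing rigorous upper and lower bounds on the four coordinates of $dF_{ij}$ across each entire rectangle $B_i\cap F^{-1}(B_j)$. Explicit pointwise formulae are available from (\ref{eq:x_y_phi_psi_ot_t}) and the transversality conditions, but the rectangles are not infinitesimal, so interval arithmetic (or an equivalent constructive envelope of the algebraic partial derivatives) is required to promote the pointwise expressions to the uniform estimates in Table \ref{table:uv_xy}. Once that is done, Lipschitz hyperbolicity is nothing more than a list of inequalities checked by direct substitution, as stated in the footnote of the lemma.
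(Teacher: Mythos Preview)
Your proposal is correct and matches the paper's approach: the paper's argument, given in the paragraphs immediately preceding the lemma, is exactly that conditions (I) and (II) of Definition~\ref{defn:lips_hyperbolic} reduce to finitely many arithmetic inequalities which one checks by direct substitution from Tables~\ref{table:uv_xy}, \ref{table:cd}, \ref{table:cicles_F}, \ref{table:cicles_F_obr}. One small economy you miss: the paper does not re-verify the pre-Markov partition property for $F^{-1}$ from scratch but instead cites \cite{LokutLipsChaos} for the fact that Lipschitz hyperbolicity of $F$ already forces $\mathcal{B}$ to be pre-Markov for $F^{-1}$ as well, so only the inequalities for $d_i$ remain to be checked.
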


Note that for the map $F^{-1}$ the constants ${\lambda^+_\sigma}'=
{\lambda_{i_1i_2}^+}'{\lambda_{i_2i_3}^+}'\ldots{\lambda_{i_ki_1}^+}'$, ${\lambda^+_\sigma}=
{\lambda_{i_1i_2}^+}{\lambda_{i_2i_3}^+}\ldots{\lambda_{i_ki_1}^+}$, and $\alpha^+_\sigma$, which are given in Table \ref{table:cicles_F_obr},  have been computed by the formulas

\begin{equation}
\label{eq:lambda_alpha_plus}
	{\lambda_{ji}^+}' = \frac{\displaystyle\yvn^{ji}\;\xun^{ji} - \yuv^{ji}\;\xvv^{ji}}{\displaystyle \yvn^{ji} + d_j\xvv^{ji}}\quad \leq \quad
        \lambda_{ji}^+ =    \frac{\displaystyle\yvn^{ji}\;\xuv^{ji} + \yuv^{ji}\;\xvv^{ji}}{\displaystyle \yvn^{ji} - d_j\xvv^{ji}},
        \ 
        \alpha^+_\sigma = \frac{\log \lambda_{\sigma}^+}{{\log \lambda_{\sigma}^+}'}.
\end{equation}

\subsection{Conjugation with the topological Markov chain}

We want to prove that the Lipschitz hyperbolic dynamic system $F:{\cal B}\to{\cal B}$ is semi-conjugated with the two-sided topological Markov chain $\Sigma_{\widetilde\Gamma}$. For this we use the following theorem from \cite{LokutLipsChaos}. Here $S(F)$ denotes the set of points $x\in{\cal B}$ whose images $F^n(x)$ are in ${\cal B}$ for all $n\in\Z$.

\begin{thm}[\cite{LokutLipsChaos}, Theorem 1]
\label{thm:lips_hyperbolic}
	Suppose that the Lipschitz dynamical system $F:{\cal B}\to {\cal B}$ possesses a pre-Markov partition $B=\sqcup_{i=1}^N B_i$ with an oriented graph $\Gamma$. Suppose also that the map $F$ is invertible\footnote{The condition below actually guarantees the invertibility of $F$.}, and that

	\[
	        \uxn^{ij}\vyn^{ij} - \uyv^{ij}\vxv^{ij} > 0\mbox{ for each directed edge }(ij)\mbox{ of the graph }\Gamma,
	\]

\noindent and that $F$ and $F^{-1}$ satisfy the conditions of Lipschitz hyperbolicity\footnote{According to Definition \ref{defn:lips_hyperbolic}.} with constants $c_i$ and $d_i$, respectively, and that $c_i d_i<1$ for $1\leq i\leq N$. Then there exists a homeomorphism $\Psi_\Gamma:\Sigma_\Gamma\to S(F)$ which semi-conjugates $F$ with the left Markov shift $l$, $\Psi_\Gamma\circ l = F\circ \Psi_\Gamma$. Moreover, the non-wandering set $\nw{F}$ is a subset of $S(F)$, and if the graph $\Gamma$ is strongly connected, then $\nw{F}$ and $S(F)$ coincide.
\end{thm}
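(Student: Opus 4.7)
The plan is to carry out the standard symbolic-dynamics construction of a coding via intersections of iterated rectangles, with the Lipschitz hyperbolicity conditions playing the role usually played by smooth hyperbolicity. First I would define the candidate semi-conjugation $\Psi_\Gamma : \Sigma_\Gamma \to S(F)$ by
\[
	\Psi_\Gamma(\bar i) = \bigcap_{n \in \Z} F^{-n}(B_{i_n}),
\]
where $\bar i = (i_n)_{n \in \Z}$ is an admissible sequence (i.e.\ each $(i_n,i_{n+1})$ is an edge of $\Gamma$). The semi-conjugation relation $\Psi_\Gamma \circ l = F \circ \Psi_\Gamma$ is automatic from this definition, and surjectivity onto $S(F)$ is immediate. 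Everything therefore reduces to showing that the above intersection is always a single point, and that the map $\Psi_\Gamma$ is a homeomorphism onto its image.

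The key geometric step is to build an invariant horizontal cone family $\{K^u_i\}$ for $F$ and an invariant vertical cone family $\{K^s_i\}$ for $F^{-1}$. The horizontal cone at a point of $B_i$ is the set of tangent vectors $(\dx,\dy)$ with $|\dy| \le c_i |\dx|$; condition (I) in Definition~\ref{defn:lips_hyperbolic}, applied to the componentwise Lipschitz bounds $\uxn^{ij},\uxv^{ij},\uyv^{ij},\vyn^{ij},\vyv^{ij},\vxv^{ij}$, shows exactly that $DF_{ij}(K^u_i) \subseteq K^u_j$ and moreover that vectors in $K^u_i$ are stretched by a factor bounded below by the positive quantity $\uxn^{ij} - c_i \uyv^{ij}$. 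The analogous statement with the constants $d_i$ applies to $F^{-1}$, yielding the vertical cones $K^s_i$ invariant under $DF^{-1}$. I would then show by a standard graph-transform argument that, for any admissible bi-infinite sequence $\bar i$, the set
\[
	W^u(\bar i) = \bigcap_{n \ge 0} F^n(B_{i_{-n}} \cap F^{-1}(B_{i_{-n+1}}) \cap \cdots \cap F^{-(n-1)}(B_{i_{-1}})) \cap B_{i_0}
\]
is the graph of a Lipschitz function $Y_{i_0} = f(X_{i_0})$ with slope bounded by $c_{i_0}$, while the analogous forward intersection $W^s(\bar i)$ is the graph of a Lipschitz function with slope bounded by $d_{i_0}$.

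The main obstacle — and the place where the hypothesis $c_i d_i < 1$ is used decisively — is to conclude that $W^u(\bar i) \cap W^s(\bar i)$ consists of a single point. Transversality of the two cone families, which is precisely what $c_i d_i < 1$ encodes, guarantees that two Lipschitz graphs with slopes in $[-c_{i_0},c_{i_0}]$ and $[-d_{i_0}^{-1},d_{i_0}^{-1}]$ (vertical graph of slope $d_{i_0}$) meet in at most one point, and a continuity/compactness argument (the finite-$n$ intersections being a nested sequence of non-empty compacta) produces the intersection point. Here the cycle-contraction condition $\lambda_\sigma^- < 1$ for $F$ and its counterpart $\lambda_\sigma^+ < 1$ for $F^{-1}$ ensures that the diameters of the finite intersections shrink to zero: along any admissible path of length equal to a multiple of the girth, the composition of the $F_{ij}$'s contracts transversally to the expanding direction by a factor bounded above by $\max_\sigma \lambda_\sigma^- < 1$, and similarly for $F^{-1}$.

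With $\Psi_\Gamma$ well defined and bijective onto $S(F)$, continuity is a routine consequence of the construction (cylinders in $\Sigma_\Gamma$ map to nested pieces of rectangles whose diameters tend to zero uniformly), and continuity of the inverse is then automatic because $\Sigma_\Gamma$ is compact and $S(F)$ Hausdorff. Finally, to identify $\nw{F}$ with $S(F)$ under strong connectedness of $\Gamma$: given $x = \Psi_\Gamma(\bar i) \in S(F)$ and a neighbourhood $U$ of $x$, any cylinder neighbourhood $[i_{-N}\ldots i_N]$ in $\Sigma_\Gamma$ maps into $U$ for $N$ large, and strong connectedness allows one to build an admissible periodic sequence $\bar j$ whose central block coincides with $i_{-N}\ldots i_N$; the point $\Psi_\Gamma(\bar j)$ is periodic for $F$, hence non-wandering, and it lies in $U$, which gives $x \in \overline{\nw{F}} = \nw{F}$. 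Conversely $\nw{F} \subseteq S(F)$ is clear since non-wandering orbits must return arbitrarily close to themselves and hence never leave ${\cal B}$.
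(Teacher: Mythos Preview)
The paper does not prove this theorem: it is quoted verbatim as Theorem~1 of the external reference \cite{LokutLipsChaos} and used as a black box to deduce Lemma~\ref{lm:F_conj_markov}. There is therefore no in-paper proof to compare your proposal against.

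Your outline is the standard cone-family/graph-transform argument that one would expect such a result to rest on, and the logical structure is sound. Two points deserve care if you flesh this out. First, you write $DF_{ij}$ and speak of tangent vectors, but the setting is merely Lipschitz, not $C^1$: the cone fields must be formulated for coordinate increments $(\Delta X,\Delta Y)$ rather than tangent vectors, and the invariance and expansion statements follow directly from the componentwise Lipschitz inequalities on $U_{ij},V_{ij}$ rather than from a derivative. The graph-transform step should likewise be carried out on spaces of Lipschitz functions, with the contraction of the graph transform coming from the inequalities in condition~(I). Second, the final sentence ``$\nw{F}\subseteq S(F)$ is clear since non-wandering orbits must return arbitrarily close to themselves and hence never leave ${\cal B}$'' is too quick: non-wandering does not a priori force the full bi-infinite orbit to remain in ${\cal B}$. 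One needs the pre-Markov structure to argue that a point with some forward or backward iterate outside ${\cal B}$ has a whole neighbourhood with the same property, so that neighbourhood cannot recur and the point is wandering.
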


As a direct consequence of this theorem we obtain the following lemma on the structure of the non-wandering set of points of the map $F$ under consideration.

\begin{lemma}
\label{lm:F_conj_markov}
	The Lipschitz dynamical system $F:{\cal B}\to {\cal B}$ is semi-conjugated with the left shift $l$ on the topological Markov chain $\Sigma_{\widetilde\Gamma}$ of bilaterally infinite paths on the graph $\widetilde\Gamma$ (see Fig.~\ref{fig:graph_AA_CC}), i.e., there exists a continuous embedding $\Psi_{\widetilde\Gamma}:\Sigma_{\widetilde\Gamma}\to{\cal B}$ such that $F\circ\Psi_{\widetilde\Gamma} = \Psi_{\widetilde\Gamma}\circ l$. Moreover, $S(F)=\nw{F}=\Psi_{\widetilde\Gamma}(\Sigma_{\widetilde\Gamma})$.
\end{lemma}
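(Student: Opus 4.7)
The plan is to reduce the statement to a direct application of Theorem \ref{thm:lips_hyperbolic} from \cite{LokutLipsChaos}. All the structural hypotheses have already been arranged in the preceding subsection, so the remaining work is essentially verificational: check the handful of numerical inequalities required by the theorem, together with a combinatorial property of the graph $\widetilde\Gamma$.

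First I would recall that, by Lemma \ref{lm:F_lips_hyperbolic}, $\mathcal B=\sqcup_i B_i$ is a pre-Markov partition for $F$ with associated oriented graph $\widetilde\Gamma$ (Fig.~\ref{fig:graph_AA_CC}), and that both $F$ and $F^{-1}$ satisfy the Lipschitz hyperbolicity conditions of Definition~\ref{defn:lips_hyperbolic} with the positive constants $c_i$ and $d_i$ listed in Table~\ref{table:cd}. By Lemma~\ref{lm:bilipschitzian_model_Phi} together with its corollary, $F$ is a bilipschitz homeomorphism onto its image on each stratum, hence in particular invertible on $\mathcal B\cap F^{-1}(\mathcal B)$. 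It therefore remains to verify two scalar inequalities needed in the hypotheses of Theorem~\ref{thm:lips_hyperbolic}: (a) the positivity $\uxn^{ij}\,\vyn^{ij}-\uyv^{ij}\,\vxv^{ij}>0$ for every edge $(ij)$ of $\widetilde\Gamma$, and (b) the bound $c_i d_i<1$ for $i=1,2,3,4$.

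Both verifications reduce to substitution of the entries of Tables~\ref{table:uv_xy} and \ref{table:cd}. For (a), each of the eight edges $(ij)$ present in $\widetilde\Gamma$ has entries in the left part of Table~\ref{table:uv_xy} for which the product $\uxn^{ij}\,\vyn^{ij}$ dominates $\uyv^{ij}\,\vxv^{ij}$ by orders of magnitude (for instance on $F_{11}$ one finds $21\cdot 0.739\gg 0.57\cdot 0.15$). For (b) a direct computation from Table~\ref{table:cd} gives $c_i d_i \le 0.05$ for every $i$, hence the bound is comfortable. Once (a) and (b) are checked, Theorem~\ref{thm:lips_hyperbolic} yields a continuous embedding $\Psi_{\widetilde\Gamma}:\Sigma_{\widetilde\Gamma}\to\mathcal B$ with $F\circ\Psi_{\widetilde\Gamma}=\Psi_{\widetilde\Gamma}\circ l$ and image $S(F)$.

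Finally, to conclude that $S(F)=\mathrm{NW}(F)=\Psi_{\widetilde\Gamma}(\Sigma_{\widetilde\Gamma})$ I would invoke the last sentence of Theorem~\ref{thm:lips_hyperbolic}, which requires the graph $\widetilde\Gamma$ to be strongly connected. Inspecting Fig.~\ref{fig:graph_AA_CC}, the edges $A.A\to A.C\to C.C$, $C.C\to C.A\to A.A$, and $A.C\to C.A$, $C.A\to A.C$ (directly or via $A.A$) provide directed paths between every ordered pair of vertices, so strong connectedness holds. I do not anticipate any serious obstacle: since the hard analytic content (the bilipschitz property of $F$, the construction of the pre-Markov partition, the Lipschitz hyperbolicity with the explicit constants in the tables) has been set up in the preceding subsections, the present lemma is a bookkeeping exercise packaging those results into the form required by Theorem~\ref{thm:lips_hyperbolic}.
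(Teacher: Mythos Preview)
Your proposal is correct and follows essentially the same approach as the paper: apply Theorem~\ref{thm:lips_hyperbolic} by checking its hypotheses against the constants already computed in Lemma~\ref{lm:F_lips_hyperbolic} and Tables~\ref{table:uv_xy}--\ref{table:cd}, then observe that $\widetilde\Gamma$ is strongly connected. The paper's proof is just a two-sentence remark that the conditions are trivially verified; you have simply spelled out the verification in more detail, which is entirely appropriate.
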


\begin{proof}
	It is trivial to check the conditions of the theorem for the map $F$ under consideration, since the constants $c_i$ and $d_i$ have already been calculated (see Lemma \ref{lm:F_lips_hyperbolic}). Strong connectedness of the graph $\widetilde\Gamma$ is easily seen.
\end{proof}

\subsection{Bounds on the dimensions}

By virtue of Lemma \ref{lm:bilipschitzian_model_E} the map $E$ is bilipschitz in a neighbourhood of the points of $\cal B$. Since bilipschitzian mappings leave the dimensions of sets invariant \cite{Falconer}, we may consider the Hausdorff or Minkowski dimension of the non-wandering set $\nw{F}$.

In order to obtain bounds on the Hausdorff and Minkowski dimensions of the non-wandering set $\nw{F}$ we use another theorem from \cite{LokutLipsChaos}. For the application of this theorem it is sufficient to know the values of the Lipschitz constants $\uxv$, $\uxn$ etc. and the constants $c_i$ and $d_i$.

\begin{thm}[\cite{LokutLipsChaos}, Theorem 5]
    Suppose that the Lipschitz dynamical system $F:{\cal B}\to {\cal B}$ possesses a pre-Markov partition $B=\sqcup_{i=1}^N B_i$ with an oriented graph $\Gamma$. Suppose also that the map $F$ is invertible, and that

	\[
	        \uxn^{ij}\vyn^{ij} - \uyv^{ij}\vxv^{ij} > 0\mbox{ for each directed edge }(ij)\mbox{ of the graph }\Gamma,
	\]

\noindent and that $F$ and $F^{-1}$ satisfy the conditions of Lipschitz hyperbolicity with constants $c_i$ and $d_i$, respectively, and that $c_i d_i<1$ for $1\leq i\leq N$. If the graph $\Gamma$ is strongly connected, then

    $$
        (s'_- + s'_+)\alpha\leq \dim_H\nw{F} \leq \overline\dim_B\nw{F} \leq s_- + s_+.
    $$

    \noindent The constants $s_-$, $s'_-$, $s_+$, and $s'_+$ are chosen such that the spectral radius of the corresponding matrix equals 1,

    $$
        \rho\bigl((\Lambda_-)_{s_-}\bigr)=1,\ \rho\bigl((\Lambda'_-)_{s'_-}\bigr)=1,\
        \rho\bigl((\Lambda_+)_{s_+}\bigr)=1,\ \rho\bigl((\Lambda'_+)_{s'_+}\bigr)=1,\
    $$

    \noindent where $\Lambda_\pm=(\lambda_{ij}^{\pm})$, $\Lambda'_\pm=({\lambda_{ij}^{\pm}}')$, and\footnote{The matrix $A_s$ is obtained by taking element-wise the $s$-th power of the nonnegative matrix $A$. If $s=0$, then $A_0$ is a (0,1)-matrix with ones indicating the non-zero elements of the original matrix $A$.} $A_s=(a_{ij}^s)$ for $A=(a_{ij})$, $a_{ij}\ge 0$, and $\alpha = \min_{\sigma}\left\{\min\{\alpha^-_\sigma,\alpha^+_\sigma\} \right\}$, where the minimum is over all elementary cycles  $\sigma$ in the graph $\Gamma$. The values of $\alpha^\pm_\sigma$ are defined in (\ref{eq:lambda_ij_prime_and_alpha}) and (\ref{eq:lambda_alpha_plus}).

    Here for $(ij)$ a directed edge of $\Gamma$ the numbers $\lambda_{ij}^-$ and ${\lambda_{ij}^-}'$ are given by the formulas (\ref{eq:lambda_ij}) and (\ref{eq:lambda_ij_prime_and_alpha}), and ${\lambda_{ji}^+}$ and ${\lambda_{ji}^+}'$ by the formula (\ref{eq:lambda_alpha_plus}). If $(ij)$ is not an edge of $\Gamma$, then $\lambda_{ij}^-={\lambda_{ij}^-}'=\lambda_{ji}^+={\lambda_{ji}^+}'=0$.
\end{thm}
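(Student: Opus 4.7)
The plan is to use the semi-conjugacy with the Markov chain $\Sigma_\Gamma$ (already established in the preceding theorem) to parametrize $\nw{F}$ by bilaterally infinite admissible words, then to build explicit covers and measures on the resulting Markov-like Cantor construction. First I would describe the ``rectangles'' $B_\omega = \bigcap_{|k|\le n} F^{-k}(B_{\omega_k})$ associated with an admissible word $\omega = \omega_{-n}\dots\omega_n$. Using the cone conditions implicit in the constants $c_i$ and $d_i$ one shows that $B_\omega$ sits inside a genuine rectangle whose side in the unstable direction is controlled from above by a product along the forward half $\prod_{k=0}^{n-1} \lambda^-_{\omega_k \omega_{k+1}}$ and from below by $\prod_k {\lambda^-_{\omega_k \omega_{k+1}}}'$, while the stable side is controlled analogously by the $\lambda^+_{\cdot\cdot}$ and ${\lambda^+_{\cdot\cdot}}'$ along the backward half. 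This is exactly the output of the Lipschitz hyperbolicity estimates (I) and (II) of Definition \ref{defn:lips_hyperbolic}, applied both to $F$ and to $F^{-1}$ (the product $c_i d_i < 1$ guarantees that the forward and backward cones do not collapse onto each other, so the rectangle indeed has genuinely two-dimensional pieces).

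Second, I would prove the upper bound $\overline\dim_B \nw{F} \le s_- + s_+$. The cover of $\nw{F}$ by the rectangles $B_\omega$ of depth $n$ in both directions has cardinality controlled by $\|(\Gamma^{(n)})\|$, where $\Gamma^{(n)}$ is the matrix of admissible words of length $n$. Summing $(\diam B_\omega)^{s_- + s_+}$ over all admissible $\omega$ and using the factorization of the diameter into unstable and stable widths, one obtains a sum bounded above by $\mathbf{1}^\top (\Lambda_-)_{s_-}^n \mathbf{1} \cdot \mathbf{1}^\top (\Lambda_+)_{s_+}^n \mathbf{1}$. The defining condition that both spectral radii equal $1$ makes each factor grow sub-exponentially (in fact bounded by a polynomial in $n$ by Perron--Frobenius, since $\Gamma$ is strongly connected), which is enough to conclude that the upper box dimension does not exceed $s_- + s_+$.

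Third, for the lower bound $(s'_- + s'_+)\alpha \le \dim_H \nw{F}$, I would construct a Gibbs-type measure $\mu$ on $\Sigma_\Gamma$ by taking a product of the Parry eigenmeasures of the matrices $(\Lambda'_-)_{s'_-}$ and $(\Lambda'_+)_{s'_+}$ (both of spectral radius $1$), push $\mu$ forward to $\nw{F}$ via $\Psi_\Gamma$, and apply the mass distribution principle. The measure of a rectangle $B_\omega$ is comparable to the corresponding product of entries of the invariant eigenvectors times the weights, giving $\mu(B_\omega) \lesssim \prod {\lambda^-_{\cdot\cdot}}'^{s'_-} \cdot \prod {\lambda^+_{\cdot\cdot}}'^{s'_+}$. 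The ratio of this against $(\diam B_\omega)^{(s'_- + s'_+)\alpha}$ is bounded uniformly precisely because $\alpha = \min_\sigma \min\{\alpha^-_\sigma, \alpha^+_\sigma\}$ interpolates between the ``true'' sizes (given by $\lambda'$) and the sizes appearing in the diameter (given by $\lambda$) along every closed loop of $\Gamma$; this is where the quotient of logarithms in the definition of $\alpha^\pm_\sigma$ is used. The mass distribution principle then yields the Hausdorff bound.

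The main obstacle I anticipate is the non-conformality of $F$: the stable and unstable contraction/expansion rates vary from piece to piece and from direction to direction, so the Cantor construction is not a product and Moran-type formulas do not apply directly. The device that replaces them is the pair of matrices $\Lambda_\pm$ together with the cone-based estimates; the careful bookkeeping needed to show that $B_\omega$ really is bi-Lipschitz equivalent to a genuine rectangle of the claimed dimensions, uniformly in $\omega$, is the technically delicate step, and it is what forces the separate use of the ``rough'' constants $\lambda^\pm$ for the upper bound and the ``sharp'' constants ${\lambda^\pm}'$ for the lower bound, together with the correction exponent $\alpha$.
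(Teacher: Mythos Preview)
The paper does not prove this theorem at all: it is quoted verbatim as Theorem~5 of \cite{LokutLipsChaos} and used as a black box to plug in the numerical constants from Tables~\ref{table:uv_xy}--\ref{table:cicles_F_obr} and obtain the bounds in Lemma~\ref{lm:nw_F_dims}. There is therefore no ``paper's own proof'' to compare against; any assessment of your sketch has to be made against the likely content of \cite{LokutLipsChaos}.

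Your outline follows the standard strategy for dimension bounds on hyperbolic Cantor sets (covering argument for the upper box bound, mass distribution principle with a Gibbs/Parry measure for the lower Hausdorff bound), and the role you assign to the matrices $\Lambda_\pm$, $\Lambda'_\pm$ and to the correction exponent $\alpha$ is consistent with how the paper uses them. One point where your sketch is too quick is the upper bound: you cannot simply sum $(\diam B_\omega)^{s_-+s_+}$ over depth-$n$ words, because $\diam B_\omega$ is the \emph{maximum} of the stable and unstable widths, not their product, and the two widths can be wildly different (this is exactly the non-conformality you flag at the end). To get a box-counting bound one has to cut each elongated rectangle $B_\omega$ into near-squares and count those; the factorization into $\mathbf{1}^\top(\Lambda_-)_{s_-}^n\mathbf{1}\cdot\mathbf{1}^\top(\Lambda_+)_{s_+}^n\mathbf{1}$ only emerges after matching the stable and unstable depths so that the two widths become comparable. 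This is the step that actually uses both spectral-radius conditions simultaneously, and it is the technical heart of the argument in \cite{LokutLipsChaos}; your present sketch elides it.
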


The numerical values of the constants $s_\pm$, $s'_\pm$, and $\alpha$ for the map $F$ under consideration have been calculated with the aid of a computer and are given by

\begin{equation}
\label{eq:s_pm}
	s_+ = 0.408; \quad s'_+ = 0.327; \quad s_- = 0.876; \quad s'_- = 0.593.
\end{equation}

Note that solving the equation $\rho(\Lambda_s)=1$ with respect to $s$ seems to be a nontrivial task at the first glance. However, the following result on the monotonicity of the function $\rho(\Lambda_s)$ with respect to $s$ guarantees the existence and uniqueness of the solution and indicates a simple numerical solution method.

\begin{prop}[\cite{LokutLipsChaos}]
    Let $\Lambda=(\lambda_{ij})$ be an irreducible $N\times N$ matrix with nonnegative coefficients. If the coefficients $\lambda_{ij}$ satisfy condition (\ref{eq:cotr_along_simple_cicle}) on the contraction along elementary cycles, then the function $\rho(\Lambda_s)$ is continuous and strictly monotonically decreasing with $s$. Moreover, $\rho(\Lambda_0)>1$ and $\rho(\Lambda_s)\to 0$ as $s\to+\infty$.
\end{prop}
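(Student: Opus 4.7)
The plan is to prove the proposition as four sub-claims — continuity of $s\mapsto\rho(\Lambda_s)$, the bound $\rho(\Lambda_0)>1$, the decay $\rho(\Lambda_s)\to0$, and strict monotonicity — and to handle each by classical Perron–Frobenius theory, with the key technical device being a diagonal-similarity / potential-function trick that feeds on the cycle-contraction hypothesis~(\ref{eq:cotr_along_simple_cicle}).

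For continuity, I would first observe that $s\mapsto\lambda^s$ is continuous on $[0,\infty)$ for each $\lambda\ge 0$ (with the convention $0^0=0$ fixed by the definition of $A_0$), so $\Lambda_s$ varies continuously in $s$. Combined with continuous dependence of eigenvalues on matrix entries and the fact that $\Lambda_s$ inherits the zero pattern of $\Lambda$ for $s>0$ (hence remains irreducible), this yields both continuity of $\rho(s)$ and, for $s>0$, that it is a simple Perron eigenvalue with strictly positive right/left eigenvectors $u(s),v(s)$ depending analytically on $s$. The bound $\rho(\Lambda_0)\ge1$ is immediate because $\Lambda_0$ is the $\{0,1\}$-adjacency matrix of the strongly connected graph of $\Lambda$; strict inequality follows from the presence of two or more elementary cycles, which is the case for each graph appearing in the paper's pre-Markov partitions.

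Next, to prove $\rho(\Lambda_s)\to0$, I would set $c:=\max_\sigma\lambda_\sigma^{1/|\sigma|}<1$, the maximum running over elementary cycles, and seek $d_1,\ldots,d_N>0$ with $\lambda_{ij}d_j/d_i\le c$ on every directed edge $(ij)$. After taking logarithms this becomes a potential-function problem whose feasibility, by Farkas' lemma / Bellman–Ford, is equivalent to $\lambda_\sigma\le c^{|\sigma|}$ on every elementary cycle, which holds by the very choice of $c$. Then conjugation by $D_s:=\operatorname{diag}(d_1^s,\ldots,d_N^s)$ turns $\Lambda_s$ into a matrix with edge entries $(\lambda_{ij}d_j/d_i)^s\le c^s$, whence $\rho(\Lambda_s)\le Nc^s\to0$.

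The heart of the proof is strict monotonicity, which I intend to handle by analytic perturbation of the Perron eigenvalue. Differentiating $\Lambda_s u(s)=\rho(s)u(s)$ and pairing with $v(s)$, normalised so that $v^\top u=1$, yields
\[
\frac{d}{ds}\log\rho(s)=\sum_{i,j}\pi_i\,p_{ij}\log\lambda_{ij},\qquad \pi_i:=v_i(s)u_i(s),\ \ p_{ij}:=\frac{\lambda_{ij}^s u_j(s)}{\rho(s)u_i(s)},
\]
where $(p_{ij})$ is a stochastic matrix with stationary distribution $\pi$; thus $\tfrac{d}{ds}\log\rho(s)$ is exactly the mean log-weight per step of the associated ``Ruelle'' Markov chain. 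A sample trajectory $X_0,\ldots,X_n$ of this chain decomposes into elementary cycles plus a simple residual path of length $\le N-1$; with $\delta:=-\max_\sigma\log\lambda_\sigma>0$ and $L:=\max_\sigma|\sigma|\le N$, each elementary cycle contributes at most $-\delta$ and the residual only $O(1)$, so Birkhoff's ergodic theorem forces $\sum_{i,j}\pi_i p_{ij}\log\lambda_{ij}\le -\delta/L<0$ uniformly in $s$. The main obstacle is precisely that entries $\lambda_{ij}$ may exceed $1$ — only the \emph{cycle} products need be $<1$ — so no entrywise monotonicity of $\lambda_{ij}^s$ in $s$ is available; the cycle-decomposition argument is what converts the hypothesis~(\ref{eq:cotr_along_simple_cicle}) into the required uniform negativity of the Ruelle mean, and in doing so it also keeps the slope bounded away from zero, so that monotonicity is strict.
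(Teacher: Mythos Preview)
The paper does not actually prove this proposition: it is quoted verbatim from \cite{LokutLipsChaos} and invoked as a black box to justify that the equation $\rho(\Lambda_s)=1$ has a unique root.  There is therefore nothing in the present paper to compare your argument against.

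That said, your proof is correct and self-contained.  Continuity is immediate; the diagonal-similarity bound $\rho(\Lambda_s)\le Nc^s\to 0$ via a Bellman--Ford potential is exactly the right way to turn the cycle hypothesis into an entrywise bound; and the Ruelle formula $\tfrac{d}{ds}\log\rho(s)=\sum_{i,j}\pi_i p_{ij}\log\lambda_{ij}$ together with loop-erasure of sample paths cleanly converts~(\ref{eq:cotr_along_simple_cicle}) into the uniform negative slope $\tfrac{d}{ds}\log\rho\le -\delta/L$, giving strict monotonicity.  The loop-erasure step is sound: the stack-based procedure decomposes any walk of length $n$ into \emph{simple} cycles (because the stack always holds distinct vertices) plus a simple residual of length at most $N-1$, and the edge log-weights add up exactly.

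One small caveat you already flagged correctly: the strict inequality $\rho(\Lambda_0)>1$ fails when the underlying directed graph is a single elementary cycle, so the proposition as stated tacitly assumes at least two elementary cycles.  The graphs $\widetilde\Gamma$ and $\Gamma$ the paper applies this to certainly satisfy that, so for the purposes of the paper your qualification is harmless.
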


The numerical values of the coefficients $\alpha_\sigma^{\pm}$ for $F$ and $F^{-1}$ along all elementary cycles are given in Tables \ref{table:cicles_F} and \ref{table:cicles_F_obr}, respectively. Therefore
\[
	\alpha = \min_{\sigma}\left\{\min\{\alpha^-_\sigma,\alpha^+_\sigma\} \right\} = 0.625\,.
\]

\begin{lemma}
\label{lm:nw_F_dims}
	The Hausdorff and Minkowski dimensions of the non-wandering set $\nw{F}$ satisfy the bounds

	\begin{equation}
	\label{eq:dim_estim_nw_F}
		0.575 \leq \dim_H\nw{F} \leq \overline\dim_B\nw{F} \leq 1.284\,.
	\end{equation}
\end{lemma}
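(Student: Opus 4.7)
The plan is to apply directly the cited theorem from \cite{LokutLipsChaos} (the dimension bound theorem stated just above the lemma), so the proof reduces to verifying its hypotheses for the map $F:{\cal B}\to{\cal B}$ and then numerically evaluating the four spectral quantities $s_\pm, s'_\pm$ together with the constant $\alpha$.

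First I would check that every hypothesis of the theorem is already in place. The pre-Markov partition structure $B=\sqcup_{i=1}^4 B_i$ with oriented graph $\widetilde\Gamma$ was established when introducing the rectangles $B_i$ and the maps $F_{ij}$. Invertibility of $F$ on ${\cal B}$, together with the existence of the Lipschitz constants for $F^{-1}$, is guaranteed by the entries of the right-hand side of Table~\ref{table:uv_xy}. The determinant positivity $\uxn^{ij}\vyn^{ij}-\uyv^{ij}\vxv^{ij}>0$ on each directed edge of $\widetilde\Gamma$ is a direct check against Table~\ref{table:uv_xy}. Lipschitz hyperbolicity for both $F$ and $F^{-1}$, with the explicit constants $c_i$ and $d_i$ from Table~\ref{table:cd}, is Lemma~\ref{lm:F_lips_hyperbolic}, and the inequality $c_id_i<1$ for $i=1,2,3,4$ is again a direct numerical check against Table~\ref{table:cd}. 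Strong connectedness of $\widetilde\Gamma$ is immediate from Fig.~\ref{fig:graph_AA_CC}.

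Once the hypotheses are verified, the theorem gives
\[
(s'_- + s'_+)\alpha \leq \dim_H\nw{F} \leq \overline\dim_B\nw{F} \leq s_-+s_+ .
\]
Here the four exponents $s_\pm, s'_\pm$ are defined as the unique positive solutions of the spectral radius equations $\rho((\Lambda_\pm)_s)=1$ and $\rho((\Lambda'_\pm)_{s})=1$, where the $4\times 4$ matrices $\Lambda^\pm$ and ${\Lambda^\pm}'$ have entries $\lambda^{\pm}_{ij}$ and ${\lambda^{\pm}_{ij}}{}'$ on the edges of $\widetilde\Gamma$ (formulas (\ref{eq:lambda_ij}), (\ref{eq:lambda_ij_prime_and_alpha}), (\ref{eq:lambda_alpha_plus})) and $0$ elsewhere. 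I would then plug the numerical values from Tables~\ref{table:uv_xy} and~\ref{table:cd} into these formulas to obtain the four explicit $4\times 4$ matrices, and solve the four scalar equations numerically. Existence and uniqueness of these solutions is not an obstacle: the cited proposition guarantees that $s\mapsto\rho(\Lambda_s)$ is continuous, strictly decreasing from a value strictly greater than $1$ at $s=0$ down to $0$ as $s\to+\infty$, so a simple bisection suffices. This yields the rounded values $s_+=0.408$, $s'_+=0.327$, $s_-=0.876$, $s'_-=0.593$ listed in~(\ref{eq:s_pm}).

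Finally, the constant $\alpha$ is obtained as a minimum over the six elementary cycles of $\widetilde\Gamma$ of the quantities $\alpha^\pm_\sigma$ already tabulated in Tables~\ref{table:cicles_F} and~\ref{table:cicles_F_obr}; scanning these tables gives $\alpha=\min_\sigma\min\{\alpha^-_\sigma,\alpha^+_\sigma\}=0.625$, attained on $\sigma_2$ for $F^{-1}$. Substituting these five numbers into the theorem's bound produces
\[
(s'_-+s'_+)\alpha = (0.593+0.327)\cdot 0.625 \geq 0.575,\qquad s_-+s_+ = 0.876+0.408 \leq 1.284,
\]
which completes the proof. The only non-routine step is the numerical solution of the spectral-radius equations; however, it is a bounded-precision calculation that is robust under the rounding convention adopted in the tables (all values rounded so that the relevant inequalities remain strict), so I would mention that rounding direction explicitly to make the final chain of inequalities rigorous rather than only numerical.
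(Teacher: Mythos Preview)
Your proposal is correct and follows exactly the paper's approach: the lemma is obtained by direct application of the cited dimension-bound theorem from \cite{LokutLipsChaos}, with the hypotheses already verified in Lemma~\ref{lm:F_lips_hyperbolic} and the numerical constants $s_\pm,s'_\pm,\alpha$ computed from the tables precisely as you describe. The paper presents these computations in the text preceding the lemma rather than as a formal proof, but the content is identical to what you wrote.
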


\subsection{Unilateral Markov chain}

In the second part of this paper we will study general Hamiltonian systems with discontinuous right-hand side. For this we will need the description of the set of points in the quotient with respect to the action $g$ which are not attracted to the semi-singular cycles $Z_{ij}$ in the forward time direction, regardless of the asymptotic behaviour in the backward time direction. This will be the subject of this subsection.

Obviously the points of the intersection $Q^k\cap D_{31}$, $k=1,3$, are not attracted to $Z_{13}$ when evolving in the forward time direction under iterations of the Poincar\'e map $\Phi$, because by virtue of Lemma \ref{lm:model_probel_3_4_6_cycles} we have $(ij)\Phi^2(Q^k\cap D_{ij})=Q^k\cap D_{ij}$, $k=i,j$. Moreover, in \cite{ZMHBasic} it was shown that the differential of the Poincar\'e map in the ambient quotient space, $\Phi:{\cal S}/g \to {\cal S}/g$, has exactly one eigenvalue with modulus strictly smaller than 1. By virtue of the Hadamard-Perron theorem \cite{Katok} on ${\cal S}/g$ there exists a smooth immersed one-dimensional stable submanifold ${\cal Q}^k$, consisting of points which are attracted to $Q^k\cap D_{ij}$, $k=i,j$, under the iterations of the map $\Phi$. If we pass to the extended phase space ${\cal M}=T^*M$ we obtain that the trajectories of Hamiltonian system (\ref{eq:model_pmp_system}) emanating from the preimage $\pi^{-1}({\cal Q}^k)$ hit the origin in finite time and are thus optimal by Theorem \ref{thm:model_problem_bellman}. Hence on $D_{31}$ there exist two one-dimensional Lipschitz\footnote{There is no smooth structure on $D_{31}$ which is preserved by the map $E$. Therefore we consider only the Lipschitzian structure on $D_{31}$, which is left invariant by the map $E$, see Theorem \ref{thm:model_problem_bellman} and Lemma \ref{lm:bilipschitzian_model_E}.} immersed submanifolds ${\cal Q}^k\supset Q^k\cap D_{13}$, $k=1,3$, such that for every point $z\in{\cal Q}^k$ the images $\Phi^n(z)$ are not attracted to the semi-singular cycles $Z_{ij}$ as $n\to\infty$.

Besides ${\cal Q}^k$, $k=1,3$, on $D_{31}$ there exists a set $\widetilde{\cal Y}$ of points such that their images do not tend to $Z_{ij}$ neither. The set $\widetilde {\cal Y}$ is fractal, it consists of the union of sets $i_0.i_1i_2...$, where each $i_k$ denotes one of the symbols $A$ or $C$. The structure of these sets is described in Lemma 4 of \cite{LokutLipsChaos}. Each such set is a Lipschitz curve from the functions space $\Lip_{d_{j}}(Y_{j}\to X_{j})$ on the corresponding rectangle $B_{j}\supset i_0.i_1$. Consider the natural map $\Psi_{\widetilde\Gamma}^+$ from $\widetilde{\cal Y}$ to the unilateral Markov chain $\Sigma_{\widetilde\Gamma}^+$ of paths on the graph $\widetilde\Gamma$ which are infinite to the right. The preimage of a point in $\Sigma_{\widetilde\Gamma}^+$ is the corresponding Lipschitz curve $i_0.i_1i_2...$. It is clear that by construction the map $\Psi_{\widetilde\Gamma}^+$ semi-conjugates the left shift $l$ on $\Sigma_{\widetilde\Gamma}^+$ with the map $F$, i.e., $l\circ\Psi_{\widetilde\Gamma}^+=\Psi_{\widetilde\Gamma}^+\circ F$. The map $\Psi_{\widetilde\Gamma}^+$ is continuous by virtue of Remark 1 of \cite{LokutLipsChaos}. Bounds on the dimension of the set $\widetilde{\cal Y}$ have also been calculated in \cite{LokutLipsChaos}\footnote{Unfortunately, in \cite{LokutLipsChaos} this bound has not been singled out into a separate proposition. For details we refer the reader to the proof of Theorem 5. More precisely, above bounds on the dimensions $\dim_H \widetilde{\cal Y}$ and $\overline\dim_B \widetilde{\cal Y}$ are exactly the bounds on the dimensions of the sets $Q_i^\pm$ in \cite{LokutLipsChaos}, which have been obtained in the proof of Theorem 5.},

\[
	1+\alpha_+s'_+ \le \dim_H \widetilde{\cal Y} \le \overline\dim_B \widetilde{\cal Y} \le 1 + s_+,
\]

\noindent where $\alpha_+ = \min_{\sigma}\frac{\log \lambda_{\sigma}^+}{{\log \lambda_{\sigma}^+}'}$.

We have proven the following lemma.

\begin{lemma}
	\begin{enumerate}
		\item On the disc $D_{31}$ there exists a one-dimensional immersed Lipschitz submanifold ${\cal Q}^2$, consisting of points which under the iterations of the Poincar\'e map $\Phi$ tend to the intersection points of the four-link cycle $Q^2$ with ${\cal S}/g$.
		\item On the disc $D_{31}$ there exists a set $\widetilde{\cal Y}\subset{\cal B}$ with the following properties. (i) For every point $z\in\widetilde{\cal Y}$ the images $F^n(z)$ do not tend to the semi-singular cycles $Z_{ij}$ as $n\to+\infty$. (ii) The restriction of $F$ on $\widetilde{\cal Y}$ is by the continuous map $\Psi_{\widetilde\Gamma}^+$ semi-conjugated to the left shift on the unilateral Markov chain $\Sigma_{\widetilde\Gamma}^+$. Here the preimage of any point of $\Sigma_{\widetilde\Gamma}^+$ is a one-dimensional Lipschitz curve in $\cal B$. (iii) The dimensions of the set $\widetilde{\cal Y}$ satisfy the bounds
		
		\[
			1.204 \le \dim_H \widetilde{\cal Y} \le \overline\dim_B \widetilde{\cal Y} \le 1.408.
		\]
	\end{enumerate}
\label{lm:oneside_chain}
\end{lemma}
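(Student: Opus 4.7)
The plan is to construct $\mathcal{Q}^2$ by classical stable-manifold theory and $\widetilde{\mathcal{Y}}$ directly from the pre-Markov machinery already established for $F$ in Lemma \ref{lm:F_lips_hyperbolic}, then read off the dimension bounds from the calculations of the previous subsection.

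For item 1, I would work with the second iterate $\Phi^2$ at one of the points of $Q^2\cap\mathcal{S}/g$. By the spectral information from \cite{ZMHBasic} recalled just before the lemma, the differential $d\Phi^2$ at this periodic point, acting on the ambient quotient $\mathcal{S}/g$, has exactly one eigenvalue of modulus less than 1 (and the remaining eigenvalues have modulus strictly greater than 1). The Hadamard--Perron theorem (as stated in \cite{Katok}) produces a local smooth one-dimensional stable manifold; iterating by $\Phi^{-2}$ and taking the union over the two orbit points yields an immersed one-dimensional Lipschitz submanifold $\mathcal{Q}^2 \subset D_{31}$ of points whose forward $\Phi$-iterates converge to $Q^2\cap\mathcal{S}/g$. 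Lipschitz regularity (rather than smoothness) comes from Lemma \ref{lm:bilipschitzian_model_Phi}: away from $\mathcal{S}_{123}/g$ and $\bigcup A_{ij}^{\times 4}/g$ the map $\Phi$ is only known to be bilipschitz, and the orbit of $Q^2$ stays inside this bilipschitz locus. Optimality of the corresponding trajectories in $\mathcal{M}$ then follows from Theorem \ref{thm:model_problem_bellman} and Corollary \ref{corollary:M_plus_is_optimal}, after lifting via $E^{-1}$ and the projection $\pi$.

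For item 2, I would exploit the fact that $\mathcal{B} = \sqcup_i B_i$ is a pre-Markov partition for $F$ satisfying Lipschitz hyperbolicity with the constants $c_i,d_i$ of Table~\ref{table:cd} (Lemma \ref{lm:F_lips_hyperbolic}). Lemma 4 of \cite{LokutLipsChaos} attaches to every unilateral symbolic sequence $i_0.i_1 i_2\ldots \in \Sigma_{\widetilde\Gamma}^+$ the graph of a Lipschitz function in $\Lip_{d_{i_0}}(Y_{i_0}\to X_{i_0})$ living in $B_{i_0}$, namely the set of those points in $B_{i_0}$ whose forward $F$-orbit visits the rectangles $B_{i_k}$ in the prescribed order. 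Define $\widetilde{\mathcal{Y}}$ as the union of these Lipschitz curves and set $\Psi_{\widetilde\Gamma}^+(z)$ equal to the forward itinerary of $z$. By construction $l\circ\Psi_{\widetilde\Gamma}^+ = \Psi_{\widetilde\Gamma}^+\circ F$; continuity of $\Psi_{\widetilde\Gamma}^+$ is Remark 1 of \cite{LokutLipsChaos}. Property (i) is automatic: points in $\widetilde{\mathcal{Y}}$ remain in $\mathcal{B}$ under all forward iterates of $F$, hence cannot enter the basin of the semi-singular cycles $Z_{ij}$ (which attracts points from the green region, lying outside $\mathcal{B}$); the reduction from $\Phi$ to $F$ is legitimate since each application of $F$ represents a composition of $\Phi$-iterates of type $A$ or $C$ and the detailed symbolic analysis of Subsection \ref{subsec:resolve_B} shows these are precisely the trajectories that never fall into the absorbing transpositional loop at $V$.

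The dimension bounds in (iii) come from the same argument used for the sets $Q_i^{\pm}$ in the proof of Theorem 5 of \cite{LokutLipsChaos}: since each fibre $\Psi_{\widetilde\Gamma}^{+\,-1}(\sigma)$ is a one-dimensional Lipschitz graph, while the transverse Cantor-like structure is controlled by the spectral radii of $\Lambda_+$ and $\Lambda'_+$, one obtains
\[
        1 + \alpha_+\, s'_+ \;\le\; \dim_H\widetilde{\mathcal{Y}} \;\le\; \overline{\dim}_B\widetilde{\mathcal{Y}} \;\le\; 1 + s_+,
\]
with $\alpha_+ = \min_\sigma \alpha^+_\sigma$. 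Plugging in $s_+ = 0.408$, $s'_+ = 0.327$ from (\ref{eq:s_pm}) and $\alpha_+ \ge 0.625$ from Table \ref{table:cicles_F_obr} yields the numerical bounds $1.204 \le \dim_H\widetilde{\mathcal{Y}}\le\overline{\dim}_B\widetilde{\mathcal{Y}}\le 1.408$.

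The main obstacle, I expect, will be checking that the construction of $\mathcal{Q}^2$ via Hadamard--Perron genuinely lifts to a set of optimal trajectories in $\mathcal{M}$ intersecting the origin in finite time: one must rule out a priori that the stable manifold crosses the discontinuity locus $\mathcal{S}_{123}$ where $\Phi$ loses regularity. This is handled by observing, as was done for the cycles $Z^{\pm}$ and $R^{\pm}$ in Lemma \ref{lm:robust_automodel}, that the orbit $Q^2\cap\mathcal{S}/g$ is bounded away from $\mathcal{S}_{123}$ and that the same holds for a sufficiently small neighbourhood of it, so that all local stable manifolds lie in the bilipschitz region of Lemma \ref{lm:bilipschitzian_model_Phi}.
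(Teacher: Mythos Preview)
Your proposal is correct and follows essentially the same route as the paper: the text immediately preceding the lemma constructs $\mathcal{Q}^k$ via the Hadamard--Perron theorem using the spectral data from \cite{ZMHBasic}, builds $\widetilde{\mathcal{Y}}$ as the union of the Lipschitz curves $i_0.i_1i_2\ldots$ furnished by Lemma~4 of \cite{LokutLipsChaos}, invokes Remark~1 of \cite{LokutLipsChaos} for continuity of $\Psi_{\widetilde\Gamma}^+$, and extracts the dimension bounds $1+\alpha_+ s'_+ \le \dim_H\widetilde{\mathcal{Y}} \le \overline\dim_B\widetilde{\mathcal{Y}} \le 1+s_+$ from the proof of Theorem~5 in \cite{LokutLipsChaos}. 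Your additional care about $\mathcal{Q}^2$ avoiding $\mathcal{S}_{123}$ is a point the paper leaves implicit.
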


\begin{figure}
    \centering
    \begin{tikzpicture}[->,>=stealth',shorten >=1pt,auto,node
    distance=3.4cm,thick,main node/.style={circle,fill=blue!10,
    draw,font=\sffamily\Large\bfseries}, minimum size=10mm]

    \node[main node] (1) {$\cal A$};
    \node[main node] (2) [right of=1] {$\cal B$};
    \node[main node] (3) [below right of=2] {$\cal C$};
    \node[main node] (4) [above right of=2] {$\cal D$};

    \path[every node/.style={font=\sffamily\small}]
        (1) edge [bend right] node [below] {r} (2)
        (2) edge [bend right] node [above] {r} (1)
        (2) edge [bend right] node [below] {t} (3)
        (3) edge [bend right] node [right] {r} (4)
        (4) edge [bend right] node [above] {t} (2);
    \end{tikzpicture}

    \caption{The graph $\widehat\Gamma$. The letters $r$ and $t$ are irrelevant for the definition of the graph $\widehat\Gamma$. They are necessary for the construction of the graph $\Gamma$ in Definition~\ref{defn:graph_Gamma}.}
\label{fig:graph_hatted_Gamma}
\end{figure}
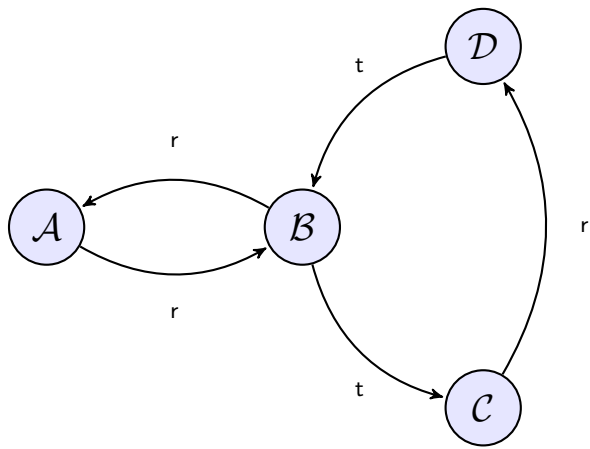

\subsection{The fractal nature of the Poincar\'e map}

The map $F$ plays of course an auxiliary role. Our main goal is the investigation of the Poincar\'e return map $\Phi$. In order to formulate the main theorem on the map $\Phi$ we shall need the following graph $\Gamma$.

\begin{defn}
\label{defn:graph_Gamma}
	The graph $\widehat\Gamma$ depicted on Fig.~\ref{fig:graph_hatted_Gamma} is a prototype of the directed graph $\Gamma$. The graph $\Gamma$ can be obtain from $\widehat\Gamma$ as follows. The vertex set of $\Gamma$ is a direct product of the vertex set of $\widehat\Gamma$ with the set of ordered pairs $(ij)$, where $i,j \in \{1,2,3\},\; (i\ne j)$. Hence the graph $\Gamma$ has 24 vertices $A_{12},A_{13}, ... D_{32}$. In $\Gamma$ there exists a directed edge from the vertex $A_{ij}$ to the vertex $B_{i'j'}$ if and only if $j=i'$ and $i\ne j'$, i.e., $\{i,j=i',j'\}=\{1,2,3\}$. This rule applies to all edges in $\widehat\Gamma$ which are marked by the letter $r$. There exists a directed edge from the vertex $A_{ij}$ to the vertex $C_{i'j'}$ if and only if $i'=j$ and $j'=i$. This rule applies to all edges in $\widehat\Gamma$ which are marked by the letter $t$. There are no other edges in $\Gamma$.
\end{defn}

Note that the graph $\Gamma$ consists of two disjoint and mutually isomorphic components $\Gamma_\pm$, which are strongly connected and each possess one component in the spectral decomposition\footnote{A graph possesses one component in the spectral decomposition if the greatest common divisor of the lengths of its elementary cycles equals 1 \cite{Katok}.}.

\begin{thm}
\label{thm:poincare_map_chaotic_structure_equilateral_triangle}
	The dynamics of the Poincar\'e map $\Phi:({\cal S}\cap M_+)/g\to({\cal S}\cap M_+)/g$ can be described as follows:
	
	\begin{enumerate}
		\item The non-wandering set ${\cal Y}=\nw{\Phi}\subset({\cal S}\cap M_+)/g$ of the map $\Phi$ is homeomorphic to the topological chain $\Sigma_\Gamma$ of bilaterally infinite paths on the graph $\Gamma$. The homeomorphism is realized by a mapping $\widehat\Psi_\Gamma$ which conjugates $\Phi$ and the left shift $l$ on $\Sigma_\Gamma$, i.e., the following diagram commutes:
		
		\begin{center}
		\begin{tikzpicture}[description/.style={fill=white,inner sep=2pt}]
		        \matrix (m) [matrix of math nodes, row sep=1em, column sep=2em, text height=1.5ex, text depth=0.25ex]
		        {    {\cal Y} & {\cal Y} \\
		             \Sigma_\Gamma & \Sigma_\Gamma \\ };
		        \path[->,font=\scriptsize]
		        (m-1-1) edge node[auto] {$\Phi$} (m-1-2)
		        (m-1-2) edge node[auto] {$\widehat\Psi_\Gamma$} (m-2-2)
		        (m-1-1) edge node[auto] {$\widehat\Psi_\Gamma$} (m-2-1)
		        (m-2-1) edge node[auto] {$l$} (m-2-2);
		\end{tikzpicture}
		\end{center}
		
		\noindent The dimensions of the set ${\cal Y}$ satisfy the bounds

		\[
			0.575 \le \dim_H {\cal Y} \le \overline\dim_B {\cal Y} \le 1.284.
		\]

		\item The set of points $z\in({\cal S}\cap M_+)/g$ which are not attracted to one of the semi-singular cycles $Z_{ij}$ under the iterations of $\Phi$ contains (i) the stable manifolds ${\cal Q}^i$ of $\Phi$ emanating from the hyperbolic points $Q^i\cap ({\cal S}/g)$; (ii) a set ${\cal Y}^+\subset({\cal S}\cap M_+)/g$ which is invariant under $\Phi$, i.e., $\Phi({\cal Y}^+)\subset{\cal Y}^+$. The dynamics of $\Phi$ on ${\cal Y}^+$ is semi-conjugated to the left shift $l$ on the topological Markov chain $\Sigma_\Gamma^+$ of paths on the graph $\Gamma$ which are infinite to the right. The conjugation is realized by some map $\widehat\Psi_\Gamma^+:{\cal Y}^+\to \Sigma_\Gamma^+$, i.e., the following diagram commutes:
		
		\begin{center}
		\begin{tikzpicture}[description/.style={fill=white,inner sep=2pt}]
		        \matrix (m) [matrix of math nodes, row sep=1em, column sep=2em, text height=1.5ex, text depth=0.25ex]
		        {    {\cal Y}^+ & {\cal Y}^+ \\
		             \Sigma_\Gamma^+ & \Sigma_\Gamma^+ \\ };
		        \path[->,font=\scriptsize]
		        (m-1-1) edge node[auto] {$\Phi$} (m-1-2)
		        (m-1-2) edge node[auto] {$\widehat\Psi_\Gamma^+$} (m-2-2)
		        (m-1-1) edge node[auto] {$\widehat\Psi_\Gamma^+$} (m-2-1)
		        (m-2-1) edge node[auto] {$l$} (m-2-2);
		\end{tikzpicture}
		\end{center}
		
		\noindent The map $\Psi_\Gamma^+$ is continuous, surjective, and the preimage of any point is a Lipschitz submanifold homeomorphic to a line segment. The dimensions of the set ${\cal Y}^+$ satisfy the bounds
		
		\[
			1.204 \le \dim_H {\cal Y}^+ \le \overline\dim_B {\cal Y}^+ \le 1.408.
		\]

	\end{enumerate}
\end{thm}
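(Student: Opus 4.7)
The overall strategy is to bridge from the auxiliary compressed map $F$, whose non-wandering dynamics was already encoded via the symbolic conjugacy to $\Sigma_{\widetilde\Gamma}$ in Lemma~\ref{lm:F_conj_markov}, to the original Poincar\'e return map $\Phi$ on the full switching surface $({\cal S}\cap M_+)/g$. Two refinement operations are required: (a) undo the identification of the six discs $D_{ij}$ under the $S_3$-action so as to distinguish which sheet above the base disc $D_{31}$ an orbit actually sits on, and (b) expand each $F$-step back into the two or three primitive $\Phi$-steps out of which it was assembled. The $(ij)$-labels attached to vertices of $\Gamma$ in Definition~\ref{defn:graph_Gamma} encode the first refinement, while the prototype graph $\widehat\Gamma$ encodes the second one: an $A$-block corresponds to the path $\mathcal{A}\to\mathcal{B}\to\mathcal{A}$ of two $r$-edges, and a $C$-block corresponds to the path $\mathcal{B}\to\mathcal{C}\to\mathcal{D}\to\mathcal{B}$ of $t,r,t$-edges.

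Concretely, for part~(1) I would first lift the disc-folding. Using the $S_3$-equivariance of Subsection~\ref{sec:symmetries}, every folded $\Phi$-orbit on $D_{31}$ is covered by six unfolded orbits on $({\cal S}\cap M_+)/g$, with the disc label changing by the rules $(ij)\mapsto(jk)$ under a rotational step and $(ij)\mapsto(ji)$ under a transpositional step. Since a complete $F$-block (either $rr$ or $trt$) preserves the cyclic orientation, the labels split into the two $S_3$-orbits $\{(12),(23),(31)\}$ and $\{(21),(13),(32)\}$, producing the two connected components $\Gamma_\pm$. I would then define $\widehat\Psi_\Gamma\colon\Sigma_\Gamma\to({\cal S}\cap M_+)/g$ by: forgetting the $(ij)$-labels and the intermediate $\widehat\Gamma$-vertices to obtain an element of $\Sigma_{\widetilde\Gamma}$; applying $\Psi_{\widetilde\Gamma}$ from Lemma~\ref{lm:F_conj_markov} to land in $\nw{F}\subset D_{31}$; and finally applying the $S_3$-element that lifts $D_{31}$ to the sheet $D_{ij}$ prescribed by the current label. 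Equivariance with the left shift is immediate from the analogous equivariance of $\Psi_{\widetilde\Gamma}$ and from the cocycle rule built into the evolution of $(ij)$. Continuity is inherited from Lemma~\ref{lm:F_conj_markov}; injectivity follows because an orbit determines its $A/C$-sequence, its $r/t$-refinement, and its initial disc label; and $\Sigma_\Gamma$ being compact with target Hausdorff upgrades the continuous bijection to a homeomorphism.

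For the dimension bounds on $\mathcal{Y}$ I would exploit that the above refinements are locally bilipschitz. The disc-unfolding is a finite disjoint union of isometric copies, and the $\Phi$-refinement of an $F$-step precomposes with one or two iterates of $\Phi$, each bilipschitz by Lemma~\ref{lm:bilipschitzian_model_Phi} since the relevant orbits on region IIIb stay uniformly away from ${\cal S}_{123}$ and from $\bigcup A_{ij}^{\times 4}/g$. By Lemma~\ref{lm:bilipschitzian_model_E} one may pass freely between the adjoint and primal representations without affecting the dimensions. Hence the bounds of Lemma~\ref{lm:nw_F_dims} transfer verbatim. Part~(2) proceeds in parallel: Lemma~\ref{lm:oneside_chain} furnishes the semi-conjugacy of $F$ on $\widetilde{\cal Y}$ with the left shift on $\Sigma_{\widetilde\Gamma}^+$, and the same two-stage refinement builds $\widehat\Psi_\Gamma^+$ and yields $\mathcal{Y}^+$. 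The stable manifolds ${\cal Q}^i$ must be appended separately, since they correspond to the fixed point of the $B$-transition, which was removed from the $\widetilde\Gamma$-description in Subsection~\ref{subsec:resolve_B}. The Lipschitz-arc structure of the preimages is inherited from item~(ii) of Lemma~\ref{lm:oneside_chain}, and the dimension bounds again transfer via bilipschitz equivalence.

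The main obstacle is verifying, in the step corresponding to surjectivity, that no $\Phi$-non-wandering point outside the isolated cycles $Z^\pm,Z_{ij},Q^i,R^\pm$ escapes the symbolic encoding. This requires combining the combinatorial analysis of admissible $A,B,C$-words in Subsection~\ref{subsec:resolve_B} (which forces any non-wandering orbit ultimately to live in region IIIb) with the geometric analysis of region V (which shows that points outside $\mbox{IIIr}$ are absorbed by the yellow/green basin of a $Z_{ij}$) and with a careful treatment of the branching of the curves $\sigma_k$ at points of ${\cal S}_{123}$ as in Figure~\ref{Fig3Strata}. The technical delicacy is that Lemma~\ref{lm:homoclinic_point_any_triangle} was only used for one particular homoclinic orbit of $R^\pm$, whereas here one must globally rule out non-wandering points hiding near the discontinuity loci of $\Phi$ and $\Phi^{-1}$; this is where one would need to argue explicitly that the preimage under sufficiently many iterates of $\Phi$ of any small test neighbourhood of such a candidate point eventually enters the green, blue or red attracting domain.
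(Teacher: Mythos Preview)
Your proposal is correct and follows essentially the same two-stage lift as the paper: refine each $F$-step into its two or three constituent $\Phi$-steps via the prototype graph $\widehat\Gamma$, then attach the disc label $(ij)$ to obtain $\Gamma$; the paper makes the target explicit as $\nw{\Phi}=\bigsqcup_{\sigma\in S_3}\sigma\bigl(\nw{F}\cup\Phi[\nw{F}]\cup\Phi^2[\nw{F}]\bigr)$ and likewise ${\cal Y}^+=\bigsqcup_{\sigma\in S_3}\sigma\bigl(\widetilde{\cal Y}\cup\Phi(\widetilde{\cal Y})\cup\Phi^2(\widetilde{\cal Y})\bigr)$. Your last paragraph overcomplicates the exclusion of spurious non-wandering points: neither Lemma~\ref{lm:homoclinic_point_any_triangle} nor any ${\cal S}_{123}$-branching analysis is invoked, since the region decomposition already established in Section~\ref{sec:topological_properties_of_Poincare_map} shows directly that every point outside $S_1(\Phi)$ is attracted forward to some $Z_{ij}$, or backward to some $Z^\pm$, or lies on a stable arc ${\cal Q}^i$ (the orbits with an all-$B$ tail from Subsection~\ref{subsec:resolve_B}), and each alternative makes the point wandering. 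One small slip: since region III corresponds to vertex $\cal B$, the $A$-block is the loop $\mathcal{B}\to\mathcal{A}\to\mathcal{B}$ rather than $\mathcal{A}\to\mathcal{B}\to\mathcal{A}$, though this does not affect your argument.
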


Note that item (ii) of Theorem \ref{thm:poincare_map_chaotic_structure_equilateral_triangle} does not describe all points $z\in({\cal S}\cap M_+)/g$ which do not tend to one of the semi-singular cycles $Z_{ij}$ under the iteration of the map $\Phi$. For instance, Theorem \ref{thm:poincare_map_chaotic_structure_equilateral_triangle} does not mention the three-link cycles.

\begin{proof}[Proof of Theorem \ref{thm:poincare_map_chaotic_structure_equilateral_triangle}]

	We start with the proof of item 1. Denote by $S(\Phi)$ the set of points $z\in{\cal S}/g$ such that the images $\Phi^k(z)$ do not tend to any of the three semi-singular cycles $Z_{ij}$ in the forward time direction, i.e., as  $k\to+\infty$, and do not tend to any of the two three-link cycles $Z^\pm$ in backward time direction, i.e., as $k\to-\infty$.

	In order to describe the set $S(\Phi)$ and the dynamical system $\Phi:S(\Phi)\to S(\Phi)$ we consider the intersection of the set $S(\Phi)$ with the disc $D_{31}$. Since the map $F$ is given by $\Phi^2$ on $A.A\cup C.A$ and by $\Phi^3$ on $A.C\cup C.C$, the set $S(F)$ contains the sets $\nw{F}$, $\Phi[\nw{F}]$, and $\Phi^2[\nw{F}]$. Applying all permutations from $S_3$ to $\nw{F}$ we obtain that

	\[
		S(\Phi) \supset S_1(\Phi) =
		\bigsqcup_{\sigma\in S_3}\sigma\Bigl(\nw{F}\cup \Phi\bigl[\nw{F}\bigr] \cup \Phi^2\bigl[\nw{F}\bigr]\Bigr).
	\]

	In Subsections \ref{subsec:resolve_B} and \ref{subsec:AC_dynamics} it has been shown that the intersection of the difference $S(\Phi)\setminus S_1(\Phi)$ with the closure of region III on the disc $D_{31}$ consists of points $z$ corresponding to a sequence $(\ldots i_{-1}i_0.i_1\ldots)$, $i_k\in\{A,B,C\}$, ending with an infinite tail of symbols $B$ only. Any such point $z$ lies on the stable manifold ${\cal Q}^2$ and its images under $\Phi$ tend to the four-link cycle $Q^2$ in the forward direction of time. The other points of the intersection $(S(\Phi)\setminus S_1(\Phi)) \cap D_{31}$ can be obtained by application of $\Phi$ or $\Phi^2$ to the points $z$ described previously and a corresponding permutation in $S_3$. Hence we obtain

	\[
		S(\Phi)\setminus S_1(\Phi) = \tilde S(\Phi)\subset \bigsqcup_i {\cal Q}^i,
	\]

	\noindent and therefore the points of the set $\tilde S(\Phi)$ are not non-wandering points of the map $\Phi$. The points in the set $({\cal S}/g)\setminus S(\Phi)$ are also not non-wandering points of $\Phi$, since they are attracted to one of the three-link cycles $Z^\pm$ under the iterations of $\Phi^{-1}$, or are attracted to some point on one of the semi-singular segments $Z_{ij}^s$ under the iterations of $\Phi$. Therefore

	\[
		\nw{\Phi} = S_1(\Phi) =
		\bigsqcup_{\sigma\in S_3}\sigma\Bigl(\nw{F}\cup \Phi\bigl(\nw{F}\bigr) \cup \Phi^2\bigl(\nw{F}\bigr)\Bigr).
	\]

	Taking into account the results of Lemmas \ref{lm:F_conj_markov} and \ref{lm:nw_F_dims}, it remains to explain how the graph $\Gamma$ arises and to construct the map $\widehat\Psi_\Gamma$ in order to complete the proof of item 1.
	
	First note that the topological Markov chain $\Sigma_{\widetilde\Gamma}$ is identical to the chain $\Sigma_{AC}$ of bilaterally infinite words with the two-letter alphabet $\{A,C\}$. Recall that up to the application of a permutation returning the point to the disc $D_{31}$ we have the identity $F=\Phi^3$ on the set $A.C\cup C.C$ and the identity $F=\Phi^2$ on $A.A\cup C.A$. Let us consider the map $\widehat \Phi:z\mapsto \sigma(\Phi(z))$ on the set $\nw{\Phi}\cap D_{31}$, where $\sigma\in S_3$ takes $\Phi(z)$ to $D_{31}$. Then the map $\Psi_{\tilde\Gamma}$ from Lemma \ref{lm:F_conj_markov} naturally gives rise to a homeomorphism $\widehat\Psi_{\widehat\Gamma}$ conjugating the dynamical system $\widehat\Psi:\nw{\Phi}\cap D_{31}\to \nw{\Phi}\cap D_{31}$ with the left shift on the topological Markov chain $\Sigma_{\widehat\Gamma}$. Here the directed graph $\widehat\Gamma$ has four vertices and is depicted on Fig.~\ref{fig:graph_hatted_Gamma}. In order to return to the original map $\Phi$ we have to take into account all discs $D_{ij}$ and to track in which of the six discs each image $\Phi^k(z)$ is located. This can be easily accomplished by utilizing the letters $r$ and $t$ which label the edges of the graph on Fig.~\ref{fig:graph_hatted_Gamma} (recall that $A=rr$ and $C=trt$). For the disc $D_{31}$ the letter $r$ corresponds to the permutation $(312)\in S_3$, and the letter $t$ to the transposition $(13)\in S_3$. This finally yields that the restriction of the map $\Phi$ to the non-wandering set $\nw{\Phi}$ is conjugated to the left shift on the chain $\Sigma_\Gamma$. The first part of the theorem has been proven.
	
	\smallskip
	
	Item (i) of the second part of the theorem is trivial. The proof of item (ii) can be based on Lemma \ref{lm:oneside_chain} and is identical to the proof of the first part, given we set, e.g.,
	
	\[
		{\cal Y}^+ = \bigsqcup_{\sigma\in S_3}\sigma\Bigl(
		\widetilde{\cal Y}\cup \Phi\bigl(\widetilde{\cal Y}\bigr) \cup \Phi^2\bigl(\widetilde{\cal Y}\bigr)
		\Bigr).
	\]
	
	\noindent
\end{proof}

\begin{remark}
	The set ${\cal Y}^+$ can be chosen in many ways. One may replace it, e.g., by $\Phi^k({\cal Y}^+)$ for any $k\in\Z$. However, working with the union $\bigcup_{k\in\Z}\Phi^k({\cal Y}^+)$ encounters some difficulties. Firstly, its Minkowski dimension may be strictly greater than the Minkowski dimension of ${\cal Y}^+$. Moreover, the preimage of any point of $\Sigma_\Gamma^+$ in $\bigcup_{k\in\Z}\Phi^k({\cal Y}^+)$ is no more a Lipschitz submanifold, but rather an immersed Lipschitz submanifold. This second difficulty is similar to the mingling of the stable and unstable manifolds of some fixed point of a hyperbolic map if we move farther away from the fixed point.
\end{remark}

\subsection{Main theorem on the chaotic nature of the optimal synthesis in the model problem}

We are now ready to formulate the main theorem of the first part of this paper, describing the dynamics of the optimal trajectories in the original optimal control problem (\ref{problem:model}). The first three items of the theorem describe the considered sets ${\cal X}$ and ${\cal X}^+$, consisting of trajectories of Hamiltonian system (\ref{eq:model_pmp_system}), and the remaining items describe the chaotic dynamics on these sets. Since every trajectory on the sets ${\cal X}$ and ${\cal X}^+$ intersects the stratified manifold ${\cal S}={\cal S}_{12}\cup{\cal S}_{13}\cup{\cal S}_{23}$ of discontinuity of the right-hand side of system (\ref{eq:model_pmp_system}), the chaotic dynamics of these trajectories can best be described in terms of the sequence of intersections of the strata ${\cal S}_{ij}$.

\begin{thm}
\label{thm:model_chaos_equilateral_triangle}
	In the extended phase space ${\cal M}=T^*M$ of problem (\ref{problem:model}) there exist two sets ${\cal X}$ and ${\cal X}^+$ with the following properties.
	
	\begin{enumerate}[(I)]
		\item For every point $z\in{\cal X}\cup {\cal X}^+$ there exists a time $T(z)<\infty$ such that the trajectory $X(t,z)$ of Hamiltonian system (\ref{eq:model_pmp_system}) through $z$ exists and is unique for all $t\in(-\infty,T(z)]$. The trajectory $X(t,z)$ hits the origin at time $T(z)$, $X(T(z),z)=0$.
		
		\item The sets $\cal X$ and ${\cal X}^+$ are invariant with respect to the flow of Hamiltonian system (\ref{eq:model_pmp_system}) in the following sense. If $z\in {\cal X}^+$, then $X(t,z)$ lies in ${\cal X}^+$ for all $t\in[0,T(z))$. If $z\in {\cal X}$, then $X(t,z)$ lies in ${\cal X}$ for all $t\in(-\infty,T(z))$.
		
		\item The projection of the trajectory $X(t,z)$ on phase space $M$, extended by $0$ for $t>T(z)$, is optimal for all $z\in {\cal X}\cup{\cal X}^+$, i.e., ${\cal X}\cup{\cal X}^+\subset M_+$. Moreover, the trajectory $X(t,z)$ intersects the switching surface $\cal S$ a countable number of times at time instants $\ldots<t_{-1}<t_0\le 0 <t_1<t_2\ldots<T(z)$, $X(t_k,z)\in{\cal S}$, where $t_k\to T(z)$ as $k\to+\infty$ and $t_k\to-\infty$ as $k\to-\infty$.
		
		\item The dynamical system $\Phi:{\cal X}\cap {\cal S}\to{\cal X}\cap{\cal S}$ which takes a point $z\in{\cal X\cap\cal S}$ to the next intersection point of the trajectory $X(t,z)$ with the switching surface ${\cal S}$, $\Phi(z) = X(t_1,z)$, is semi-conjugated with the topological Markov chain of bilaterally infinite paths on the graph $\Gamma$ by means of some map $\Psi_\Gamma$:

		\begin{center}
		\begin{tikzpicture}[description/.style={fill=white,inner sep=2pt}]
		        \matrix (m) [matrix of math nodes, row sep=1em, column sep=2em, text height=1.5ex, text depth=0.25ex]
		        {    {\cal X\cap S} & {\cal X\cap S} \\
		             \Sigma_\Gamma & \Sigma_\Gamma \\ };
		        \path[->,font=\scriptsize]
		        (m-1-1) edge node[auto] {$\Phi$} (m-1-2)
		        (m-1-2) edge node[auto] {$\Psi_\Gamma$} (m-2-2)
		        (m-1-1) edge node[auto] {$\Psi_\Gamma$} (m-2-1)
		        (m-2-1) edge node[auto] {$l$} (m-2-2);
		\end{tikzpicture}
		\end{center}
		
		\noindent The map $\Psi_\Gamma$ is continuous and surjective, and the preimage of any point $\sigma\in\Sigma_\Gamma^+$ is a one-dimensional smooth manifold, diffeomorphic to an open interval. Denote by $.V$ the set of all paths on $\Gamma$ which commence at some vertex $V$. Then the preimages $\Psi_\Gamma^{-1}[.A_{ij}]$, $\Psi_\Gamma^{-1}[.B_{ij}]$, $\Psi_\Gamma^{-1}[.C_{ij}]$, and $\Psi_\Gamma^{-1}[.D_{ij}]$ lie on the stratum ${\cal S}_{ij}$.

		\item The dynamical system $\Phi:{\cal X}^+\cap {\cal S}\to{\cal X}^+\cap{\cal S}$ which takes a point $z\in{\cal X_+\cap\cal S}$ to the next intersection point of the trajectory $X(t,z)$ with the switching surface ${\cal S}$, $\Phi(z) = X(t_1,z)$, is semi-conjugated with the left shift on the topological Markov chain $\Sigma_\Gamma^+$ of paths on $\Gamma$ which are infinite to the right (see Definition \ref{defn:graph_Gamma}). The conjugation is realized by some map $\Psi_\Gamma^+$, i.e., the following diagram is commutative:
		
		\begin{center}
		\begin{tikzpicture}[description/.style={fill=white,inner sep=2pt}]
		        \matrix (m) [matrix of math nodes, row sep=1em, column sep=2em, text height=1.5ex, text depth=0.25ex]
		        {    {\cal X_+\cap S} & {\cal X_+\cap S} \\
		             \Sigma_\Gamma^+ & \Sigma_\Gamma^+ \\ };
		        \path[->,font=\scriptsize]
		        (m-1-1) edge node[auto] {$\Phi$} (m-1-2)
		        (m-1-2) edge node[auto] {$\Psi_\Gamma^+$} (m-2-2)
		        (m-1-1) edge node[auto] {$\Psi_\Gamma^+$} (m-2-1)
		        (m-2-1) edge node[auto] {$l$} (m-2-2);
		\end{tikzpicture}
		\end{center}
		
		\noindent The map $\Psi_\Gamma^+$ is continuous and surjective, and the preimage of any point $\sigma\in\Sigma_\Gamma^+$ is a Lipschitz submanifold, whose relative interior is homeomorphic to a two-dimensional open disc. The preimages $(\Psi_\Gamma^+)^{-1}[.A_{ij}]$, $(\Psi_\Gamma^+)^{-1}[.B_{ij}]$, $(\Psi_\Gamma^+)^{-1}[.C_{ij}]$, and $(\Psi_\Gamma^+)^{-1}[.D_{ij}]$ lie on the stratum ${\cal S}_{ij}$.
		
		\item The Hausdorff and Minkowski dimensions of ${\cal X}$ and ${\cal X}^+$ satisfy the bounds
		
		\[
			3.204 \le \dim_H {\cal X}^+ \le \overline\dim_B {\cal X}^+ \le 3.408,
		\]
		\[
			2.575 \le \dim_H {\cal X} \le \overline\dim_B {\cal X} \le 3.284.
		\]

		\item The topological entropy of the left shift $l$ on $\Sigma_\Gamma$ and $\Sigma_\Gamma^+$ equals
		
		\[
		        h_{\mathrm{top}}(l) =
			\log_2\left(
			\sqrt[3]{\frac{1}{2} + \frac{\sqrt{69}}{18}} +
			\sqrt[3]{\frac{1}{2} - \frac{\sqrt{69}}{18}}\,
			\right)
			\approx 0.4057.
		\]

		
		\item Similar assertions hold for trajectories of Hamiltonian system (\ref{eq:model_pmp_system}) which leave the origin, except that they are no more optimal in problem (\ref{problem:model}).

	\end{enumerate}
\end{thm}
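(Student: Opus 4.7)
The plan is to lift the dynamical description of the Poincar\'e map $\Phi$ from the quotient space $({\cal S}\cap M_+)/g$ (established in Theorem \ref{thm:poincare_map_chaotic_structure_equilateral_triangle}) to the full extended phase space ${\cal M}$, using the blowup construction from Section \ref{subsec:blowing_procedure} and the time reparametrization from Section \ref{subsec:reparametrize_time}. The sets ${\cal X}$ and ${\cal X}^+$ will be obtained as unions of trajectories of system (\ref{eq:model_pmp_system}) whose projections to ${\cal C}_0 \simeq ({\cal M}\setminus 0)/g$ pass through ${\cal Y} = \nw{\Phi}$ and ${\cal Y}^+$, respectively, after a choice of scale via the action $g$.

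For items (I)--(III), fix $z \in {\cal Y}$, consider the trajectory $\widetilde z(s)$ of the vector field $\mu\xi$ on ${\cal C}_0$ through $z$ (well-defined for all $s \in \R$ because ${\cal Y}$ avoids the discontinuity locus ${\cal S}_{123}$ by construction), pick an arbitrary $\mu_0 > 0$, and lift $\widetilde z(s)$ uniquely to a trajectory in ${\cal C} \cap \{\mu > 0\}$. The periodic orbits visited by points of ${\cal Y}$ (the four-link cycles $Q^i$, the six-link cycles $R^\pm$, and the nine-link cycle attached to transition $C$) all come from optimal self-similar trajectories with $\lambda_0 < 1$; combined with the Lipschitz hyperbolicity of $\widetilde\Phi$ established in Lemma \ref{lm:F_lips_hyperbolic}, formula (\ref{eq:blowed_Poincare_map}) forces uniform exponential contraction of $\mu$ per fixed number of switchings, while the switching times in $s$ along trajectories through ${\cal Y}$ are uniformly bounded above and below as in (\ref{eq:Smin_Sz_Smax}). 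Hence $\int_0^{+\infty}\mu(s)\,ds < \infty$, and Lemma \ref{lm:trajectory_vanish_to_C0} certifies that the pullback $\blowing^{-1}(\widetilde z(s),\mu(s))$, extended by $0$ past the finite time $T(z)$, is an optimal trajectory of problem (\ref{problem:model}). Defining ${\cal X}$ as the union of all such trajectories over $z \in {\cal Y}$ and $\mu_0 > 0$, and ${\cal X}^+$ analogously from ${\cal Y}^+$ using the Lipschitz fibers from Lemma \ref{lm:oneside_chain}, yields the desired invariant sets.

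Items (IV) and (V) then come by pulling back the semi-conjugacies $\widehat\Psi_\Gamma$ and $\widehat\Psi_\Gamma^+$ of Theorem \ref{thm:poincare_map_chaotic_structure_equilateral_triangle} through the canonical projection $\pi$. The preimage under $\Psi_\Gamma$ of a bilateral Markov word is the $g$-orbit of the corresponding point of ${\cal Y}$, a smooth curve diffeomorphic to an open interval via $\lambda \mapsto g(\lambda)z$; the preimage under $\Psi_\Gamma^+$ is the $g$-orbit of the one-dimensional Lipschitz fiber from Lemma \ref{lm:oneside_chain}, whose relative interior is a two-dimensional Lipschitz disc. The assignment of strata ${\cal S}_{ij}$ to the vertex labels $A_{ij},B_{ij},C_{ij},D_{ij}$ is read directly from Definition \ref{defn:graph_Gamma}. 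For (VI), the bilipschitz property of $q\mapsto (q,E(q))$ away from the exceptional planes $\bigcup A_{ij}^{\times 4}$ (Lemmas \ref{lm:bilipschitzian_model_E}, \ref{lm:bilipschitzian_model_Phi}) preserves Hausdorff and Minkowski dimensions; adding one dimension for the scaling parameter and one for the flow direction between consecutive intersections with $\cal S$ shifts the bounds of Lemmas \ref{lm:nw_F_dims} and \ref{lm:oneside_chain} up by exactly two, giving the claimed intervals.

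For (VII), the topological entropy equals $\log_2 \rho$, where $\rho$ is the spectral radius of the adjacency matrix of $\Gamma$; since each connected component of $\Gamma$ is a finite directed cover of $\widehat\Gamma$ (Fig.~\ref{fig:graph_hatted_Gamma}) preserving out-degrees, this spectral radius coincides with that of $\widehat\Gamma$. The characteristic polynomial of the $4\times 4$ adjacency matrix of $\widehat\Gamma$ factors as $\lambda(\lambda^3 - \lambda - 1)$, and Cardano's formula applied to the irreducible cubic $\lambda^3 - \lambda - 1 = 0$ (whose discriminant satisfies $\tfrac14 - \tfrac1{27} = \tfrac{69}{324}$, so that $\sqrt{\tfrac14 - \tfrac1{27}} = \sqrt{69}/18$) produces the stated closed-form root. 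Item (VIII) is handled by applying the symmetry $g(-1)$ from Section \ref{sec:symmetries}, which preserves system (\ref{eq:model_pmp_system}) while reversing time and sending $M_+$ to $M_-$. The principal technical obstacle will be proving uniform exponential decay of $\mu(s)$ along \emph{every} trajectory projecting into ${\cal Y}$ or ${\cal Y}^+$: one must globalize the contraction argument from the proof of Theorem \ref{thm:model_chaos_any_triangle} (which was localized near $R^\pm$) to the entire Cantor-like set of symbolic orbits, verifying that the product of $\lambda_0$-contractions per symbol cycle dominates any transient $\mu$-expansion during excursions away from the dominant periodic orbits. This is the analogue of the choice of the integer $K$ in the horseshoe construction and relies essentially on the uniformity of the Lipschitz constants assembled in Tables \ref{table:uv_xy}--\ref{table:cicles_F_obr}.
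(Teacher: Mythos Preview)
Your overall architecture matches the paper's: construct ${\cal X}$ and ${\cal X}^+$ as $\pi$-preimages of the trajectory sets built over ${\cal Y}$ and ${\cal Y}^+$ from Theorem~\ref{thm:poincare_map_chaotic_structure_equilateral_triangle}, define $\Psi_\Gamma = \widehat\Psi_\Gamma\circ\pi$ and $\Psi_\Gamma^+ = \widehat\Psi_\Gamma^+\circ\pi$, add $2$ to the dimension bounds (one for the $g$-orbit, one for the flow between switchings), compute the entropy as $\log_2$ of a spectral radius, and use $g(-1)$ for item~(VIII). On all of these your reasoning is sound.

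There is, however, one substantive difference worth pointing out. For items (I)--(III) you propose to \emph{re-derive} optimality by proving uniform exponential decay of $\mu(s)$ along every lifted trajectory and then invoking Lemma~\ref{lm:trajectory_vanish_to_C0}, and you flag this as ``the principal technical obstacle.'' The paper sidesteps this entirely: Theorem~\ref{thm:poincare_map_chaotic_structure_equilateral_triangle} already places ${\cal Y}$ and ${\cal Y}^+$ inside $M_+/g$, so their $\pi$-preimages lie in $M_+$ by construction, and optimality together with finite-time arrival at the origin is immediate from Theorem~\ref{thm:model_problem_bellman} and Corollary~\ref{corollary:M_plus_is_optimal}. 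The exponential $\mu$-decay you seek is then a \emph{consequence} of Lemma~\ref{lm:trajectory_exp_decrease} (which holds for every optimal trajectory), not something that needs to be established first. Your route would work, but the ``obstacle'' is illusory once you remember where ${\cal Y}$ and ${\cal Y}^+$ live.

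For item~(VII) your argument is actually cleaner than the paper's. You observe that each component $\Gamma_\pm$ is a directed cover of $\widehat\Gamma$ in which out-edges of a vertex biject with out-edges of its image; pulling back the Perron eigenvector of $\widehat\Gamma$ therefore gives a positive eigenvector of $\Gamma_\pm$ with the same eigenvalue, so the spectral radii coincide. You then compute the characteristic polynomial of the $4\times 4$ adjacency matrix of $\widehat\Gamma$ directly as $\lambda(\lambda^3-\lambda-1)$, and Cardano on $\lambda^3-\lambda-1=0$ gives the stated closed form. The paper instead works with the $12$-vertex quotient $\Gamma'$ and an $S_3$-symmetric eigenvector ansatz; its intermediate relation $\lambda_{\max}^3+\lambda_{\max}^2-1=0$ is in fact inconsistent with the graph (that cubic has its real root below $1$), although the final Cardano expression it writes down is the correct one for $\lambda^3-\lambda-1=0$. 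Your computation avoids this slip.
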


\begin{proof}

	In order to construct the sets ${\cal X}$ and ${\cal X}^+$ we use the sets ${\cal Y}$ and ${\cal Y}^+$, respectively, which have been constructed in Theorem \ref{thm:poincare_map_chaotic_structure_equilateral_triangle}. There are two differences between these pairs of sets. Firstly, the latter pair lies in quotient space $M_+/g$, while the former lies in extended phase space $\cal M$. Secondly, the sets ${\cal Y}$ and ${\cal Y}^+$ lie on the switching surface ${\cal S}/g$ and do not contain the optimal trajectories itself. Let us first construct intermediate sets $\cal Z$ and $\cal Z_+$. For every point in ${\cal Y}$ or ${\cal Y}^+$ we place the projection of the trajectory of Hamiltonian system (\ref{eq:model_pmp_system}) through this point into the set ${\cal Z}$ or ${\cal Z}_+$, respectively. The set $\cal Z$ shall contain the whole trajectory, and the set ${\cal Z}_+$ only half of it, corresponding to the forward time direction. Note that ${\cal Z}\cap ({\cal S}/g) = {\cal Y}$ and ${\cal Z}_+\cap ({\cal S}/g) = {\cal Y}^+$, since the sets ${\cal Y}$ and ${\cal Y}^+$ are invariant with respect to the map $\Phi$, and therefore no new points on ${\cal S}$ will appear. The sets ${\cal X}$ and ${\cal X}^+$ are then simply defined as the preimages of $\cal Z$ and $\cal Z_+$ under the projection $\pi:{\cal M}\to{\cal M}/g$. Since the sets ${\cal Y}$ and ${\cal Y}^+$ lie in $M_+/g$, we have that every trajectory through a point in $\pi^{-1}({\cal Y})$ or $\pi^{-1}({\cal Y}^+)$ is optimal and reaches the origin in finite time. This proves items (I-III).
	
	The semi-conjugating maps $\Psi_\Gamma$ and $\Psi_\Gamma^+$ are both constructed by the same procedure. First we apply the projection $\pi$ onto the factor space ${\cal M}/g$, and then the corresponding map $\widehat\Psi_\Gamma$ or $\widehat\Psi_\Gamma^+$ from Theorem \ref{thm:poincare_map_chaotic_structure_equilateral_triangle}:
	
	\[
		\Psi_\Gamma = \widehat\Psi_\Gamma\circ\pi\quad\mbox{ and }\quad\Psi_\Gamma^+ = \widehat\Psi_\Gamma^+\circ\pi.
	\]

	Let us first study the properties of the map $\Psi_\Gamma$. Clearly it is continuous as a composition of continuous maps. It is surjective as a composition of a surjective and a bijective map. In order to avoid confusion, we shall now denote the Poincar\'e map on the original switching surface ${\cal S}$ by $\Phi^\uparrow$ and on the quotient ${\cal S}/g$ by $\Phi^\downarrow$. Then the commutativity of the diagram from item (IV) follows from
	
	\[
		\Psi_\Gamma\circ\Phi^\uparrow = \widehat\Psi_\Gamma\circ\pi\circ\Phi^\uparrow =
		\widehat\Psi_\Gamma\circ\Phi^\downarrow\circ\pi = l\circ \widehat\Psi_\Gamma\circ\pi=
		l\circ\Psi_\Gamma.
	\]

	\noindent Here the second relation holds because Hamiltonian system (\ref{eq:model_pmp_system}) respects the action $g$, and hence this action is respected by the Poincar\'e map. The third relation holds by virtue of Theorem \ref{thm:poincare_map_chaotic_structure_equilateral_triangle}. The preimage $\pi^{-1}(z)$ of any point $z\in M_+/g$ is given by a smooth one-dimensional curve, namely the orbit of the action $g$ of the group $\R_+$. Since the map $\widehat\Psi_\Gamma$ is bijective, we have that the preimage $\Psi_\Gamma^{-1}(\sigma)$ is a smooth curve for every $\sigma\in\Sigma_\Gamma$. This proves item (IV).
	
	The proof of item (V) for the mapping $\Psi_\Gamma^+$ is almost identical to the proof of item (IV). The only difference appears in the consideration of the preimage $(\Psi_\Gamma^+)^{-1}(\sigma)$ of points $\sigma\in\Sigma_\Gamma^+$. In Theorem \ref{thm:poincare_map_chaotic_structure_equilateral_triangle} it was shown that $(\widehat\Psi_\Gamma^+)^{-1}(\sigma)$ is a Lipschitz curve in $({\cal S}\cap M_+)/g$. By virtue of the structure of the projection $\pi^{-1}$ we obtain that $(\Psi_\Gamma^+)^{-1}(\sigma)$ is a two-dimensional Lipschitz submanifold, homeomorphic to the direct product of a segment and an interval.

	\smallskip
	
	In order to obtain the bounds in item (VI) we shall show that
	
	\[
		\dim_H{\cal X} \ge 1 + \dim_H {\cal Z}\ge 2 + \dim_H {\cal Y};\qquad
		\overline\dim_B{\cal X} \le 1 + \overline\dim_B {\cal Z}\le 2 + \overline\dim_B {\cal Y};
	\]
	\[
		\dim_H{\cal X}^+ \ge 1 + \dim_H {\cal Z}_+\ge 2 + \dim_H {\cal Y}^+;\ 
		\overline\dim_B{\cal X}^+ \le 1 + \overline\dim_B {\cal Z}_+\le 2 + \overline\dim_B {\cal Y}^+.
	\]
	
	\noindent These bounds are obtained by application of the formulas for the Hausdorff and Minkowski dimensions of direct products of sets, see \cite{Falconer}, Chapter 7.
	
	\smallskip
	
	Let us now compute the topological entropies of the left shifts on $\Sigma_\Gamma$ and $\Sigma_\Gamma^+$. Since the graph $\Gamma$ is not connected and consists of two disjoint copies of some graph $\Gamma'$, we may replace the graph $\Gamma$ by $\Gamma'$ for the calculus of the topological entropy. The graph $\Gamma'$ can be obtained from $\Gamma$ by the identification of the vertices ${\cal A}_{ij}$ with ${\cal A}_{ji}$, ${\cal B}_{ij}$ with ${\cal B}_{ji}$ etc. Denote by $A_{01}$ the $0,1$-matrix generated by the graph $\Gamma'$. The matrix $A_{01}$ is irreducible, because $\Gamma'$ is strongly connected and the greatest common divisor of the length of its cycles is 1. By virtue of the Perron-Frobenius theorem the matrix $A_{01}$ then has a unique positive eigenvalue $\lambda_{\max}>0$. The sought topological entropies then coincide and equal $\log_2(\lambda_{\max})$ \cite{Katok}.
	
	Let $p$ be the eigenvector of $A_{01}$ corresponding to the eigenvalue $\lambda_{\max}$. Since both $\lambda_{\max}$ and $p$ are unique (up to a homothety), $p$ must be invariant with respect to all coordinate permutations which are induced by the action of the group $S_3$ on the graph $\Gamma'$. Hence if the vertex ${\cal A}_{12}$ corresponds to an entry $a$ of the vector $p$, then all other vertices of the form ${\cal A}_{ij}$ also correspond to an entry $a$ in $p$. The vertices ${\cal B}_{ij}$ and ${\cal C}_{ij}$ then correspond to the entry $\lambda_{\max} a$, and the vertices ${\cal D}_{ij}$ to the entry $\lambda_{\max}^2 a$. We then obtain the algebraic relation $a = \lambda_{\max}^2 a + \lambda_{\max}^3 a$. Therefore
	
	\[
		\lambda_{\max}^3 + \lambda_{\max}^2 - 1 = 0.
	\]

	\noindent The value of $\lambda_{\max}$ claimed in the theorem can then be obtained by the Cardano formula.
	
	\smallskip
	
	The last item of the theorem is proven by application of the inversion $g(-1)$ to Hamiltonian system (\ref{eq:model_pmp_system}).
\end{proof}

\begin{remark}
	The Hausdorff and Minkowski dimensions of the sets $\cal X$ and ${\cal X}^+$ do not change if we project the sets on the phase space $M=\{(x,y)\}=\R^4$, because the map $E$ is locally Lipschitz and hence the projection  $M_+=\mathrm{graph}\,E\to M$ is locally bilipschitz.
\end{remark}

\begin{remark}
	The set $\Xi$ defined in Theorem \ref{thm:model_chaos_any_triangle} for the case of an equilateral triangle $\Omega$ is a subset of $\cal X$.
\end{remark}

\section{Chaos on finite time intervals in Hamiltonian systems with discontinuous right-hand side}
\label{sec:mercedes}

In this section we prove and formulate two theorems on the chaotic behaviour of trajectories in integral vortices in Hamiltonian systems with discontinuous right-hand side in generic position. The proofs of these theorems are based on the structure of optimal synthesis in model problem (\ref{problem:model}), more precisely on Theorems \ref{thm:model_chaos_any_triangle} and \ref{thm:model_chaos_equilateral_triangle}.

\subsection{Hamiltonian systems with discontinuous right-hand side}

Consider a smooth symplectic manifold ${\cal M}^{2n}$ of dimension $2n$. Suppose that a $(2n-1)$-dimensional stratified submanifold ${\cal S}^H \subset {\cal M}$ partitions ${\cal M}$ into a finite number of open domains $\Omega_1,\dots,\Omega_k$, $({\cal M}=\overline{\bigcup \Omega_i})$. Consider a smooth Hamiltonian $H(q,p): {\cal M} \to  \R$ such that its restriction $H_i = H|_{\Omega_i}$ to any of the domains $\Omega_i$ is a smooth function, which is $C^\infty$-extendable to a neighbourhood of the closure $\overline{\Omega_i}$. Consider an open set ${\cal U}\subset {\cal M}$. Suppose that the set ${\cal U}$ contains parts of only three $(2n-1)$-dimensional strata ${\cal S}^H_{ij} \subset {\cal S}^H, \,(i,j=1,2,3)$, such that ${\cal S}^H_{ij}$ separates the domains $\Omega_i$ and $\Omega_j$. Let the submanifolds ${\cal S}^H_{ij}$ join in a stratum ${\cal S}^H_{123}$ of dimension $(2n-2)$, as depicted on Fig.~\ref{fig:mercedes}.

\begin{figure}
	\centering
	\includegraphics[width=0.3\textwidth]{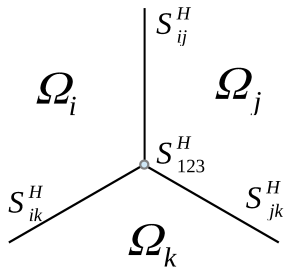}
	\caption{Arrangement of the strata ${\cal S}^H_{ij}$, ${\cal S}^H_{123}$ and the domains $\Omega_i$}
	\label{fig:mercedes}
\end{figure}


The main tool in the study of the generic behaviour of solutions of differential equations is the investigation of their typical singularities. For systems of ordinary differential equations with continuous right-hand side this program was at large realized in the work of Poincar\'e. But in optimal control theory the key role is played by Hamiltonian systems with discontinuous right-hand side and a tangential jump in the velocity vector, which are generated by the application of the Pontryagin Maximum Principle.

The solutions of piece-wise smooth Hamiltonian systems which enter into a singular point $x_0$ of order 2, lying on a switching surface ${\cal S}$ of codimension 1, have been studied in the work of I. Kupka and Zelikin \& Borisov \cite{Kupka}, \cite{ZelikinBorisov}. There it was shown that while approaching the point $x_0$, these solutions experience a bi-constant ratio chattering in generic position. This means that they intersect the switching surface ${\cal S}$ at time instants $t_1,t_2,\dots,t_n,\dots$ forming an asymptotically bi-constant ratio sequence,

\[
	\lim_{n\to \infty}\frac{t_{2n+2}-t_{2n+1}}{t_{2n+1}-t_{2n}}=a, \quad
	\lim_{n\to \infty}\frac{t_{2n+3}-t_{2n+2}}{t_{2n+2}-t_{2n+1}}=b.
\]

In this section we study the situation when the singular point $x_0$ of order 2 lies on a stratum of codimension 2. In this case a qualitatively new phenomenon arises, which is typical for problems of optimal synthesis and for Hamiltonian systems with discontinuous right-hand side. Besides the trajectories entering into $x_0$ with usual chattering there exists a Cantor set ${\cal X}_H(x_0)$ of trajectories, the dynamics of whose intersection with the strata ${\cal S}^H_{ij}$ can be described by a topological Markov chain $\Sigma_{\Gamma}^+$. Here $\Gamma$ is a certain oriented graph (see Def.~\ref{defn:graph_Gamma}). More precisely, $\Sigma_{\Gamma}^+$ is a quotient of the system describing the dynamics of the intersection of the trajectories in ${\cal X}_H(x_0)$ with ${\cal S}^H_{ij}$. The intersection of the set ${\cal X}_H(x_0)$ of trajectories with a surface transversal to them is a multi-dimensional Cantor set similar to Smale's Horseshoe. The topological Markov chain $\Sigma_{\Gamma}^+$ is homeomorphic to the standard version of Smale's Horseshoe \cite{Katok} as a topological space, but not conjugated to it as a dynamical system.

\subsection{Formulation of theorems on chaos in Hamiltonian systems with discontinuous right-hand side}

Before formulating the main theorems, we shall formally define the configuration depicted on Fig.~\ref{fig:mercedes}. We shall demand that the following inequalities be satisfied, where $H$ may be replaced by $-H$:

\begin{equation}
\label{eq:mercedes}
    H_i(x) > \max(H_j(x),H_k(x))\ \ \forall x\in\Omega_i,
\end{equation}

\noindent for all mutually different $i$, $j$, and $k$ from $\{1,2,3\}$. Set

\begin{equation}
\label{eq:defn_G_F_0_F_r}
	\begin{array}{c}
	3G=3F_0=H_1+H_2+H_3,\\
	3F_1=H_2-H_3,\; 3F_2=H_3-H_1,\;3F_3=H_1-H_2.\\
	\end{array}
\end{equation}

\noindent Clearly the $H_i$ are uniquely determined by $G$, $F_1$, and $F_2$ and vice versa. Here $F_3\equiv -F_1-F_2$ has been introduced for brevity of notations.

\begin{defn}
\label{defn:strange_point}
	A point $x_0 \in {\cal S}^H_{123}$, $H_1(x_0)=H_2(x_0) = H_3(x_0)$ will be called {\bf strange}\footnote{We hope that this notation will be associated by the reader with strange attractors.} if the following conditions are satisfied at $x_0$:

	\begin{enumerate}[(i)]
		\item In some neighbourhood of $x_0$ condition (\ref{eq:mercedes}) is satisfied, where $H$ is possibly replaced by $-H$.

		\item The functions $F_r$, $(\ad F_i)F_r$, $(\ad F_j)(\ad F_i)F_r$, and $(\ad F_k)(\ad F_j)(\ad F_i)F_r$ vanish at $x_0$. Here the index $r$ runs through the set $\{1,2,3\}$, and the indices $i,j,k$ through $\{0,1,2,3\}$. The set of the differentials of these functions at $x_0$ has the maximal rank\footnote{This conditions may be weakened.} allowed by the anti-commutativity and linearity relations, the Jacobi identities, and the relation $F_1+F_2+F_3\equiv0$. In other words, these differentials are in general position.

		\item The bilinear form

		\[
			B_{rr'} =  \ad F_r(\ad G)^3F_{r'}|_{x_0}, \; r,r'=1,2,3
		\]

		\noindent has maximal rank 2. Moreover, it is symmetric and negative semi-definite if condition (\ref{eq:mercedes}) holds for $H$, and positive semi-definite if this condition holds for $-H$. All other commutators of 5-th order of the functions $G=F_0$ and $F_r$, $r=1,2,3$ which are independent of the aforementioned ones vanish at the point $x_0$.
	\end{enumerate}
\end{defn}

The behaviour of the trajectories in the integral vortex of trajectories joining $x_0$ is described in the two Theorems \ref{thm:main_chaos_hamilton_equilateral_triangle} and \ref{thm:main_chaos_hamilton_any_triangle} below. In the first of these theorems a much more detailed description of the chaotic behaviour of the trajectories is given than in the second one, but the bilinear form $B_{rr'}$ has to satisfy an additional requirement, which for the model problem (\ref{problem:model}) is equivalent to the condition that the triangle of controls is equilateral, as in Theorem \ref{thm:model_chaos_equilateral_triangle}. In the second one of these theorems this condition is replaced by the condition of closeness to an equilateral triangle, as in Theorem \ref{thm:model_chaos_any_triangle}.

\begin{thm}
\label{thm:main_chaos_hamilton_equilateral_triangle}
	Suppose the point $x_0$ of a Hamiltonian system with piece-wise smooth Hamiltonian $H$ is strange, as in Definition \ref{defn:strange_point}, and the form $B_{rr'}$ is proportional to the bilinear form given by the matrix

	\begin{equation}
	\label{eq:matrix_1_1_2}
		\left(\begin{array}{ccc}
			-1   & 1/2  & 1/2  \\
			1/2  &  -1  & 1/2  \\
			1/2  & 1/2  &  -1  \\
		\end{array}\right )
	\end{equation}

	\noindent with a positive proportionality factor if condition (\ref{eq:mercedes}) is satisfied for $H$, and with negative proportionality factor if condition (\ref{eq:mercedes}) is satisfied for $-H$.
		
	Then there exists a set ${\cal X}_H(x_0)$ of points in the neighbourhood of $x_0$ such that

	\begin{enumerate}[(I)]
		\item For every point $z\in{\cal X}_H(x_0)$ there exists a time $T(z)<\infty$ such that the trajectory $X(t,z)$ through $z$ exists and is unique for all $t\in[0;T(z)]$. Moreover, the trajectory $X(t,z)$ hits $x_0$ at time $T(z)$, $X(T(z),z)=x_0$.
		
		\item The set ${\cal X}_H(x_0)$ consists of trajectories of the Hamiltonian system with Hamiltonian $H$ and is invariant with respect to this system in the following sense: if $z\in {\cal X}_H(x_0)$, then $X(t,z)$ lies in ${\cal X}_H(x_0)$ for all $t\in[0;T(z))$. Moreover, in the interval $t\in(0;T(z))$ the trajectory $X(t,z)$ intersects the switching surface ${\cal S}^H$ an infinite, countable number of times at the moments $t_1<t_2<\ldots$, $X(t_k,z)\in{\cal S}^H$, where $t_k\to T(z)$ as $k\to+\infty$.

		\item The dynamical system $\Phi_H:{\cal X}_H(x_0)\cap {\cal S}^H \to {\cal X}_H(x_0)\cap {\cal S}^H$ which takes the point $z\in{\cal X}_H(x_0)\cap {\cal S}^H$ to the next intersection point of the trajectory $X(t,z)$ with ${\cal S}^H$, i.e., $\Phi_H(z) = X(t_1,z)$, is by means of a map $\Psi^H_\Gamma$ semi-conjugated to the unilateral topological Markov chain $\Sigma_\Gamma^+$ of paths on the fixed graph $\Gamma$ (see Definition \ref{defn:graph_Gamma}) which are infinite to the right, where $\Gamma$ is independent of $x_0$ and $H$,

		\begin{center}
		\begin{tikzpicture}[description/.style={fill=white,inner sep=2pt}]
			\matrix (m) [matrix of math nodes, row sep=1em, column sep=2em, text height=1.5ex, text depth=0.25ex]
			{    {\cal X}_H(x_0)\cap {\cal S}^H & {\cal X}_H(x_0)\cap {\cal S}^H \\
				\Sigma_\Gamma^+ & \Sigma_\Gamma^+ \\ };
			\path[->,font=\scriptsize]
			(m-1-1) edge node[auto] {$\Phi_H$} (m-1-2)
			(m-1-2) edge node[auto] {$\Psi^H_\Gamma$} (m-2-2)
			(m-1-1) edge node[auto] {$\Psi^H_\Gamma$} (m-2-1)
			(m-2-1) edge node[auto] {$l$} (m-2-2);
		\end{tikzpicture}
		\end{center}
		
		\noindent Here $l$ is the Bernoulli left shift and $\Psi^H_\Gamma$ is a continuous surjective map. The preimage $(\Psi^H_\Gamma)^{-1}(\sigma)$ of any point $\sigma\in\Sigma_\Gamma^+$ is homeomorphic to an open unit disc $D^2$, and the diameter of $\Phi_H^k\bigl((\Psi^H_\Gamma)^{-1}(\sigma) \bigr)$ tends to 0 as $k\to+\infty$. Denote by $(.V)$ the set of paths in $\Gamma$ which start at the vertex $V$. Then the preimages of the sets $(\Psi^H_\Gamma)^{-1}(.A_{ij})$, $(\Psi^H_\Gamma)^{-1}(.B_{ij})$, $(\Psi^H_\Gamma)^{-1}(.C_{ij})$, and $(\Psi^H_\Gamma)^{-1}(.D_{ij})$ lie on the stratum ${\cal S}^H_{ij}$ with the same indices.

		\item If\footnote{Unfortunately an error has slipped into the papers \cite{HLZ_Chaos,HLZ_Chaos_DAN} and the condition $dG(x_0)=0$ has been forgotten.} $dG(x_0)=0$, then the Hausdorff and Minkowski dimensions of the set ${\cal X}_H(x_0)$ do not depend on $x_0$ and $H$ (of course, the point $x_0$ has to satisfy the conditions of the theorem) and coincide with the dimensions of the set ${\cal X}$ given in Theorem \ref{thm:model_chaos_equilateral_triangle}. Hence the bounds

		\begin{equation}
			3.204762 \leq \dim_H {\cal X}_H(x_0) \leq \overline{\dim_B} {\cal X}_H(x_0) \leq 3.407495
			\label{eq:dimH_dimB}
		\end{equation}
\noindent are valid for these dimensions.

		\item The topological entropy of the Bernoulli shift $l$ is given by

		\[
			h_{\mathrm{top}}(l) =
			\log_2\left(
			\sqrt[3]{\frac{1}{2} + \frac{\sqrt{69}}{18}} +
			\sqrt[3]{\frac{1}{2} - \frac{\sqrt{69}}{18}}\,\right)
			\approx 0.4057.
		\]

		\item Similar statements are valid for the trajectories leaving the point $x_0$ if the direction of time flow is reversed.
	\end{enumerate}
\end{thm}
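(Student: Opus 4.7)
The plan is to reduce Theorem \ref{thm:main_chaos_hamilton_equilateral_triangle} to the model case (Theorem \ref{thm:model_chaos_equilateral_triangle}) via the strategy already announced in the introduction: introduce the \emph{descending system of Poisson brackets} of $G, F_1, F_2, F_3$ as local coordinates near $x_0$, perform a weighted blowup of $x_0$ analogous to Definition \ref{defn:blowing}, and show that the principal part of the resulting vector field coincides with the blown-up vector field \eqref{eq:blowing_model_hamilton_vector_field} of the model problem. First I would use conditions (ii) and (iii) of Definition \ref{defn:strange_point} to produce a local chart in which the four functions playing the roles of $\widetilde y,\widetilde x,\widetilde\phi,\widetilde\psi$ of the model problem are, respectively, proportional to $F_r$, $(\ad G)F_r$, $(\ad G)^2F_r$, $(\ad G)^3F_r$, restricted to the level set of the symplectic directions transversal to the integral vortex. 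The maximum condition $\langle\widetilde\psi,u\rangle\to\max$ in \eqref{eq:blowing_model_hamilton_vector_field} is replaced here by the selection of the index $i$ realizing $\max_i H_i$, which by (i) is governed exactly by the signs of the three functions $F_r$. The hypothesis on $B_{rr'}$ being proportional to \eqref{eq:matrix_1_1_2} is what guarantees that the asymptotic control polytope is an equilateral triangle (after a linear change of coordinates in the symplectic transversal), putting us in the setting of Theorem \ref{thm:model_chaos_equilateral_triangle}.

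Next, I would carry out the blowup $\blowing$ with weights $(1,2,3,4)$ on the four coordinates above, prolong everything to the cylinder ${\cal C}$ with coordinate $\mu$, and reparametrize time by $ds=dt/\mu$ exactly as in Subsection \ref{subsec:reparametrize_time}. By construction the vector field $\mu\xi_H$ on ${\cal C}$ decomposes as $\mu\xi_H=\mu\xi_{\text{model}}+\mu R$, where $R$ is smooth up to $\{\mu=0\}$ and vanishes on the zero section ${\cal C}_0$ (this is precisely where condition (ii) of Definition \ref{defn:strange_point} enters: all lower-order Poisson brackets that could contribute extra terms are forced to vanish at $x_0$, so the remainder is $O(\mu)$ in the blown-up chart). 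Consequently, on ${\cal C}_0$ the dynamics is literally the model dynamics, and away from ${\cal C}_0$ it is a $C^r$-small perturbation thereof on any compact subset.

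With this in hand, the proof proceeds by transporting the structural results of the first part. The six-link cycle $R^+\cap{\cal S}$ and its transversal homoclinic point $z_0$ furnished by Lemma \ref{lm:homoclinic_point_any_triangle} are robust under $C^1$-small perturbations of the Poincar\'e map (they are hyperbolic and the homoclinic intersection is transversal), so they persist for $\mu\xi_H$ on ${\cal C}_0$; by the Smale--Birkhoff theorem this produces a Horseshoe $\Lambda_0$ for some power of the Poincar\'e return map on ${\cal S}^H\cap{\cal C}_0$. Lifting to the full cylinder, the contraction estimate \eqref{eq:blowed_Poincare_map} in the vertical direction is preserved (with $\lambda_0$ close to its model value, thanks to (iii)), so the argument of the proof of Theorem \ref{thm:model_chaos_any_triangle} applies verbatim and yields a set ${\cal X}_H(x_0)$ inside a small neighbourhood of $x_0$ whose trajectories hit $x_0$ in finite time (via the Filippov-style Lemma \ref{lm:trajectory_vanish_to_C0} transported to this setting), together with the semi-conjugacy to $\Sigma_\Gamma^+$ coming from Theorem \ref{thm:model_chaos_equilateral_triangle}. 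The topological entropy (V) depends only on the graph $\Gamma$, which is the same object as in the model problem, so it carries over identically. For (IV), the extra hypothesis $dG(x_0)=0$ is what ensures that the Hamiltonian flow of the drift $G$ does not distort the bilipschitz structure at leading order; the dimension bounds then follow because the maps $F_{ij}$ in the conjugation are bilipschitz with Lipschitz constants equal, up to an arbitrarily small error depending on the neighbourhood size, to those of the model problem in Table \ref{table:uv_xy}, so the matrix-spectral-radius computation of \eqref{eq:s_pm} produces the same exponents.

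The main obstacle will be the second step: making rigorous the statement that the Poisson-bracket coordinates do provide a $C^\infty$ chart near $x_0$ in which $\mu\xi_H=\mu\xi_{\text{model}}+\mu R$ with $R$ smooth and $R|_{{\cal C}_0}=0$. This requires verifying that the generic-position condition in (ii) (maximality of the rank of the differentials of all relevant brackets modulo the anti-commutativity/Jacobi relations) really supplies enough functionally independent brackets to serve as symplectic coordinates transverse to the integral vortex, and that all additional fifth-order brackets which are not listed in (iii) contribute only through the bilinear form $B_{rr'}$ and higher-order-in-$\mu$ terms after the weighted blowup. Once this normal-form lemma is established, the rest of the argument is structurally identical to the proof of Theorems \ref{thm:model_chaos_any_triangle} and \ref{thm:model_chaos_equilateral_triangle}, with the proviso that all quantitative computations (eigenvalues, Lipschitz constants, positions of the homoclinic point) are continuous functions of the perturbation parameter and hence survive the passage from the model problem to a general strange point.
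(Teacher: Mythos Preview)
Your overall strategy---descending Poisson brackets, weighted blowup, identification with the model on the zero section---is the right one and matches the paper. But there is a structural gap in your second step that the paper handles quite differently.

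You propose to use only the primary brackets $F_r,(\ad G)F_r,(\ad G)^2F_r,(\ad G)^3F_r$ as coordinates and conclude that $\mu\xi_H=\mu\xi_{\text{model}}+\mu R$ with $R|_{\mathcal C_0}=0$. This fails when $\dim\mathcal M>8$: the primary brackets do not give a chart, and the \emph{secondary} brackets $\{F_i,F_j\},\{F_i,\{G,F_j\}\},\dots$ of order $\le 4$ (there are $31$ independent ones in all), together with auxiliary coordinates $w$, are needed to complete one. On the zero section $\mathcal C_0^H$ the dynamics is \emph{not} literally the model dynamics; it is a higher-dimensional system in which the model appears only on the invariant submanifold $\mathcal D_0^H=\{\mu=0,\ \text{all secondary brackets}=0\}$. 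The key computation (Lemma \ref{lm:descending_system_general_nongeneral_bracket_order}) shows that a secondary bracket of order $m$ vanishes to order $(T-t)^{6-m}$ along incoming trajectories, one order better than primary brackets, which is what makes $\mathcal D_0^H$ attracting in those extra directions.

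Consequently the lift from $\mathcal C_0^H$ back to a neighbourhood of $x_0$ cannot be done by robustness of a homoclinic point. The paper instead computes the full block structure of the Poincar\'e differential $d\Phi_H$ at points of $\mathcal D_0^H$ (Lemma \ref{lm:block_structure_dPhi}): it is block-triangular with the model block $A$ in one corner, blocks $\rho^{-1}I,\rho^{-2}I,\rho^{-3}I$ for the secondary brackets, and $\rho^{-1}I$ for $w$. After block-diagonalising $A$ continuously over the non-wandering set (Lemma \ref{lm:about_tangent_hyperbolic_decomposition}), one applies Hadamard--Perron \emph{trajectory by trajectory} to obtain, for each $z_0$ in $\mathcal X_H^0(x_0)\subset\mathcal D_0^H$, a two-dimensional local stable manifold $W^+(z_0)$ (one direction from $\mu$, one from the contracting direction of $A$) that leaves the zero section. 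The set $\mathcal X_H(x_0)$ is the union of the $\blowing^{-1}(W^+(z_0))$, transported by the flow. This stable-manifold construction over a Cantor set of base trajectories---not a perturbation argument---is the substantive content your outline is missing, and it is also what makes the preimage of a point of $\Sigma_\Gamma^+$ a $2$-disc in item (III).
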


\bigskip

The set ${\cal X}_H(x_0)$ is the direct analogue of the set ${\cal X}^+$ from Theorem \ref{thm:model_chaos_equilateral_triangle}. Formally Hamiltonian system (\ref{eq:model_pmp_system}) of model problem  (\ref{problem:model}) does not satisfy the conditions of Theorem (\ref{problem:model}), however, because most of the Poisson brackets in the definition of the strange point vanish identically. Therefore the condition of linear independence of their gradients at the point does not hold. We could formulate Theorem \ref{thm:main_chaos_hamilton_equilateral_triangle} in a way such that these cases satisfy its conditions, but then we would lose the structural stability of the phenomenon, see Remark \ref{rm:structural_stability_equilateral_triangle}).

\begin{remark}
	Using methods for the computation of the topological entropy it is not hard to calculate the Perry measure $\mu_P$ on $\Sigma_\Gamma^+$ which satisfies the variational principle and maximizes the metric entropy:

	\[
		\max_{\mu} h_\mu(l) =h_{\mu_P}(l) = h_{top}(l).
	\]
\end{remark}

\begin{thm}
\label{thm:main_chaos_hamilton_any_triangle}
	Let $H$ be a piece-wise smooth Hamiltonian and $x_0$ be a strange point of the corresponding of a Hamiltonian system in the sense of Definition \ref{defn:strange_point}. Suppose there exists a number $\lambda_H(x_0)$, positive if condition (\ref{eq:mercedes}) holds for $H$ and negative if condition (\ref{eq:mercedes}) holds for $-H$, such that the form $\lambda_H(x_0) B_{rr'}$ is close enough\footnote{Here "close enough" means that there exists a neighbourhood of matrix (\ref{eq:matrix_1_1_2}) in the manifold of symmetric matrices of rank 2, independent of $x_0$ and $H$, and a number $\lambda_H(x_0)$ such that the form $\lambda_H(x_0)B_{rr'}$ is an element of this neighbourhood.} to the bilinear form with matrix (\ref{eq:matrix_1_1_2}).

	Then in the neighbourhood of $x_0$ there exists a set of points $\Xi_H(x_0)$ with the following properties:
	
	\begin{enumerate}[(I)]
		\item For every point $z\in\Xi_H(x_0)$ there exists a time $T(z)<\infty$ such that the trajectory $X(t,z)$ passing through $z$ exists and is unique for $t\in[0,T(z)]$. Moreover, the trajectory $X(t,z)$ hits the point $x_0$ at time $T(z)$, $X(T(z),z)=x_0$.
		
		\item The set $\Xi_H(x_0)$ consists of trajectories of the Hamiltonian system with Hamiltonian $H$ and is invariant with respect to this system in the following sense. If $z\in \Xi_H(x_0)$, then $X(t,z)$ lies in $\Xi_H(x_0)$ for all $t\in[0,T(z))$. Moreover, in the time interval $t\in(0,T(z))$ the trajectory $X(t,z)$ intersects the switching surface ${\cal S}^H$ a countable, infinite number of times at time instants $t_1<t_2<\ldots$, $X(t_k,z)\in{\cal S}^H$, where $t_k\to T(z)$ as $k\to+\infty$.
		
		\item Consider the dynamical system $\Phi_H:\Xi_H(x_0)\cap{\cal S}^H\to\Xi_H(x_0)\cap{\cal S}^H$ taking a point $z\in\Xi_H(x_0)$ on ${\cal S}^H$ to the next intersection point of the trajectory $X(t,z)$ with ${\cal S}^H$, i.e., $\Phi_H(z) = X(t_1,z)$. There exists a natural number $n>0$, independent of the Hamiltonian system and the point $x_0$ and coinciding with the number $n$ from \ref{thm:model_chaos_any_triangle}), such that the map $\Phi_H^n$ is semi-conjugated with the the topological Markov chain of the Bernoulli shift on the disjoint union of two copies of the space $\Sigma_{01}^+$ of unilaterally infinite words with letters 0 and 1. In other words, there exists a continuous surjective map $\Psi^H_{01}$ from $\Xi_H(x_0)\cap{\cal S}^H$ to the space $\bigsqcup\limits^2 \Sigma_{01}^+$ such that the following diagram commutes:

		\begin{center}
		\begin{tikzpicture}[description/.style={fill=white,inner sep=2pt}]
		        \matrix (m) [matrix of math nodes, row sep=1em, column sep=2em, text height=1.5ex, text depth=0.25ex]
		        {    {\Xi_H(x_0)\cap {\cal S}^H}                  & {\Xi_H(x_0)\cap {\cal S}^H} \\
		             \bigsqcup\limits^2 \Sigma_{01}^+ & \bigsqcup\limits^2 \Sigma_{01}^+ \\ };
		        \path[->,font=\scriptsize]
		        (m-1-1) edge node[auto] {$\Phi_H^n$} (m-1-2)
		        (m-1-2) edge node[auto] {$\Psi^H_{01}$} (m-2-2)
		        (m-1-1) edge node[auto] {$\Psi^H_{01}$} (m-2-1)
		        (m-2-1) edge node[auto] {$l$} (m-2-2);
		\end{tikzpicture}
		\end{center}
		
		\noindent Here $l$ denotes the left shift on each of the copies of $\Sigma_{01}^+$. The preimage $(\Psi^H_{01})^{-1}(\sigma)$ of every point $\sigma\in\Sigma_{01}^+$ is homeomorphic to the open two-dimensional disc $D^2$, and the diameter of $\Phi_H^k\bigl((\Psi^H_{01})^{-1}(\sigma) \bigr)$ tends to 0 as $k\to+\infty$.

		\item Besides the Cantor set $\Xi_H(x_0)$ of trajectories which has been described in (I)--(IV) above, there are two other sets of chattering trajectories which enter into the point $x_0$, namely

		\begin{enumerate}[(a)]
			\item There exist two 1-parametric families of "three-chain" trajectories $R_{123}$ and $R_{132}$ which hit the point $x_0$ in finite time with chattering, i.e., with a countable number of consecutive intersections of the strata $S_{12}^H$, $S_{23}^H$, and $S_{31}^H$. The strata are intersected in the described order for $R_{123}$ and in the reverse order for $R_{132}$.

			\item There exist three 2-parametric families of "four-chain" trajectories $Q_1$, $Q_2$, and $Q_3$. Each trajectory from $Q_i$ intersects consecutively and cyclically the strata ${\cal S}^H_{ij}$, $S_{ik}^H$, $S_{ik}^H$, and ${\cal S}^H_{ij}$ a countable number of times, where $i\neq j\neq k$.
		\end{enumerate}

		\item Similar statements are valid for the trajectories leaving the point $x_0$ if the direction of time flow is reversed.
		
	\end{enumerate}
\end{thm}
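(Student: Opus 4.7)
The plan is to reduce the general piece-wise smooth Hamiltonian system in a neighbourhood of the strange point $x_0$ to the model problem already analysed, by means of a weighted blowup adapted to the second-order singularity, and then to transfer the chaotic structure established in Theorem \ref{thm:model_chaos_any_triangle} using the robustness lemmas. First I would introduce symplectic coordinates $(q,p)$ centered at $x_0$ in which ${\cal S}^H_{123} = \{p_1 = p_2 = 0\}$ locally, choosing the $p$-coordinates so that the level sets of $F_1,F_2$ (modulo higher-order terms) coincide with the strata ${\cal S}^H_{ij}$. The vanishing of all third-order iterated Poisson brackets of the $F_r$ at $x_0$, together with the non-degeneracy of the fourth-order form $B_{rr'}$ prescribed in Definition \ref{defn:strange_point}(iii), guarantees that the singular trajectory $\{x_0\}$ is of intrinsic second order and that the "descending system" of ODEs for the Poisson brackets up to fourth order has precisely the structure announced in the introduction.

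Next I would perform a blowup $\blowing_H$ of $x_0$ completely analogous to Definition \ref{defn:blowing}, with weights $(1,2,3,4)$ assigned to the coordinates conjugate to $(\dot y,x,\phi,\psi)$ of the model problem, and reparametrise time by $ds = d t/\mu$ as in Subsection \ref{subsec:reparametrize_time}. The crucial step is to verify that in the limit $\mu\to 0$, the resulting vector field $\mu\xi_H$ on the zero section ${\cal C}_0^H$ depends only on the coefficients of the principal part of the Taylor expansions of the $F_r$ at $x_0$, and that these coefficients are exactly encoded by the form $B_{rr'}$. Under the standing hypothesis that $\lambda_H(x_0)B_{rr'}$ is close to the matrix \eqref{eq:matrix_1_1_2}, the vector field $\mu\xi_H|_{{\cal C}_0^H}$ therefore coincides, up to an arbitrarily small perturbation, with the vector field $\mu\xi|_{{\cal C}_0}$ of the model problem of Section 2 for a triangle $\Omega$ close to equilateral (the proportionality factor $\lambda_H(x_0)$ absorbs an additional scaling acquired by the dilation group $g$). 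The hard part of this step is book-keeping the lower-order terms: one needs to check that terms containing $\mu$ or higher-order Poisson brackets do indeed enter as a perturbation vanishing on ${\cal C}_0^H$, which follows from Definition \ref{defn:strange_point}(ii) via the descending system.

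With this reduction in hand, I would invoke the robustness machinery of Lemmas \ref{lm:Poincare_transversal_robust}, \ref{lm:robust_automodel}, and \ref{lm:homoclinic_point_any_triangle} to carry the transversal homoclinic orbit of $R^+$ (and the one attached to $R^-$) from the model problem over to the perturbed vector field $\mu\xi_H$ on ${\cal C}_0^H$. This yields, by the standard Smale--Birkhoff theorem, two disjoint Smale Horseshoes for some iterate $(\widetilde\Phi_H)^n$ of the restricted Poincaré map, with the integer $n$ inherited from Theorem \ref{thm:model_chaos_any_triangle} (and therefore independent of the particular Hamiltonian, as required). Lifting these Horseshoes from the zero section to the full cylinder and pulling back by $\blowing_H^{-1}$, repeating verbatim the construction used in the proof of Theorem \ref{thm:model_chaos_any_triangle}, produces the set $\Xi_H(x_0)$, the map $\Psi^H_{01}$, and the finite-time hitting of $x_0$ (via the analogue of Lemma \ref{lm:trajectory_vanish_to_C0}, which only requires exponential decay of $\mu(s)$ along the lifted orbits). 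The fact that the preimage $(\Psi^H_{01})^{-1}(\sigma)$ is a topological disc and that its iterates contract follows from the vertical contraction built into the lift, exactly as in the model case.

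Finally, for item (IV), the chattering families $R_{123}, R_{132}$ and $Q_1,Q_2,Q_3$ are obtained by robust continuation of the corresponding periodic orbits $Z^\pm$ and $Q^i$ of Lemma \ref{lm:model_probel_3_4_6_cycles}: Lemma \ref{lm:robust_automodel} guarantees that each of these robust self-similar trajectories of the model problem persists under the $C^r$-small perturbation of $\mu\xi$ induced by the general Hamiltonian, and Remark \ref{rm:lambda_0_less_1} shows that the corresponding lifted trajectories on ${\cal C}\cap\{\mu>0\}$ reach ${\cal C}_0^H$ in finite $t$-time, hence hit $x_0$ after a chattering cascade with the prescribed cyclic order of strata crossings. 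The parameter counts $1$ and $2$ for $R_{ijk}$ and $Q_i$ respectively arise from the action of the dilation group $g$ and, in the $Q_i$ case, from the additional transverse direction corresponding to the unique stable eigendirection of $\widetilde\Phi$ at the fixed point of $Q^i$, exactly as discussed in Subsection \ref{subsec:periodic_2_4_6_fuller}. The statement for trajectories leaving $x_0$ follows by the time-reversal symmetry $g(-1)$. The main obstacle in this whole program is the rigorous verification that the descending system's non-principal terms do not spoil the transversality of the homoclinic intersection constructed in Lemma \ref{lm:homoclinic_point_any_triangle}; this is precisely what the genericity conditions in Definition \ref{defn:strange_point}(ii)--(iii) are designed to secure.
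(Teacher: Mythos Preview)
Your outline captures the correct overall strategy—blow up, identify the model problem on the zero section, and lift—but there is a genuine gap in the lifting step that cannot be closed by ``repeating verbatim'' the proof of Theorem \ref{thm:model_chaos_any_triangle}.

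The issue is dimensional. In the model problem the cylinder ${\cal C}$ has dimension $8$ and the zero section ${\cal C}_0$ has dimension $7$; lifting the Horseshoe from ${\cal C}_0$ to ${\cal C}$ requires controlling only the single vertical coordinate $\mu$. In the general Hamiltonian system, however, the cylinder ${\cal C}^H$ has dimension $\dim{\cal M}$, and the model problem lives only on an $8$-dimensional submanifold ${\cal D}_0^H\subset{\cal C}_0^H$ on which all \emph{secondary} Poisson brackets vanish. The remaining $\dim{\cal M}-8$ directions (the secondary brackets ${\cal K}^m$ with at least one index $\ne 0$, and the extra coordinates $w$) are not small perturbations of the model field; they are genuine additional degrees of freedom of the blown-up system. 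Your sentence ``$\mu\xi_H|_{{\cal C}_0^H}$ coincides, up to an arbitrarily small perturbation, with $\mu\xi|_{{\cal C}_0}$'' is therefore not correct as stated: the two vector fields live on spaces of different dimension.

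What the paper actually does is compute the block structure of $d\Phi_H$ at points of ${\cal D}_0^H$ (Lemma \ref{lm:block_structure_dPhi}): the primary-bracket block reproduces the model matrix $A$, while the secondary-bracket and $w$ blocks are upper-triangular with diagonal entries $\rho^{-3},\rho^{-2},\rho^{-1}$. Since $\rho<1$ along optimal orbits, these extra directions are \emph{expanding} under $\Phi_H$. Combining this with the hyperbolic splitting of $A$ (Lemma \ref{lm:about_tangent_hyperbolic_decomposition}) and passing to a high iterate $\Theta_H=\Phi_H^{-N}$, one obtains a continuous hyperbolic splitting with a $2$-dimensional unstable bundle for $\Theta_H$ (spanned by $\partial_\mu$ and the single stable eigendirection of $A$). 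The Hadamard--Perron theorem then produces, for each $z_0\in\Xi^0_H(x_0)$, a smooth $2$-dimensional local stable manifold $W^+(z_0)$ for $\Phi_H$ that is transverse to ${\cal C}_0^H$; the union of the $\blowing^{-1}(W^+(z_0))$ is $\Xi_H(x_0)\cap{\cal S}^H$. This is also what forces the preimages $(\Psi^H_{01})^{-1}(\sigma)$ to be $2$-discs and what yields items (I)--(III). Your proposal needs this entire layer; the vertical $\mu$-contraction alone is not enough. The same Hadamard--Perron argument, applied with $X^0$ replaced by the finite periodic orbits $Z^\pm\cap{\cal S}$ and $Q^i\cap{\cal S}$, gives the $1$- and $2$-parameter families in (IV), so your parameter count there is right but the mechanism is the stable-manifold dimension, not just the dilation group.
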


\begin{remark}
	If the condition for an equilateral triangle from Theorem \ref{thm:main_chaos_hamilton_equilateral_triangle} is satisfied, then the set $\Xi_H(x_0)$ is a subset of ${\cal X}_H(x_0)$.
\end{remark}

\subsection{Descending system of Poisson brackets}

The proofs of Theorems \ref{thm:main_chaos_hamilton_equilateral_triangle} and \ref{thm:main_chaos_hamilton_any_triangle} are similar and based on key facts about the chaotic behaviour of optimal synthesis in the model problem, which have been obtained in the proofs of Theorems \ref{thm:model_chaos_equilateral_triangle} and \ref{thm:model_chaos_any_triangle}, respectively. We wish to find the optimal synthesis of model problem (\ref{problem:model}), which was described in Theorems \ref{thm:main_chaos_hamilton_equilateral_triangle} and \ref{thm:main_chaos_hamilton_any_triangle} and which sits in a generic Hamiltonian system with discontinuous right-hand side. To this end we use the so-called descending system of Poisson brackets. This allows us to perform an effective investigation of the behaviour of trajectories in integral vortices. For example, the theorem on the Hamiltonian nature of the flow of singular trajectories (see \cite{LokutHSF}) or the theorem on the structure of the Lagrangian manifold in the neighbourhood of a singular trajectory of first order for extremal problems which are holonomous with respect to a control taking values in a polyhedron (see \cite{LokutMnogogr}), have been proven with the method of the descending system of Poisson brackets.

Let us describe the structure of the descending system of Poisson brackets in detail. Instead of the Hamiltonians $H_i$ we shall use the Hamiltonians $F_i$ from (\ref{eq:defn_G_F_0_F_r}). Keep in mind that $F_1+F_2+F_3\equiv 0$. The Hamiltonian $H$ can be written as follows, where we assume a summation over the set $\{1,2,3\}$ over repeating upper and lower indices.

\[
	H = G +  \sum_{r=1}^3 u^rF_r= G + u^rF_r,
\]

\noindent where $u\in \R^3$, $u^1+u^2+u^3=0$, and the function $u(x)$ is chosen as follows: if $x\in\Omega_k$ and $(ijk)$ is an even permutation, i.e., $(ijk)\in A_3$ then

\[
	u^i(x) = -1, u^j(x)=1, u^k(x)=0.
\]

\noindent Thus $u(x)$ is the corresponding vertex of the triangle

\begin{equation}
\label{eq:triangle_omega_for_hamiltonian_system}
	\Omega = \conv\bigl\{(-1,1,0),(0,-1,1),(1,0,-1)\bigr\}\subset
	U=\{u\in\R^3:u_1+u_2+u_3=0\}.
\end{equation}

Since the Hamiltonian $H$ is not smooth, we define the trajectory of the Hamiltonian system of ODEs with discontinuous right-hand side according to Filippov \cite{Filippov}, as follows. Let $\omega$ be the symplectic form on ${\cal M}$ and $i_\omega$ the canonical isomorphism $i_\omega(z):T^*_z{\cal M} \to T_z{\cal M}$ defined by the form $\omega$. Then an absolutely continuous curve $z(t)$ is a trajectory of the Hamiltonian system of ODEs with Hamiltonian $H$ if and only if for almost all $t$ the differential inclusion

\[
	\dot z(t) \in \overline{\conv}\bigl\{\mbox{accumulation points of}\ i_\omega dH(\widetilde z)\ \mbox{as}\ \ \widetilde z\to z(t)\bigr\}\subset T_{x(t)}{\cal M}
\]

\noindent holds. Equivalently,

\[
	\dot z(t) = i_\omega\Bigl(dG\bigl(z(t)\bigr) + u^rdF_r\bigl(z(t)\bigr)\Bigr),\quad\mbox{for some}\quad u\in\argmax_{u\in\Omega}\{u^rF_r(z(t))\}.
\]

\begin{remark}
	Note that we do not pre-define a scalar product on the space $U=\{u:u_1+u_2+u_3=0\}\subset \R^3$. The sum $u^rF_r$ is to be understood as the contraction of a vector and a co-vector, where $u(x)\in U$ and $F(x)\in U^*$.
\end{remark}

We now come to the construction of the descending system of Poisson brackets for the Hamiltonian $H$. To this end we single out a set of ODEs along an arbitrary trajectory of the system and partition it into rows. We shall write the symbol $\rceil k\lceil$ to indicate that the corresponding ODE belongs to row $k$. In the first row of the descending system we write the three ODEs

\[
	\rceil 1\lceil\ \ \ddt F_i = \{G,F_i\} + \{F_r,F_i\}u^r, \quad i=1,2,3.
\]

\noindent The second row contains two types of equations,

\[
	\begin{array}{lll}
		\rceil2\lceil&\ddt \{G,F_i\} = \{G,\{G,F_i\}\} + \{F_r,\{G,F_i\}\}u^r,& i=1,2,3;\\
		\rceil2\lceil&\ddt \{F_i,F_j\} = \{G,\{F_i,F_i\}\} + \{F_r,\{F_i,F_j\}\}u^r,& i,j=1,2,3.\\
	\end{array}
\]

\noindent In general, the $m$-th row contains the equations

\[
	\rceil m\lceil\ \ \ \ddt {\cal K}^m = \{G,{\cal K}^m\} + \{F_r,{\cal K}^m\}u^r,
\]

\noindent where ${\cal K}^m = \{K_m,\{K_{m-1},\ldots\{K_2,K_1\}\ldots\}\}$, $K_1=F_i$, $i=1,2,3$, and the remaining symbols $K_j$ denote $G$ as well as $F_i$. The equations in row $(m+1)$ can be obtained by differentiating the right-hand side of the equations in the $m$-th row with respect to $t$. For example, we have

\[
	\rceil m+1\lceil\ \ \ \ \ddt\{G,{\cal K}^m\} = \{G,\{G,{\cal K}^m\}\} + \{F_r,\{G,{\cal K}^m\}\}u^r
\]

\noindent and

\[
	\rceil m+1\lceil\ \ \ \ \ddt\{F_i,{\cal K}^m\} = \{G,\{F_i,{\cal K}^m\}\} + \{F_r,\{F_i,{\cal K}^m\}\}u^r,\
	i=1,2,3.
\]

\noindent The descending system is written out up to the row with number $2h$, where $h$ is the order of the singularity at the point $x_0$ (see \cite{HLZOrderTorus}). In our case we have $h=2$, since the control explicitly appears at $x_0$ when differentiating the fourth time.

Therefore in the $m$-th row we have equations on the time derivative of brackets of order $m$, and the right-hand sides of these equations are affine in $u$ with brackets of order $m+1$ as coefficients.

We remark that the construction of the descending system is similar to the derivation of singular trajectories. In the latter case, the equalities $F_i(x(t))\equiv0$ are differentiated along the trajectory of the Hamiltonian system with Hamiltonian $H=G + F_ru^r$. From our point of view the descending system, albeit it contains a large number of equations, is more favorable than direct differentiation, because it does not produce explicit derivatives of the control like $\dot u,\ddot u,\ldots$, which are usually inconvenient to work with.

\begin{defn}
\label{defn:main_poisson|_brackets}
	In the descending system we shall call the brackets $F_r$, $(\ad G)F_r$, $(\ad G)^2 F_r$, $(\ad G)^3F_r$, $r=1,2,3$ the primary brackets. The other brackets of order not exceeding four shall be called secondary\footnote{Note that the function $\dot G$ does not belong to the descending system and is hence neither a primary nor a secondary bracket.}.
\end{defn}

The reduction of the Hamiltonian system with discontinuous right-hand side to the Hamiltonian system generated by the Pontryagin maximum principle when applied to model problem (\ref{problem:model}) is based on the fact that in the neighbourhood of a strange point the secondary brackets in any given row $m$ have a larger order of smallness than the primary brackets, and hence do not influence the qualitative behaviour of the system.

\begin{lemma}
\label{lm:descending_system_general_nongeneral_bracket_order}
	Consider an arbitrary trajectory $x(t)$ which enters a strange point $x_0=x(0)$ at $t=T$. Then there exists a constant $c>0$ such that if ${\cal K}^m$ is a primary bracket of order $m\le 4$ in the descending system, then for all $t$ close to $T$ we have
	
	\[
		|{\cal K}^m\bigl(x(t)\bigr)| \le c (T-t)^{5-m},
	\]

	\noindent and if ${\cal K}^m$ is a secondary Poisson bracket of order $m\le 4$, then

	\[
		|{\cal K}^m\bigl(x(t)\bigr)| \le c (T-t)^{6-m}.
	\]
\end{lemma}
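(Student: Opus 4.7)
The plan is to argue by downward induction on the bracket order $m$, from $m=4$ down to $m=1$, using the ODE
\[
\ddt {\cal K}^m = \{G,{\cal K}^m\} + u^s\{F_s,{\cal K}^m\}
\]
supplied by the descending system. Integrating this ODE from $t$ up to $T$, and exploiting the vanishing ${\cal K}^m(x_0)=0$ guaranteed by condition (ii) of Definition~\ref{defn:strange_point}, will transfer bounds on $(m{+}1)$-th order brackets into bounds on $m$-th order brackets. The crux is the base case at $m=4$; its key ingredients are the trivial estimate $|x(t)-x_0|\le C(T-t)$, which follows from the uniform boundedness of the velocity field $\dot x = i_\omega dG + u^s i_\omega dF_s$ in a neighbourhood of $x_0$ (since $u$ takes values in the compact triangle $\Omega$), together with the classification of $5$-th order brackets furnished by condition (iii).

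At the base of the induction, condition (ii) makes every primary and secondary bracket of orders $1$--$4$ vanish at $x_0$, so Taylor's theorem immediately gives $|{\cal K}(x(t))|\le C(T-t)$ for every such bracket; this produces directly the claim $|{\cal K}^4_{\mathrm{pri}}(x(t))|\le C(T-t)^{5-4}$ for primary top-order brackets. For a secondary bracket ${\cal K}^4_{\mathrm{sec}}$ the needed bound is $(T-t)^2$, one power better, and I obtain it by differentiating along the trajectory. Both $\{G,{\cal K}^4_{\mathrm{sec}}\}$ and $\{F_s,{\cal K}^4_{\mathrm{sec}}\}$ are $5$-th order commutators; condition (iii), read jointly with the structural identities (Jacobi, anti-commutativity, and $F_1+F_2+F_3\equiv 0$), asserts that, modulo these identities, the only $5$-th order brackets with non-zero value at $x_0$ are the entries $B_{sr'}=\{F_s,(\ad G)^3 F_{r'}\}$ of the bilinear form. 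Because ${\cal K}^4_{\mathrm{sec}}$ is by definition not of primary form $(\ad G)^3 F_{r'}$, neither $\{G,{\cal K}^4_{\mathrm{sec}}\}$ nor $\{F_s,{\cal K}^4_{\mathrm{sec}}\}$ reduces via Jacobi to a non-trivial $B_{sr'}$-combination, so both vanish at $x_0$ and are $O(T-t)$ along the trajectory. This gives $|\ddt{\cal K}^4_{\mathrm{sec}}|\le C(T-t)$, and integration yields $|{\cal K}^4_{\mathrm{sec}}(x(t))|\le C(T-t)^2$.

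For the inductive step from order $m{+}1$ to order $m$, the same ODE is used. If ${\cal K}^m=(\ad G)^{m-1}F_r$ is primary, then $\{G,{\cal K}^m\}=(\ad G)^m F_r$ is the next primary (bounded by $C(T-t)^{4-m}$ by the inductive hypothesis), while $\{F_s,{\cal K}^m\}$ has $F_s$ in the outermost position and is therefore a secondary bracket of order $m+1$ (bounded by $C(T-t)^{5-m}$). Hence $|\ddt{\cal K}^m_{\mathrm{pri}}|\le C(T-t)^{4-m}$, and integration yields $|{\cal K}^m_{\mathrm{pri}}(x(t))|\le C(T-t)^{5-m}$. If ${\cal K}^m$ is secondary, then both $\{G,{\cal K}^m\}$ and $\{F_s,{\cal K}^m\}$ inherit the nested $F$-$F$ commutator of ${\cal K}^m$ (bracketing with $G$ or $F_s$ on the outside does not dismantle this commutator) and are themselves secondary of order $m+1$, each $O((T-t)^{5-m})$; so $|\ddt{\cal K}^m_{\mathrm{sec}}|\le C(T-t)^{5-m}$, and integration gives $|{\cal K}^m_{\mathrm{sec}}(x(t))|\le C(T-t)^{6-m}$, closing the induction at the correct power.

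The main technical obstacle lies in the base-case classification: one must verify that for every admissible secondary $4$-th order bracket, the specific $5$-th order Poisson derivatives $\{G,\cdot\}$ and $\{F_s,\cdot\}$ are never Jacobi-equivalent, modulo anti-commutativity and $F_1+F_2+F_3\equiv 0$, to a non-trivial combination of the distinguished entries $B_{sr'}$. The model problem (\ref{problem:model}) provides a concrete template, since there all secondary brackets of orders $\le 4$ vanish identically and the corresponding $5$-th order brackets can be checked to vanish at the origin by direct calculation (the Jacobi identity producing two terms of opposite sign that cancel). The genericity built into condition (iii) of Definition~\ref{defn:strange_point} precisely requires that the same cancellation pattern persist at the strange point $x_0$ in the general Hamiltonian setting, so the induction's base case goes through with no modification.
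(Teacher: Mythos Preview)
Your proof is correct and follows essentially the same approach as the paper: reverse induction on $m$ via the integral formula ${\cal K}^m(x(t)) = -\int_t^T(\{G,{\cal K}^m\}+u^r\{F_r,{\cal K}^m\})\,d\tau$, with the primary/secondary dichotomy propagating through the induction because bracketing a secondary ${\cal K}^m$ with $G$ or $F_s$ yields secondary $(m{+}1)$-brackets, while bracketing a primary ${\cal K}^m$ with $G$ yields the next primary and with $F_s$ yields a secondary. The paper's write-up is terser: where you explicitly invoke $|x(t)-x_0|\le C(T-t)$ and a Taylor estimate to convert ``integrand vanishes at $x_0$'' into $O(T-\tau)$ along the trajectory, the paper simply asserts the resulting $(T-t)^2$ bound; and where you flag the Jacobi-reducibility issue in the base case as ``the main technical obstacle'', the paper dispatches it in one clause by appealing to ``the definition of the strange point''. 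One minor wording issue: a secondary bracket need not literally contain a nested $\{F_i,F_j\}$ commutator---it only needs some $K_j$ with $j\ge 2$ equal to an $F_i$---but your conclusion that $\{G,{\cal K}^m_{\mathrm{sec}}\}$ and $\{F_s,{\cal K}^m_{\mathrm{sec}}\}$ remain secondary is correct regardless.
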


\begin{proof}

	We proof the lemma by reverse induction over $m$. The main tool is the following formula, which is valid for every bracket ${\cal K}^m$, $m\le 4$ from the descending system for $t\in[0,T]$:

	\begin{equation}
	\label{eq:K_int}
		{\cal K}^m =
		\int_T^t \Bigl(\{G,{\cal K}^m\}(x(\tau)) + u^r(\tau)\{F_r,{\cal K}^m\}(x(\tau))\Bigr)\;d\tau.
	\end{equation}

	\noindent This follows because ${\cal K}^m(x(T))=0$ for $m\le 4$.
	
	Let us now prove the base of the induction. Consider the last row of the descending system. If ${\cal K}^4$ is a secondary bracket, then the integrand in (\ref{eq:K_int}) vanishes at $t = T$ according to the definition of the strange point $x_0$. Hence for small $T-t$ we have for some $b_4>0$ that

	\[
		|{\cal K}^4(x(t))|\le b_4 (T-t)^2,\quad\mbox{ if }{\cal K}^4\mbox{ is a secondary bracket}.
	\]

	\noindent Consider a primary bracket ${\cal K}^4$ in the last row of the descending system. For this bracket the integrand in (\ref{eq:K_int}) does not need to vanish at $t=T$. Hence the most which we can guarantee is
	
	\[
		|{\cal K}^4(x(t))|\le b_4 (T-t),\quad\mbox{ if }{\cal K}^4\mbox{ is a primary bracket}.
	\]

	The induction step is very similar to the induction base. Suppose $m<4$. If ${\cal K}^m$ is a secondary bracket, then the integrand in (\ref{eq:K_int}) contains only secondary brackets. Hence we obtain from the assumption of the induction for $m + 1$ that for some $b_m>0$ and for $t<T$ close to $T$

	\[
		|{\cal K}^m(x(t))|\le b_m (T-t)^{6-m},\quad\mbox{ if }{\cal K}^m\mbox{ is a secondary bracket}.
	\]

	\noindent For a primary bracket ${\cal K}^m$ we obtain the following. The first summand in the integrand in (\ref{eq:K_int}) is a primary bracket in row $(m+1)$, and the second summand is a secondary bracket and is majorized by the first one for $t<T$ close to $T$. Hence for small $T-t\ge0$ we have

	\[
		|{\cal K}^m(x(t))|\le b_m (T-t)^{5-m},\quad\mbox{ if }{\cal K}^m\mbox{ is a primary bracket}.
	\]
	
	\noindent This completes the proof.
\end{proof}

\begin{remark}
\label{rm:main_bracket_ineq}
	At the end of the proofs of Theorems \ref{thm:main_chaos_hamilton_equilateral_triangle} and \ref{thm:main_chaos_hamilton_any_triangle} we show that on the trajectories in the integral vortex of the strange point $x_0$ which are of interest to us we have the following lower bound for the primary Poisson brackets:

	\[
		c'' (T-t)\le
		\max_{m}\Big|{\cal K}^m\bigl(x(t)\bigr)\Big|^{\frac{1}{5-m}} \le c' (T-t)
	\]

	\noindent for some constants $c',c''>0$.
\end{remark}

Of course bounds similar to those obtained in Lemma \ref{lm:descending_system_general_nongeneral_bracket_order} are valid also for trajectories leaving the strange point. We have to replace $T-t$ by the time which has elapsed since the trajectory left the strange point $x_0$.

\subsection{Blow-up of the singularity at the strange point}
\label{subsec:blowing_of_singularity}

In this section we describe the blow-up of the singularity at the strange point in the case when the form $B_{rr'}$ is merely non-degenerate. We show that on the zero section, i.e., on the sphere which will be glued in in place of the strange point, we obtain a smooth sub-manifold which contains exactly the trajectories of Hamiltonian system (\ref{eq:model_pmp_system}) which is obtained by application of the PMP to model problem (\ref{problem:model}) with triangle $\Omega$. Here the form $B_{rr'}$ generates the scalar product on the plane given by $u^1+u^2+u^3=0$ which defines the functional $J$ in model problem (\ref{problem:model}). Whether the triangle $\Omega$ is equilateral is hence determined by the form $B_{rr'}$.

\begin{remark}
\label{rm:B_as_dot_product}
	In the sequel we shall assume without loss of generality that condition (\ref{eq:mercedes}) holds for $H$. In this case the form $B_{rr'}$ is negative definite on $U$ according to Definition \ref{defn:strange_point}. We then use the form $-B_{rr'}$ for the definition of a scalar product on $U$.
\end{remark}

Let us introduce new local coordinates in the neighbourhood of the point $x_0$. Consider the set of all possible brackets ${\cal K}^m$, $m\le 4$ from the descending system of Poisson brackets, all in all $255$ brackets\footnote{The number 255 is obtained as follows. We have 3 brackets in the first row of the descending system, namely $F_1$, $F_2$, and $F_3$. The bracket $G$ does not participate in the differentiation in the construction of the descending system. There are $3\cdot 4=12$ brackets in row two, namely $\{F_i,F_j\}$ and $\{G,F_i\}$. Continuing, we obtain $255 = 3+ 3\cdot 4 + 3\cdot4^2 + 3\cdot 4^3$ in total.}. The set ${\cal K} = \{{\cal K}^m,m\le 4\}$ of functions is linearly dependent, because of the Jacobi identity, the anti-symmetry of the Poisson bracket, and due to the relation $F_1+F_2+F_3\equiv0$. Moreover, some brackets vanish identically. However, according to Definition \ref{defn:strange_point} of strange point the gradients $d{\cal K}^m$ form a set of maximally possible rank at $x_0$. Therefore the set ${\cal K}$ defines a map of a neighbourhood of the point $x_0$ into a linear subspace of $\R^{255}$ of dimension\footnote{The dimension $31$ can be obtained as follows. The dimension of a free nilpotent Lie algebra of order 4 with three generators $G$, $F_1$, and $F_2$ equals $32$. However, we excluded $G$ from the set of local coordinates, and the dimension is reduced by 1.} $31$. Let us extend the system ${\cal K}$ to a full set of local coordinates in a neighbourhood of $x_0$ by a collection of arbitrary smooth functions $w(x)=(w_1(x),...,w_{(\dim{\cal M}-31)}(x))\in\R^{\dim{\cal M}-31}$. We only demand that the differential $dw$ at $x_0$ has maximal rank and that $w(x_0) = 0$. On the space $\R^{\dim{\cal M}-31}$ we define the standard Euclidean norm.

In the coordinates $({\cal K},w)$ the Hamiltonian system of ODEs with Hamiltonian $H$ can be written as

\begin{equation}
\label{eq:general_system_K_w}
	\left\{\begin{array}{l}
		\ddt{\cal K}^m = \{G,{\cal K}^m\} + \{F_r,{\cal K}^m\}u^r,\quad m\le 4;\vspace{0.3cm}\\
		\ddt w = \alpha({\cal K},w) + \beta_r({\cal K},w)u^r;\vspace{0.3cm}\\
		F_ru^r \to \max\limits_{u\in\Omega}.\\
	\end{array}\right.
\end{equation}

\noindent Here $\alpha$ and $\beta_r$ are some smooth functions. Note also that the brackets ${\cal K}^5$ of fifth order are smooth functions of the local coordinates $({\cal K},w)$ in a neighbourhood of $x_0$.

The blow-up of the singularity will be performed similar to the blow-up of the origin in the model problem as in Section \ref{subsec:blowing_procedure}. To this end we define the action $g_H(x_0)$ of the group $\R\setminus\{0\}$ on the space $\R^{\dim {\cal M}}=\{(K,w)\}$, as follows.

\begin{defn}
	Let $\lambda$ be an arbitrary non-zero number. Define the action $g_H(x_0)(\lambda)$ by

	\[
		g_H(x_0)(\lambda):
		({\cal K}^1,{\cal K}^2,{\cal K}^3,{\cal K}^4,w)\mapsto
		(\lambda^4{\cal K}^1,\lambda^3{\cal K}^2,\lambda^2{\cal K}^3,\lambda{\cal K}^4,\lambda w).
	\]
\end{defn}

Of course the action $g_H(x_0)$ does not preserve the Hamiltonian system with Hamiltonian $H$. But the principal part of the system can be isolated by a blow-up of the singularity at $x_0$ by means of the action $g_H(x_0)$. To this end we introduce notations for the local coordinates in the neighbourhood of the strange point $x_0$. Here and in the sequel we shall set $F_0 = G$ and
\[ {\cal K}^1_r = F_r,\quad {\cal K}^2_{ir} = \{F_i,{\cal K}^1_r\},\quad {\cal K}^3_{jir} = \{F_j,{\cal K}^2_{ir}\},\quad {\cal K}^4_{kjir} = \{F_k,{\cal K}^3_{jir}\},
\]
where the index $r$ runs through $\{1,2,3\}$, and the other indices $i,j$, and $k$ can additionally take the value 0. The local coordinates will be $w,{\cal K}^1_r,{\cal K}^2_{ir},{\cal K}^3_{jir}$, and ${\cal K}^4_{kjir}$.

Taking Remark \ref{rm:B_as_dot_product} into account, we set

\begin{equation}
\label{eq:blowing_general_mu}
	\begin{array}{rl}
	\mu({\cal K}) = \Bigl(&
	(-B^{rr'}{\cal K}^1_r{\cal K}^1_{r'})^3 +
	(-B^{rr'}{\cal K}^2_{0r}{\cal K}^2_{0r'})^4 +\\
	&+(-B^{rr'}{\cal K}^3_{00r}{\cal K}^3_{00r'})^6 +
	(-B^{rr'}{\cal K}^4_{000r}{\cal K}^4_{000r'})^{12}
	\Bigr)^{1/24},
	\end{array}
\end{equation}

\noindent where the matrix $B^{rr'}$ denotes the matrix of the form on $U^*$ which is the inverse of the negative definite form $B_{rr'}$ on $U$ from Definition \ref{defn:strange_point} of strange points. We shall also set

\begin{equation}
\label{eq:blowing_general_K_w}
	(\widetilde{\cal K},\widetilde w) = g_H(x_0)\left(1/\mu({\cal K})\right)\Bigl({\cal K},w\Bigr).\\
\end{equation}

\noindent The variables $\widetilde{\cal K} = \{\widetilde{\cal K}^m,m\le 4\}$ and $\widetilde w$ lie on the spherical cylinder

\[
	{\cal C}^H_0 =
	\Bigl\{(\widetilde{\cal K},\widetilde w):\ \ \mu(\widetilde{\cal K}) = 1\Bigr\}.
\]

\noindent The cylinder ${\cal C}^H$ will be constructed as in Definition \ref{defn:cylinder_C_model_problem} in Section \ref{subsec:blowing_procedure}.

Denote the map $({\cal K},w) \mapsto (\mu,\widetilde{\cal K},\widetilde w)$ by $\blowingBig$. Topologically this blow-up procedure is equivalent to the removal of the submanifold on which the primary Poisson brackets vanish and gluing in the spherical cylinder ${\cal C}^H_0$ in place of the strange point $x_0$.

In the sequel we denote the division of a Poisson bracket by a corresponding power of $\mu$ by a wide tilde. For example, we have

\[
	\widetilde{\cal K}^m = {\cal K}^m/\mu^{5-m}\quad\mbox{or}\quad
	\widetilde{\{G,{\cal K}_m\}} = \{G,{\cal K}_m\}/\mu^{4-m}.
\]

\noindent For brackets up to 4-th order we obtain a local coordinate on ${\cal C}^H$, and the brackets of 5-th order do not change under the tilde operation.

Let us now carry over the Hamiltonian system of ODEs to the space ${\cal C}^H$, as we have done in Section \ref{subsec:reparametrize_time}. By differentiating (\ref{eq:blowing_general_mu}) and (\ref{eq:blowing_general_K_w}) along an arbitrary trajectory we obtain

\begin{equation}
\label{eq:blowing_general_hamilton_vector_field}
	\left\{\begin{array}{lll}
	\ddt\mu &=& \Upsilon(\mu,\widetilde{\cal K},\widetilde w, u);\\
	\ddt\widetilde{\cal K}^m &=& \frac{\displaystyle 1}{\displaystyle\mu}\left(
		\widetilde{\{G,{\cal K}^m\}} + \widetilde{\{F_r,{\cal K}^m\}}u^r - (5-m)\Upsilon\widetilde{\cal K}^m
	\right);\\
	\ddt \widetilde w &=& \frac{\displaystyle1}{\displaystyle\mu}\bigl(\alpha + \beta_ru^r - \Upsilon\widetilde w\bigr);\vspace{0.2cm}\\
	\widetilde F_ru^r &\to& \max\limits_{u\in\Omega};\\
	\end{array}\right.
\end{equation}

\noindent where

\begin{equation}
\label{eq:Upsilon_formula}
	\begin{array}{r}
	24\Upsilon(\mu,\widetilde{\cal K},\widetilde w, u) =
	-6\bigl(B^{rr'}\widetilde{\cal K}^1_r\widetilde{\cal K}^1_{r'}\bigr)^2 B^{rr'}\widetilde{\cal K}^1_r\bigl(\widetilde{\{G,{\cal K}^1_{r'}\}} + \widetilde{\{F_{r''},\widetilde{\cal K}^1_{r'}\}}u^{r''}\bigr)\ +\\
	+8(B^{rr'}\widetilde{\cal K}^2_{0r}\widetilde{\cal K}^2_{0r'})^3 B^{rr'}\widetilde{\cal K}^2_{0r}\bigl(\widetilde{\{G,{\cal K}^2_{0r'}\}} + \widetilde{\{F_{r''},{\cal K}^2_{0r'}\}}u^{r''}\bigr)\ +\\
	+12(B^{rr'}\widetilde{\cal K}^3_{00r}\widetilde{\cal K}^3_{00r'})^5 B^{rr'}\widetilde{\cal K}^3_{00r}\bigl(\widetilde{\{G,{\cal K}^3_{00r'}\}} + \widetilde{\{F_{r''},{\cal K}^3_{00r'}\}}u^{r''}\bigr)\ + \\
	+24(B^{rr'}\widetilde{\cal K}^4_{000r}\widetilde{\cal K}^4_{000r'})^{11} B^{rr'}\widetilde{\cal K}^4_{000r}\bigl(\widetilde{\{G,{\cal K}^4_{000r'}\}} + \widetilde{\{F_{r''},{\cal K}^4_{000r'}\}}u^{r''}\bigr).\ \ \\
	\end{array}
\end{equation}

As in Section \ref{subsec:reparametrize_time}, let us denote the vector field on ${\cal C}^H\cap\{\mu>0\}$ defined by the right-hand side of (\ref{eq:blowing_general_hamilton_vector_field}) by $\xi_H(x_0)$. Formally the field $\xi_H(x_0)$ is defined in a neighbourhood of ${\cal C}^H_0$ for $\mu>0$ only, but we extend it to the lower half of the neighbourhood, i.e., to ${\cal C}^H\cap\{\mu<0\}$, by the same formula. More percisely, the map

\[
	\blowing^{-1}:(\mu,\widetilde {\cal K}, \widetilde w) \mapsto
	({\cal K}, w),\quad\mbox{where}\quad ({\cal K}, w) = g_H(x_0)(\mu)\bigl(\widetilde {\cal K}, \widetilde w\bigr),
\]

\noindent is a 2-fold covering of a neighbourhood of ${\cal C}^H_0$ in ${\cal C}^H\cap\{\mu\ne0\}$ over its preimage in the pointed neighbourhood\footnote{Note that the image of a neighbourhood of ${\cal C}^H_0$ in ${\cal C}^H\cap\{\mu\ne0\}$ under the mapping $\blowing^{-1}$ does not contain the pointed neighbourhood of the point $x_0$.} of $x_0$. It allows us to carry over the values of the functions $\alpha$ and $\beta$ and the Poisson brackets of 5-th order on the lower half ${\cal C}^H\cap\{\mu<0\}$ of the cylinder. In this case the map $\blowing^{-1}$ takes the vector field of system (\ref{eq:blowing_general_hamilton_vector_field}) to the field of the Hamiltonian $H$ for $\mu>0$ as well as for $\mu<0$.

Note that the field $\xi_H(x_0)$ grows like $\frac{1}{\mu}$ as $\mu\to 0$. However, the field $\mu\xi_H(x_0)$ can already be continuously extended on the section ${\cal C}^H_0={\cal C}^H\cap\{\mu=0\}$ of the cylinder at all points except those in the set ${\cal S}^H=\bigl\{\widetilde K^1_r=\widetilde K^1_{r'}, r\ne r'\bigr\}$. We shall hence assume in the sequel that the field $\mu\xi_H(x_0)$ is defined on ${\cal C}^H_0\setminus{\cal S}^H$ by means of the natural extension by continuity. Surely we have $\mu\xi_H(x_0)\ne0$ on ${\cal C}_H^0$, regardless of the fact that $\mu=0$ on ${\cal C}_H^0$. The trajectories of the field $\mu\xi_H(x_0)$ either do not intersect the section ${\cal C}^H_0$ or lie on this section, because along the trajectories of the field $\mu\xi_H(x_0)$ we have $\dot\mu = \mu \Upsilon$. Moreover, on ${\cal C}^H_0$ those components of the field $\mu\xi_H(x_0)$ which correspond to the variables $(\widetilde{\cal K},\widetilde w)$ are independent of $\mu$.

Therefore the integral curves of the fields $\xi_H(x_0)$ and $\mu\xi_H(x_0)$ coincide on ${\cal C}^H\cap\{\mu\ne0\}$, but the velocity of movement along them is different. If we denote by $s$ the time parameter along the trajectories of the field $\mu\xi_H(x_0)$, then $s$ and $t$ are related by

\[
	ds=\frac{1}{\mu}dt.
\]

\noindent System (\ref{eq:blowing_general_hamilton_vector_field}) can then be rewritten in the form
\begin{equation}
\label{eq:blowing_general_hamilton_vector_field_s}
	\left\{\begin{array}{lll}
	\dds\mu &=& \mu\Upsilon(\mu,\widetilde{\cal K},\widetilde w, u);\\
	\dds\widetilde{\cal K}^m &=&
		\widetilde{\{G,{\cal K}^m\}} + \widetilde{\{F_r,{\cal K}^m\}}u^r - (5-m)\Upsilon\widetilde{\cal K}^m;\\
	\dds \widetilde w &=& \alpha + \beta_ru^r - \Upsilon\widetilde w;\vspace{0.2cm}\\
	\widetilde F_ru^r &\to& \max\limits_{u\in\Omega}.\\
	\end{array}\right.
\end{equation}

\subsection{Model optimal control problem on the zero section \texorpdfstring{${\cal C}^H_0$}{C0}}
\label{subsec:model_problem_inside}

Consider an arbitrary trajectory $x(t)$ which enters the strange point $x_0=x(0)$ as $t\to -0$. In this case we have $\mu(x(t))\le c'|t|\to 0$ as $t\to-0$ by virtue of Lemma \ref{lm:descending_system_general_nongeneral_bracket_order}. Therefore if $x(t)$ lies in the set $\blowingBig^{-1}({\cal C}^H\setminus{\cal C}^H_0)$ for small $t<0$, then the image of the trajectory $\blowingBig\bigl(x(t)\bigr)$ on ${\cal C}^H\cap\{\mu>0\}$ will tend to ${\cal C}^H_0$ as $t\to-0$.

Of interest for us are those trajectories $x(t)$ entering $x_0=x(0)$, for which at least one of the primary Poisson brackets ${\cal K}^m(x(t))$ has the largest possible order $|t|^{5-m}$ as $t\to-0$. In this case $\mu(x(t))$ has order $|t|$ as $t\to-0$. Namely, by virtue of Lemma \ref{lm:descending_system_general_nongeneral_bracket_order} we have $c''|t|\le\mu(x(t))\le c'|t|$ for small $t<0$. Therefore the trajectory $x(t)$ does not leave the set $\blowingBig^{-1}({\cal C}^H\setminus{\cal C}^H_0)$ for small $t<0$, and its image $\blowingBig(x(t))$ on ${\cal C}^H\cap\{\mu>0\}$ tends to ${\cal C}^H_0$. In fact, a stronger assertion holds. For $t\to-0$ the image $\blowingBig(x(t))$ tends to the submanifold ${\cal D}_0^H\subset{\cal C}^H_0$ on which all secondary Poisson brackets vanish,

\begin{eqnarray*}
	{\cal D}_0^H=\Bigl\{ \mu=0\ \ \mbox{and}\ \ \widetilde{\cal K}^m=0,\ \ \mbox{where}\ \ \widetilde{\cal K}^m\ \mbox{runs through all}\\
	\mbox{secondary Poisson brackets}\Bigr\} \subset {\cal C}^H_0.\\
\end{eqnarray*}

\noindent Moreover, every trajectory of the field $\mu\xi_H(x_0)$ that commences in ${\cal D}_0^H$ will never leave this submanifold. Indeed, on  ${\cal D}_0^H$ we have on trajectories of the vector field $\mu\xi_H(x_0)$ that $\dds\mu=0$ and $\dds\widetilde{\cal K}^m = 0$, where $\widetilde{\cal K}^m$ stands for an arbitrary secondary Poisson bracket in the descending system.

System (\ref{eq:blowing_general_hamilton_vector_field_s}) of ODEs has the same form on ${\cal D}_0^H$ as in the blown-up system (\ref{eq:blowing_model_hamilton_vector_field}) of the PMP in model problem (\ref{problem:model}). Indeed, let us rename the primary brackets of the descending system by

\begin{equation}
\label{eq:phi_psi_x_y_K_main}
	\widetilde\psi_r = \widetilde {\cal K}^1_r,\ \
	\widetilde\phi_r = -\widetilde {\cal K}^2_{0r},\ \
	\widetilde x^r   =  B^{rr'}\widetilde {\cal K}^3_{00r'},\ \
	\widetilde y^r   =  B^{rr'}\widetilde {\cal K}^4_{000r'},
\end{equation}

\noindent where $\widetilde\phi,\widetilde\psi\in U^*$ and $\widetilde x,\widetilde y\in U$. Then we obtain the following system of ODEs on ${\cal D}_0^H$

\begin{equation}
\label{eq:ODE_on_D0}
	\left\{\begin{array}{llrll}
		\dds{\widetilde \psi_r} &=&
			-\widetilde\phi_r &-& 4\Upsilon\widetilde \psi_r;\\
		\dds{\widetilde \phi_r} &=&
			-B_{rr'}\widetilde x^{r'} &-& 3\Upsilon\widetilde \phi_r;\\
		\dds{\widetilde x^r} &=&
			\widetilde y^r &-& 2\Upsilon\widetilde x^r;\\
		\dds{\widetilde y^r} &=&
			u^r &-& \Upsilon\widetilde y^r;\\
		\dds{\widetilde w} &=& \alpha(x_0) + \beta_r(x_0)u^r &-& \Upsilon\widetilde w;\\
		\widetilde\psi_ru^r &\to& \max\limits_{u\in\Omega}.\qquad\qquad\\
	\end{array}\right.
\end{equation}

\noindent If we set the secondary brackets in (\ref{eq:Upsilon_formula}) to zero, on ${\cal D}_0^H$ we obtain

\begin{eqnarray*}
	24\Upsilon|_{{\cal D}_0^H} = &-&
			6 \bigl(-B^{rr'}\widetilde \psi_r\widetilde \psi_{r'}\bigr)^2\bigl(-B^{rr'}\widetilde \psi_r\widetilde \phi_{r'}\bigr) +
			\ 8 \bigl(-B^{rr'}\widetilde \phi_r\widetilde \phi_{r'}\bigr)^3\bigl(\widetilde \phi_r\widetilde x^r\bigr)\ \ +\\
			&+&12\bigl(-B_{rr'}\widetilde x^r\widetilde x^{r'}\bigr)^{5} \bigl(-B_{rr'}\widetilde x^r\widetilde y^{r'}\bigr) +
			24\bigl(-B_{rr'}\widetilde y^r\widetilde y^{r'}\bigr)^{11}\bigl(-B_{rr'}\widetilde y^r u^{r'}\bigr).\\
\end{eqnarray*}

We see that if we neglect the variable $\widetilde w$, then the system of ODEs on ${\cal D}_0^H$ which is generated by the original Hamiltonian system with Hamiltonian $H$ coincides after blow-up with Hamiltonian system (\ref{eq:blowing_model_hamilton_vector_field}) of the Pontryagin maximum principle for model problem (\ref{problem:model}), given we define the scalar product on $U$ by means of the form $-B_{rr'}$. The system of ODEs which we obtain on ${\cal C}^H_0$ is in some sense a "nilpotentization" of the original Hamiltonian system in the neighbourhood of the strange point $x_0$.

Let us remark also that the property of the triangle $\Omega$ of being equilateral is determined by the form $-B_{rr'}$. Namely, the triangle $\Omega$ is equilateral if the form $B_{rr'}$ is as described in Theorem  \ref{thm:main_chaos_hamilton_equilateral_triangle}. Indeed, the vertices of the triangle $\Omega$ given by (\ref{eq:triangle_omega_for_hamiltonian_system}) have to form an equilateral triangle with respect to the form $-B_{rr'}$ with center in the origin. Hence the Gram matrix of the vertices of $\Omega$ has to be proportional to matrix (\ref{eq:matrix_1_1_2}) with a negative proportionality coefficient, whose modulus equals the side-length of the triangle $\Omega$.

\subsection{Nilpotentization in the neighbourhood of the strange point}

Below we shall show that the behaviour of the original Hamiltonian system is described by its behaviour on the zero section ${\cal C}^H_0$ of the cylinder $\cal C$. The proof is based on a thorough consideration of the Poincar\'e map in system (\ref{eq:blowing_general_hamilton_vector_field_s}) in the neighbourhood of points of the strange sets $\Xi$ and ${\cal X}$ from Theorems \ref{thm:main_chaos_hamilton_any_triangle} and \ref{thm:main_chaos_hamilton_equilateral_triangle}. As an auxiliary system of ODEs in the neighbourhood of $x_0$ we consider the system obtained from system (\ref{eq:blowing_general_hamilton_vector_field_s}) by "nilpotentization" with respect to $\mu$ by means of the blow-up map $\blowing$.

The nilpotentization procedure in the neighbourhood of the strange point by means of the blow-up map is performed as follows. We transform the right-hand side of system (\ref{eq:blowing_general_hamilton_vector_field_s}), which was obtained by the blow-up map $\blowing$, as follows. The differential equations on the variables $\widetilde{\cal K}$ and $\widetilde w$ are extended without modification from the zero section ${\cal C}^H_0$ ot the whole cylinder $\cal C$. The right-hand side of the differential equation on $\mu$ is linearized with respect to $\mu$ in the neighbourhood of the zero section. This means terms of order $o(\mu)$ are discarded. As a result we obtain the system

\begin{equation}
\label{eq:blowing_general_linearized_hamilton_vector_field}
	\left\{\begin{array}{lll}
	\dds\mu &=& \mu\,\Upsilon(0,\widetilde{\cal K},\widetilde w, u);\\
	\dds\widetilde{\cal K}^1_r &=& \widetilde{\cal K}^2_{0r} + \widetilde{\cal K}^2_{r'r}u^{r'} - 4 \widetilde{\cal K}^1_r\Upsilon,\\
	\dds\widetilde{\cal K}^2_{ir} &=& \widetilde{\cal K}^3_{0ir} + \widetilde{\cal K}^3_{r'ir}u^{r'} - 3 \widetilde{\cal K}^2_{ir}\Upsilon,\\
	\dds\widetilde{\cal K}^3_{jir} &=& \widetilde{\cal K}^4_{0jir} + \widetilde{\cal K}^4_{r'jir}u^{r'} - 2 \widetilde{\cal K}^3_{jir}\Upsilon,\\
	\dds\widetilde{\cal K}^4_{000r} &=&  B_{rr'}u^{r'} - \widetilde{\cal K}^4_{000r}\Upsilon,\\
	\dds\widetilde{\cal K}^4_{kjir} &=& - \widetilde{\cal K}^4_{kjir}\Upsilon,\quad\ \mbox{ if}\ {\cal K}^4_{kjir}\mbox{ -- secondary bracket}\\
	\dds \widetilde w &=& \alpha(x_0) + \beta_r(x_0)u^r - \Upsilon(0,\widetilde{\cal K},\widetilde w, u)\widetilde w;\vspace{0.2cm}\\
	\widetilde {\cal K}^1_ru^r &\to& \max\limits_{u\in\Omega},\\
	\end{array}\right.
\end{equation}

\noindent where here and below the summation over the upper and lower index $r'$ runs over $r'=1,2,3$. Let us rewrite this system in the original coordinates ${\cal K}$ and $w$, which have been introduced before blow-up. Applying the inverse map $\blowing^{-1}$ and transforming back the time parameters $s$ and $t$ we obtain

\begin{equation}
\label{eq:general_linearized_system_K_w}
\left\{\begin{array}{l}
		\ddt{\cal K}^1_r = {\cal K}^2_{0r} + {\cal K}^2_{r'r}u^{r'},\\
		\ddt{\cal K}^2_{ir} = {\cal K}^3_{0ir} + {\cal K}^3_{r'ir}u^{r'},\\
		\ddt{\cal K}^3_{jir} = {\cal K}^4_{0jir} + {\cal K}^4_{r'jir}u^{r'},\\
		\ddt{\cal K}^4_{000r} =  B_{rr'}u^{r'},\\
		\ddt{\cal K}^4_{kjir} = 0,\quad\ \mbox{ if}\ {\cal K}^4_{kjir}\mbox{ is a secondary bracket}\\
		\ddt w = \alpha(x_0) + \beta_r(x_0)u^r;\vspace{0.3cm}\\
		{\cal K}^1_ru^r \to \max\limits_{u\in\Omega}.\\
	\end{array}\right.
\end{equation}

\noindent Hence the "nilpotentization" by means of the blow-up map $\blowing$ with respect to $\mu$ in the neighbourhood of the strange point leads to system (\ref{eq:general_linearized_system_K_w}). This system can be obtained from system (\ref{eq:general_system_K_w}) by replacing the functions $\alpha$ and $\beta_r$ and the brackets of 5-th order by their values at the point $x_0$.

As mentioned before, system (\ref{eq:blowing_general_linearized_hamilton_vector_field}) coincides on the zero section ${\cal C}^H_0$ with system (\ref{eq:blowing_general_hamilton_vector_field}). The differential equations on $\widetilde{\cal K}$ and $\widetilde w$ are independent of $\mu$, and the right-hand side of the equation on $\mu$ is obtained by linearization of (\ref{eq:blowing_general_hamilton_vector_field}) with respect to $\mu$ in the neighbourhood of ${\cal C}^H_0={\cal C}^H\cap\{\mu=0\}$.

Let us note the following important facts about system (\ref{eq:blowing_general_linearized_hamilton_vector_field}):

\begin{enumerate}
	\item The right-hand side of the differential equations on $\widetilde{\cal K}$ is independent of $\widetilde w$ and $\mu$. After the coordinates $\widetilde{\cal K}$ on the trajectory of system (\ref{eq:blowing_general_linearized_hamilton_vector_field}) have been found, the variables $\ln\mu$ and $\widetilde w$ can be calculated by direct integration. 
	\item If the control is constant, then the evolution of the secondary Poisson brackets is uncoupled from the evolution of the primary Poisson brackets. The primary Poisson brackets influence only the time instant of the control switching. 
\end{enumerate}

Hence every trajectory of the original Hamiltonian system with Hamiltonian $H$ which lies in ${\cal D}_0^H$ can be found also in the system determined by model problem (\ref{problem:model}). Indeed, consider an arbitrary trajectory $\bigl(\widehat x(t),\widehat y(t),\widehat \phi(t),\widehat \psi(t),\widehat u(t)\bigr)$ of the PMP of model problem (\ref{problem:model}) lying in $M_+$ (see Corollary \ref{corollary:M_plus_is_optimal}), i.e., entering the origin. By virtue of Theorem \ref{thm:model_problem_bellman} this trajectory enters the origin at a time instant $t=T\bigl(x(0),y(0)\bigr)$. By virtue of (\ref{eq:phi_psi_x_y_K_main}) this trajectory determines the primary Poisson brackets ${\cal K}$ as functions of time $t$ for $t<T\bigl(x(0),y(0)\bigr)$. In order to obtain a trajectory of system (\ref{eq:general_linearized_system_K_w}) we still need to determine the secondary Poisson brackets ${\cal K}$ and the variables $w$ as functions of time. To this end we set the secondary Poisson brackets identically zero and determine the function $w(t)$ from the last equation of system (\ref{eq:general_linearized_system_K_w}),

\[
	w(t) = w_0 + \alpha(x_0)\Bigl(t-T\bigl(x(0),y(0)\bigr)\Bigr) + \beta_r(x_0)y^r(t).
\]

\noindent Since we are interested in trajectories which enter the strange point $x_0$, we also set $w_0=w(0)=0$. The constructed trajectory satisfies the ODEs of the "nilpotentized" system (\ref{eq:general_linearized_system_K_w}). Hence we have constructed a map $\Pi_+$ from $M_+\subset\{(x,y,\phi,\psi)\}$ into the space $\{({\cal K},w)\}$ by means of the formulas

\[
	\begin{array}{ll}
		\Pi_+:&
		\left\{\begin{array}{l}
			{\cal K}^1_r = \psi_r,\ \
				{\cal K}^2_{0r} = - \phi_r,\ \
				{\cal K}^3_{00r}= B_{r'r}x^{r'},\ \
				{\cal K}^4_{000r}= B_{r'r}y^{r'};\\
			{\cal K} = 0\quad\mbox{for all secondary brackets}\ {\cal K};\\
			w = \beta_r(x_0)y^r - \alpha(x_0)T\bigl(x(0),y(0)\bigr).
		\end{array}\right.
	\end{array}
\]

\begin{lemma}
\label{lm:Pi_plus}
	The map $\Pi_+:M_+\to\{({\cal K},w)\}$ constructed above is an equivariant injection, i.e., it is compatible with the action of the group $\R_+$,
	
	\[
		\Pi_+ \circ g(\lambda) = g_H(x_0)(\lambda) \circ \Pi_+.
	\]
	
	\noindent Moreover, the map $\Pi_+$ is continuous on $M_+$ and takes the trajectories of Hamiltonian system (\ref{eq:model_pmp_system}) of model problem (\ref{problem:model}) on $M_+$ to trajectories of the "nilpotentized" system (\ref{eq:general_linearized_system_K_w}).
	
	If in addition $dG(x_0)=0$, then the map $\Pi_+$ is locally Lipschitz.
\end{lemma}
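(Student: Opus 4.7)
The plan is to verify the four claims in sequence, leaving the Lipschitz statement for last since it alone uses the hypothesis $dG(x_0)=0$.

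For equivariance, I would substitute the scaling laws directly: under $g(\lambda)$ one has $\psi\mapsto \lambda^4\psi$, $\phi\mapsto \lambda^3\phi$, $x\mapsto \lambda^2 x$, $y\mapsto \lambda y$ (from Subsection \ref{sec:symmetries}), and the bilinear identifications in (\ref{eq:phi_psi_x_y_K_main}) translate these weights into exactly the weights $\lambda^4,\lambda^3,\lambda^2,\lambda$ that $g_H(x_0)(\lambda)$ assigns to ${\cal K}^1,{\cal K}^2,{\cal K}^3,{\cal K}^4$. For the $w$-coordinate I would use the elementary scaling $T(g(\lambda)q_0) = \lambda T(q_0)$, a consequence of the fact that $g(\lambda)$ carries a trajectory onto a rescaled trajectory whose time is stretched by $\lambda$. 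Combined with $y\mapsto\lambda y$ this yields $w\mapsto \lambda w$, matching the action $g_H(x_0)(\lambda)\colon w\mapsto \lambda w$. Injectivity is then immediate: two distinct points of $M_+$ project to distinct $(x,y)$ in view of Corollary \ref{corollary:M_plus_is_optimal}, and distinct $(x,y)$ yield distinct images under $\Pi_+$ by inverting the non-degenerate form $B_{rr'}$ on the ${\cal K}^3$- and ${\cal K}^4$-components.

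Continuity of $\Pi_+$ is clear from the continuity of $E$ (Theorem \ref{thm:model_problem_bellman}) and of $T$ (a corollary of that theorem). To verify that $\Pi_+$ intertwines the flows, I would substitute the definition of $\Pi_+$ into each equation of (\ref{eq:general_linearized_system_K_w}). Setting all secondary brackets to zero causes every coupling term whose coefficient is a secondary bracket to drop out, leaving the chain $\dot{\cal K}^1_r = {\cal K}^2_{0r}$, $\dot{\cal K}^2_{0r} = {\cal K}^3_{00r}$, $\dot{\cal K}^3_{00r} = {\cal K}^4_{000r}$, $\dot{\cal K}^4_{000r} = B_{rr'}u^{r'}$, which via (\ref{eq:phi_psi_x_y_K_main}) is precisely the cascade $\dot\psi_r = -\phi_r$, $\dot\phi_r = -B_{rr'}x^{r'}$, $\dot x^r = y^r$, $\dot y^r = u^r$ of (\ref{eq:model_pmp_system}). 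The equations for the secondary brackets preserve the value zero identically, since their right-hand sides involve only secondary brackets of one higher order. The equation for $w$ is satisfied by construction, as differentiating $w(t) = \beta_r(x_0)y^r(t) - \alpha(x_0)(T(q_0)-t)$ along the trajectory gives $\dot w = \beta_r(x_0)u^r + \alpha(x_0)$. Finally, the maximum condition $\psi_r u^r\to\max$ of the model problem coincides with the maximum condition $\widetilde{\cal K}^1_r u^r \to\max$ of the nilpotentized system.

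The main obstacle is the local Lipschitz property under $dG(x_0)=0$, since a priori the function $T$ is only continuous (in fact H\"older, by Theorem \ref{thm:model_problem_bellman}), and hence the $w$-coordinate need not be Lipschitz. I would remove this obstruction by showing that the hypothesis $dG(x_0)=0$ forces $\alpha(x_0)=0$. Indeed, along any trajectory of the original system one has $\dot w = \{H,w\} = \{G,w\} + \{F_r,w\}u^r$, so $\alpha = \{G,w\}$ and $\beta_r = \{F_r,w\}$; the Poisson bracket $\{G,w\}(x_0)$ is a contraction involving $dG(x_0)$ and therefore vanishes. Once the $T$-term disappears, the $w$-coordinate reduces to the linear expression $\beta_r(x_0)y^r$, and $\Pi_+$ becomes the composition of the locally Lipschitz graph embedding $q_0\mapsto(q_0,E(q_0))$ (Theorem \ref{thm:model_problem_bellman}) with a fixed linear map in the variables $(x,y,\phi,\psi)$. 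It is therefore locally Lipschitz.
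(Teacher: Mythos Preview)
Your proof is correct and follows essentially the same route as the paper, which dispatches equivariance, injectivity, continuity, and the trajectory-mapping property in one line each (``by construction'' together with Theorem~\ref{thm:model_problem_bellman}). The only minor difference is in deducing $\alpha(x_0)=0$: the paper observes that $dG(x_0)=0$ makes $x(t)\equiv x_0$ a trajectory for $u=0$, whence $\dot w=0$ forces $\alpha(x_0)=0$, whereas you compute $\alpha=\{G,w\}$ directly and invoke $dG(x_0)=0$; these are the same argument viewed from two sides.
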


\begin{proof}
	By construction the map $\Pi_+$ is equivariant and injective. Continuity of $\Pi_+$ on $M_+\setminus\{0\}$ follows from Theorem \ref{thm:model_problem_bellman}. In this theorem it is proven that the function $T(\cdot)$ is continuous. Also by construction the map $\Pi_+$ takes the trajectories of system (\ref{eq:model_pmp_system}) to the trajectories of system (\ref{eq:general_linearized_system_K_w}).
	
	If $dG(x_0)=0$, then $x(t)=x_0$ is a trajectory of the original Hamiltonian $H=G+F_ru^r$ for $u=0$. Hence ${\cal K}(t)=0$, $w(t)=0$ is a trajectory of system (\ref{eq:general_system_K_w}) for $u=0$. Then the condition $\alpha(x_0)=0$ is necessary and the map $\Pi_+$ does not contain a summand with $T(x(0),y(0))$. It is hence locally Lipschitz by virtue of Theorem \ref{thm:model_problem_bellman}.
\end{proof}

Thus we can find every trajectory of system (\ref{eq:model_pmp_system}) in the "nilpotentized" system (\ref{eq:general_linearized_system_K_w}). However, in the original system (\ref{eq:general_system_K_w}) these trajectories may not exist. However, it turns out that the trajectories from the sets ${\cal X}$ (Theorem \ref{thm:model_chaos_equilateral_triangle}) and $\Xi$ (Theorem \ref{thm:model_chaos_any_triangle}) can still be found in the original Hamiltonian system. To this end we have first to project the sets $\Pi_+({\cal X})$ and $\Pi_+(\Xi)$ on ${\cal C}^H_0$, and then to lift each trajectory from the zero section ${\cal C}^H_0$ by means of the Hadamard-Perron theorem. 
\bigskip

By the equivariance of the map $\Pi_+$ the map $\Pi_+/g:M_+/g \to {\cal C}^H_0$ is well-defined. Moreover, the image of $M_+/g$ lies in ${\cal D}_0^H$. Since the vector field $\mu\xi_H(x_0)$ of the blown-up "nilpotentized" system (\ref{eq:blowing_general_linearized_hamilton_vector_field}) and the vector field $\mu\xi$ of the blown-up model system (\ref{eq:blowing_model_hamilton_vector_field}) are also preserved by the action of the group $\R\setminus\{0\}$, we have that every trajectory of the field $\mu\xi$ on $M_+/g$ is taken to some trajectory of the field $\mu\xi_H(x_0)$.

Let us now transfer the sets ${\cal X}$ and $\Xi$ from the model optimal control problem to the zero section ${\cal C}^H_0$. Define

\[
	{\cal X}^0_H(x_0) = (\Pi_+/g)\bigl({\cal X}/g\bigr)\subset {\cal D}_0^H;
	\quad\mbox{ and }\quad
	\Xi^0_H(x_0) = (\Pi_+/g)\bigl(\Xi/g\bigr)\subset {\cal D}_0^H.
\]

Note that the set ${\cal X}_H(x_0)$ is defined for the Hamiltonian system with Hamiltonian $H$ only if the triangle $\Omega$ is equilateral with respect to the form $B_{rr'}$. In other words, it is defined only if the form $B_{rr'}$ has the form described in the conditions of Theorem \ref{thm:main_chaos_hamilton_equilateral_triangle}.

\subsection{Poincar\'e map in the Hamiltonian system}

In the previous section we have constructed sets ${\cal X}^0_H(x_0)$ and $\Xi^0_H(x_0)$ on the submanifold ${\cal D}_0^H\subset{\cal C}^H_0$ for a generic Hamiltonian system with Hamiltonian $H$. However, the inverse map $\blowingBig^{-1}$ will take the section ${\cal C}^H_0$ entirely to the strange point $x_0$. In order to find the corresponding trajectories in the original Hamiltonian system, we have to apply the Hadamard-Perron theorem to each (hyperbolic) trajectory of the sets $\Xi_H(x_0)$ and ${\cal X}_H(x_0)$ on ${\cal D}_0^H$. We will show that their stable submanifolds do not anymore lie in the zero section ${\cal C}^H_0$. The union of these stable submanifolds will yield the sought sets ${\cal X}_H(x_0)$ and $\Xi_H(x_0)$. The unstable submanifolds lie in ${\cal C}^H_0$ and have no analog in the original Hamiltonian system.

We shall apply the Hadamard-Perron theorem to the Poincar\'e map in the neighbourhood of an arbitrary trajectory $\gamma$ from ${\cal X}^0_H(x_0)$ or $\Xi^0_H(x_0)$. Every trajectory $\bigl(\widehat x(t),\widehat y(t),\widehat \phi(t),\widehat \psi(t),\widehat u(t)\bigr)$ from the set $\Xi$ or $\cal X$ in model problem (\ref{problem:model}) intersects the switching surface only transversally. Hence in the neighbourhood of any of its points a smooth Poincar\'e map is defined. In this section we transfer the structure of this map from the model problem to the trajectory $\gamma$, first in the "nilpotentized" system  (\ref{eq:blowing_general_linearized_hamilton_vector_field}), and then also in the original system (\ref{eq:blowing_general_hamilton_vector_field_s}).

We are interested in the differential of the Poincar\'e map $\overline\Phi_H$ from the switching surface to itself in the "nilpotentized" system (\ref{eq:blowing_general_linearized_hamilton_vector_field}) in the neighbourhood of an arbitrary trajectory in ${\cal D}_0^H$. The Poincar\'e map in the original Hamiltonian system will be denoted by $\Phi_H$, without the overbar.

In order to construct the map $\overline\Phi_H$ in the neighbourhood of the switching point $z_0=\gamma(0)$ on the trajectory $\gamma(t)$ from ${\cal X}^0_H(x_0)$ or $\Xi^0_H(x_0)$, we use the following simplifying consideration. Since the trajectory $\gamma(t)$ intersects the switching surface ${\cal S}^H$ transversally consecutively at the points $z_0$ and $z_1=\overline\Phi_H(z_0)=\gamma(\tau)$, the control is also constant between the intersection points with ${\cal S}^H$ for any trajectory in the neighbourhood of $\gamma(t)$, and coincides with the control on $\gamma(t)$. Hence we may assume that the control in the "nilpotentized" system (\ref{eq:blowing_general_linearized_hamilton_vector_field}) is constant and compute $d\overline\Phi_H$ as a composition of the linearization of the evolution of system (\ref{eq:blowing_general_linearized_hamilton_vector_field}) for the fixed time interval $\tau$ with the projection onto the tangent plane to ${\cal S}^H$ along the vector field of system (\ref{eq:blowing_general_linearized_hamilton_vector_field}). Let us denote the evolution map by $\Phi_H(\tau)$ and compute it.

The equation of variations for system (\ref{eq:blowing_general_linearized_hamilton_vector_field}) in the neighbourhood of $\gamma$ is of the form

\[
	\ddt \Phi_H(t) = M\Phi_H(t).
\]

\noindent Since the trajectory $\gamma$ lies in ${\cal D}_0^H$, not all primary brackets vanish on $\gamma$. Since the function $\Upsilon$ is independent of $\mu$ and the secondary Poisson brackets $\widetilde K$, the matrix $M$ must have the following block structure. (Here and in the sequel we shall write $\Upsilon(t) = \Upsilon(\gamma(t))$ for brevity. The identity matrix of any appropriate size is denoted by $I$.)

\begin{tikzpicture}[description/.style={fill=white,inner sep=1pt}]
	\matrix (m) [matrix of math nodes, row sep=1em, column sep=0.35em, text height=1.5ex]
	{
		\Upsilon(t)& 0 & 0 & 0 & 0 & 0 & 0 & 0 & 0\\
		0   & * & * & * & * & * & * & * & 0\\
		0   & * & * & * & * & * & * & * & 0\\
		0   & * & * & * & * & * & * & * & 0\\
		0   & * & * & * & * & * & * & * & 0\\
		0   & 0 & 0 & 0 & 0 &-3\Upsilon(t)I& * & * & 0\\
		0   & 0 & 0 & 0 & 0 & 0 &-2\Upsilon(t)I& * & 0\\
		0   & 0 & 0 & 0 & 0 & 0 & 0 &-\Upsilon(t)I& 0\\
		0   & * & * & * & * & * & * & * &-\Upsilon(t)I\\
	};
	\draw[decorate, decoration = brace] (m-1-1.south west) -- node [left] {$\mu$} (m-1-1.north west);
	\draw[decorate, decoration = brace] (m-2-1.south west) -- node [left] {\tiny Primary brackets $\widetilde{\cal K}^1_r$} (m-2-1.north west);
	\draw[decorate, decoration = brace] (m-3-1.south west) -- node [left] {\tiny Primary brackets $\widetilde{\cal K}^2_{0r}$} (m-3-1.north west);
	\draw[decorate, decoration = brace] (m-4-1.south west) -- node [left] {\tiny Primary brackets $\widetilde{\cal K}^3_{00r}$} (m-4-1.north west);
	\draw[decorate, decoration = brace] (m-5-1.south west) -- node [left] {\tiny Primary brackets $\widetilde{\cal K}^4_{000r}$} (m-5-1.north west);
	\draw[decorate, decoration = brace] (m-6-1.south west) -- node [left] {\tiny Secondary brackets $\widetilde{\cal K}^2$} (m-6-1.north west);
	\draw[decorate, decoration = brace] (m-7-1.south west) -- node [left] {\tiny Secondary brackets $\widetilde{\cal K}^3$} (m-7-1.north west);
	\draw[decorate, decoration = brace] (m-8-1.south west) -- node [left] {\tiny Secondary brackets $\widetilde{\cal K}^4$} (m-8-1.north west);
	\draw[decorate, decoration = brace] (m-9-1.south west) -- node [left] {$\widetilde w$} (m-9-1.north west);
	\draw[decorate, decoration = brace] (m-1-1.north west) -- node [above] {$\mu$} (m-1-1.north east);
	\draw[decorate, decoration = brace] (m-1-2.north west) -- node [above] {Primary brackets $\widetilde{\cal K}$} (m-1-5.north east);
	\draw[decorate, decoration = brace] (m-1-6.north west) -- node [above] {Secondary brackets $\widetilde{\cal K}$} (m-1-8.north east);
	\draw[decorate, decoration = brace] (m-1-9.north west) -- node [above] {$\widetilde w$} (m-1-9.north east);
\end{tikzpicture}

The matrix $M$ is block-lower triangular with three blocks on the diagonal, here the central block, which corresponds to the variables $\widetilde{\cal K}$, is block-upper triangular. Therefore the matrix $\Phi_H(t)$ possesses the same structure,

\begin{tikzpicture}[description/.style={fill=white,inner sep=1pt}]
	\matrix (m) [matrix of math nodes, row sep=2em, column sep=5.5em, text height=1.5ex, text depth=0.25ex]
	{
		\lambda& 0 & 0 & 0 \\
		0   & * & * & 0\\
		0   & 0 & C & 0\\
		0   & * & * & W\\
	};
	\draw[decorate, decoration = brace] (m-1-1.south west) -- node [left] {$\mu$} (m-1-1.north west);
	\draw[decorate, decoration = brace] (m-2-1.south west) -- node [left] {Primary brackets $\widetilde{\cal K}$} (m-2-1.north west);
	\draw[decorate, decoration = brace] (m-3-1.south west) -- node [left] {Secondary brackets $\widetilde{\cal K}$} (m-3-1.north west);
	\draw[decorate, decoration = brace] (m-4-1.south west) -- node [left] {$\widetilde w$} (m-4-1.north west);
	\draw[decorate, decoration = brace] (m-1-1.north west) -- node [above] {$\mu$} (m-1-1.north east);
	\draw[decorate, decoration = brace] (m-1-2.north west) -- node [above] {\tiny Primary brackets $\widetilde{\cal K}$} (m-1-2.north east);
	\draw[decorate, decoration = brace] (m-1-3.north west) -- node [above] {\tiny Secondary brackets $\widetilde{\cal K}$} (m-1-3.north east);
	\draw[decorate, decoration = brace] (m-1-4.north west) -- node [above] {$\widetilde w$} (m-1-4.north east);
\end{tikzpicture}

\noindent Here the matrix $W$ is proportional to the identity matrix, and the matrix $C$ is upper triangular.

Let us explicitly write out the differential equations on $\lambda$, $W$, and $C$:

\[
	\ddt \lambda = \Upsilon(t)\lambda,\qquad \lambda(0)=1.
\]

\noindent Denote $\lambda(\tau)=\rho$, where $\tau$ is the time on the trajectory $\gamma$ from $z_0$ to $z_1$. Further,

\[
	\ddt W = -\Upsilon(t)W,\quad W(0) = I\qquad\Longrightarrow\qquad W(\tau) = \rho^{-1}I.
\]

\noindent The matrix $C$ obeys the equation

\[
	\ddt C = \left(
		\begin{array}{ccc}
			-3\Upsilon(t) & * & * \\
			0 & -2\Upsilon(t) & * \\
			0 & 0 & -\Upsilon(t) \\
		\end{array}
	\right) C,\qquad C(0) = I.
\]

\noindent Hence the diagonal blocks of $C$ are given by $\rho^{-3}I$, $\rho^{-2}I$, and $\rho^{-1}I$.

Let us now compute the number $\rho$. Let the trajectory $\gamma'$ of system (\ref{eq:blowing_general_linearized_hamilton_vector_field}) be a lifting of $\gamma$ from ${\cal C}^H_0=\{\mu=0\}$ to $\{\mu\ne0\}$. In other words, the coordinates $\widetilde {\cal K}$ and $\widetilde w$ coincide on $\gamma$ and $\gamma'$, but on $\gamma'$ we have $\mu\ne 0$. Denote by $z'_0$ and $z'_1$ the corresponding liftings of $z_0$ and $z_1$.

\begin{lemma}
\label{lm:compute_rho}
	Denote $\mu_0=\mu(z'_0)$ and $\mu_1=\mu(z'_1)$. Then we have
	
	\[
		\frac{\mu_1}{\mu_0} = \rho.
	\]

\end{lemma}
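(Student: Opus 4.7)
The plan is to exploit the decoupled structure of system (\ref{eq:blowing_general_linearized_hamilton_vector_field}): the equation $\dot\mu = \mu\,\Upsilon(\mu,\widetilde{\cal K},\widetilde w, u)$ depends on $\mu$ only through the prefactor, because (as noted after (\ref{eq:blowing_general_linearized_hamilton_vector_field})) the function $\Upsilon$ involves no $\mu$ at all, and the right-hand sides of the equations on $(\widetilde{\cal K},\widetilde w)$ are likewise independent of $\mu$. Consequently, once a trajectory $\gamma(t)$ on ${\cal C}^H_0$ is fixed, the values $\widetilde{\cal K}(t)$ and $\widetilde w(t)$ along any lifting $\gamma'(t)$ of $\gamma$ coincide with those along $\gamma$ itself, so along $\gamma'$ the function $\Upsilon(\gamma'(t),\hat u(t))$ equals the prescribed time-dependent function $\Upsilon(t)=\Upsilon(\gamma(t),\hat u(t))$.

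First, I would substitute this observation into the $\mu$-equation of (\ref{eq:blowing_general_linearized_hamilton_vector_field}) restricted to $\gamma'$: the lifted trajectory satisfies the scalar linear ODE $\dot\mu = \Upsilon(t)\mu$ on the interval $[0,\tau]$ between the two consecutive switching instants. This is literally the same ODE as the one defining $\lambda(t)$, with $\lambda(0)=1$. Integrating yields $\mu(t) = \mu_0\exp\bigl(\int_0^t \Upsilon(s)\,ds\bigr) = \mu_0\,\lambda(t)$, and specialising to $t=\tau$ gives $\mu_1 = \mu_0\,\lambda(\tau) = \mu_0\,\rho$, which is the desired equality.

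The one subtlety worth noting is that on the interval $[0,\tau]$ the control $\hat u$ is constant (the trajectory does not cross the switching surface between $z_0$ and $z_1$ by definition of the Poincar\'e map), so $\Upsilon(t)$ is well defined as a smooth function of $t$ and no discontinuity issues arise. There is no real obstacle here; the lemma is essentially a direct consequence of the triangular block structure of the variational equation already written out just above the statement, and more precisely of the fact that the top-left $1\times 1$ block of $\Phi_H(t)$ is governed by exactly the same scalar linear ODE as $\mu$ on the lifted trajectory.
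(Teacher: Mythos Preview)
Your proof is correct. Both you and the paper exploit the same structural fact---that in the nilpotentized system (\ref{eq:blowing_general_linearized_hamilton_vector_field}) the equations for $(\widetilde{\cal K},\widetilde w)$ are independent of $\mu$ and the $\mu$-equation is $\dot\mu=\mu\,\Upsilon(0,\widetilde{\cal K},\widetilde w,u)$---but you use it more directly: you simply observe that along the lift $\gamma'$ the scalar equation for $\mu$ is the same linear ODE $\dot\mu=\Upsilon(t)\mu$ that defines $\lambda$, and integrate. The paper instead argues via homogeneity: it writes $\mu_1=\mu_1(\mu_0,\widetilde{\cal K}_0,\widetilde w_0)$, notes that $\mu_1(\kappa\mu_0,\widetilde{\cal K}_0,\widetilde w_0)=\kappa\,\mu_1(\mu_0,\widetilde{\cal K}_0,\widetilde w_0)$, differentiates in $\kappa$ to obtain $\partial\mu_1/\partial\mu_0=\mu_1/\mu_0$, observes this ratio is constant along the vertical fibre, and then evaluates it at $\mu_0=0$ where it equals the variational entry $\lambda(\tau)=\rho$. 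Your route is shorter and more transparent; the paper's route makes more explicit contact with the block structure of $d\overline{\Phi_H}$ (identifying $\rho$ as the $(\mu,\mu)$-entry of the linearisation), which is perhaps why the authors preferred it.
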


\begin{proof}
	Indeed, $\mu_1$ is a function of the initial point $z'_0=(\mu_0,\widetilde{\cal K}_0,\widetilde w_0)$, i.e., $\mu_1 = \mu_1(\mu_0,\widetilde{\cal K}_0,\widetilde w_0)$. Clearly we have
	
	\[
		\mu_1(\kappa\mu_0,\widetilde{\cal K}_0,\widetilde w_0) = \kappa \mu_1(\mu_0,\widetilde{\cal K}_0,\widetilde w_0)\quad\forall\kappa\in\R.
	\]

	\noindent Differentiating this relation with respect to $\kappa$ at $\kappa=1$ we obtain
	
	\[
		\frac{\partial \mu_1}{\partial \mu_0} = \frac{\mu_1}{\mu_0}.
	\]

	\noindent The right-hand side of this equation does not change if $\mu_0$ is multiplied by $\kappa\ne 0$. We hence get that the derivative $\frac{\partial \mu_1}{\partial \mu_0}$ is constant on the whole vertical line $(\kappa\mu_0,\widetilde{\cal K}_0,\widetilde w_0)$, $\kappa\in\R$. However, on $\gamma$, i.e., for $\kappa=0$, we have that $\frac{\partial \mu_1}{\partial \mu_0}=\lambda(\tau) = \rho$. This completes the proof.
\end{proof}

\begin{remark}
	If we discard the coordinates $\widetilde w$ and the secondary Poisson brackets on the trajectory $\gamma'$, or equivalently, if we apply an inverse of the map $\Pi_+/g$, then we recover an optimal trajectory of model problem (\ref{problem:model}). Therefore we can apply the key Lemma \ref{lm:trajectory_exp_decrease} to it, which guarantees an exponentially fast convergence to zero of the coordinate $\mu(s)$ on this trajectory as $s\to+\infty$.
\end{remark}

Let us now consider the projection operator of the tangent space $T_{z_1}{\cal C}^H$ onto the hyperplane $T_{z_1}{\cal S}^H$ along the vector field of system (\ref{eq:blowing_general_linearized_hamilton_vector_field}). The vector field of system (\ref{eq:blowing_general_linearized_hamilton_vector_field}) is discontinuous at $z_1$, and we are interested in the velocity vector of the trajectory $\gamma$ when it arrives at $z_1$ for $t\to \tau-0$. Denote this vector by $v$. Then we have

\[
	v= \Bigl(\ \  0 \ \ \big|\ \  *\ \  \big|\ \  0\ \  \big| \ \ *\ \ \Bigr)^T.
\]

\noindent The co-vector $\alpha$ defining the hyperplane $T_{z_1}{\cal S}^H$ has the form

\[
	\alpha= \Bigl(\ \  0 \ \ \big|\ \  *\ \  \big|\ \  0\ \  \big| \ \ 0\ \ \Bigr).
\]

\noindent Hence the projection operator $1-\frac{\displaystyle v\otimes \alpha}{\displaystyle \alpha(v)}$ onto $T_{z_1}{\cal S}^H$ along $v$ has the form

\begin{tikzpicture}[description/.style={fill=white,inner sep=1pt}]
	\matrix (m) [matrix of math nodes, row sep=2em, column sep=5.5em, text height=1.5ex, text depth=0.25ex]
	{
		1   & 0 & 0 & 0 \\
		0   & * & 0 & 0\\
		0   & 0 & I & 0\\
		0   & * & 0 & I\\
	};
	\draw[decorate, decoration = brace] (m-1-1.south west) -- node [left] {$\mu$} (m-1-1.north west);
	\draw[decorate, decoration = brace] (m-2-1.south west) -- node [left] {Primary brackets $\widetilde{\cal K}$} (m-2-1.north west);
	\draw[decorate, decoration = brace] (m-3-1.south west) -- node [left] {Secondary brackets $\widetilde{\cal K}$} (m-3-1.north west);
	\draw[decorate, decoration = brace] (m-4-1.south west) -- node [left] {$\widetilde w$} (m-4-1.north west);
	\draw[decorate, decoration = brace] (m-1-1.north west) -- node [above] {$\mu$} (m-1-1.north east);
	\draw[decorate, decoration = brace] (m-1-2.north west) -- node [above] {\tiny Primary brackets $\widetilde{\cal K}$} (m-1-2.north east);
	\draw[decorate, decoration = brace] (m-1-3.north west) -- node [above] {\tiny Secondary brackets $\widetilde{\cal K}$} (m-1-3.north east);
	\draw[decorate, decoration = brace] (m-1-4.north west) -- node [above] {$\widetilde w$} (m-1-4.north east);
\end{tikzpicture}

Thus the differential of the Poincar\'e map $d\overline{\Phi_H}$ of the "nilpotentized" system (\ref{eq:blowing_general_linearized_hamilton_vector_field}) at the point $z_0$ is given by

\[
	d\overline{\Phi_H}|_{z_0} = \Bigl(1-\frac{\displaystyle v\otimes \alpha}{\displaystyle \alpha(v)}\Bigr) \Phi_H(\tau)
\]

\noindent and has the block structure

\[
	d\overline{\Phi_H}|_{z_0} =
	\left(\begin{array}{c|c|ccc|c}
		\rho \ &\  0\  & &  0 & &\  0\\[0.3cm]
		\hline&&&&&\\
		0    \ &\  A\  & &\  *\  & &\  0\\[0.3cm]
		\hline&&&&&\\
		       &     &\  \rho^{-3}I\  &\  *\  &\  *\  &\\[0.3cm]
		0    \ &\  0\  &\  0\  &\  \rho^{-2}I\  &\  *\  &0\\[0.3cm]
		       &     &\  0\  &\  0\  &\  \rho^{-1}I\  &\\[0.3cm]
		\hline&&&&&\\
		0    \ &\  *\  & &\  * \ & &\  \rho^{-1}I\\
	\end{array}\right)
\]

\begin{remark}
	Note that the matrix $A$ is obtained by conjugation with the differential of the map (\ref{eq:phi_psi_x_y_K_main}) from the corresponding matrix in the PMP system (\ref{eq:model_pmp_system}) for model problem (\ref{problem:model}). This follows immediately from Section \ref{subsec:model_problem_inside}. Moreover, since all considered trajectories lie in ${\cal D}_0^H$ the matrix appearing in the conjugation will be constant and independent of the choice of the trajectory $\gamma$ on ${\cal D}^H_0$.
\end{remark}

\bigskip

Let us now describe the matrix of the differential of the Poincar\'e map for the original, not "nilpotentized" system (\ref{eq:blowing_general_hamilton_vector_field_s}). Since the original system (\ref{eq:blowing_general_hamilton_vector_field_s}) and the "nilpotentized" system (\ref{eq:blowing_general_linearized_hamilton_vector_field}) coincide on ${\cal C}^H_0$, the matrices $d\Phi_H$ and $d\overline{\Phi_H}$ have the same elements everywhere except the first row and the first column. However, the first column is also identical, since the differential equation on $\mu$ differs only by terms of order $O(\mu^2)$ in the systems (\ref{eq:blowing_general_hamilton_vector_field_s}) and (\ref{eq:blowing_general_linearized_hamilton_vector_field}) in the neighbourhood of ${\cal C}_0^H$.

Hence we have proven

\begin{lemma}
\label{lm:block_structure_dPhi}
The differential $d\Phi_H$ of the Poincar\'e map for system (\ref{eq:blowing_general_hamilton_vector_field_s}) at the point $z_0\in{\cal D}_0^H\cap {\cal S}^H$ from ${\cal X}^0_H(x_0)$ or $\Xi^0_H(x_0)$ on the surface ${\cal S}^H$ has the form

\begin{tikzpicture}[description/.style={fill=white,inner sep=1pt}]
	\matrix (m) [matrix of math nodes, row sep=1em, column sep=1em, text height=1.5ex, text depth=0.25ex]
	{
		\rho& 0 & 0 & 0 & 0 & 0 & 0 & 0 & 0\\
		*   &   &   &   &   & * & * & * & 0\\
		*   &   &   &   &   & * & * & * & 0\\
		*   &   &   & A &   & * & * & * & 0\\
		*   &   &   &   &   & * & * & * & 0\\
		*   & 0 & 0 & 0 & 0 &\rho^{-3}I& * & * & 0\\
		*   & 0 & 0 & 0 & 0 & 0 &\rho^{-2}I& * & 0\\
		*   & 0 & 0 & 0 & 0 & 0 & 0 &\rho^{-1}I& 0\\
		*   & * & * & * & * & * & * & * &\rho^{-1}I\\
	};
	\draw[decorate, decoration = brace] (m-1-1.south west) -- node [left] {$\mu$} (m-1-1.north west);
	\draw[decorate, decoration = brace] (m-2-1.south west) -- node [left] {\tiny Primary brackets $\widetilde{\cal K}^1_r$} (m-2-1.north west);
	\draw[decorate, decoration = brace] (m-3-1.south west) -- node [left] {\tiny Primary brackets $\widetilde{\cal K}^2_{0r}$} (m-3-1.north west);
	\draw[decorate, decoration = brace] (m-4-1.south west) -- node [left] {\tiny Primary brackets $\widetilde{\cal K}^3_{00r}$} (m-4-1.north west);
	\draw[decorate, decoration = brace] (m-5-1.south west) -- node [left] {\tiny Primary brackets $\widetilde{\cal K}^4_{000r}$} (m-5-1.north west);
	\draw[decorate, decoration = brace] (m-6-1.south west) -- node [left] {\tiny Secondary brackets $\widetilde{\cal K}^2$} (m-6-1.north west);
	\draw[decorate, decoration = brace] (m-7-1.south west) -- node [left] {\tiny Secondary brackets $\widetilde{\cal K}^3$} (m-7-1.north west);
	\draw[decorate, decoration = brace] (m-8-1.south west) -- node [left] {\tiny Secondary brackets $\widetilde{\cal K}^4$} (m-8-1.north west);
	\draw[decorate, decoration = brace] (m-9-1.south west) -- node [left] {$\widetilde w$} (m-9-1.north west);
	\draw[decorate, decoration = brace] (m-1-1.north west) -- node [above] {$\mu$} (m-1-1.north east);
	\draw[decorate, decoration = brace] (m-1-2.north west) -- node [above] {Primary brackets $\widetilde{\cal K}$} (m-1-5.north east);
	\draw[decorate, decoration = brace] (m-1-6.north west) -- node [above] {Secondary brackets $\widetilde{\cal K}$} (m-1-8.north east);
	\draw[decorate, decoration = brace] (m-1-9.north west) -- node [above] {$\widetilde w$} (m-1-9.north east);
\end{tikzpicture}

\noindent where the matrix $A$ is up to substitution (\ref{eq:phi_psi_x_y_K_main}) equal to the corresponding matrix for system (\ref{eq:model_pmp_system}) of the Pontryagin maximum principle for model problem (\ref{problem:model}), and $\rho$ is obtained by virtue of lemma~\ref{lm:compute_rho}.

\end{lemma}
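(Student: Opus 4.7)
The plan is to reduce the computation of $d\Phi_H|_{z_0}$ to two ingredients: the linearization along $\gamma$ of the flow (with constant control between the transversal intersections $z_0$ and $z_1=\Phi_H(z_0)$), and the projection onto $T_{z_1}\mathcal{S}^H$ along the incoming velocity vector $v$. Since both $\gamma$ and nearby trajectories cross $\mathcal{S}^H$ transversally, the control is constant and equal to that on $\gamma$ on the stretch between the switchings; hence $d\Phi_H|_{z_0}=\bigl(I-\tfrac{v\otimes\alpha}{\alpha(v)}\bigr)\circ\Psi(\tau)$, where $\Psi(\tau)$ is the time-$\tau$ linearized flow of (\ref{eq:blowing_general_hamilton_vector_field_s}) along $\gamma$ for this fixed control and $\alpha$ defines the hyperplane $T_{z_1}\mathcal{S}^H$.

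First I would carry out the computation for the nilpotentized system (\ref{eq:blowing_general_linearized_hamilton_vector_field}), exactly as in the discussion preceding the lemma. In the coordinates ordered as $(\mu \mid \widetilde{\mathcal{K}}^1_r,\widetilde{\mathcal{K}}^2_{0r},\widetilde{\mathcal{K}}^3_{00r},\widetilde{\mathcal{K}}^4_{000r} \mid \widetilde{\mathcal{K}}^2,\widetilde{\mathcal{K}}^3,\widetilde{\mathcal{K}}^4 \mid \widetilde w)$ the variational matrix $M$ is block-lower-triangular: the secondary brackets of order $m$ decouple from each other at $\mu=0$ and decay at rate $-(5-m)\Upsilon(t)$ by the last four rows of (\ref{eq:blowing_general_linearized_hamilton_vector_field}), and $\widetilde w$ evolves with rate $-\Upsilon(t)$. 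Integrating the triangular system from $0$ to $\tau$ yields a block-lower-triangular $\Psi(\tau)$ whose diagonal blocks are $\rho, A,\rho^{-3}I,\rho^{-2}I,\rho^{-1}I,\rho^{-1}I$, with $\rho=\exp\int_0^\tau\Upsilon(t)\,dt$ identified as the ratio $\mu_1/\mu_0$ via Lemma \ref{lm:compute_rho}. The block $A$ coincides, under substitution (\ref{eq:phi_psi_x_y_K_main}), with the linearization of the model PMP system (\ref{eq:model_pmp_system}) because, as established in Subsection \ref{subsec:model_problem_inside}, the restriction of (\ref{eq:blowing_general_linearized_hamilton_vector_field}) to $\mathcal{D}_0^H$ is isomorphic to the blown-up model system after this substitution, and the trajectory $\gamma$ lies in $\mathcal{D}_0^H$.

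Next I would apply the projection $I-v\otimes\alpha/\alpha(v)$. The switching covector $\alpha$ involves only the primary brackets $\widetilde{\mathcal{K}}^1_r$, so it annihilates the $\mu$-, secondary-bracket-, and $\widetilde w$-components; on the other hand $v$ vanishes in the $\mu$- and secondary-bracket-directions but is nonzero in the $\widetilde w$- and primary-bracket-directions. Therefore the projection affects only the rows corresponding to the primary brackets and to $\widetilde w$ (producing the starred entries in the last row of the lemma's matrix), while preserving the block structure established for $\Psi(\tau)$. This yields the asserted form for $d\overline{\Phi_H}|_{z_0}$.

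Finally I would pass from the nilpotentized to the original system. On $\mathcal{C}_0^H=\{\mu=0\}$ the two systems coincide, so all derivatives in directions tangent to $\mathcal{C}_0^H$ (i.e., with respect to $\widetilde{\mathcal{K}}$ and $\widetilde w$) agree: every entry of $d\Phi_H$ in columns $\ge 2$ matches the corresponding entry of $d\overline{\Phi_H}$. For the first row, $\dot\mu=\mu\Upsilon$ depends on the other variables only through a factor of $\mu$, so $\partial_{\widetilde{\mathcal{K}}}\dot\mu=\partial_{\widetilde w}\dot\mu=0$ at $\mu=0$; the first row is therefore $(\rho,0,\dots,0)$ as in $d\overline{\Phi_H}$. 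Only in the first column can new nonzero entries appear, since in (\ref{eq:blowing_general_hamilton_vector_field_s}) the right-hand sides for $\widetilde{\mathcal{K}}$ and $\widetilde w$ do depend on $\mu$ (whereas in the nilpotentized system they do not), and $\partial_\mu$ evaluated at $\mu=0$ need not vanish. These are the generic starred entries in the first column of the matrix; the rest of the block structure is unchanged, proving the lemma. The main technical point is bookkeeping of exactly which entries are protected by the coincidence of the two systems on $\mathcal{C}_0^H$ and which are not — once this is kept straight, the structural description of $d\Phi_H|_{z_0}$ follows directly from the explicit computation of $d\overline{\Phi_H}|_{z_0}$.
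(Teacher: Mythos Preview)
Your proposal is correct and follows essentially the same approach as the paper: compute $d\overline{\Phi_H}$ for the nilpotentized system as the composition of the time-$\tau$ linearized flow (block-triangular with diagonal blocks $\rho,A,\rho^{-3}I,\rho^{-2}I,\rho^{-1}I,\rho^{-1}I$) with the projection $I-v\otimes\alpha/\alpha(v)$, then argue that passing to the original system can affect only the first $\mu$-row and $\mu$-column, with the first row in fact preserved. Your bookkeeping of the first row versus first column is in fact cleaner than the paper's own text, which states that ``the first column is also identical'' while giving the $O(\mu^2)$ argument that actually pertains to the first row---the lemma's displayed matrix (zeros in the first row, stars in the first column) confirms that your reading is the intended one.
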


\subsection{finalization of the proof of the chaotic behaviour of the trajectories in a generic Hamiltonian system}

Lemma \ref{lm:block_structure_dPhi} allows us to use the results on the chaotic behaviour of the optimal synthesis for model problem (\ref{problem:model}). As we mentioned above, the sets $\Xi_H^0(x_0)\subset{\cal C}^H_0$ or ${\cal X}_H^0(x_0)\subset{\cal C}^H_0$ are constructed on the zero section ${\cal C}^H_0$ and are taken to the strange point $x_0$ by the inverse map $\blowingBig^{-1}$. In order to preserve the chaotic behaviour of the synthesis, we shall for every optimal trajectory from ${\cal X}$ or $\Xi$ construct two-dimensional stable submanifolds by virtue of the Hadamard-Perron theorem \cite{Katok}. These will leave the zero section ${\cal C}^H_0$. More precisely, we shall apply the Hadamard-Perron theorem to the Poincar\'e map $\Phi_H$ along the trajectory.

Let us fix a trajectory $\gamma$ from the set $\Xi_H^0(x_0)$ or the set ${\cal X}_H^0(x_0)$. The trajectory $\gamma$ is the image of some trajectory $\widehat\gamma$ from $\Xi/g$ or ${\cal X}/g$ under the map $\Pi_+/g$. Let us denote the intersection points of the switching surface ${\cal S}^H$  with the trajectory $\gamma$ consecutively by $\ldots,z_{-1},z_0,z_1,\ldots$, and with the trajectory $\widehat\gamma$ by $\ldots,\widehat z_{-1},\widehat z_0,\widehat z_1,\ldots$. The matrix $A$ and the number $\rho$ from Lemma \ref{lm:block_structure_dPhi} which correspond to the $k$-th switching point $z_k$ will be denoted by $A(z_k)=A_k$ and $\rho(z_k)=\rho_k$, respectively. By virtue of Lemma \ref{lm:block_structure_dPhi} we have for Hamiltonian system (\ref{eq:model_pmp_system})

\[
	d\Phi(\widehat z_k) = \left(
		\begin{array}{cc}
			\rho_k & 0\\
			0 & A_k\\
		\end{array}
	\right).
\]

\noindent Here and in the sequel $\Phi$ denotes the Poincar\'e map in the blown-up model system with vector field $\mu\xi$, where $\xi$ is the vector field of system (\ref{eq:blowing_model_hamilton_vector_field}).

First we show that the original Hamiltonian system with Hamiltonian $H$ is contracting in the vertical direction. The differential $d\Phi_H(z_k)$ corresponding to a single application of the Poincar\'e map $\Phi_H$ may in general have an eigenvalue $\rho_k$ which is larger than 1. We claim, however, that Lemma \ref{lm:trajectory_exp_decrease} guarantees the existence of a natural number $N$ such that $\Phi_H^N$ is contracting in the vertical direction. Indeed, every optimal trajectory of the model problem enters the origin in finite time. Thus we will eventually obtain a contraction in the vertical direction if we repeat the Poincar\'e map $\Phi_H$ often enough. We shall now prove this claim formally. Let us use the results obtained for model problem (\ref{problem:model}). From Lemma \ref{lm:trajectory_exp_decrease} it follows that on the optimal trajectory $\widehat\gamma$ on $M_+$ we have the inequalities 

\[
	c_1e^{-c_2 s}\le \frac{\mu(\widehat\gamma(s))}{\mu(\widehat\gamma(0))}\le c_3 e^{-c_4s},
\]

\noindent where the positive constants $c_j$, $k=1,\ldots,4$ are independent of the choice of trajectory on $M_+$. Therefore

\[
	\frac{\mu(s)}{\mu(0)}\le\frac{1}{2}\ \ \mbox{when}\ \ s\ge\frac{1}{c_4}\ln(2c_3).
\]

\noindent We now bound the transition time $\sigma(z)$ between two consecutive intersections of the switching surface ${\cal S}$ at points $z\in{\cal S}$ and $\Phi(z)\in{\cal S}$ when measured with the velocity of the field $\mu\xi$ along an arbitrary trajectory on $\Xi$ or ${\cal X}$. The function $\sigma$ is continuous and independent of $\mu$, and the sets $(\Xi/g)\cap {\cal S}$ and $({\cal X}/g)\cap {\cal S}$ are compact and have an empty intersection with ${\cal S}_{123}$. Therefore we get the bounds

\[
	s_{\min}\le\sigma(z)\le s_{\max}
\]

\noindent for some nonnegative constants $s_{\min}$ and $s_{\max}$. Since any continuous function achieves its minimum on a compact set, we have $s_{\max}\ge s_{\min}>0$. Hence we get by virtue of Lemma \ref{lm:compute_rho} 

\[
	\rho_k\rho_{k+1}\ldots\rho_{k+N-1} < \frac{1}{2}\qquad \forall k
	\quad\mbox{for}\quad
	N > \frac{1}{s_{\min}c_4}\ln(2c_3).
\]

Thus, if we fix a sufficiently large number $N$ and work only with the $N$-th power of the maps $\Phi$ and $\Phi_H$ instead of these maps themselves, we may assume that we have a contraction in the vertical direction. More precisely, we have proven 

\begin{lemma}
	Let us fix\footnote{The number $N$ is independent of the point $z_k$ and can be chosen universally for all trajectories.} $N > \frac{1}{s_{\min}c_4}\ln(2c_3)$ and denote $\Theta=\Phi^{-N}$ and $\Theta_H=\Phi_H^{-N}$. Set also $\varrho(z_{kN})=\varrho_k = (\rho_{kN-1}\rho_{kN-2}\ldots\rho_{(k-1)N})^{-1}>2$ and ${\cal A}(z_{kN})={\cal A}_k=(A_{kN-1}A_{kN-2}\ldots A_{(k-1)N})^{-1}$. Then we have

\[
	d\Theta(\widehat z_{kN}) = \left(
		\begin{array}{cc}
			\varrho_k & 0\\
			0 & {\cal A}_k\\
		\end{array}
	\right),
\]

\noindent and the matrix $d\Theta_H(z_{kN})$ has the structure described in Lemma \ref{lm:block_structure_dPhi} when replacing $\rho$ by $\varrho_k$ and $A$ by ${\cal A}_k$.
\end{lemma}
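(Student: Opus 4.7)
My approach is to derive the form of $d\Theta$ and $d\Theta_H$ by iterating the one-step description in Lemma \ref{lm:block_structure_dPhi} via the chain rule, together with the contraction estimate $\rho_k\rho_{k+1}\cdots\rho_{k+N-1} < \frac12$ that was established in the paragraph preceding the statement. The proof thus has essentially three components: a chain-rule computation, a check that the block structure displayed in Lemma \ref{lm:block_structure_dPhi} is stable under matrix multiplication and inversion, and a transcription of the contraction estimate.

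For the model system I would begin by noting that Lemma \ref{lm:block_structure_dPhi} specialized to the model problem yields $d\Phi(\widehat z_j) = \mathrm{diag}(\rho_j, A_j)$, since the secondary-bracket and $\widetilde w$ coordinates are absent. The product
\[
 d\Phi^N(\widehat z_{(k-1)N}) = d\Phi(\widehat z_{kN-1})\cdots d\Phi(\widehat z_{(k-1)N})
\]
is then block diagonal with entries $\rho_{kN-1}\cdots\rho_{(k-1)N}$ and $A_{kN-1}\cdots A_{(k-1)N}$, and inverting yields precisely $\mathrm{diag}(\varrho_k,\mathcal A_k)$, giving the first half of the lemma.

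For the Hamiltonian system the chain-rule computation is identical in form, but one must verify that the block pattern of Lemma \ref{lm:block_structure_dPhi}---zeros in the prescribed off-diagonal positions and scalar multiples of the identity in the secondary and $\widetilde w$ diagonal blocks---is preserved under composition and inversion. A direct computation shows that matrices of that shape form a sub-algebra closed under multiplication, and that the diagonal blocks of the product factor out cleanly from the off-diagonal fillings. The same computation shows that the inverse of such a matrix keeps the shape, with the diagonal blocks getting inverted individually (which is unambiguous since the secondary and $\widetilde w$ diagonal blocks are scalar multiples of the identity and hence commute with everything). Consequently, the diagonal blocks of $d\Phi_H^N(z_{(k-1)N})$ are exactly the products of the corresponding single-step factors, and after inversion the substitutions $\rho \mapsto \varrho_k$ and $A \mapsto \mathcal A_k$ hold throughout.

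Finally, the inequality $\varrho_k > 2$ is immediate from the choice of $N$: Lemma \ref{lm:trajectory_exp_decrease} gives the uniform bound $\mu(s)/\mu(0) \le c_3 e^{-c_4 s}$ on optimal trajectories, which combined with the positive lower bound $\sigma(z) \ge s_{\min}$ on the inter-switching time forces $\rho_{kN-1}\cdots\rho_{(k-1)N} < \frac12$ whenever $N$ exceeds the stated threshold; uniformity over $k$ follows from compactness of $(\Xi/g)\cap\mathcal S$ and $(\mathcal X/g)\cap\mathcal S$ and their disjointness from $\mathcal S_{123}$. The only delicate point in the argument is the linear-algebra verification that the seven-block shape in Lemma \ref{lm:block_structure_dPhi} is closed under multiplication and inversion---a routine but tedious case check with no deeper obstacle; the rest of the proof is pure bookkeeping.
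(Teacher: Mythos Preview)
Your proposal is correct and matches the paper's approach: the paper presents this lemma as an immediate summary of the preceding discussion (the phrase ``More precisely, we have proven'' introduces the lemma), relying on Lemma \ref{lm:block_structure_dPhi} for the one-step block structure and on the contraction estimate $\rho_k\rho_{k+1}\cdots\rho_{k+N-1}<\tfrac12$ derived just before from Lemma \ref{lm:trajectory_exp_decrease} together with the uniform bounds $s_{\min}\le\sigma(z)\le s_{\max}$. Your explicit check that the block pattern is closed under multiplication and inversion (which becomes transparent once one reorders the variables as $(\mu,\ \text{secondary},\ \text{primary},\ \widetilde w)$, making the matrix block lower triangular) simply fills in a detail the paper leaves to the reader.
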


As a next step towards the application of the Hadamard-Perron theorem we will conduct a thorough study of the structure of the matrices $A_k$. We shall show that in some special decomposition of the tangent space all matrices $A_k$ become block-diagonal. For convenience we shall denote

\begin{equation}
\label{eq:defn_A_rho}
	d\Phi(\widehat z) = \left(
		\begin{array}{cc}
			\rho(\widehat z) & 0\\
			0 & A(\widehat z)\\
		\end{array}
	\right),
\end{equation}

\noindent where the linear operator $A(\widehat z)$ acts from $T_{\widehat z}({\cal C}_0\cap {\cal S})$ to $T_{\Phi(\widehat z)}{\cal C}_0\cap {\cal S}$.

\begin{lemma}
\label{lm:about_tangent_hyperbolic_decomposition}
	At the points $\widehat z$ of each of the sets $(\Xi/g)\cap {\cal S}$ and $({\cal X}/g)\cap {\cal S}$ there exists a decomposition of the tangent space\footnote{Recall that $\dim{{\cal C}_0\cap {\cal S}}=6$.} $T_{\widehat z}({\cal C}_0\cap {\cal S})=\widehat U(\widehat z)\oplus \widehat V(\widehat z)$, where $\dim \widehat U(\widehat z)=1$ and $\dim \widehat V(\widehat z)=5$, and a metric on the factors such that the operator $A(\widehat z)$ takes the form
	
	\[
		A(\widehat z) = \left(
			\begin{array}{cc}
				\widehat\alpha(\widehat z) & 0\\
				0 & \widehat\delta(\widehat z)\\
			\end{array}
		\right),
	\]

	\noindent where $\widehat\alpha(\widehat z):\widehat U(\widehat z)\to \widehat U(\Phi(\widehat z))$ and $\widehat\delta(\widehat z):\widehat V(\widehat z)\to \widehat V(\Phi(\widehat z))$. Here $\widehat\alpha$ is a contracting operator, and $\widehat\delta$ is an expanding operator, i.e.,
	
	\[
		\|\widehat\alpha(\widehat z)\|<1\qquad\mbox{and}\qquad \|\widehat\delta(\widehat z)^{-1}\|<1\qquad\forall \widehat z.
	\]
	
	\noindent Moreover, both the metric and the decomposition $T_{\widehat z}({\cal C}_0\cap {\cal S})=\widehat U(\widehat z)\oplus \widehat V(\widehat z)$ depend continuously on $\widehat z$.
\end{lemma}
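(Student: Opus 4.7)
The plan is to exploit the uniformly hyperbolic structure of $(\Xi/g)\cap{\cal S}$ and $({\cal X}/g)\cap{\cal S}$ which is implicit in the Smale horseshoe constructions carried out in Theorems \ref{thm:model_chaos_any_triangle} and \ref{thm:model_chaos_equilateral_triangle}. Recall that the six-link cycles $R^\pm$, the four-link cycles $Q^i$ and the additional periodic orbits entering the topological Markov chain $\Sigma_\Gamma$ are all hyperbolic periodic points of $\widetilde\Phi$ on ${\cal C}_0\cap{\cal S}$, with exactly one eigenvalue of modulus strictly less than one and five of modulus strictly greater than one (this was established in \cite{ZMHBasic} and recorded in the discussion preceding Lemma~\ref{lm:homoclinic_point_any_triangle}, and is reflected in the entries of Tables~\ref{table:cicles_F} and \ref{table:cicles_F_obr}). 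The sets $(\Xi/g)\cap{\cal S}$ and $({\cal X}/g)\cap{\cal S}$ are precisely the closures of the homoclinic/heteroclinic trajectories of these periodic orbits on which the dynamics is (semi-)conjugate to a subshift of finite type.

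First I would invoke the classical theorem (see e.g.\ \cite{Katok}, Ch.~6) that a hyperbolic basic set carrying a Smale horseshoe admits a continuous $d\Phi$-invariant splitting $T_{\widehat z}({\cal C}_0\cap{\cal S})=\widehat U(\widehat z)\oplus\widehat V(\widehat z)$ with $\dim\widehat U=1$, $\dim\widehat V=5$, obtained by the cone-field/graph-transform method applied around the splittings at periodic points. The uniform control on the cone contraction along every admissible symbolic itinerary is furnished by the Lipschitz-hyperbolicity bounds of Lemma~\ref{lm:F_lips_hyperbolic}: along every elementary cycle the constants $\lambda^\pm_\sigma$ (resp.\ ${\lambda^\pm_\sigma}'$) are strictly less than one, and this extends by the usual sub-multiplicative estimate to every admissible finite word. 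For $({\cal X}/g)\cap{\cal S}$ the same argument applies after including the three transpositional translates used to reconstruct ${\cal X}$ from $\nw{F}$, each of which is bi-Lipschitz conjugate to the original horseshoe by Lemma~\ref{lm:bilipschitzian_model_Phi}.

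Once the continuous invariant splitting is in place, the block-diagonal form
\[
A(\widehat z)=\begin{pmatrix}\widehat\alpha(\widehat z)&0\\0&\widehat\delta(\widehat z)\end{pmatrix}
\]
is automatic, since $A(\widehat z)$ sends $\widehat U(\widehat z)$ to $\widehat U(\Phi(\widehat z))$ and $\widehat V(\widehat z)$ to $\widehat V(\Phi(\widehat z))$ by definition. To upgrade the asymptotic contraction/expansion to strict contraction and expansion in a single step, I would introduce the standard adapted (Lyapunov/Mather) metrics
\[
\|v\|_{\widehat U,N}=\sum_{k=0}^{N-1}\|d\Phi^{k}|_{\widehat U}\,v\|,\qquad \|w\|_{\widehat V,N}=\sum_{k=0}^{N-1}\|d\Phi^{-k}|_{\widehat V}\,w\|,
\]
on each factor, for $N$ sufficiently large. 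A direct calculation then yields $\|\widehat\alpha(\widehat z)\|_{\widehat U,N}<1$ and $\|\widehat\delta(\widehat z)^{-1}\|_{\widehat V,N}<1$ uniformly in $\widehat z$, and continuity of the metric in $\widehat z$ is preserved because the splitting and $d\Phi$ depend continuously on $\widehat z$.

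The main obstacle will be verifying genuine uniform hyperbolicity rather than only Lipschitz-hyperbolicity in the sense of \cite{LokutLipsChaos}, since the Poincar\'e map $\Phi$ is piecewise (not globally) smooth and is defined on a switching surface with singular stratum ${\cal S}_{123}$. This obstacle dissolves on our invariant sets: by construction the trajectories in $\Xi/g$ and ${\cal X}/g$ avoid ${\cal S}_{123}$ (Lemmas~\ref{lm:periodic_transversal_S} and~\ref{lm:homoclinic_point_any_triangle}) and intersect ${\cal S}$ transversally, so $\Phi$ is a smooth diffeomorphism in a neighbourhood of every orbit considered. The remaining issue, namely that $F=\Phi^2$ on one portion and $F=\Phi^3$ on another, is handled by passing from the hyperbolic splitting for $F$ to a hyperbolic splitting for $\Phi$ via pull-back along the intermediate Poincar\'e iterates (which are smooth diffeomorphisms by the same transversality argument), so that the eigenvalue count $1{+}5$ is preserved. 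This completes the plan.
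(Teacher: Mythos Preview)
Your outline works for $(\Xi/g)\cap{\cal S}$ and is close to what the paper does there: the paper also invokes, in one line, Grobman--Hartman together with the transversality of the homoclinic point from Lemma~\ref{lm:homoclinic_point_any_triangle}, so that the $1+5$ index is inherited from the six-link orbit $R^\pm$ and the off-diagonal cone constants can be made as small as desired by shrinking the horseshoe.

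For $({\cal X}/g)\cap{\cal S}$ there is a genuine gap. The Lipschitz-hyperbolicity of Lemma~\ref{lm:F_lips_hyperbolic} and the constants of Tables~\ref{table:cicles_F}--\ref{table:cicles_F_obr} concern the \emph{two-dimensional} map $F$ on the disc $D_{31}\subset M_+/g$, whereas the present lemma asks for a splitting of the full six-dimensional tangent space $T_{\widehat z}({\cal C}_0\cap{\cal S})$. The four directions transverse to $M_+/g$ inside ${\cal C}_0$ are untouched by the analysis of $F$, and nothing in your argument shows that they all fall into the expanding factor $\widehat V$. Knowing that a handful of specific periodic orbits are $(1,5)$-hyperbolic in six dimensions does not by itself propagate uniform $(1,5)$ cone control along every admissible $A/C$-itinerary; the set $({\cal X}/g)\cap{\cal S}$ was constructed as a Lipschitz horseshoe in two dimensions, not as a classical smooth horseshoe in six. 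The paper supplies exactly this missing six-dimensional input by a substantial explicit computation: it writes down sixteen constant $6\times 6$ coordinate-change matrices $M_{AA.AA},\dots,M_{CC.CC}$ (one per symbolic domain) and verifies numerically that in those coordinates the full $6\times 6$ differential of the Poincar\'e map satisfies uniform cone bounds $|u_x|>1.2641$, $\|u_y\|,\|v_x\|<0.3894$, $\|v_y\|<0.40$ on every domain. Only after this step does the paper run the block-diagonalisation, which is indeed the graph-transform idea you describe, implemented as a contraction mapping: one solves the functional equation~(\ref{eq:R_new_R_old}) for a continuous operator field ${\cal R}(\widehat z)$ that kills the lower off-diagonal block of $A^{-1}$, and then repeats the procedure with the roles of the factors exchanged to kill the remaining off-diagonal block. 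Your adapted-metric construction at the end is a legitimate alternative to the paper's appeal to compactness for passing from pointwise to uniform strict contraction/expansion.
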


\begin{proof}
	The proof of the lemma is divided into two parts. First we choose an "almost" correct decomposition of the tangent space at each of the points in the sets $(\Xi/g)\cap {\cal S}$ and $({\cal X}/g)\cap {\cal S}$, such that the matrix $A(\widehat z)$ becomes "almost" block-diagonal, and then we transform this decomposition. 
	
	We claim that in the neighbourhood of each of the sets $(\Xi/g)\cap {\cal S}$ and $({\cal X}/g)\cap {\cal S}$ there exist local coordinates and a Riemannian metric such that
	
	\begin{equation}
	\label{eq:A_alpha_beta_gamma_delta}
		A^{-1}(\widehat z) = \left(
			\begin{array}{cc}
				\alpha(\widehat z) & \beta(\widehat z)\\
				\gamma(\widehat z) & \delta(\widehat z)\\
			\end{array}
		\right)
	\end{equation}
	
	\noindent for every point $\widehat z$ in these neighbourhoods, where $\alpha(\widehat z)$ is a number, $\beta(\widehat z)$ and $\gamma(\widehat z)$ are $1\times 5$ and $5\times 1$ matrices, respectively, and $\delta(\widehat z)$ is a $5\times 5$ matrix. Moreover, there exist positive numbers $u_x$, $u_y$, $v_x$, and $v_y$ such that
	
	\[
		\|\alpha(\widehat z)^{-1}\| \le u_x^{-1},\quad \|\beta(\widehat z)\| \le u_y,\quad
		\|\gamma(\widehat z)\| \le v_x,\quad \|\delta(\widehat z)\| \le v_y.
	\]

	\noindent Here the diagonal elements are of hyperbolic nature and the off-diagonal elements are small compared to the diagonal elements. More precisely, the numbers $u_x$, $u_y$, $v_x$, and $v_y$ satisfy the inequalities
	
	\begin{equation}
	\label{eq:lipschitz_hyperbolic}
		\left\{
			\begin{array}{l}
				u_x>1>v_y;\\
				(u_x-1)(1-v_y) > u_yv_x.\\
			\end{array}
		\right.
	\end{equation}
	
	Indeed, the existence of such local coordinates in the neighbourhood of the set $(\Xi/g)\cap {\cal S}$ follows from the Grobman-Hartman theorem \cite{Katok} and the transversality of the homoclinic point (see Lemma \ref{lm:homoclinic_point_any_triangle}). The set $(\Xi/g)\cap {\cal S}$ from \ref{thm:model_chaos_any_triangle} can be reduced such that in an arbitrary metric the norm of the off-diagonal elements will be arbitrarily small, and the diagonal elements will be contracting and expanding operators of the same dimensions as for the 6-link chain, namely 5 and 1. 
	
	For the set $({\cal X}/g)\cap {\cal S}$ we prove this by explicitly describing appropriate local coordinates on the switching surface given by the relation $\psi_1=\psi_3$ with normalization $\phi_2=1$. First we reduce the number of variables to six, namely $\pi_1 = \frac{\sqrt{2}}{2}(y_1-y_3)$, $\pi_2 = \frac{1}{\sqrt{6}}(y_1-2y_2+y_3)$, $\pi_3 = \frac{\sqrt{2}}{2}(x_1-x_3)$, $\pi_4 = \frac{1}{\sqrt{6}}(x_1-2x_2+x_3)$, $\pi_5 = \frac{\sqrt{2}}{2}(\phi_1-\phi_3)$, $\pi_6 = \frac{1}{\sqrt{6}}(\psi_1-2\psi_2+\psi_3)$. Now in each of the 16 domains $AA.AA,AA.AC,\dots,CC.CC$ we apply a linear coordinate transformation $\eta = M\pi$, where the constant coefficient matrices for the 16 domains are given by
	
\begin{align*}
M_{AA.AA} &= \begin{pmatrix}
-0.5913 & -1.0677 & 0.0683 & 0.2937 & -0.0114 & 0.0316 \\
    0.0848 & -1.3179 & -0.9326 & -0.6262 & -0.0104 & 0.0737 \\
   -0.0023 & 0.6606 & 0.2677 & 0.7432 & 0.4258 & 0.2468 \\
   -0.2024 & 0.8600 & -0.7595 & 0.9189 & -1.5610 & 0.7982 \\
    0.5347 & -0.1061 & 1.3964 & 0.6468 & 2.3193 & -2.1665 \\
   -1.1822 & 0.5279 & -2.5279 & -0.2730 & -3.8024 & 2.6844
   \end{pmatrix}, \\
\end{align*}
\begin{align*}M_{AA.AC} &= \begin{pmatrix}
-0.3772 & -0.6812 & 0.0436 & 0.1874 & -0.0072 & 0.0201 \\
    0.0603 & -0.9371 & -0.6631 & -0.4452 & -0.0074 & 0.0524 \\
   -0.0016 & 0.4697 & 0.1903 & 0.5285 & 0.3028 & 0.1755 \\
   -0.1439 & 0.6115 & -0.5401 & 0.6534 & -1.1101 & 0.5675 \\
    0.3801 & -0.0754 & 0.9930 & 0.4599 & 1.6493 & -1.5406 \\
   -0.8407 & 0.3754 & -1.7976 & -0.1940 & -2.7037 & 1.9088
   \end{pmatrix}, \\
M_{AA.CA} &= \begin{pmatrix}
-0.3971 & -0.6704 & 0.1033 & 0.2038 & 0.0053 & -0.0444 \\
    0.1116 & -0.2444 & -0.2931 & -0.2188 & 0.0594 & 0.0028 \\
    0.0819 & 0.6562 & 0.5188 & 0.8690 & 1.3697 & 0.2361 \\
   -0.0770 & 0.5547 & -0.3887 & 0.6313 & -0.8937 & 0.5253 \\
    0.2177 & -0.0065 & 0.6332 & -0.1496 & 1.2464 & 0.3705 \\
   -0.1329 & 0.0749 & -0.4715 & -0.2392 & -0.9990 & 0.9895
   \end{pmatrix}, \\
M_{AA.CC} &= \begin{pmatrix}
-0.2941 & -0.4965 & 0.0765 & 0.1509 & 0.0039 & -0.0329 \\
    0.1180 & -0.2584 & -0.3099 & -0.2314 & 0.0629 & 0.0030 \\
    0.0865 & 0.6937 & 0.5485 & 0.9186 & 1.4479 & 0.2496 \\
   -0.0815 & 0.5864 & -0.4109 & 0.6674 & -0.9447 & 0.5552 \\
    0.2302 & -0.0068 & 0.6694 & -0.1581 & 1.3176 & 0.3916 \\
   -0.1405 & 0.0791 & -0.4985 & -0.2529 & -1.0561 & 1.0461
   \end{pmatrix}, \\
M_{AC.AA} &= \begin{pmatrix}
-1.3472 & -1.2301 & 0.8883 & 0.9348 & -0.4362 & -0.0239 \\
    0.1653 & -1.7355 & -1.7124 & -0.5395 & 0.6959 & 0.2788 \\
    0.3610 & 0.1926 & 2.3424 & -0.1152 & 4.4150 & -0.4106 \\
   -0.2216 & 1.5281 & -0.9719 & 1.9245 & -2.4828 & 1.4865 \\
    1.7063 & -1.3513 & 3.2009 & -1.9034 & 4.1924 & -0.2951 \\
   -0.4124 & 0.3207 & -1.2318 & -0.1906 & -2.4337 & 2.6356
   \end{pmatrix}, \\
M_{AC.AC} &= \begin{pmatrix}
-1.5370 & -1.4034 & 1.0134 & 1.0666 & -0.4977 & -0.0273 \\
    0.1456 & -1.5288 & -1.5084 & -0.4752 & 0.6131 & 0.2456 \\
    0.3180 & 0.1697 & 2.0633 & -0.1015 & 3.8890 & -0.3616 \\
   -0.1951 & 1.3460 & -0.8561 & 1.6952 & -2.1869 & 1.3094 \\
    1.5029 & -1.1902 & 2.8195 & -1.6767 & 3.6929 & -0.2599 \\
   -0.3632 & 0.2825 & -1.0851 & -0.1678 & -2.1437 & 2.3215
   \end{pmatrix}, \\
M_{AC.CA} &= \begin{pmatrix}
-1.4918 & -1.2360 & 1.0849 & 0.9744 & -0.6000 & -0.1150 \\
   -0.5929 & 0.5930 & 1.2489 & 0.5188 & -0.7829 & -0.2039 \\
    0.2087 & -2.0370 & 0.9934 & -2.4691 & 2.5841 & -1.8853 \\
    0.8390 & 1.3864 & 4.1958 & 1.8630 & 10.4128 & 0.9577 \\
    1.8997 & -0.1209 & 6.1266 & -1.0920 & 12.6540 & 3.2477 \\
   -0.5410 & 0.3880 & -1.7381 & -0.6350 & -3.6204 & 3.9319
   \end{pmatrix}, \\
M_{AC.CC} &= \begin{pmatrix}
-1.4492 & -1.2008 & 1.0539 & 0.9467 & -0.5829 & -0.1117 \\
   -0.6064 & 0.6066 & 1.2774 & 0.5307 & -0.8008 & -0.2085 \\
    0.2134 & -2.0836 & 1.0162 & -2.5256 & 2.6432 & -1.9285 \\
    0.8583 & 1.4182 & 4.2918 & 1.9056 & 10.6511 & 0.9796 \\
    1.9431 & -0.1237 & 6.2668 & -1.1170 & 12.9436 & 3.3220 \\
   -0.5534 & 0.3969 & -1.7778 & -0.6495 & -3.7032 & 4.0219
   \end{pmatrix}, \\
M_{CA.AA} &= \begin{pmatrix}
-0.8312 & -1.5009 & 0.0960 & 0.4129 & -0.0159 & 0.0445 \\
    0.1017 & -1.5797 & -1.1178 & -0.7505 & -0.0125 & 0.0883 \\
   -0.0028 & 0.7918 & 0.3209 & 0.8908 & 0.5104 & 0.2957 \\
   -0.2426 & 1.0308 & -0.9104 & 1.1014 & -1.8712 & 0.9568 \\
    0.6409 & -0.1272 & 1.6739 & 0.7752 & 2.7801 & -2.5969 \\
   -1.4171 & 0.6327 & -3.0301 & -0.3272 & -4.5577 & 3.2177
   \end{pmatrix}, \\
\end{align*}
\begin{align*}
M_{CA.AC} &= \begin{pmatrix}
-0.5590 & -1.0094 & 0.0646 & 0.2777 & -0.0107 & 0.0300 \\
    0.0284 & -1.2522 & -1.0645 & -0.6260 & -0.2454 & 0.2924 \\
    0.0136 & 0.5899 & 0.2568 & 0.6980 & 0.4571 & 0.2243 \\
   -0.1603 & 0.7743 & -0.6799 & 0.8766 & -1.3837 & 0.6740 \\
    0.4698 & 0.0295 & 1.3318 & 0.6845 & 2.0995 & -2.0201 \\
   -1.1106 & 0.4685 & -2.3717 & -0.2692 & -3.5420 & 2.4780
   \end{pmatrix}, \\
M_{CA.CA} &= \begin{pmatrix}
-0.6987 & -1.1795 & 0.1817 & 0.3586 & 0.0093 & -0.0782 \\
    0.2866 & -0.6276 & -0.7527 & -0.5620 & 0.1527 & 0.0072 \\
    0.2102 & 1.6846 & 1.3320 & 2.2311 & 3.5163 & 0.6063 \\
   -0.1980 & 1.4241 & -0.9979 & 1.6208 & -2.2944 & 1.3485 \\
    0.5591 & -0.0166 & 1.6256 & -0.3841 & 3.1999 & 0.9511 \\
   -0.3412 & 0.1922 & -1.2105 & -0.6143 & -2.5648 & 2.5404
   \end{pmatrix}, \\
M_{CA.CC} &= \begin{pmatrix}
-0.5165 & -0.8721 & 0.1343 & 0.2651 & 0.0068 & -0.0579 \\
    0.2310 & -0.5057 & -0.6066 & -0.4529 & 0.1230 & 0.0059 \\
    0.1694 & 1.3576 & 1.0734 & 1.7980 & 2.8339 & 0.4887 \\
   -0.1595 & 1.1476 & -0.8042 & 1.3062 & -1.8490 & 1.0867 \\
    0.4506 & -0.0134 & 1.3100 & -0.3096 & 2.5789 & 0.7664 \\
   -0.2750 & 0.1549 & -0.9756 & -0.4950 & -2.0670 & 2.0473
   \end{pmatrix}, \\
M_{CC.AA} &= \begin{pmatrix}
-1.7029 & -1.5549 & 1.1228 & 1.1818 & -0.5513 & -0.0303 \\
    0.2242 & -2.3541 & -2.3229 & -0.7318 & 0.9440 & 0.3782 \\
    0.4897 & 0.2613 & 3.1773 & -0.1562 & 5.9887 & -0.5568 \\
   -0.3006 & 2.0727 & -1.3182 & 2.6104 & -3.3677 & 2.0164 \\
    2.3143 & -1.8329 & 4.3417 & -2.5818 & 5.6867 & -0.4003 \\
   -0.5593 & 0.4349 & -1.6709 & -0.2585 & -3.3011 & 3.5749
   \end{pmatrix}, \\
M_{CC.AC} &= \begin{pmatrix}
-2.1389 & -1.9529 & 1.4103 & 1.4843 & -0.6925 & -0.0380 \\
    0.2145 & -2.2524 & -2.2225 & -0.7003 & 0.9032 & 0.3618 \\
    0.4685 & 0.2499 & 3.0400 & -0.1494 & 5.7300 & -0.5328 \\
   -0.2876 & 1.9832 & -1.2612 & 2.4976 & -3.2221 & 1.9292 \\
    2.2144 & -1.7536 & 4.1542 & -2.4703 & 5.4411 & -0.3829 \\
   -0.5352 & 0.4161 & -1.5988 & -0.2473 & -3.1585 & 3.4205
   \end{pmatrix}, \\
M_{CC.CA} &= \begin{pmatrix}
-2.1848 & -1.8102 & 1.5888 & 1.4272 & -0.8787 & -0.1684 \\
   -0.9631 & 0.3038 & 0.8826 & 0.2771 & -2.6832 & -1.3049 \\
    0.1995 & -2.1397 & 1.4883 & -2.6630 & 3.2584 & -1.9360 \\
    0.9908 & 1.4063 & 4.5037 & 1.9791 & 11.6620 & 0.8665 \\
    2.2134 & -0.2631 & 6.8269 & -1.3631 & 14.3844 & 3.5996 \\
   -0.6009 & 0.2926 & -2.2588 & -0.8628 & -4.4148 & 4.3259
   \end{pmatrix}, \\
M_{CC.CC} &= \begin{pmatrix}
-1.8994 & -1.5738 & 1.3813 & 1.2407 & -0.7640 & -0.1464 \\
   -0.6747 & 0.6749 & 1.4212 & 0.5903 & -0.8909 & -0.2319 \\
    0.2374 & -2.3179 & 1.1304 & -2.8096 & 2.9404 & -2.1454 \\
    0.9548 & 1.5775 & 4.7743 & 2.1199 & 11.8485 & 1.0898 \\
    2.1615 & -0.1375 & 6.9713 & -1.2427 & 14.3987 & 3.6954 \\
   -0.6155 & 0.4415 & -1.9777 & -0.7225 & -4.1196 & 4.4740
   \end{pmatrix}.
\end{align*}

	\noindent In these coordinates the constants $u_x$, $u_y$, $v_x$, $v_y$ for the Poincar\'e map on the domains $AA.AA$, $AA.AC$, $\ldots$, $CC.CC$ satisfy the inequalities
	
	\[
		|u_x| > 1.2641,\ \|u_y\| < 0.3894,\
		\|v_x\| < 0.3894,\ \|v_y\| < 0.4000.
	\]
	
	From now on the reasoning will be identical for both sets $(\Xi/g)\cap {\cal S}$ and $({\cal X}/g)\cap {\cal S}$. We shall then denote any of these sets by $X$. 
	
	Thus for every point $\widehat z$ in $X$ we have a decomposition of the tangent space
	
	\[
		T_{\widehat z}({\cal C}_0\cap {\cal S}) = U(\widehat z)\oplus V(\widehat z),
	\]

	\noindent depending continuously on $\widehat z\in X$. We have that $\alpha(\widehat z): U(\widehat z)\to U(\Phi^{-1}(\widehat z))$, $\beta(\widehat z): V(\widehat z)\to U(\Phi^{-1}(\widehat z))$, $\gamma(\widehat z): U(\widehat z)\to V(\Phi^{-1}(\widehat z))$, and $\delta(\widehat z): V(\widehat z)\to V(\Phi^{-1}(\widehat z))$\ \ are\ \  linear\ \ operators.\ \  In\ \ our\ \  case\ \  we\ \  have $\dim U(\widehat z)=1$ and $\dim V(\widehat z)=5$. This is not important, however, and our considerations below are valid for arbitrary dimensions $\dim U(\widehat z)=m_1$ and $\dim V(\widehat z)=m_2$. Accordingly, we shall consider $\alpha(\widehat z)$ not as a number, but as an invertible linear expanding operator which dilutes by a factor of at least $u_x$. Vectors in the spaces $U(\widehat z)$ and $V(\widehat z)$ will be denoted by $u$ and $v$.
	
	We shall now seek a linear coordinate transformation $\widetilde v = v + {\cal R}(\widehat z)u$ in each of the spaces $T_{\widehat z}({\cal C}_0\cap {\cal S})$ such that the lower left element of the matrix $A^{-1}(\widehat z)$ vanishes. Let us compute ${\cal R}(\widehat z)$. We have
	
	\[
		\left(\begin{array}{c}
			\widetilde u\\ \widetilde v\\
		\end{array}\right) =
		\left(\begin{array}{cc}
			1 & 0\\
			{\cal R}(\widehat z) & 1\\
		\end{array}\right)
		\left(\begin{array}{c}
			u\\ v\\
		\end{array}\right),
	\]

	\noindent hence

	\[
		A^{-1}(\widehat z)\left(\begin{array}{c}
			\widetilde u\\ \widetilde v\\
		\end{array}\right) =
		\left(\begin{array}{cc}
			1 & 0\\
			{\cal R}(\Phi^{-1}(\widehat z)) & 1\\
		\end{array}\right)
		\left(
			\begin{array}{cc}
				\alpha(\widehat z) & \beta(\widehat z)\\
				\gamma(\widehat z) & \delta(\widehat z)\\
			\end{array}
		\right)
		\left(\begin{array}{cc}
			1 & 0\\
			-{\cal R}(\widehat z) & 1\\
		\end{array}\right)
		\left(\begin{array}{c}
			\widetilde u\\ \widetilde v\\
		\end{array}\right),
	\]
	
	\noindent or in other words
	
	\[
		A^{-1}(\widehat z)\left(\begin{array}{c}
			\widetilde u\\ \widetilde v\\
		\end{array}\right) =
		\left(
			\begin{array}{cc}
				\widetilde\alpha(\widehat z) & \widetilde\beta(\widehat z)\\
				\widetilde\gamma(\widehat z) & \widetilde\delta(\widehat z)\\
			\end{array}
		\right)
		\left(\begin{array}{c}
			\widetilde u\\ \widetilde v\\
		\end{array}\right),
	\]

	\noindent where
	
	\[
		\left\{\begin{array}{l}
			\widetilde\alpha(\widehat z) = \alpha(\widehat z) - \beta(\widehat z){\cal R}(\widehat z)\\
			\widetilde\beta(\widehat z) = \beta(\widehat z)\\
			\widetilde\gamma(\widehat z) = \gamma(\widehat z) - \delta(\widehat z){\cal R}(\widehat z) + {\cal R}(\Phi^{-1}(\widehat z))\alpha(\widehat z) - {\cal R}(\Phi^{-1}(\widehat z))\beta(\widehat z){\cal R}(\widehat z)\\
			\widetilde\delta(\widehat z) = \delta(\widehat z) + {\cal R}(\Phi^{-1}(\widehat z))\beta(\widehat z).
		\end{array}\right.
	\]

	We look for a substitution such that $\widetilde\gamma(\widehat z)=0$ for all $\widehat z$ in $X$. We then get
	
	\begin{equation}
	\label{eq:R_new_R_old}
		{\cal R}(\Phi^{-1}(\widehat z)) = -\bigl(\gamma(\widehat z) - \delta(\widehat z){\cal R}(\widehat z)\bigr) \bigl(\alpha(\widehat z) - \beta(\widehat z){\cal R}(\widehat z)\bigr)^{-1}.
	\end{equation}

	Let us look for an operator ${\cal R}(\widehat z)$ of bounded norm not depending on $\widehat z$, i.e., $\|{\cal R}(\widehat z)\|\le\varepsilon$ for some fixed $\varepsilon>0$ and every $\widehat z\in X$. We now precisely choose $\varepsilon>0$. In order for the operator $\alpha(\widehat z) - \beta(\widehat z){\cal R}(\widehat z)$ to be invertible it is sufficient that
	
	\[
		u_x-\varepsilon u_y>0.
	\]

	In order for the map at the right-hand side (\ref{eq:R_new_R_old}) to define an operator of norm not exceeding $\varepsilon$ it is sufficient that
	
	\[
		\frac{v_x+\varepsilon v_y}{u_x-\varepsilon u_y}\le\varepsilon.
	\]

	Let us now demand another condition on $\varepsilon$, namely such that the map at the right-hand side of equation (\ref{eq:R_new_R_old}) is contracting. The differential of this map is given by
	
	\[
		\bigl(\delta(\widehat z) + {\cal R}(\Phi^{-1}(\widehat z))\beta(\widehat z)\bigr)
		d{\cal R}
		\bigl(\alpha(\widehat z) - \beta(\widehat z){\cal R}(\widehat z)\bigr)^{-1}.
	\]

	\noindent Hence a sufficient condition for the map to be contracting is
	
	\[
		\frac{v_y+\varepsilon u_y}{u_x-\varepsilon u_y} < 1.
	\]

	As a last condition on $\varepsilon$ we demand that the operator $A^{-1}(\widehat z)$ preserves the hyperbolic structure, namely such that $\|\widetilde \alpha(\widehat z)^{-1}\|<1$ and $\|\widetilde \delta(\widehat z)\|<1$. Sufficient conditions for this are
	
	\[
		\frac{1}{u_x-\varepsilon u_y}<1\qquad\mbox{and}\qquad v_y+\varepsilon u_y < 1.
	\]

	Gathering all conditions on $\varepsilon>0$, we obtain the system
	
	\begin{equation}
	\label{eq:hyperbolic_system_on_epsilon}
		\left\{
			\begin{array}{l}
				u_y\varepsilon < u_x;\\
				u_y\varepsilon^2 - (u_x-v_y)\varepsilon + v_x \le 0;\\
				u_y\varepsilon < \frac{1}{2}(u_x-v_y);\\
				u_y\varepsilon < u_x - 1;\\
				u_y\varepsilon < 1 - v_y.\\
			\end{array}
		\right.
	\end{equation}

	Let us show that this system has a feasible solution $\varepsilon$. If $u_y=0$, then the system has the trivial solution $\varepsilon\ge\frac{v_x}{u_x-v_y}$. Let us now assume $u_y>0$. The first condition of system (\ref{eq:hyperbolic_system_on_epsilon}) is a consequence of the fourth, and the third one is a consequence of the 4-th and the 5-th one. After the substitution $C=u_y\varepsilon$ we then obtain the equivalent system
	
	\[	
		\left\{\begin{array}{l}
			C^2 - (u_x-v_y) C + u_y v_x < 0;\\
			C < u_x - 1;\\
			C < 1-v_y.\\
		\end{array}\right.
	\]

	\noindent The discriminant of the quadratic trinomial in the first inequality is positive. Indeed, from (\ref{eq:lipschitz_hyperbolic}) we have
	
	\[
		D = (u_x-v_y)^2 - 4 u_yv_x \ge 4(u_x-1)(1-v_y)- 4 u_yv_x>0.
	\]

	\noindent Therefore the system has a feasible solution if the smaller root of the quadratic trinomial is smaller than both $u_x-1$ and $1-v_y$. We obtain
	
	\[
		\frac{u_x-v_y-\sqrt{D}}{2} < u_x-1\qquad\mbox{and}\qquad \frac{u_x-v_y-\sqrt{D}}{2} < 1-v_y,
	\]

	\noindent or equivalently
	
	\[
		\sqrt{D} > 2-u_x-v_y\qquad\mbox{and}\qquad \sqrt{D} > u_x+v_y-2.
	\]
	
	\noindent It remains to show that
	
	\[
		\sqrt{D} > |2-u_x-v_y|\quad \Longleftrightarrow \quad D > (2-u_x-v_y)^2,
	\]

	\noindent which is an immediate consequence of (\ref{eq:lipschitz_hyperbolic}) after expanding the parentheses and collecting similar terms.
	
	Let us now show that the linear map ${\cal R}(\widehat z):U(\widehat z)\to V(\widehat z)$, $\|R(\widehat z)\|\le\varepsilon$ exists and is continuous with respect to $\widehat z$. Consider the space $\mathfrak{C}$ of all continuous maps from $\widehat z\in X$ into the ball with radius $\varepsilon$ in the space $L(U(\widehat z),V(\widehat z))$ of linear maps from $U(\widehat z)$ into $V(\widehat z)$. The space $\mathfrak{C}$ is a complete metric space with respect to the standard supremum norm, because $X$ is compact and $L(U(\widehat z),V(\widehat z))$ is complete. On the space $\mathfrak{C}$ we can define a map $\mathfrak{P}:\mathfrak{C}\to\mathfrak{C}$ by virtue of (\ref{eq:R_new_R_old}), namely
	
	\[
		\mathfrak{P}{\cal R}(\Phi^{-1}(\widehat z)) = -\bigl(\gamma(\widehat z) - \delta(\widehat z){\cal R}(\widehat z)\bigr) \bigl(\alpha(\widehat z) - \beta(\widehat z){\cal R}(\widehat z)\bigr)^{-1}.
	\]

	\noindent The map $\mathfrak{P}$ is well-defined. Firstly because the image of the continuous map ${\cal R}$ is continuous. This follows because the operator $\bigl(\alpha(\widehat z) - \beta(\widehat z){\cal R}(\widehat z)\bigr)$ is bounded away from the set of non-invertible operators by virtue of the choice of $\varepsilon$. Secondly, by virtue of $|{\cal R}(\widehat z)|<\varepsilon$ we have $|\mathfrak{P}{\cal R}(\widehat z)|<\varepsilon$, again by the choice of $\varepsilon$. Moreover, the map $\mathfrak{P}$ is contracting for the same reasons. Hence the map $\mathfrak{P}$ has a unique fixed point ${\cal R}_0\in\mathfrak{C}$, i.e., ${\cal R}_0(\widehat z)$ is the sought linear operator, continuously depending on $\widehat z\in X$.
	
	Thus at the points of the set $X$ we have a continuous family of operators ${\cal R}_0(\widehat z)$ such that $\widetilde\gamma(\widehat z)=0$. Here $\|\widetilde\alpha(\widehat z)^{-1}\| < 1$ and $\|\widetilde\delta(\widehat z)\| < 1$. Replacing the subspaces $U(\widehat z)=\{v=0\}$ and $V(\widehat z)=\{u=0\}$ by $\widetilde U(\widehat z)=\{\widetilde v = v+{\cal R}_0(\widehat z)u=0\}$ and $\widetilde V(\widehat z) = V(\widehat z)$, we obtain another continuous decomposition $T_{\widehat z}({\cal C}_0\cap {\cal S})=\widetilde U(\widehat z)\oplus \widetilde V(\widehat z)$. In this decomposition the matrix $A^{-1}(\widehat z)$ has the form
	
	\[
		A^{-1}(\widehat z) =
		\left(
			\begin{array}{cc}
				\widetilde\alpha(\widehat z) & \beta(\widehat z)\\
				0 & \widetilde\delta(\widehat z)\\
			\end{array}
		\right)\mbox{ for all }\widehat z\mbox{ in }X.
	\]
	
 	Compactness of $X$ implies the existence of constants $\widetilde u_x>1$ and $\widetilde v_y<1$ such that $\|\widetilde\alpha(\widehat z)^{-1}\| < \widetilde u_x^{-1}$ and $\|\widetilde\delta(\widehat z)\| < \widetilde v_y$. In the new decomposition we have $\widetilde u_y=u_y$ and $\widetilde v_x=0$. Hence the conditions of (\ref{eq:lipschitz_hyperbolic}) are satisfied also in the new decomposition. Applying the procedure above again to the operator $A(\widehat z)$ in the new decomposition, we obtain the sought decomposition $T_{\widehat z}({\cal C}_0\cap {\cal S})=\widehat U(\widehat z)\oplus \widehat V(\widehat z)$\footnote{It can be shown that the decomposition $T_{\widehat z}({\cal C}_0\cap {\cal S})=\widehat U(\widehat z)\oplus \widehat V(\widehat z)$ is not only continuous but even H\"older with respect to $\widehat z\in X$, but this is not relevant for the following argument.}.
\end{proof}

\begin{remark}
	Let us explain why the operator ${\cal R}_0(\widehat z)$ is in general not smooth. Equation (\ref{eq:R_new_R_old}) is an equation on functions, it can be written in the simplest form as 
	
	\[
		y(x) = f\Bigr(x,y\bigl(\phi(x)\bigr)\Bigl),
	\]

	\noindent where $x$ is a coordinate, $f(x,y)$ and $\phi(x)$ given smooth functions, and $y(x)$ the sought function. The function $f$ satisfies the condition $\|f'_y\|<1$. We may even assume that $x$ and $y$ are 1-dimensional, this has no influence on the effects in connection with the smoothness of the solution. 
	
	If there were no function $\phi$, i.e., $\phi(x)=x$, then the relation would have the form $y=f(x,y)$. In this case the implicit function theorem guarantees the existence of a smooth solution. Namely, the inclusion $y\in C^1$, which implies also the inclusion $y\in C^\infty$, can be shown by the explicit formula
	
	\[
		y'(x) = \frac{f'_x(x,y(x))}{1-f'_y(x,y(x))}.
	\]

	\noindent Indeed, we obtain $y'\in C^1$, this in turn yields $y\in C^2$ and so on.
	
	If $\phi(x)$ is not the identity in $x$, but rather a map of some compact set on itself, then we can still show the existence and uniqueness of a continuous solution on this compact set, as was done in Lemma \ref{lm:about_tangent_hyperbolic_decomposition}. However, the solution of the functional equation need not to be smooth, just H\"older. For example, the equation $y(x) = \frac{1}{2}y(3x)$ has the solution $y(x)=|x|^{\log_3 2}$, which is not smooth at $x = 0$.
\end{remark}

We now consecutively apply the procedure described in Lemma \ref{lm:about_tangent_hyperbolic_decomposition} to the different decompositions of the tangent space. In this way we transform the $(-N)$-th power $d\Phi_H^{-N}=\Theta_H$ of the differential of the Poincar\'e map from Lemma \ref{lm:block_structure_dPhi} to the most convenient form. The following reasoning is almost identical for both sets $\Xi^0_H(x_0)$ and ${\cal X}^0_H(x_0)$. We hence denote by  $X^0$ any of the sets $\Xi^0_H(x_0)\cap {\cal S}^H$ or ${\cal X}^0_H(x_0)\cap {\cal S}^H$.

In order to bring the differential $d\Theta_H(z)$ to a block-diagonal form, we first restrict our attention to the zero section ${\cal C}_0^H$. Moreover, for the moment we shall not consider the coordinate $\widetilde w$. Denote the subspace of $T_{z}{\cal C}_0^H$ which corresponds to the primary brackets by $T^0_{z}{\cal C}_0^H$, and the subspace which corresponds to the secondary brackets by $T^1_{z}{\cal C}_0^H$. Decompose the subspace $T^0_{z}{\cal C}_0^H$ by virtue of Lemma \ref{lm:about_tangent_hyperbolic_decomposition} and substitution (\ref{eq:phi_psi_x_y_K_main}), such that $T^0_{z}{\cal C}_0^H=\widehat U(z)\oplus \widehat V(z)$ with $\dim\widehat U(z)=1$. The operator ${\cal A}(z)$ becomes block-diagonal and expands $\widehat U(z)$ and contracts $\widehat V(z)$. Now we extend the subspace $\widehat V(z)$ by adjoining the subspace $T^1_{z}{\cal C}_0^H$. In the obtained decomposition the operator $d\Theta_H|_{T^0_{z}{\cal C}_0^H\oplus T^1_{z}{\cal C}_0^H}$ has the form (\ref{eq:A_alpha_beta_gamma_delta}) with $\gamma(z)=0$ and $\|\alpha(z)^{-1}\|<1$. Since $\varrho(z)>2$ we can choose $N$ large enough to enforce $\|\delta(z)\| <  1$. The number $N$ can be chosen independent of $z$ by the compactness of the set $X^0$. Therefore we can apply the procedure described in Lemma \ref{lm:about_tangent_hyperbolic_decomposition} to $d\Theta_H|_{T^0_{z}{\cal C}_0^H\oplus T^1_{z}{\cal C}_0^H}$. As a result, the operator $d\Theta_H|_{T^K_{z}{\cal C}_0^H\oplus \widetilde T^K_{z}{\cal C}_0^H}$ becomes block-diagonal. Repeating this step, we consecutively adjoin the tangent subspaces corresponding to the variables $\widetilde w$ and $\mu$ to the decomposition. The dimension of the subspace $\widehat U(z)$ will increase by exactly 1 at the last step, namely the adjoining of the coordinate $\mu$.

\bigskip

Thus we obtain a decomposition $T_z {\cal C}^H\cap {\cal S}^H = U(z)\oplus V(z)$, continuously depending on $z \in X^0$, such that $\dim U(z)=2$, and the subspace $V(z)$ is horizontal, i.e., $V(z)\subset T_z{\cal C}_0^H$. Moreover, we have

\[
	d\Theta_H(z) =
	\left(\begin{array}{cc}
		\alpha(z) & 0 \\
		0 & \beta(z)\\
	\end{array}\right),
\]

\noindent where $\alpha(z):U(z)\to U(\Theta_H(z))$ and $\beta(z):V(z)\to V(\Theta_H(z))$ depend continuously on $z$, and $\|\alpha(z)^{-1}\|<1$, $\|\beta(z)\|<1$. By the compactness of $\Xi^0_H(x_0)\cap {\cal S}^H$ and ${\cal X}^0_H(x_0)\cap {\cal S}^H$ we may assume that the norms of these operators are bounded away from 1, i.e., $\|\alpha(z)^{-1}\|<\kappa$ and $\|\beta(z)\|<\kappa$ for some $\kappa<1$.

Let us now apply the Hadamard-Perron theorem. To this end we construct a system of local coordinates in the neighbourhood of $X^0$. Since the set $X^0$ is compact, for every $\varepsilon>0$ there exists $\delta>0$ such that for every point $z_0 \in X^0$ the difference $\Theta_H(z)-d\Theta_H(z_0)(z-z_0)$ is bounded by $\varepsilon$ in the $C^1$ metric for every $z$ from a $\delta$-neighbourhood of $z_0$. Let us cover $X^0$ with a finite number of $\delta/2$-neighbourhoods in ${\cal C}^H_0$ with centers $z_1^0$, $\ldots$, $z_M^0\in X^0$. In the $\delta$-neighbourhood of each point $z_m^0$ we introduce local coordinates, defined by the block-diagonal structure of the differential $d\Theta_H(z_m^0)$. If the point $z\in X_0$ lies in a $\delta/2$-neighbourhood of $z_m^0$, then the norm of the difference $d\Theta_H(z)-d\Theta_H(z_m^0)$ is bounded by $\varepsilon$. If $\varepsilon$ is sufficiently small, then the Hadamard-Perron theorem is applicable to the orbit $\Theta_H^k(z)$, $k\in\Z$. By virtue of this theorem, for each point $z_0\in X^0$ there exists a smooth 2-dimensional manifold $W^+(z_0)$ in the $\delta/2$-neighbourhood of $z_0$ in ${\cal C}^H_0\cap{\cal S}^H$, consisting of points $z$ which tend exponentially fast to the images of $z_0$ under iterations of the map $\Theta_H^{-1}$. Since $\Theta_H=\Phi_H^{-N}$ and the set $X^0$ is compact, we have that every point $z\in W^+(z_0)$ tends to the image of $z_0$ also under iterations of the map $\Phi_H$ itself,

\begin{eqnarray*}
	W^+(z_0) = \Bigl\{&&
		z\in{\cal C}^H\cap{\cal S}^H\mbox{ and }\|z-z_0\|<\frac{\delta}{2},\\
		&&\mbox{such that }\|\Phi_H^k(z)-\Phi_H^k(z_0)\|<C\lambda^k,\ k\in\N
	\quad\Bigr\}.\\
\end{eqnarray*}

\noindent Here the constants $C>0$ and $0<\lambda<1$ from the Hadamard-Perron theorem depend only on the choice of $\kappa$ and $\varepsilon$, and hence coincide for all points $z$. By virtue of compactness of the set $X^0$ we may choose constants $C_1>0$ and $0<\lambda_1<1$ such that $\|\Phi_H^k(z)-\Phi_H^k(z_0)\|<C_1\lambda_1^k$ for $k\in\N$.

We shall need some properties of the constructed sets $W^+(z_0)$.

\begin{enumerate}[(a)]
	\item We shall now describe the trajectories of the original Hamiltonian system with Hamiltonian $H$ which emanate from points in the set $W_+(z_0)$, $z_0\in X^0$. To this end we use the vector field $\mu\xi_H(x_0)$ on ${\cal C}_H$ in the neighbourhood of ${\cal C}^0_H$, which has been constructed in Section \ref{subsec:blowing_of_singularity}, cf.~also system (\ref{eq:blowing_general_hamilton_vector_field}). Let without loss of generality $\mu(z)>0$. The trajectory $\gamma(s)$ starting at the point $\gamma(0)=z\in W^+(z_0)$ intersects the surface ${\cal S}^H$ of discontinuity of the right-hand side of the Hamiltonian system a countable number of times as $s\to+\infty$, namely at the points $\Phi_H^k(z)$, $k\in\N$. This trajectory exists and is unique, because the vector field $\mu\xi_H(x_0)$ is transversal to ${\cal S}^H$ in a neighbourhood of $X^0$. The time spent between two consecutive switchings is bounded by $\frac{1}{2}s_{\min}$ and $2s_{\max}$ from below and from above, respectively. Therefore $\gamma(s)$ tends to ${\cal C}_0^H=\{\mu=0\}$ exponentially fast as $s\to +\infty$, i.e.,

	\[
		c_1 e^{-c_2 s} < \mu(\gamma(s)) < c_3 e^{-c_4 s}
	\]

	\noindent for some positive constants $c_i$, $i=1,\ldots,4$. Since $dt=\mu ds$, we have that the integral

	\[
		T(z)=\int_0^{+\infty}\mu(\gamma(s))ds<\infty
	\]

	\noindent is finite, and hence the preimage $\blowingBig^{-1}(\gamma(t))$ of the trajectory hits the point $x_0$ in finite time $T(z)$. In other words, the trajectory $X(t,\widetilde z)$, $z\in{\cal M}$ of the original Hamiltonian system with Hamiltonian $H$ which starts at the point $X(0,\widetilde z)=\widetilde z\in\blowingBig^{-1}(W^+(z_0))$ exists and is unique in the interval $t\in[0,T(\blowingBig(\widetilde z))]$. Moreover, it hits $x_0$ at the moment $T(\blowingBig(\widetilde z))$. In the sequel we will omit the symbol $\blowingBig$ for brevity and write $T(\widetilde z)$ instead of $T(\blowingBig(\widetilde z))$.
	
	Note that the primary brackets ${\cal K}^m(X(t,z))$ satisfy the inequalities claimed in Remark \ref{rm:main_bracket_ineq}.

	\item We shall now describe how two sets $W^+(z_0)$ and $W^+(z_1)$, defined in $\delta/2$-neighbourhoods of $z_0$ and $z_1$, may intersect. Let $U$ denote the intersection of the $\delta/2$-neighbourhoods of  $z_0$ and $z_1$. Then $W^+(z_0)$ and $W^+(z_1)$ either do not intersect, or coincide on $U$. We shall now prove this claim. If their intersection is not empty and contains some point $z\in U$, then the iterates of $z$ tend to both the images of $z_0$ and the images of $z_1$. Therefore the images of $z_0$ and $z_1$ approach each other exponentially fast. Consequently the iterates of an arbitrary point $z\in W^+(z_0)$ approach the iterates of $z_1$ and vice versa. Therefore the sets $W^+(z_i)$, $i=1,2$ coincide on $U$.	
\end{enumerate}

The sets $\Xi_H(x_0)$ and ${\cal X}_H(x_0)$ have the following properties. Their intersection with ${\cal S}^H$ is precisely the union of all fibers $\blowingBig^{-1}(W^+(z_0))$ over the points $z_0$,

\[
	\Xi_H(x_0)\cap {\cal S}^H = \bigcup_{z_0\in \Xi_H^0(x_0)} \blowingBig^{-1}(W^+(z_0)),
\]

\[
	{\cal X}_H(x_0)\cap {\cal S}^H = \bigcup_{z_0\in {\cal X}_H^0(x_0)} \blowingBig^{-1}(W^+(z_0)).
\]

\noindent The sets are obtained by releasing a trajectory of the Hamiltonian system from every point in ${\cal S}^H$ and continuing it up to the next intersection point with ${\cal S}^H$. Therefore properties (I) and (II) in both Theorem \ref{thm:main_chaos_hamilton_equilateral_triangle} and Theorem \ref{thm:main_chaos_hamilton_any_triangle} follow from property (a) of the set $W^+(z_0)$.

The maps $\Psi^H_\Gamma$ and $\Psi^H_{01}$ to the topological Markov chains $\Sigma_\Gamma^+$ and $\Sigma_{01}^+$, which have been announced in (III), are obtained by virtue of the corresponding maps for model problem  (\ref{problem:model}), namely as compositions of: (i) the projection $\pi_W$, which takes $z\in\blowingBig^{-1}(W^+(z_0))$ to the point $z_0\in{\cal C}_0^H$, (ii) identification of ${\cal X}^0_H(x_0)$ and $\Xi_H^0(x_0)$ with ${\cal X}^+/g$ and $\Xi/g$ by means of the map $\Pi_+/g$ from Lemma \ref{lm:Pi_plus}, (iii) application of the corresponding quotient map form model problem (\ref{problem:model}), and (iv) discarding some leading symbols in the obtained topological Markov chain. More precisely, for the map $\Psi_\Gamma^H$ in (ii) we use the map $\Psi_\Gamma^+/g$ from Theorem \ref{thm:model_chaos_equilateral_triangle}, and for the map $\Psi_{01}^H$ we use the map $\Psi_{01}/g$ from Theorem \ref{thm:model_chaos_any_triangle}. Note that the projection $z\mapsto z_0$ itself is not well-defined, since it is possible that $z\in\blowingBig^{-1}(W^+(z_0)\cap W^+(z_1))$. However, by virtue of property (b) the iterates of $z_0$ and $z_1$ approach each other exponentially fast in this case. Therefore there exists a number $K\in\N$ such that in the images $(\Psi_\Gamma^+/g)\bigl((\Pi_+/g)^{-1}(z_i)\bigr)\in\Sigma_\Gamma^+$ or $(\Psi_{01}/g)\bigl((\Pi_+/g)(z_i)\bigr)\in\Sigma_{01}$ differences can appear only at positions with indices smaller than $K$. At positions with numbers larger than $K$ these images have to coincide. By discarding all symbols with indices smaller than $K$ (denote this operator\footnote{For sequences in $\Sigma_\Gamma^+$, which are infinite only to the right, we discard the $K$ first symbols, and for sequences in $\Sigma_{01}$, which are bilaterally infinite, we discard the whole infinite sequence of symbols left of position $K$.} by $\mathfrak{d}$) we hence eliminate the ambiguity in the map $\pi_W$. Therefore

\[
	\Psi^H_\Gamma = \mathfrak{d} \circ (\Psi_\Gamma^+/g) \circ (\Pi_+/g)^{-1} \circ \pi_W
\]

\[
	\Psi^H_{01} = \mathfrak{d} \circ (\Psi_{01}/g) \circ (\Pi_+/g)^{-1} \circ \pi_W
\]

\noindent This proves claim (III) for both Theorems \ref{thm:main_chaos_hamilton_equilateral_triangle} and \ref{thm:main_chaos_hamilton_any_triangle}.

Claim (IV) of Theorem \ref{thm:main_chaos_hamilton_any_triangle} is proven by replacing the set $X_0$ by a finite set, consisting of the intersections of $R_{ijk}/g$ or $Q_i/g$ with ${\cal S}$ (or more precisely, of their images under the map $\Pi_+/g$). In the first case the set $X_0$ consists of three points, in the second case of four points. In the case of the three-link chain all eigenvalues of the operator $A^3$ from (\ref{eq:defn_A_rho}) have a modulus larger than 1 (see Section \ref{subsec:periodic_2_4_6_fuller}). Hence we obtain in this case that the manifold $W^+$ is 1-dimensional. For the case of the four-link chain the operator $A^4$ has exactly one eigenvalue with modulus smaller 1 (see Section \ref{subsec:periodic_2_4_6_fuller}). Hence in this case the manifold $W^+$ is 2-dimensional.

Item (IV) from Theorem \ref{thm:main_chaos_hamilton_equilateral_triangle} on the bounds on the dimension of ${\cal X}_H(x_0)$ follows from the corresponding item in Theorem \ref{thm:model_chaos_equilateral_triangle}. Indeed, for some sufficiently small $\mu_0$ the map $\blowingBig^{-1}$ is a diffeomorphism on ${\cal C}_0^H\times\{0<\mu<\mu_0\}$ and does not change dimensions. Secondly, if $dG(x_0)=0$, then the dimensions (Hausdorff or Kolmogorov) of the sets ${\cal X}^0_H(x_0)$ and ${\cal X}/g$ coincide, because in this case the map $\Pi_+$ is locally Lipschitz by virtue of Lemma \ref{lm:Pi_plus}. It remains to note that the sets ${\cal X}_H(x_0)$ and ${\cal X}^+$ are obtained from the sets ${\cal X}^0_H(x_0)\cap{\cal S}^H$ and $({\cal X}\cap {\cal S})/g$ by a similar procedure, namely two-dimensional stable submanifolds are constructed and trajectories of the system released.

Item (V) of Theorem \ref{thm:main_chaos_hamilton_equilateral_triangle} is identical with the corresponding item in Theorem \ref{thm:model_chaos_equilateral_triangle}.

Items (VI) of Theorem \ref{thm:main_chaos_hamilton_equilateral_triangle} and (V) of Theorem \ref{thm:main_chaos_hamilton_any_triangle} can be obtained by considering the Lipschitz surface $M_-$ instead of the Lipschitz surface $M_+$ of optimal trajectories. Recall that $M_-$ consists of trajectories which leave the origin of Hamiltonian system (\ref{eq:model_pmp_system}) determined by the Pontryagin maximum principle for model problem (\ref{problem:model}). This completes the proof of Theorems \ref{thm:main_chaos_hamilton_equilateral_triangle} and \ref{thm:main_chaos_hamilton_any_triangle}.

\begin{flushright}

$\square$

\end{flushright}

Let us now prove the claim of Remark \ref{rm:main_bracket_ineq}. The upper bound has been already obtained in Lemma \ref{lm:descending_system_general_nongeneral_bracket_order}. The lower bound is equivalent to the assertion that on every trajectory $X(t)$ from the set ${\cal X}_H(x_0)$ or $\Xi_H(x_0)$ we have the bound

\[
	\mu(X(t)) \ge c''(T-t),
\]

\noindent where $c''>0$ is some constant, and $T$ is the time instant of hitting the strange point $x_0$.

Let us transfer the trajectory $X(t)$ to the cylinder ${\cal C}_H$ by means of the blow-up map $\blowingBig$ and perform the substitution $ds=\frac{1}{\mu}dt$ of the time parameter. Then we obtain a trajectory $\widetilde X(s) = \blowing(X(t(s))$ of system (\ref{eq:blowing_general_hamilton_vector_field_s}). Hence we have to show that

\[
	\mu(\widetilde X(s)) \ge T-t(s) =
	c'' \int_s^{+\infty} \mu(\widetilde X(\sigma))\,d\sigma.
\]

Denote by $\widetilde X^0(s)$ the trajectory on ${\cal C}_0^H$ which emanates from ${\cal X}_H(x_0)$ or $\Xi^0_H(x_0)$ which the trajectory $\widetilde X(s)$ converges to as $s\to+\infty$. As we have mentioned above, the trajectory $\widetilde X^0(s)$ is also a trajectory of system (\ref{eq:blowing_general_linearized_hamilton_vector_field}), since we have $\mu(\widetilde X^0(s))=0$. Consider also some trajectory $\widetilde X^1(s)$ of system (\ref{eq:blowing_general_linearized_hamilton_vector_field}) which differs from $\widetilde X^0(s)$ only in the coordinate $\mu$, say $\mu(\widetilde X^1(s))>0$, such that

\[
	\left\{\begin{array}{rcl}
		\widetilde K(\widetilde X^1(s)) &=& \widetilde K(\widetilde X^0(s)),\\
		\widetilde w(\widetilde X^1(s)) &=& \widetilde w(\widetilde X^0(s)),\\
		\dds \mu(\widetilde X^1(s))     &=& \mu(\widetilde X^1(s))\Upsilon(\widetilde X^1(s)) = \mu(\widetilde X^1(s))\Upsilon(\widetilde X^0(s)),\\
		\mu(\widetilde X^1(0)) &=& \mu(\widetilde X(0)).\\
	\end{array}\right.
\]

Since the trajectory $\widetilde X^1(s)$ is the image of an optimal trajectory for model problem (\ref{problem:model}) under the map $\blowingBig\circ\Pi_+$, we have by virtue of Theorem \ref{thm:model_problem_bellman} the following inequality:

\[
	\mu(\widetilde X^1(s)) \ge
	c_0 \int_s^{+\infty} \mu(\widetilde X^1(\sigma))\,d\sigma
\]

\noindent on this trajectory for some constant $c_0>0$.

Let us now show that the values of $\mu$ do not differ too much on both trajectories $\widetilde X(s)$ and $\widetilde X^1(s)$, in the sense that 

\[
	\left\{\begin{array}{rcl}
	        \dds \mu(\widetilde X(s))     &=& \mu(\widetilde X(s))\Upsilon(\widetilde X(s)),\\
	        \dds \mu(\widetilde X^1(s))     &=& \mu(\widetilde X^1(s))\Upsilon(\widetilde X^1(s)),
	\end{array}\right.
\]

\noindent and

\[
	\dds \ln \frac{\mu(\widetilde X(s))}{\mu(\widetilde X^1(s))} =
	\Upsilon(\widetilde X(s)) - \Upsilon(\widetilde X^1(s)).
\]

\noindent Since both $\widetilde X(s)$ and $\widetilde X^1(s)$ tend to $\widetilde X^0(s)$ exponentially fast, the difference at the right-hand side in the last equation above also tends to 0 exponentially fast. Therefore we have for some constants $c_1,c_2>0$ that

\[
	-c_1e^{-c_2s}\le
	\dds \ln \frac{\mu(\widetilde X(s))}{\mu(\widetilde X^1(s))}
	\le c_1e^{-c_2s}.
\]

\noindent Taking into account that $\ln \frac{\mu(\widetilde X(0))}{\mu(\widetilde X^1(0))}=0$, we immediately obtain that for $c_3=e^{c_1/c_2}$ we have

\[
	\frac{1}{c_3}\mu(\widetilde X^1(s))\le
	\mu(\widetilde X(s)) \le
	c_3\mu(\widetilde X^1(s)).
\]

\noindent Thus

\[
	\mu(\widetilde X(s)) \ge \frac{1}{c_3}\mu(\widetilde X^1(s)) \ge
	\frac{c_0}{c_3} \int_s^{+\infty} \mu(\widetilde X^1(\sigma))\,d\sigma \ge
	\frac{c_0}{c_3^2} \int_s^{+\infty} \mu(\widetilde X(\sigma))\,d\sigma,
\]

\noindent what is what we had to show.

\begin{remark}
	By virtue of Remark \ref{rm:main_bracket_ineq} we can control how the sets $\Xi_H(x_0)$ and ${\cal X}_H(x_0)$ enter the strange point $x_0$. In geometric terms the sets $\Xi_H(x_0)$ and ${\cal X}_H(x_0)$ are tangent (in the sense of the order in $\mu$) to the cone $\blowingBig^{-1}\bigl({\cal D}_0\times\{\mu\in \R\}\bigr)$ .
\end{remark}

\begin{remark}
	By Definition \ref{defn:strange_point} a strange point can appear only if the number of degrees of freedom of the system is not smaller than 16. Indeed, in Definition \ref{defn:strange_point} we demand the linear independence of the differentials of the functions $F_r$, $(\ad F_i)F_r$ etc. up to the 4-th order. The minimal dimension where such a situation can occur can be easily computed by means of the dimension of the free nilpotent graded Lie algebra of depth 4 and with three generators ($F_3$ has to be excluded, since $F_3\equiv -F_1-F_2$). Namely, the growth vector of such an algebra can be calculated via Halls words, it equals $(3,6,14,32)$. Since we do not use $dF_0$ at the first level, the minimally possible dimension of $\cal M$ is $31$. But $\dim {\cal M}$ must be even, hence $\dim{\cal M}\ge 32$ and the system has at least 16 degrees of freedom. However, the conditions for the definition of a strange point are excessively strong, and that is why the number of degrees of freedom is so large. In fact, the conditions can be substantially weakened. Hamiltonian system (\ref{eq:model_pmp_system}), which is determined by the Pontryagin maximum principle for model problem (\ref{problem:model}), is a low-dimensional example (with 4 degrees of freedom) of a Hamiltonian system which exhibits the phenomenon described in Theorem \ref{thm:main_chaos_hamilton_equilateral_triangle}.
\end{remark}

\begin{remark}
\label{rm:structural_stability_equilateral_triangle}
	The appearance of strange points in large dimensions is not avoidable and they cannot be removed by a small perturbation of the Hamiltonians $H_i$, $i=1,2,3$ if we assume that the gradients of the commutators at $x_0$ are linearly independent not only up to the 4-th order, as demanded in Definition \ref{defn:strange_point}, but up to 5-th order. In this case the set $\cal ST$ of strange points is a smooth manifold in the neighbourhood of $x_0$ and its codimension can be computed by means of the dimension of the free nilpotent graded Lie algebra of depth 5 and with three generators. The growth vector of such an algebra is given by $(3,6,14,32,80)$. The codimension of $\cal ST$ is smaller than 80 by a number of $4$. Here 3 dimensions come from the symmetric form $B_{rr'}$ and one from the function $F_0$ at the first level. Hence we get 
	
	\[
		\codim {\cal ST} = 76.
	\]
\end{remark}

\bibliographystyle{plain}
\bibliography{all_eng}

\end{document}